\theoremstyle{plain} 
\newtheorem{theorem}{Theorem}[section]
\newtheorem{lemma}{Lemma}[section]
\newtheorem{corollary}{Corollary}[section]
\newtheorem{proposition}{Proposition}[section]
\newtheorem*{conjecture*}{Conjecture}
\newtheorem*{theorem*}{Theorem}
\theoremstyle{plain}
\theoremstyle{remark}
\newtheorem{remark}{Remark}[section]
\theoremstyle{definition}
\newtheorem*{assumption*}{Assumption}
\newtheorem*{notation*}{Notation}
\newtheorem*{acknowledgment*}{Acknowledgments}
\numberwithin{equation}{section}
\newcommand\swapcommand[2]{%
\let\swaptemp#1
\let#1#2
\let#2\swaptemp
}
\let\sl\l
\renewcommand\l{%
	\leavevmode
  \ifmmode
    \left
  \else
    \sl
  \fi}
\newcommand\set[2]{%
\left\{ #1 \; : \; #2 \right\}
}
\newcommand\norm[2][1]{
\| #2 \|_{#1}}
\newcommand\EXP[1]{
  \mathbb{E}\left[ #1 \right]
}
\swapcommand{\SS}{\S}
\renewcommand{\S}{\mathscr{S}}
\newcommand{\CC}{\mathbb{C}}
\newcommand{\RR}{\mathbb{R}}
\newcommand{\QQ}{\mathbb{Q}}
\newcommand{\ZZ}{\mathbb{Z}}
\newcommand{\PP}{\mathbb{P}}
\newcommand{\e}{\varepsilon}
\newcommand{\s}{\sigma}
\newcommand{\lam}{\lambda}
\newcommand{\Lam}{\Lambda}
\newcommand{\us}{\underset}
\newcommand{\qqquad}{\qquad \qquad \qquad}
\newcommand{\qqqquad}{\qquad \qquad \qquad \qquad}
\newcommand{\G}{\Gamma}
\newcommand{\T}{\mathscr{T}}
\newcommand{\Sc}{\mathcal{S}}
\newcommand{\ol}{\overline}
\newcommand{\meas}{\operatorname{meas}}
\newcommand{\sgn}{\operatorname{sgn}}
\renewcommand{\a}{\alpha}
\renewcommand{\b}{\beta}
\renewcommand{\r}{\right}
\renewcommand{\Re}{\operatorname{Re}}
\renewcommand{\Im}{\operatorname{Im}}
\renewcommand{\epsilon}{\varepsilon}
\renewcommand{\P}{\Phi}
\title[Joint value distribution of $L$-functions]{Joint value distribution of $L$-functions \\on the critical line}
\author[S. INOUE and J. Li]{Sh\={o}ta Inoue and Junxian Li}
\address[S. Inoue]{Graduate School of Mathematics, Nagoya University,
Furocho, Chikusaku, Nagoya 464-8602, Japan}
\email{m16006w@math.nagoya-u.ac.jp}
\address[J. Li]{Max Planck Institute for Mathematics,    
Vivatsgasse 7,
53111 Bonn,
Germany }
\email{jli135@mpim-bonn.mpg.de}
\keywords{Value distribution of $L$-functions, Central limit theorems, Mean value theorems, Large deviations}
\subjclass[2010]{Primary 11M41; Secondary 60F10}
\begin{document}

\maketitle

\begin{abstract}
	In this paper, we discuss the joint value distribution of $L$-functions in a suitable class.
	We obtain joint large deviations results in the central limit theorem for these $L$-functions and some mean value theorems, which give evidence that different $L$-functions are ``statistically independent".
\end{abstract}

\section{\textbf {Introduction}}

The value distribution of $L$-functions plays an important role in analytic number theory.  A celebrated theorem of Selberg \cite{SCR} states that $\log \zeta(\frac{1}{2}+it)$ is approximately Gaussian distributed, namely,
\begin{align}\label{SCLT}
	\lim_{T\rightarrow \infty}\frac{1}{T}\meas\set{t \in [T, 2T]}{\frac{\log |\zeta(\tfrac{1}{2}+it)|}{\sqrt{\tfrac{1}{2}\log\log T}} > V}
	=\int_{V}^\infty e^{-\frac{u^2}{2}}\frac{du}{\sqrt{2\pi}}
\end{align}
for any fixed $V$. Throughout this paper, $\meas(\cdot)$ stands for the one-dimensional Lebesgue measure.
Selberg also stated that primitive $L$-functions in the Selberg class are ``statistically independent"
without precise description for the independence (see \cite[p.7]{S1992}).  Later,
Bombieri and Hejhal \cite{BH1995} indeed showed that normalized values of $L$-functions satisfying certain assumptions
behave like independent Gaussian variables, precisely,
\begin{align}\label{BHJVD}
	 & \lim_{T\rightarrow \infty}\frac{1}{T}\meas\set{t \in [T, 2T]}{
		\frac{\log |L_j(\frac{1}{2}+it)|}{\sqrt{\frac{n_{j}}{2}\log\log T}} > V_{j} \text{ for } j = 1, \dots, r}
	=\prod_{j=1}^r\int_{V_{j}}^\infty e^{-\frac{u^2}{2}}\frac{du}{\sqrt{2\pi}}
\end{align}
for any fixed $V_{1}, \dots, V_{r} \in \RR$ under certain conditions of $\{L_j\}_{j=1}^r$, 
where $n_{j}$ are certain positive integers determined by $L_{j}$.
It is natural to ask if \eqref{SCLT} and \eqref{BHJVD} still hold for a larger range of $V$.
In fact, if estimates similar to \eqref{SCLT} holds for a larger range of $V$,
then one could expect the growth of moments of $\zeta(\frac{1}{2} +it)$ as
\begin{align*}
	\int_T^{2T}|\zeta(\tfrac{1}{2}+it)|^{2k} dt
	\asymp_{k} T (\log T)^{k^2}.
\end{align*}
This idea has been used by Soundararajan \cite{SM2009}, where he proved the upper bound for all $k \geq 0$ conditionally on the Riemann Hypothesis, up to a factor of $(\log T)^\epsilon$. However, the asymptotic \eqref{SCLT} should not hold for arbitrarily large $V$, as observed by Radziwi\l\l\mbox{} \cite{Ra2011}, where he proved that \eqref{SCLT} holds for $V=o((\log\log T)^{1/10-\epsilon})$ and conjectured that \eqref{SCLT} holds for $V=o(\sqrt{\log\log T})$.

In this paper, we derivate large deviations results for joint value distribution of $L$-functions and give some further evidence of the ``statistical independence" of $L$-functions in a general class, including the Riemann zeta function, Dirichlet $L$-functions, and $L$-functions attached to holomorphic and Maa\ss \   cusp forms. We obtain the expected upper bound \eqref{BHJVD} for
$V = o\l((\log\log T)^{1/6}\r)$ (see Therorem  \ref{Main_Thm_LD_JVD}).
We also obtain weaker bounds when $V\ll \sqrt{\log\log T}$ (see Theorem \ref{GJu} and Theorem \ref{RH_LD_JVD}). As an application,  we have the following result.
\begin{theorem}\label{ThmDirichlet}
	Let $\{\chi_j\}_{j=1}^r$ be distinct primitive Dirichlet characters (where the case $\chi_{i} = 1$ for some $1 \leq i \leq r$ is allowed).
	Then there exists some positive constant $B$ depending on $\{\chi_j\}_{j=1}^r$ such that for any fixed sufficiently small positive real number $k$,
	\begin{align}
		\label{EMVTRD}
		\int_{T}^{2T}\l( \min_{1\leq j \leq r}   |L(\tfrac{1}{2}+it, \chi_j)| \r)^{2k}dt
		\ll T(\log{T})^{k^2/r +  Bk^{3}},
	\end{align}
	\begin{align} 
    \label{EMVTRD2}
		\int_{T}^{2T}\l( \max_{1\leq j \leq r}   |L(\tfrac{1}{2}+it, \chi_j)| \r)^{-2k}dt
		\gg T(\log{T})^{k^2/r - Bk^{3}},
	\end{align}
	\begin{align}
		T(\log{T})^{k^2/r +  Bk^{3}}
		\ll \int_{T}^{2T}\exp\l( 2k\min_{1\leq j \leq r} \Im\log L(\tfrac{1}{2}+it, \chi_j)\r)dt
		\ll T(\log{T})^{k^2/r +  Bk^{3}},
	\end{align}
	and
	\begin{align}
		T(\log{T})^{k^2/r - B k^{3}}
		\ll \int_{T}^{2T}\exp\l( -2k\max_{1\leq j \leq r} \Im\log L(\tfrac{1}{2}+it, \chi_j)\r)dt
		\ll T(\log{T})^{k^2/r + B k^{3}}.
	\end{align}
	The above implicit constants depend on $\{\chi_j\}_{j=1}^r$.
	If we assume the Riemann Hypothesis for $L(s, \chi_j)$ for all $j$, then we have for any $k > 0$ and $\e > 0$,
	\begin{align}
		\int_{T}^{2T}\l( \min_{1\leq j \leq r}|L(\tfrac{1}{2}+it, \chi_j)| \r)^{2k}dt
		\ll T(\log{T})^{k^2/r + \e},
	\end{align}
	\begin{align}
		\int_{T}^{2T}\l( \max_{1\leq j \leq r}   |L(\tfrac{1}{2}+it, \chi_j)| \r)^{-2k}dt
		\gg T(\log{T})^{k^2/r - \e},
	\end{align}
	\begin{align} \label{EMVTRDRH3}
		T(\log{T})^{k^2/r - \e}
		\ll \int_{T}^{2T}\exp\l( 2k\min_{1\leq j \leq r} \Im\log L(\tfrac{1}{2}+it, \chi_j)\r)dt
		\ll T(\log{T})^{k^2/r + \e},
	\end{align}
	and
	\begin{align} \label{EMVTRDRH4}
		T(\log{T})^{k^2/r - \e}
		\ll \int_{T}^{2T}\exp\l( -2k\max_{1\leq j \leq r} \Im\log L(\tfrac{1}{2}+it, \chi_j)\r)dt
		\ll T(\log{T})^{k^2/r + \e}.
	\end{align}
	Here, the above implicit constants depend on $\{\chi_j\}_{j=1}^r$, $k$, and $\e$.
\end{theorem}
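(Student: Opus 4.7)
The plan is to deduce all eight moment bounds in Theorem \ref{ThmDirichlet} from the joint large deviation estimates of Theorems \ref{Main_Thm_LD_JVD}, \ref{GJu}, and \ref{RH_LD_JVD} via a layer-cake decomposition followed by a saddle-point analysis. I describe \eqref{EMVTRD} in detail; the remaining seven bounds are strictly analogous.

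Set $V_T := \sqrt{(1/2)\log\log T}$ and $M(t) := \min_{1 \leq j \leq r} \log |L(\tfrac{1}{2}+it, \chi_j)|$. The layer-cake identity
\begin{equation}
\int_T^{2T} \Bigl(\min_{1 \leq j \leq r} |L(\tfrac{1}{2}+it, \chi_j)|\Bigr)^{2k} dt = 2k\int_{-\infty}^{\infty} e^{2kw}\meas\{t \in [T, 2T] : M(t) > w\}\, dw,
\end{equation}
together with the substitution $w = V V_T$, reduces the problem to integrating $2k V_T e^{2k V V_T}$ against the joint tail probability. Since $M(t) > w$ is equivalent to $\log|L(\tfrac{1}{2}+it,\chi_j)| > w$ for every $j$, the joint large deviation theorems predict that this probability is close to $(1 - \Phi(V))^r$. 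Completing the square in $-rV^2/2 + 2k V V_T$ locates the saddle at $V_\star := 2kV_T/r$ with value $\exp(2k^2 V_T^2/r) = (\log T)^{k^2/r}$, exactly the predicted main term.

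The crux is that $V_\star = \Theta(\sqrt{\log\log T})$ lies outside the range $V = o((\log\log T)^{1/6})$ of the sharp Theorem \ref{Main_Thm_LD_JVD}, so I would invoke the wider-range Theorem \ref{GJu} unconditionally, or Theorem \ref{RH_LD_JVD} under RH. These carry an error of shape $V^3/V_T$ in the exponent of the joint tail, which at $V \sim k V_T$ produces the factor $\exp(O(k^3 V_T^2)) = (\log T)^{O(k^3)}$; this is precisely the $B k^3$ correction unconditionally, while under RH the sharper theorem replaces it by $\varepsilon$. The matching lower bound \eqref{EMVTRD2} follows by restricting the $V$-integral to a $V_T^{-1}$-neighborhood of $V_\star$ and applying the large deviation lower bound. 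The bounds involving $(\max_j |L_j|)^{-2k}$ reduce to the previous situation by rewriting them as $\exp(-2k\max_j \log|L_j|)$ and using $\{\max_j X_j < -V\} = \{X_j < -V \text{ for all } j\}$ together with the symmetry of the Gaussian tail; those for $\Im\log L_j$ are identical, since $\Im\log L(\tfrac{1}{2}+it,\chi_j)/V_T$ is asymptotically standard normal with the same joint independence.

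The main obstacle will be controlling the tails of the $V$-integral beyond the validity range $V \ll V_T$ of all three large deviation theorems. There I would appeal to crude moment bounds, such as Soundararajan-type estimates under RH or classical fourth-moment bounds combined with convexity in the unconditional case, and verify that these tail contributions are dominated by the saddle contribution $(\log T)^{k^2/r + B k^3}$. A secondary technical point is keeping track of the polynomial $V_T$-factors produced by the Laplace evaluation and checking that both the upper and lower sides of the large deviation theorems take the asymptotic form $(1 - \Phi(V))^r \exp(O(V^3/V_T))$ uniformly over the relevant window around $V_\star$.
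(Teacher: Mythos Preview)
Your approach is essentially the same as the paper's: Theorem \ref{ThmDirichlet} is obtained as the special case $F_j = L(\cdot,\chi_j)$ (so $n_{F_j}=1$, $h_{\bm F}=r$, $\vartheta_{\bm F}=0$) of Theorems \ref{New_MVT} and \ref{New_MVT_RH}, whose proofs follow exactly your outline --- layer-cake formula \eqref{MINEF}, saddle-point analysis on the range $0\le V\le Dk\log\log T$ using Corollary \ref{NNGJu} (from Theorem \ref{GJu}) unconditionally and \eqref{RH_LD_JVD1u}, \eqref{RH_LD_JVD1l} under RH, with the $V^3/\sqrt{\log\log T}$ error producing the $Bk^3$ (resp.\ $\varepsilon$) correction.

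The one point where the paper differs from your sketch is the tail control for $V\gg k\log\log T$. You propose invoking external fourth-moment or Soundararajan-type bounds; the paper instead stays self-contained, using Lemma \ref{LLDSL} unconditionally and estimate \eqref{RH_LD_JVD2} under RH, both of which are derived from the same approximate formula (Theorem \ref{Main_F_S}) and Dirichlet-polynomial moment machinery already developed. This is cleaner and avoids any dependence on the specific $L$-function, but your suggestion would also work for Dirichlet $L$-functions.
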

\begin{remark}
Theorem \ref{ThmDirichlet} is a special case for Theorem \ref{New_MVT} and Theorem \ref{New_MVT_RH}, which are applicable to more general $L$-functions such as $L$-functions associated with $GL(2)$ cusp forms. 
\end{remark}
\begin{remark}
	It is now known that for $0\leq k \leq 2$
	\begin{align*}
		\int_{T}^{2T}|\zeta(\tfrac{1}{2}+it)|^{2k}dt
		\asymp_{k} T(\log{T})^{k^2},
	\end{align*}
	by the works of Heap-Radziwi\l\l-Soundararajan \cite{HRS2019} and Heap-Soundararajan \cite{HeapSound},
	so the unconditional result \eqref{EMVTRD} is nontrivial when $k$ is sufficiently small and $0 < k < B^{-1}/2$, 
  if one of the $L$-functions is chosen to be the Riemann zeta-function and $r\geq 2$.
  In particular, when $k$ is sufficiently small and $0 < k < B^{-1}/2$, we obtain the relation
  \begin{align*}
    \int_{T}^{2T}\l( \min\l\{ |\zeta(\tfrac{1}{2} + it)|, |L(\tfrac{1}{2} + it, \chi)| \r\} \r)^{2k}dt
    = o\l( \int_{T}^{2T}|\zeta(\tfrac{1}{2} + it)|^{2k}dt \r) \quad \text{as $T \rightarrow + \infty$}.
  \end{align*}
  This relation also holds for any $k > 0$ if the Riemann Hypothesis for $\zeta(s)$ and $L(s, \chi)$ is true.
\end{remark}
\begin{remark}
	The conditional upper bound for the $2k$-th moment for $\Im \log \zeta(\frac{1}{2}+it)$ was recently proved by Najnudel \cite{Naj} 
	using a different argument, but the lower bound remained unproved.
  Estimates \eqref{EMVTRDRH3}, \eqref{EMVTRDRH4} give the lower bound and also recover the original Najnudel's result.
\end{remark}
\begin{remark}
  Gonek \cite{G1989} showed a lower bound of the negative moment of the Riemann zeta-function under the Riemann Hypothesis.
  On the other hand, there were no unconditional results for the lower bound.
  Estimate \eqref{EMVTRD2} gives an unconditional result for the negative moment for sufficiently small $k$.
\end{remark}

\subsection{Proof sketch and outline of the paper}
Our proof starts with approximate formulae for $L$-functions that combines the formula of Selberg \cite{SCR} and the hybrid formula of Gonek-Hughes-Keating \cite{GHK2007} developed for the study of $\zeta(s)$. These formulae essentially contain a Dirichlet polynomial involving primes and explicit expressions involving zeros of these $L$-functions. After examining the contribution from the zeros, the joint value distribution of $L$-functions can be reduce to the joint value distribution of Dirichlet polynomials. To make clear of the scope of our methods,  we work within a general class of $L$-functions satisfying suitable assumptions. These assumptions can be verified for Dirichlet $L$-functions and $L$-functions associated with $GL(2)$ cusp forms. Note that we do not assume the Ramanujan conjecture, but Hypothesis H of Rudnick-Sarnak \cite{RS1996}. The weaker assumption makes our proof different in several aspects (see Section \ref{appL} and \ref{dpoly}).

The rest of the paper is organized as follows: In Section \ref{General} we give the definition of the class of $L$-functions and state our results for these $L$-functions and their corresponding Dirichlet polynomials. 
In Section \ref{appL} we give the approximate formulas for $L$-functions in our class.
In Section \ref{dpoly} we give the proofs of results for Dirichlet polynomials.
In Section \ref{uncond} and Section \ref{cond} we give the proof for results for $L$-functions
without and with the Riemann Hypothesis respectively. In Section \ref{finalremarks}, we give some remarks on other applications of our method.

\section*{acknowledgement}
The authors would like to thank Winston Heap for his comments and remarks on earlier version of the paper.
The authors also thank Kenta Endo and Masahiro Mine for their useful comments.
The first author is supported by Grant-in-Aid for JSPS Research Fellow (Grant Number: 19J11223). The second author thanks the Max Planck Institute for the financial support when attending the conference on zeta functions in
Centre International de Rencontres Math\'ematiques, where the project was started.

\section{\textbf{Definitions and Results} }\label{General}

We consider the class $\Sc^\dagger$ consisting of $L$-functions $F(s)$
that satisfy the following conditions (S1)--(S5).
\begin{enumerate}
	\setlength{\parskip}{2mm}

	\item [(S1)] (Dirichlet series) $F(s)$ has a  Dirichlet series ${F(s) = \sum_{n = 1}^{\infty}\frac{a_{F}(n)}{n^{s}}}$, 
        which converges absolutely for $\s > 1$.

	\item[(S2)] (Analytic continuation) \quad
	      There exists $m_F \in \ZZ_{\geq 0}$ such that
	      $(s - 1)^{m_F}F(s)$ is entire of finite order.

	\item[(S3)](Functional equation) \quad
	      $F(s)$ satisfies the following functional equation
	      \begin{align*}
		      \P_{F}(s) = \omega_F \ol{\P_{F}}(1 - s),
	      \end{align*}
	      where $\P_{F}(s) = \gamma(s)F(s)$ and
	      $
		      \gamma(s)= Q^{s}\prod_{\ell = 1}^{k}\G(\lam_{\ell}s + \mu_{\ell}),
	      $ with
	      $\lam_{\ell} > 0$, $Q > 0$, $\Re \mu_{\ell} \geq -\frac{1}{2}$,
	      and $|\omega_F |= 1$. Here we use the notation $ \ol{\P_{F}}(s)=\ol{\P_F(\ol s)}$.
        The quantity $d_{F}=2\sum_{\ell = 1}^{k}\lam_{\ell}$ is often called the degree of $F$.


	\item[(S4)] (Euler product)
	      $F(s)$ can be written as
	      \begin{align*}
		      F(s) = \prod_{p}F_{p}(s),
		      \quad
		      F_{p}(s) = \exp\l( \sum_{\ell = 1}^{\infty}\frac{b_{F}\l( p^{\ell} \r)}{p^{\ell s}} \r),
	      \end{align*}
	      where
	      $b_{F}(n) = 0$ unless $n = p^{\ell}$ with $\ell \in \ZZ_{\geq 1}$, and
	      $b_{F}(n) \ll n^{\vartheta_{F}}$ for some $\vartheta_{F} < \frac{1}{2}$.


	\item[(S5)](Hypothesis H)
	      For any $\ell \geq 2$,
	      \begin{align}
		      \label{HHRS}
		      \sum_{p}\frac{|b_{F}(p^{\ell}) \log{p^{\ell}}|^2}{p^{\ell}} < +\infty.
	      \end{align}
\end{enumerate}
This class of $L$-functions is different from the Selberg class in that we do not assume the Ramanujan Conjecture,
which says 
\begin{itemize}
\item [(S3')] $F(s)$ has the same functional equation as in (S3)
      with the condition $\Re \mu_{\ell} \geq 0$ for $1\leq \ell \leq k$.
\item[(S5')] 
	      For every $\e > 0$, the inequality
	      $a_{F}(n) \ll_{F} n^{\e}$ holds.
\end{itemize}
The set of $L$-functions satisfying (S1), (S2), (S3'), (S4), and (S5') is called the Selberg class $\Sc$. It is believed that automorphic $L$-functions of $GL(n)$ are in the Selberg class $\Sc$, but this hasn't been proved due to the lack of the Ramanujan Conjecture. The weaker conditions in $\mathcal S^{\dagger}$ allow us to include more automorphic $L$-functions of $GL(n)$ unconditionally.
The properties (S1)-(S4) for automorphic $L$-functions of $GL(n)$ as well as 
their Rankin-Selberg convolutions can be found in \cite[Section 2]{RS1996}. 
Hypothesis H have been proved for $1\leq n\leq 4$ 
( The case for $n = 1$ is trivial and the case for $n=2$ follows from the work of Kim-Sarnak \cite{KimSarnak}. 
See Rudnick-Sarnak \cite[Proposition 2.4]{RS1996} for $n= 3$ and Kim \cite{Kim06} for $n=4$). 


To study the joint value distribution of $L$-functions in $\mathcal{S}^\dagger$, we need the following assumption $\mathscr A$.
\begin{assumption*}[$\mathscr{A}$]
  An $r$-tuple of Dirichlet series $\bm{F} = (F_{1}, \dots, F_{r})$
	and an $r$-tuple of the numbers $\bm{\theta} = (\theta_{1}, \dots, \theta_{r}) \in \RR^{r}$
	satisfy $\mathscr{A}$ if and only if
	$\bm{F}$, $\bm{\theta}$ satisfy the following properties.
	\begin{itemize}
		\item[(A1)] (Selberg Orthonormality Conjecture) \quad
		      For any $F_j$, we have
		      \begin{align}	\label{SNC}
			      \sum_{p \leq x}\frac{|a_{F_{j}}(p)|^2}{p}
			      = n_{F_{j}} \log{\log{x}} + O_{F_{j}}(1), \quad x\rightarrow \infty.
					      \end{align}
		      for some constant $n_{F_{j}}>0$.
		      For any pair $F_{i} \not= F_{j}$,
          \begin{align}
            \label{SOC2}
			      \sum_{p \leq x}\frac{a_{F_{i}}(p) \ol{a_{F_{j}}(p)}}{p}
			      = O_{F_{i}, F_{j}}(1), \quad x\rightarrow \infty.
		      \end{align}
		\item[(A2)] For every $i$,  there is at most one $j\not=i$ such that $F_{i} = F_{j}$ and in this case $|\theta_{i} - \theta_{j}| \equiv \pi/2\pmod{ \pi }$.
		\item[(A3)](Zero Density Estimate)
		      For every $F_{j}$, there exists a positive constant $\kappa_{F_{j}}$ such that, uniformly for any $T \geq 3$ and $1/2 \leq \s \leq 1$,
		      \begin{align}	\label{ZDC1}
			      N_{F_{j}}(\s, T) \ll_{F_{j}} T^{1 - \kappa_{F_{j}}(\s - 1/2)}\log{T},
		      \end{align}
		      where $N_{F}(\s, T)$ is the number of nontrivial zeros $\rho_{F} = \b_{F} + i\gamma_{F}$ of $F \in \Sc^{\dagger}$
		      with $\b_{F} \geq \s$ and $0 \leq \gamma_{F} \leq T$.
	\end{itemize}
\end{assumption*}

\begin{remark}
	The Selberg Orthonormality Conjecture has been proved
	for  $L$-functions associated with cuspidal automorphic representations of $GL(n)$  unconditionally for $n\leq 4$ by Liu-Ye in \cite{LiuYe},
	(See \cite{AS, Liu05}) and in full generality in \cite{LiuYe06}.
\end{remark}

\begin{remark}
	It is natural to assume (A2). This allows us to  consider the joint distribution of $\log |F(s)|$ and $\Im\log F(s)$.
	It can be seen that $\Re e^{-i\theta_1}\log F(s)$ and $\Re e^{-i\theta_2}\log F(s)$ can not be independent 
  when $|\theta_1-\theta_2| \not\equiv \frac{\pi}{2} \pmod{\pi}$.
\end{remark}
\begin{remark} We require a strong zero density estimate (the exponent of $\log T$ is $1$) close to the line $\sigma=1/2$. 
	Results of the shape \eqref{ZDC1} have been obtained for the Riemann zeta-function by Selberg \cite{SCR}  and for Dirichlet $L$-functions by Fujii \cite{F1974}.
	For $GL(2)$ $L$-functions,  (A3) has been established by Luo \cite{Luo95} for holomorphic cusp forms
	of the full modular group (see \cite[Section 7]{FZ} for other congruence subgroups). 
  The proof should also be applicable for Maa\ss\  forms (see the remark in \cite[p 141]{Luo95}, see also a weaker estimate in \cite{SSmaass}).
	If we assume the Riemann Hypothesis for $F_j$, then \eqref{ZDC1} holds for any $\kappa_{F_j}>0$.
\end{remark}

 Before we state our theorems, we need some notation.
Let $r$ be a fixed positive integer. For $\bm V=(V_1, \dots, V_r)\in \mathbb{R}^r$, 
$\bm \theta=(\theta_1, \dots, \theta_r) \in \RR^{r}$, and $\bm F=(F_1, \dots, F_r)\in (\mathcal{S}^\dagger)^r$ 
satisfying assumption $\mathscr A$, we define
\begin{align}
	\S(T, \bm{V}; \bm{F}, \bm{\theta})
	:= \set{t \in [T, 2T]}
	{\frac{\Re e^{-i\theta_{j}}\log{F_{j}(\frac{1}{2} + it)}}{
			\sqrt{\frac{n_{F_{j}}}{2}\log{\log{T}}}} > V_j \text{ for } j = 1, \dots, r},
\end{align}
where the constants $n_{F_j}$ are defined in \eqref{SNC}. Let $\a_{\bm{F}} := \min\{ 2r, \frac{1 - 2\vartheta_{\bm{F}}}{2\vartheta_{\bm{F}}} \}$, where $\vartheta_{\bm{F}}=\max_{1\leq j\leq r}\vartheta_{F_{j}}$ as defined in (S4).
Here $\a_{\bm{F}} = 2r$ if $\vartheta_{\bm{F}} = 0$.
We denote $\norm[]{\bm{z}}=\max_{1 \leq j \leq r}|z_{j}|$.
Throughout this paper, we write $\log_{3}x$ for $\log{\log{\log}}x$.

\subsection{Results for large deviations}

The following theorem extends the result of Bombieri and Hejhal \cite{BH1995},
where we show \eqref{BHJVD} holds for a larger range of $V$.
\begin{theorem}	\label{Main_Thm_LD_JVD}
	Let $\bm{\theta} = (\theta_{1}, \dots, \theta_{r}) \in \RR^{r}$ and
	$\bm{F} = (F_1, \dots, F_{r}) \in (\Sc^{\dagger} )^{r}$ satisfy assumption $\mathscr{A}$.
	Let $A \geq 1$ be a fixed constant.
	For any large $T$ and any $\bm{V} = (V_{1}, \dots, V_{r}) \in \RR^{r}$
	with $\norm[]{\bm{V}} \leq A(\log{\log{T}})^{1/10}$, we have
	\begin{align}	\label{Main_Thm_LD_JVD1}
		 & \frac{1}{T}\meas(\S(T, \bm{V}; \bm{F}, \bm{\theta}))                                                               \\
		 & =\l( 1 + O_{\bm{F}, A}\l(\frac{(\norm[]{\bm{V}}^{4} + (\log_{3}{T})^2)(\norm[]{\bm{V}} + 1)}{\sqrt{\log{\log{T}}}}
			+ \frac{\prod_{k = 1}^{r}(1 + |V_{k}|)}{(\log{\log{T}})^{\a_{\bm{F}} + \frac{1}{2}}}\r)\r)
		\prod_{j = 1}^{r}\int_{V_j}^{\infty} e^{-\frac{u^2}{2}}\frac{du}{\sqrt{2\pi}}.
	\end{align}
	Moreover, if $\bm \theta \in  [-\frac{\pi}{2}, \frac{\pi}{2}]^r$ and $\|\bm V\| \leq A (\log{\log{T}})^{1/6}$
	we have
	\begin{align}	\label{Main_Thm_LD_JVD2}
		 & \frac{1}{T}\meas(\S(T, \bm{V}; \bm{F}, \bm{\theta}))                                                                   \\
		 & \leq\l ( 1 + O_{\bm{F}, A}\l(\frac{(\norm[]{\bm{V}}^{2} + (\log_{3}{T})^2)(\norm[]{\bm{V}} + 1)}{\sqrt{\log{\log{T}}}}
			+ \frac{\prod_{k = 1}^{r}(1 + |V_{k}|)}{(\log{\log{T}})^{\a_{\bm{F}} + \frac{1}{2}}}\r)\r)
		\prod_{j = 1}^{r}\int_{V_j}^{\infty} e^{-\frac{u^2}{2}}\frac{du}{\sqrt{2\pi}},
	\end{align}
	and if $\bm \theta \in [\frac{\pi}{2}, \frac{3\pi}{2}]^r$, $\|\bm V\| \leq  a_1(\log{\log{T}})^{1/6}$, and
	$\prod_{j = 1}^{r} (1 + |V_{j}|) \leq a_1(\log{\log{T}})^{\a_{\bm{F}} + \frac{1}{2}}$ with $a_1= a_1(\bm{F}) > 0$ small enough, we have
	\begin{align}	\label{Main_Thm_LD_JVD3}
		 & \frac{1}{T}\meas(\S(T, \bm{V}; \bm{F}, \bm{\theta}))                                                                 \\
		 & \geq \l ( 1 - O_{\bm{F}}\l(\frac{(\norm[]{\bm{V}}^{2} + (\log_{3}{T})^2)(\norm[]{\bm{V}} + 1)}{\sqrt{\log{\log{T}}}}
			+ \frac{\prod_{k = 1}^{r}(1 + |V_{k}|)}{(\log{\log{T}})^{\a_{\bm{F}} + \frac{1}{2}}}\r)\r)
		\prod_{j = 1}^{r}\int_{V_j}^{\infty} e^{-\frac{u^2}{2}}\frac{du}{\sqrt{2\pi}}.
	\end{align}
\end{theorem}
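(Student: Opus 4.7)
My plan is to reduce the joint large-deviation problem for $\log F_{j}(\tfrac{1}{2}+it)$ on $[T,2T]$ to the analogous problem for the short prime sums $P_{j}(t,x) = \sum_{p \leq x} a_{F_{j}}(p) p^{-1/2-it}$, where $x$ is a suitable small power of $T$. This reduction uses the Selberg-type and hybrid Gonek--Hughes--Keating approximate formulae established in Section \ref{appL}: outside an exceptional set whose measure is negligible compared with the Gaussian tail in the permitted range of $\bm{V}$, one has $\Re e^{-i\theta_{j}}\log F_{j}(\tfrac{1}{2}+it) = \Re e^{-i\theta_{j}} P_{j}(t,x) + O(1)$, where the $O(1)$ error is controlled via the zero-density hypothesis (A3) and the prime-power tails are handled by (S4) together with Hypothesis H (S5).

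\textbf{Joint moment generating function.} The core estimate concerns $M_{T}(\bm{z}) = \frac{1}{T} \int_{T}^{2T} \exp\bigl( \sum_{j} z_{j} \Re e^{-i\theta_{j}} P_{j}(t,x) \bigr) dt$ for complex $\bm{z}$ in a suitable region. Expanding the exponential, using orthogonality of $p^{-it}$ on $[T,2T]$ to isolate diagonal contributions, and applying (A1) to evaluate the self-correlations $\sum_{p \leq x} |a_{F_{j}}(p)|^{2}/p = n_{F_{j}} \log\log x + O(1)$, together with (A2) to discard cross-correlations between distinct $F_{i}, F_{j}$, I expect to obtain $M_{T}(\bm{z}) = \exp\bigl( \sum_{j} \tfrac{z_{j}^{2}}{4}\, n_{F_{j}} \log\log T \bigr) \bigl( 1 + E(\bm{z},T) \bigr)$. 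This is precisely the Dirichlet-polynomial content to be established in Section \ref{dpoly}; the size of $E(\bm{z},T)$ depends on $\vartheta_{\bm{F}}$ through (S4) and ultimately accounts for the parameter $\alpha_{\bm{F}}$ in the final error.

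\textbf{From moments to measure.} For the two-sided asymptotic \eqref{Main_Thm_LD_JVD1} I would use Fourier inversion in the spirit of Bombieri--Hejhal, truncating the characteristic function at a height depending on $\|\bm{V}\|$ and balancing the truncation error against the Gaussian tail; the $(\log\log T)^{1/10}$ threshold emerges from this balance. For the upper bound \eqref{Main_Thm_LD_JVD2} I would apply exponential Chebyshev at the Gaussian saddle, with $z_{j} \asymp V_{j}/\sqrt{\log\log T}$; the restriction $\theta_{j} \in [-\pi/2,\pi/2]$ ensures that the shifted moments admit a convergent Euler-product description (the case $\theta_{j}=0$ corresponding to positive moments of $|F_{j}|$). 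For the lower bound \eqref{Main_Thm_LD_JVD3} I would perform a Cram\'er-type change of measure, tilting by $\exp( \sum z_{j} \Re e^{-i\theta_{j}} P_{j} )/M_{T}(\bm{z})$ at the same saddle and controlling the variance of $\sum z_{j} \Re e^{-i\theta_{j}} P_{j}$ under the tilted measure by a second-moment estimate; the opposite restriction $\theta_{j} \in [\pi/2,3\pi/2]$ reflects the sign structure needed to make this tilted second-moment argument work.

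\textbf{Main obstacle.} The hard part is extending the admissible range from $(\log\log T)^{1/10}$ in \eqref{Main_Thm_LD_JVD1} to $(\log\log T)^{1/6}$ in the one-sided estimates. The Fourier inversion underlying the two-sided statement is comparatively robust, but the one-sided bounds demand that the exponential saddle argument be pushed essentially up to its natural barrier, which requires very sharp quantitative control of $E(\bm{z},T)$ and of the exceptional set where the reduction to $P_{j}$ fails. The strength of (A3), with leading exponent $1$ in $\log T$, is crucial here, and the use of Hypothesis H in place of the Ramanujan bound makes the prime-power contributions visible through $\vartheta_{\bm{F}}$ and hence through $\alpha_{\bm{F}}$ in the error term. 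Finally, the additional constraint $\prod_{j}(1+|V_{j}|) \leq a_{1}(\log\log T)^{\alpha_{\bm{F}}+1/2}$ in \eqref{Main_Thm_LD_JVD3} is not a technical artefact but an intrinsic feature of the Cram\'er transform: the tilted second moment, unlike the one-sided Chebyshev bound, picks up the full product $\prod_{j}(1+|V_{j}|)$ and is therefore sensitive to the joint range of all the $V_{j}$ simultaneously.
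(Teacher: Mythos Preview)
Your overall strategy for \eqref{Main_Thm_LD_JVD1} is sound and matches the paper: reduce to Dirichlet polynomials outside a small exceptional set, and then invoke the joint distribution result for the $P_{F_j}$ (Proposition~\ref{Main_Prop_JVD}). The exceptional-set bound comes from Proposition~\ref{KLST}, whose $k^{4k}$ in the moment estimate forces $\meas(\mathcal{E}) \ll T\exp(-c\mathcal{L}^{1/2})$; balancing against the Gaussian tail then requires $\mathcal{L}\asymp\|\bm V\|^4$, and the constraint $\mathcal{L}\cdot\|\bm V\|\ll\sqrt{\log\log T}$ yields the $(\log\log T)^{1/10}$ threshold.

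However, your explanation of \eqref{Main_Thm_LD_JVD2}--\eqref{Main_Thm_LD_JVD3} misidentifies both the mechanism for the improved range and the reason for the $\theta_j$ restrictions. These two features are \emph{not} separate phenomena arising from Euler-product convergence or tilted second moments---they are two aspects of a single idea that you have missed. The paper uses the hybrid formula (Theorem~\ref{Main_F_S}, Proposition~\ref{KLI}) in which the sum over nearby zeros $\sum_{|s-\rho_F|\le 1/\log Y}\log((s-\rho_F)\log Y)$ is kept explicit rather than absorbed into the error. This has two consequences. First, the remaining error satisfies a moment bound with $k^{2k}$ instead of $k^{4k}$ (Lemma~\ref{diff}), so the exceptional set shrinks to $\ll T\exp(-c\mathcal{L})$; one may then take $\mathcal{L}\asymp\|\bm V\|^2$, and the constraint $\mathcal{L}\cdot\|\bm V\|\ll\sqrt{\log\log T}$ gives the $(\log\log T)^{1/6}$ threshold. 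Second, each summand $\Re e^{-i\theta_j}\log((s-\rho_{F_j})\log Y) = \cos\theta_j\log|(s-\rho_{F_j})\log Y| + \sin\theta_j\arg((s-\rho_{F_j})\log Y)$ is $\le\pi$ when $\theta_j\in[-\tfrac{\pi}{2},\tfrac{\pi}{2}]$ (since $\cos\theta_j\ge 0$ and the logarithm is $\le 0$), and $\ge -\pi$ when $\theta_j\in[\tfrac{\pi}{2},\tfrac{3\pi}{2}]$. Thus the zero sum has a one-sided bound of the correct sign, and the set where it exceeds $\mathcal{L}$ in the wrong direction is controlled by Lemma~\ref{ESRSIZ}. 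This sign observation (not any Euler-product issue) is precisely why the $\theta_j$ ranges split as they do between the upper and lower bounds.

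Without this idea your proposed saddle/Cram\'er argument would still require the full Selberg--Tsang error bound and would not escape the $(\log\log T)^{1/10}$ barrier; nor would it produce any asymmetry in $\theta_j$. The extra constraint on $\prod_j(1+|V_j|)$ in \eqref{Main_Thm_LD_JVD3} is simply that the $O$-term in Proposition~\ref{Main_Prop_JVD} must stay below $1$ for a lower bound to survive---it appears symmetrically in both the upper and lower estimates there.
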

\begin{remark}
	For $r=1$, $F_1=\zeta$, and $\theta_{1} = 0$, the asymptotic for $\|\bm V\|\ll (\log\log T)^{10}$ in \eqref{Main_Thm_LD_JVD1}
	was obtained by Radziwi\l\l\mbox{} \cite{Ra2011}, and the bound for $\|\bm V\|\ll (\log\log T)^{1/6}$ in \eqref{Main_Thm_LD_JVD2}
	was obtained by Inoue \cite{II2019}.
\end{remark}

It is reasonable to conjecture that the asymptotic in \eqref{Main_Thm_LD_JVD1} holds
for $\|\bm V\|=o(\sqrt{\log\log T})$ as speculated in \cite{Ra2011} for $\zeta(s)$. If we are only concerned with upper and lower bounds, we could extend the range of $\|\bm V\|$ further, which yields applications to moments of $L$-functions. 

We have the following unconditional results which extends the range of $\|\bm V\|$ in  Theorem \ref{Main_Thm_LD_JVD}. 

\begin{theorem}	\label{GJu}
	Let
	$\bm{F} = (F_1, \dots, F_{r}) \in (\Sc^{\dagger})^{r}$
	and $\bm{\theta} = (\theta_{1}, \dots, \theta_{r}) \in [-\tfrac{\pi}{2}, \tfrac{3\pi}{2}]^r$ satisfy assumption $\mathscr{A}$.
	Let $T$ be large.
	There exists some positive constant $a_{2} = a_{2}(\bm{F})$ such that
	if $\bm{\theta} \in [-\tfrac{\pi}{2}, \tfrac{\pi}{2}]^r$, we have
	\begin{align*}
		 & \frac{1}{T}\meas(\S(T, \bm{V}; \bm{F}, \bm{\theta}))                                                                     \\
		 & \ll_{\bm{F}} \l\{\l(\prod_{j = 1}^{r}\frac{1}{1 + V_{j}}\r) + \frac{1}{(\log{\log{T}})^{\a_{\bm{F}} + \frac{1}{2}}} \r\}
		\exp\l( -\frac{V_{1}^2 + \cdots + V_{r}^2}{2}
		+ O_{\bm{F}}\l( \frac{\norm[]{\bm{V}}^{3}}{\sqrt{\log{\log{T}}}} \r) \r)
	\end{align*}
	for any $\bm{V} = (V_{1}, \dots, V_{r}) \in (\RR_{\geq 0})^{r}$
	satisfying $\norm[]{\bm{V}} \leq a_{2}(1 + V_{m}^{1/2})(\log{\log{T}})^{1/4}$ with $V_{m} = \min_{1 \leq j \leq r}V_{j}$,
	and if $\bm{\theta} \in \l[ \frac{\pi}{2}, \frac{3\pi}{2} \r]^{r}$, we have
	\begin{align*}
		\frac{1}{T}\meas(\S(T, \bm{V}; \bm{F}, \bm{\theta}))
		\gg_{\bm{F}} \l(\prod_{j = 1}^{r}\frac{1}{1 + V_{j}}\r)
		\exp\l( -\frac{V_{1}^2 + \cdots + V_{r}^2}{2}
		+ O_{\bm{F}}\l( \frac{\norm[]{\bm{V}}^{3}}{\sqrt{\log{\log{T}}}} \r) \r)
	\end{align*}
	for $\norm[]{\bm{V}} \leq a_{2}(1 + V_{m}^{1/2})(\log{\log{T}})^{1/4}$ with
	$\prod_{j = 1}^{r}(1 + V_{j}) \leq a_{2}(\log{\log{T}})^{\a_{\bm{F}} + \frac{1}{2}}$.
\end{theorem}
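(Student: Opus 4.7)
The plan is to use the approximate formula developed in Section \ref{appL} to reduce the joint distribution of $\Re e^{-i\theta_{j}}\log F_{j}(\tfrac{1}{2}+it)$ to that of the real Dirichlet polynomials
\begin{align*}
P_{j}(t):=\sum_{p\leq X}\frac{\Re\bigl\{e^{-i\theta_{j}}a_{F_{j}}(p)p^{-it}\bigr\}}{\sqrt{p}},
\end{align*}
for a truncation parameter $X=X(T,\bm{V})$ chosen close to $T^{1/(\log\log T)^{1/2}}$.  The zero contribution in the approximate formula is controlled by the strong zero-density bound (A3) and only matters on an exceptional set of $t\in[T,2T]$ whose measure is smaller than the Gaussian target we aim to produce.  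The Selberg Orthonormality Conjecture (A1) then plays the role of asymptotic independence: cross-moments of $P_{i}$ and $P_{j}$ with $F_{i}\neq F_{j}$ contribute only $O(1)$ while the self-moments are $n_{F_{j}}\log\log T+O(1)$, so the joint moment generating function of $(P_{1},\dots,P_{r})$ factorises, each factor approximately $\exp(\tfrac{1}{2}\lambda_{j}^{2}\sigma_{F_{j}}^{2})$ with $\sigma_{F_{j}}^{2}=\tfrac{n_{F_{j}}}{2}\log\log T$.

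For the upper bound with $\bm{\theta}\in[-\tfrac{\pi}{2},\tfrac{\pi}{2}]^{r}$, I would apply the exponential Chebyshev inequality with the positive tilts $\lambda_{j}=V_{j}/\sigma_{F_{j}}$.  The sign condition $\cos\theta_{j}\geq 0$ makes $\exp(\sum_{j}\lambda_{j}P_{j}(t))$ an effective majorant on $\S(T,\bm{V};\bm{F},\bm{\theta})$ after the approximate-formula substitution, and expanding the exponential and invoking (A1) should give
\begin{align*}
\frac{1}{T}\int_{T}^{2T}\exp\Bigl(\sum_{j}\lambda_{j}P_{j}(t)\Bigr)dt=\exp\Bigl(\sum_{j}\tfrac{\lambda_{j}^{2}\sigma_{F_{j}}^{2}}{2}+O\bigl(\norm[]{\bm{V}}^{3}/\sqrt{\log\log T}\bigr)\Bigr),
\end{align*}
the cubic correction arising from the third-moment diagonal $\sum_{p}|a_{F_{j}}(p)|^{3}/p^{3/2}$, which is $O(1)$ by (A1).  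Combined with the Chebyshev pre-factor $e^{-\sum_{j}\lambda_{j}V_{j}\sigma_{F_{j}}}$ this produces the Gaussian exponent $-\sum_{j}V_{j}^{2}/2$; to recover the polynomial factor $\prod_{j}(1+V_{j})^{-1}$ I would sharpen the raw Chebyshev step via a saddle-point inversion of the Laplace transform of a smooth cutoff, shifting the contour through the saddle $\lambda_{j}=V_{j}/\sigma_{F_{j}}$.  The additive floor $(\log\log T)^{-\a_{\bm{F}}-1/2}$ tracks the contribution of prime-power terms surviving because $\vartheta_{\bm{F}}$ may be positive.

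For the lower bound with $\bm{\theta}\in[\tfrac{\pi}{2},\tfrac{3\pi}{2}]^{r}$, the plan is a change-of-measure argument in the style of Cram\'er's theorem.  Now $\cos\theta_{j}\leq 0$, so the tilt $e^{\lambda_{j}P_{j}}$ corresponds to a negative moment of $|F_{j}|$, and the Dirichlet-polynomial reduction is precisely what makes this tractable.  I would introduce the tilted measure $d\mu_{\bm{\lambda}}(t)\propto\exp(\sum_{j}\lambda_{j}P_{j}(t))dt$ on $[T,2T]$ with $\lambda_{j}=V_{j}/\sigma_{F_{j}}$; under $\mu_{\bm{\lambda}}$ the mean of $P_{j}$ moves to $V_{j}\sigma_{F_{j}}+O(\norm[]{\bm{V}}^{2}/\sqrt{\log\log T})$ while its variance stays $(1+o(1))\sigma_{F_{j}}^{2}$ by a further application of (A1).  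A joint Chebyshev on the tilted measure then confines each $P_{j}$ to a window of width $\sigma_{F_{j}}$ around $V_{j}\sigma_{F_{j}}$ on tilted mass $\gg\prod_{j}(1+V_{j})^{-1}$; untilting produces the claimed lower bound.  The two constraints $\norm[]{\bm{V}}\leq a_{2}(1+V_{m}^{1/2})(\log\log T)^{1/4}$ and $\prod_{j}(1+V_{j})\leq a_{2}(\log\log T)^{\a_{\bm{F}}+1/2}$ are exactly what is needed to keep the cubic correction below the Gaussian main term on the tilted event and to maintain the validity of the approximate formula there.

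I expect the lower bound to be the main obstacle.  The tilted second-moment computation demands Dirichlet-polynomial moment estimates with exponents of order $(\log\log T)^{1/2}$, which in turn forces $X^{(\log\log T)^{1/2}}=T^{o(1)}$ and caps $\norm[]{\bm{V}}$ at $(\log\log T)^{1/4}$.  Verifying that the exceptional set of the approximate formula has measure smaller than $T\prod_{j}(1+V_{j})^{-1}\exp(-\sum_{j}V_{j}^{2}/2)$ is where the exponent-$1$ savings (rather than $1+\e$) in (A3) become decisive, while the additive $(\log\log T)^{-\a_{\bm{F}}-1/2}$ term encodes the departure from the Ramanujan conjecture permitted by (S4).
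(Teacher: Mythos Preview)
Your overall architecture---approximate formula, exceptional set via (A3), then large deviations for the Dirichlet polynomials via a Cram\'er-type tilt---matches the paper's route. Both the role of the sign condition on $\cos\theta_{j}$ (it gives one-sided control on the zero sum $\sum_{|s-\rho|\leq 1/\log Y}\log((s-\rho)\log Y)$, cf.\ \eqref{KPlt}) and the use of (A1) to decouple the $P_{j}$ are correctly identified, and your tilted-measure plan for the lower bound is essentially the associated-distribution argument the paper packages inside Proposition~\ref{Main_Prop_JVD}.

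Two attributions are off, and they are linked. First, the cubic error $O(\|\bm{V}\|^{3}/\sqrt{\log\log T})$ does \emph{not} come from a third-moment diagonal: since $\mathcal{X}(p)$ is uniform on the circle, all odd moments of $\Re(e^{-i\theta_{j}}a_{F_{j}}(p)\mathcal{X}(p))$ vanish, so the single-prime cumulant expansion has only quadratic and higher even corrections (this is visible in Lemma~\ref{EQTM} and formula \eqref{Psi_F_3}). The cubic term arises instead from the exceptional-set mechanism. In the paper one uses Lemma~\ref{diff} and Lemma~\ref{ESRSIZ} to show that, after discarding the one-signed zero sum, $|\Re e^{-i\theta_{j}}\log F_{j}-\Re e^{-i\theta_{j}}P_{F_{j}}|\leq 2\mathcal{L}$ off a set of measure $\ll T\exp(-c\mathcal{L})$; for this exceptional set to be swallowed by the Gaussian target one needs $\mathcal{L}\asymp\|\bm{V}\|^{2}$, and the shift $V_{j}\mapsto V_{j}\pm O(\mathcal{L}/\sqrt{\log\log T})$ then perturbs the exponent $-V_{j}^{2}/2$ by $V_{j}\cdot\|\bm{V}\|^{2}/\sqrt{\log\log T}$, producing the cubic term. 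Second, the range constraint $\|\bm{V}\|\leq a_{2}(1+V_{m}^{1/2})(\log\log T)^{1/4}$ is driven by the same mechanism rather than by Dirichlet-polynomial moment exponents: one must simultaneously have $\mathcal{L}\gg\|\bm{V}\|^{2}$ and $\mathcal{L}\ll(1+V_{m})\sqrt{\log\log T}$ (so the shift does not push the smallest coordinate negative), and these combine to $\|\bm{V}\|^{2}\ll(1+V_{m})\sqrt{\log\log T}$. Your Dirichlet-polynomial estimates would tolerate much larger $\|\bm{V}\|$; it is the approximate-formula step that imposes the cap. Minor points: the paper takes $X=T^{1/(\log\log T)^{4(r+1)}}$ so that the hypothesis $X^{(\log\log X)^{4(r+1)}}\leq T$ of Proposition~\ref{RKLJVDPP} is available, and it invokes the ready-made Proposition~\ref{Main_Prop_JVD} rather than redoing the saddle-point inversion, which also accounts for the additive $(\log\log T)^{-\alpha_{\bm{F}}-1/2}$ floor.
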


Substituting
$
	\bm{V} = \l(\frac{V}{\sqrt{\frac{n_{F_{1}}}{2}\log{\log{T}}}}, \dots, \frac{V}{\sqrt{\frac{n_{F_{r}}}{2}\log{\log{T}}}}\r)
$ to Theorem \ref{GJu}, we obtain the following corollary.

\begin{corollary} \label{NNGJu}
	Let
	$\bm{F} = (F_1, \dots, F_{r}) \in (\Sc^{\dagger})^{r}$
	and $\bm{\theta} = (\theta_{1}, \dots, \theta_{r}) \in [-\tfrac{\pi}{2}, \tfrac{3\pi}{2}]^r$ satisfy assumption $\mathscr{A}$.
	Set $h_{\bm{F}} = n_{F_{1}}^{-1} + \cdots + n_{F_{r}}^{-1}$.
	There exists a small constant $a_{3} = a_{3}(\bm{F})$ such that
	if $\bm{\theta} \in \l[ -\frac{\pi}{2}, \frac{\pi}{2} \r]^{r}$, we have
	\begin{align} \label{NNGJu1}
		 & \frac{1}{T}\meas\set{t \in [T, 2T]}{\min_{1 \leq j \leq r}\Re e^{-i\theta_{j}} \log{F_{j}(\tfrac{1}{2}+it)} > V} \\
		 & \ll_{\bm{F}}
		\l(\frac{1}{(1 + V / \sqrt{\log{\log{T}}})^{r}} + \frac{1}{(\log{\log{T}})^{\a_{\bm{F}} + \frac{1}{2}}} \r)
		\exp\l( -h_{\bm{F}}\frac{V^2}{\log{\log{T}}}
		\l( 1 + O_{\bm{F}}\l( \frac{V}{\log{\log{T}}} \r) \r) \r)
	\end{align}
	for any $0 \leq V \leq a_{3}\log{\log{T}}$, and if $\bm{\theta} \in \l[ \frac{\pi}{2}, \frac{3\pi}{2} \r]^{r}$, we have
	\begin{align} \label{NNGJu2}
		 & \frac{1}{T}\meas\set{t \in [T, 2T]}{\min_{1 \leq j \leq r}\Re e^{-i\theta_{j}} \log{F_{j}(\tfrac{1}{2}+it)} > V} \\
		 & \gg_{\bm{F}} \frac{1}{(1 + V / \sqrt{\log{\log{T}}})^{r}}
		\exp\l( -h_{\bm{F}}\frac{V^2}{\log{\log{T}}}
		\l( 1 + O_{\bm{F}}\l( \frac{V}{\log{\log{T}}} \r) \r) \r)
	\end{align}
	for any $0 \leq V \leq a_{3}\min\{\log{\log{T}}, (\log \log T)^{\frac{\a_{\bm{F}}}{r} + \frac{1}{2} + \frac{1}{2r}}\}$.
\end{corollary}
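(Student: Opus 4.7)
The plan is to deduce Corollary \ref{NNGJu} by a direct substitution into Theorem \ref{GJu}. Setting $V_j = V / \sqrt{(n_{F_j}/2) \log\log T}$ for each $j$, the normalization in the definition of $\mathcal{S}(T, \bm V; \bm F, \bm \theta)$ cancels, so that the joint event defining that set reduces to $\Re e^{-i\theta_j}\log F_j(\tfrac{1}{2}+it) > V$ for all $j$, which is precisely the event $\min_{1 \leq j \leq r}\Re e^{-i\theta_j}\log F_j(\tfrac{1}{2}+it) > V$ appearing in \eqref{NNGJu1} and \eqref{NNGJu2}. Hence the left side of each inequality in the corollary equals $\frac{1}{T}\meas(\mathcal{S}(T, \bm V; \bm F, \bm \theta))$ for this choice of $\bm V$.

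With this substitution the quantities on the right side of Theorem \ref{GJu} simplify. The Gaussian exponent becomes
\[
\frac{V_1^2 + \cdots + V_r^2}{2} = \frac{V^2}{\log\log T}\sum_{j=1}^{r} \frac{1}{n_{F_j}} = h_{\bm F}\frac{V^2}{\log\log T}.
\]
The cubic error satisfies $\|\bm V\|^3 / \sqrt{\log\log T} \ll_{\bm F} V^3/(\log\log T)^2 = (V^2/\log\log T) \cdot (V/\log\log T)$, which is exactly the multiplicative $O_{\bm F}(V/\log\log T)$ factor inside the exponential in \eqref{NNGJu1} and \eqref{NNGJu2}. Finally, the polynomial prefactor $\prod_{j=1}^{r}(1+V_j)^{-1}$ is of order $(1+V/\sqrt{\log\log T})^{-r}$, matching the prefactor in both assertions.

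It remains to translate the hypotheses of Theorem \ref{GJu} into conditions on $V$. Since $\|\bm V\| \asymp_{\bm F} V/\sqrt{\log\log T}$ and $V_m \asymp_{\bm F} V/\sqrt{\log\log T}$, the constraint $\|\bm V\| \leq a_2 (1+V_m^{1/2})(\log\log T)^{1/4}$ is equivalent, up to an adjustment of the constant, to $V \ll_{\bm F} \log\log T$, producing the range stated in \eqref{NNGJu1}. For the lower bound \eqref{NNGJu2}, the extra constraint $\prod_j (1+V_j) \leq a_2(\log\log T)^{\alpha_{\bm F}+1/2}$ becomes $(1+V/\sqrt{\log\log T})^r \ll_{\bm F} (\log\log T)^{\alpha_{\bm F}+1/2}$, which yields the supplementary bound $V \ll_{\bm F} (\log\log T)^{\alpha_{\bm F}/r + 1/2 + 1/(2r)}$; intersecting the two gives the minimum appearing in the statement. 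The argument is purely mechanical, so I do not anticipate any genuine obstacle; the only point that warrants some care is the case split $V \leq \sqrt{\log\log T}$ versus $V \geq \sqrt{\log\log T}$ when verifying the first constraint, as the dominant term on the right of $a_2 (1 + V_m^{1/2})(\log\log T)^{1/4}$ changes between the two regimes.
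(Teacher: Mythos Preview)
Your proposal is correct and follows exactly the approach the paper takes: the paper simply states that Corollary~\ref{NNGJu} follows by substituting $\bm{V} = \bigl(V/\sqrt{(n_{F_1}/2)\log\log T}, \dots, V/\sqrt{(n_{F_r}/2)\log\log T}\bigr)$ into Theorem~\ref{GJu}, and you have carried out precisely this substitution with all the bookkeeping made explicit.
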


\begin{remark}
	When $r=1$, $F_1=\zeta$, and $\theta_{1} = 0$, Jutila \cite{Ju1983}, using bounds on moments of $\zeta(s)$, has proved
	\begin{align}
		 & \meas\set{t \in [T, 2T]}{ \log |\zeta(\tfrac{1}{2} + it) |> V}\ll T
		\exp\l( -\frac{V^2}{\log{\log{T}}}\l(1 + O\l(\frac{V}{\log{\log{T}}}\r) \r) \r)
	\end{align}
	uniformly for $0\leq V\leq \log \log T$.  More recently, Heap and Soundararajan\cite[Corollary 2]{HeapSound} obtained 
	\begin{align}
		 & \meas\set{t \in [T, 2T]}{ \log |\zeta(\tfrac{1}{2} + it) |> V}\ll T
		\exp\l( -\frac{V^2}{\log{\log{T}}}+ O\l(\frac{V\log_3T}{\sqrt{\log \log T}}\r) \r)
	\end{align}
	for $\sqrt{\log \log T}\log_3T\leq V\leq (2-o(1))\log\log T$
	using sharp bounds on moments of $\zeta(s)$. 
	Our Theorem \ref{GJu} slightly improves Jutila's bound by a factor of $\frac{\sqrt{\log \log T}}{V}$ when $\sqrt{\log\log T}\leq V\leq a_3\log\log T$ for some small constant $a_3$, but is weaker than \cite{HeapSound} when 	$\sqrt{\log \log T}\log_3T\leq V\leq (2-o(1))\log\log T$.
	\end{remark}

%
%
%


If we assume the Riemann Hypothesis for the corresponding $L$-functions, we can extend the range of $\|\bm V\|$ in Theorem \ref{GJu} even further. 
\begin{theorem}\label{RH_LD_JVD}
	Let $\bm{F} = (F_{1}, \dots, F_{r}) \in (\Sc^{\dagger} )^{r}$ and $\bm{\theta} = (\theta_{1}, \dots, \theta_{r}) \in \RR^{r}$
	satisfy assumption $\mathscr{A}$ and assume that the Riemann Hypothesis is true for $F_{1}, \dots, F_{r}$. Let $\bm{V} = (V_{1}, \dots, V_{r}) \in (\RR_{\geq 3})^{r}$ and denote $V_{m} = \min_{1 \leq j \leq r}V_{j}$.  Then there exists $a_4=a_4(\bm F)$ such that the following holds.
		If $\bm{\theta} \in [-\tfrac{\pi}{2}, \tfrac{\pi}{2}]^{r}$ and $\|\bm{V}\| \leq a_{4}V_{m}^{1/2}(\log{\log{T}})^{1/4}(\log_{3}{T})^{1/2}$, we have
	\begin{align}
		\label{RH_LD_JVD1u}
		 & \frac{1}{T}\meas(\S(T, \bm{V}; \bm{F}, \bm{\theta}))                                                 \\
		 & \ll_{\bm{F}}\l(\frac{1}{V_{1} \cdots V_{r}} + \frac{1}{(\log{\log{T}})^{\a_{\bm{F}}+\frac{1}{2}}}\r)
		\exp\l( -\frac{V_{1}^2 + \cdots + V_{r}^2}{2}
		+ O_{\bm{F}}\l( \frac{\norm[]{\bm{V}}^{3}}{\sqrt{\log{\log{T}}} \log{\norm[]{\bm{V}}}} \r) \r).
	\end{align}
	If $\bm{\theta} \in \l[ \frac{\pi}{2}, \frac{3\pi}{2} \r]^{r}$, $\|\bm{V}\| \leq a_{4}V_{m}^{1/2}(\log{\log{T}})^{1/4}(\log_{3}{T})^{1/2}$
 and
	$\prod_{j = 1}^{r}V_{j} \leq a_{4}(\log{\log{T}})^{\a_{\bm{F}} + \frac{1}{2}}$,  we have
	\begin{align}
		\label{RH_LD_JVD1l}
		 & \frac{1}{T}\meas(\S(T, \bm{V}; \bm{F}, \bm{\theta})) \\
		 & \gg_{\bm{F}} \frac{1}{V_{1} \cdots V_{r}}
		\exp\l( -\frac{V_{1}^2 + \cdots + V_{r}^2}{2}
		- O_{\bm{F}}\l( \frac{\norm[]{\bm{V}}^{3}}{\sqrt{\log{\log{T}}} \log{\norm[]{\bm{V}}}} \r) \r).
	\end{align}
	Moreover, there exist some positive constants $a_{5} = a_{5}(\bm{F})$
	such that for any $\bm{V} \in (\RR_{\geq 3})^{r}$ with $\norm[]{\bm{V}} \geq \sqrt{\log\log{{T}}}$ and $\bm{\theta}  \in \l[ -\frac{\pi}{2}, \frac{\pi}{2} \r]^r$,
	\begin{align}
		\label{RH_LD_JVD2}
		\frac{1}{T}\meas(\S(T, \bm{V}; \bm{F}, \bm{\theta}))
		\ll_{\bm{F}}
		\exp\l( -a_{5}\norm[]{\bm{V}}^2 \r) +  \exp\l( -a_{5} \norm[]{\bm{V}}\sqrt{\log{\log{T}}} \log{\norm[]{\bm{V}}} \r).
	\end{align}
\end{theorem}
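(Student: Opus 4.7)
The plan is to reduce the joint value distribution of $\Re e^{-i\theta_{j}}\log F_{j}(\frac{1}{2}+it)$ to that of short Dirichlet polynomials, combining the RH-conditional approximate formula to be established in Section~\ref{appL} with the large deviation estimates for Dirichlet polynomials proved in Section~\ref{dpoly}. Under RH for each $F_{j}$, the approximate formula should give, for $X\geq 3$ and outside an exceptional set of measure $o(T)$,
\begin{align*}
\log F_{j}(\tfrac{1}{2}+it)=P_{j}(t;X)+O\!\left(\frac{\log T}{\log X}\right),
\end{align*}
where $P_{j}(t;X)=\sum_{p^{k}\leq X}b_{F_{j}}(p^{k})p^{-k(1/2+it)}/k$ plus a harmless smoothing tail. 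By Selberg orthonormality (A1) and the cross-orthogonality \eqref{SOC2}, the vector $\left(\Re e^{-i\theta_{j}}P_{j}(\cdot;X)\right)_{j=1}^{r}$ on $[T,2T]$ is jointly approximately Gaussian with diagonal covariance asymptotic to $\frac{n_{F_{j}}}{2}\log\log X$.

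For the moderate-deviation bounds \eqref{RH_LD_JVD1u} and \eqref{RH_LD_JVD1l}, I would pick $X$ so that $\log X\asymp\log T/(\|\bm{V}\|\log\|\bm{V}\|)$. Then the zero contribution $O(\log T/\log X)$ is of size $\|\bm{V}\|\log\|\bm{V}\|$, which is smaller than the threshold $V_{j}\sqrt{(n_{F_{j}}/2)\log\log T}$ in the given range of $\bm{V}$, and the resulting gap $\log\log T-\log\log X\asymp\log\|\bm{V}\|$ is exactly what produces the denominator $\log\|\bm{V}\|$ in the error exponent. On this truncated polynomial, the upper bound \eqref{RH_LD_JVD1u} follows from exponential Markov with the Gaussian saddle-point tilt $s_{j}\propto V_{j}/\sqrt{\log\log T}$, while the lower bound \eqref{RH_LD_JVD1l} follows from the Laplace-inversion / twisted measure argument developed in Section~\ref{dpoly}; the sign restriction $\bm{\theta}\in[\pi/2,3\pi/2]^{r}$ is precisely what guarantees the tilt concentrates on the correct half-space, and the product constraint $\prod_{j}V_{j}\leq a_{4}(\log\log T)^{\alpha_{\bm{F}}+\frac12}$ is what prevents the Gaussian Berry--Esseen error from dominating.

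For the very large deviation \eqref{RH_LD_JVD2} with $\|\bm{V}\|\geq\sqrt{\log\log T}$, I would keep the same type of approximate formula but replace the sharp Gaussian saddle by a high-moment bound of the shape
\begin{align*}
\int_{T}^{2T}\prod_{j=1}^{r}|P_{j}(t;X)|^{2k_{j}}\,dt\ll T\prod_{j=1}^{r}(c_{j}k_{j}\log\log X)^{k_{j}},
\end{align*}
valid as long as $X^{\max_{j}2k_{j}}\leq T^{o(1)}$. Two regimes emerge: if $\|\bm{V}\|\leq c\log\log T$, the Gaussian choice $2k_{j}=\|\bm{V}\|^{2}/\log\log T$ remains admissible and yields the first term $\exp(-a_{5}\|\bm{V}\|^{2})$; once $\|\bm{V}\|$ grows past this, one is forced by the constraint $2k_{j}\log X\leq \log T$ to the maximal feasible $2k_{j}\asymp \|\bm{V}\|\sqrt{\log\log T}\log\|\bm{V}\|/\log\|\bm{V}\|^{2}$, producing the weaker second term $\exp(-a_{5}\|\bm{V}\|\sqrt{\log\log T}\log\|\bm{V}\|)$.

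The main obstacle is calibrating the cutoff $X$ to $\bm{V}$ so that the $O(\log T/\log X)$ zero-contribution error balances the Gaussian tail; recovering the refined factor $\log\|\bm{V}\|$ in the denominator of the exponent requires the stronger RH-conditional formula rather than the unconditional one behind Theorem~\ref{GJu}. A secondary difficulty is to track the joint (rather than marginal) structure uniformly in $r$: the covariance matrix is diagonal only asymptotically via \eqref{SOC2}, and the off-diagonal remainders must be shown to contribute only to the acceptable error term in both the saddle-point tilt for \eqref{RH_LD_JVD1u}--\eqref{RH_LD_JVD1l} and the high-moment bound for \eqref{RH_LD_JVD2}.
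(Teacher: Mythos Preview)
Your overall architecture (RH-conditional approximate formula, then joint large deviations for the Dirichlet polynomials from Section~\ref{dpoly}) is the paper's, but your calibration of the cutoff $X$ misidentifies the binding constraint, and this breaks the argument for \eqref{RH_LD_JVD1u}--\eqref{RH_LD_JVD1l}.

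The formula coming from Theorem~\ref{Main_F_S} under RH is not $\log F_j=P_j+O(\log T/\log X)$ outside a set of measure $o(T)$. What one gets (see \eqref{upRH}--\eqref{loRH}) is a \emph{one-sided} inequality with a deterministic shift $\asymp\log T/\log Y=:\mathcal{L}$ \emph{plus} a random piece $C_2|\sum_{n\le Y^3}\Lambda_{F_j}(n)w_Y(n)n^{-\sigma-it}|/\log Y$ and a tail $|\sum_{X<n\le Y^2}\cdots|$. Both random pieces are controlled only via moment bounds (Lemma~\ref{ESMDP} and Lemma~\ref{SLL}), and those lemmas carry the restriction $k\lesssim\log T/\log Y=\mathcal{L}$. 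Consequently the exceptional set where the random error exceeds $\mathcal{L}$ has measure at best $T\exp(-c\,\mathcal{L}\log(\mathcal{L}/\log_3 T))$. For this to be $\ll T\exp(-\tfrac12\sum_j V_j^2)$ one is \emph{forced} to take $\mathcal{L}\asymp\|\bm V\|^2/\log\|\bm V\|$; the shift $\mathcal{L}$ in the threshold then produces the error $\|\bm V\|\cdot\mathcal{L}/\sqrt{\log\log T}\asymp\|\bm V\|^3/(\sqrt{\log\log T}\,\log\|\bm V\|)$ in the exponent. Your choice $\log T/\log X\asymp\|\bm V\|\log\|\bm V\|$ makes the deterministic shift small, but then the exceptional set is only $T\exp(-c\,\|\bm V\|(\log\|\bm V\|)^2)$, which swamps the main term as soon as $\|\bm V\|\gg(\log\|\bm V\|)^2$ --- well inside the stated range $\|\bm V\|\lesssim\sqrt{\log\log T}\,\log_3 T$. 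So the denominator $\log\|\bm V\|$ does \emph{not} come from the variance gap $\log\log T-\log\log X$; it comes from the shape of the exceptional-set bound. (A Soundararajan-type pointwise inequality would avoid the exceptional set for $\theta_j=0$ and the upper bound, but it gives nothing for the lower bound \eqref{RH_LD_JVD1l} or for general $\theta_j$, which is why the paper uses the moment route uniformly.)

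For \eqref{RH_LD_JVD2}: here the paper switches to a \emph{different} cutoff, $\mathcal{L}\asymp\|\bm V\|\sqrt{\log\log T}$, and bounds $\meas(\S)$ by $\meas\{|P_{F_{j_0}}|>c\|\bm V\|\sqrt{\log\log T}\}$ for a single index $j_0$ with $V_{j_0}=\|\bm V\|$; the joint product moment you wrote is unnecessary. The optimal moment is $k\asymp\|\bm V\|^2$ when $\|\bm V\|\le\log\log T$ (solving $Ck\log\log T\asymp W^2$ with $W\asymp\|\bm V\|\sqrt{\log\log T}$), giving $\exp(-a_5\|\bm V\|^2)$; your ``$2k_j=\|\bm V\|^2/\log\log T$'' is too small by a factor $\log\log T$ and yields only $\exp(-c\|\bm V\|^2\log_3 T/\log\log T)$. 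When $\|\bm V\|>\log\log T$ one saturates the constraint $k\lesssim\mathcal{L}\asymp\|\bm V\|\sqrt{\log\log T}$ and obtains the second term.
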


With $r=1$, Theorem \ref{RH_LD_JVD} slightly improves the bound in \cite[Proposition 4.1]{MT} in the range of the following corollary.

\begin{corollary}	\label{CBSP}
	Let $F \in \Sc^{\dagger} $, and assume the Riemann Hypothesis for $F$.
	Let $A \geq 1$, $\theta \in \l[ -\frac{\pi}{2}, \frac{\pi}{2} \r]$.
	Then, for  any real number $V$ with
	$\sqrt{\log{\log{T}}} \leq V \leq A (\log{\log{T}})^{{2/3}} (\log_{3}{T})^{1/3}$, we have
	\begin{align*}
		\frac{1}{T}\meas\set{t \in [T, 2T]}{\Re e^{-i\theta} \log{F(\tfrac{1}{2}+it)|} > V}
		\ll_{A, F} \frac{\sqrt{\log{\log{T}}}}{V}\exp\l( -\frac{V^2}{n_{F}\log{\log{T}}} \r).
	\end{align*}
	as $T\rightarrow \infty$.
\end{corollary}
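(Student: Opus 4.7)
The plan is to apply the upper bound \eqref{RH_LD_JVD1u} of Theorem \ref{RH_LD_JVD} in the case $r=1$, $F_1 = F$, $\theta_1 = \theta$, with the normalization $V_1 := V\big/\sqrt{(n_F/2)\log\log T}$. Under this substitution the set $\S(T, V_1; F, \theta)$ coincides with the set in the corollary, while the quantities in the theorem's bound translate as
\[
\tfrac12 V_1^2 = \frac{V^2}{n_F\log\log T}, \qquad \frac{1}{V_1} = \sqrt{\tfrac{n_F}{2}}\cdot\frac{\sqrt{\log\log T}}{V},
\]
so the right-hand side of \eqref{RH_LD_JVD1u} already has the shape claimed in the corollary, up to controllable error terms and the auxiliary summand $(\log\log T)^{-\alpha_F-1/2}$.

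I would first restrict attention to the range $V \geq 3\sqrt{(n_F/2)\log\log T}$, so that $V_1 \geq 3$ as required by Theorem \ref{RH_LD_JVD}. In the stated range $V \leq A(\log\log T)^{2/3}(\log_3 T)^{1/3}$ one has
\[
V_1 \ \leq\ A\sqrt{\tfrac{2}{n_F}}\,(\log\log T)^{1/6}(\log_3 T)^{1/3},
\]
which for $T$ large (depending on $A, F$) satisfies the hypothesis $V_1 \leq a_4 V_1^{1/2}(\log\log T)^{1/4}(\log_3 T)^{1/2}$ of \eqref{RH_LD_JVD1u}. Since $\alpha_F > 0$ and $V_1$ is bounded by a power of $\log\log T$ strictly smaller than $\alpha_F + 1/2$, the auxiliary term $(\log\log T)^{-\alpha_F-1/2}$ is dominated by $1/V_1$ throughout this range and can be absorbed.

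The remaining calculation is to show that the error inside the exponential, namely $V_1^3/(\sqrt{\log\log T}\,\log V_1)$, is $O_{A, F}(1)$ for $V_1$ in the admissible range. At the upper end $V_1 \asymp (\log\log T)^{1/6}(\log_3 T)^{1/3}$, one has $V_1^3/\sqrt{\log\log T} \asymp \log_3 T$ and $\log V_1 \asymp \log_3 T$, so the ratio is bounded; at the lower end $V_1 \asymp 1$ the numerator is bounded while the denominator is bounded away from zero; the monotonicity check in between is routine. Hence the exponential error contributes only a constant factor depending on $A$ and $F$, yielding the bound claimed in the corollary in this main range.

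Finally, for the residual range $\sqrt{\log\log T} \leq V < 3\sqrt{(n_F/2)\log\log T}$, the right-hand side $\tfrac{\sqrt{\log\log T}}{V}\exp\bigl(-V^2/(n_F\log\log T)\bigr)$ is uniformly bounded below by a positive constant $c_F$, so the trivial bound $\meas(\cdot)/T \leq 1$ already suffices. Combining the two cases gives the corollary. I expect the only delicate point is the verification that $V_1^3/(\sqrt{\log\log T}\,\log V_1)$ stays bounded across the whole range, so that no logarithmic loss appears in the final prefactor; everything else is a direct specialization of Theorem \ref{RH_LD_JVD}.
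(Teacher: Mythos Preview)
Your proposal is correct and is exactly the intended derivation: the paper states this corollary immediately after Theorem \ref{RH_LD_JVD} as the $r=1$ specialization and gives no separate proof. Your normalization $V_1=V/\sqrt{(n_F/2)\log\log T}$, the check that $V_1^3/(\sqrt{\log\log T}\,\log V_1)=O_{A,F}(1)$ at the upper endpoint (where numerator and denominator are both $\asymp \log_3 T$ up to constants), the absorption of $(\log\log T)^{-\alpha_F-1/2}$ into $1/V_1$, and the trivial handling of the short initial segment $V<3\sqrt{(n_F/2)\log\log T}$ are all sound.
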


\subsection{Moments of $L$-functions}
We apply the large deviation results to obtain bounds for moments of $L$-functions without and with the Riemann Hypothesis of the corresponding $L$-functions. 
The following theorem is a consequence of Theorem \ref{GJu} without the Riemann Hypothesis. 
\begin{theorem}	\label{New_MVT}
	Let	$\bm{F} = (F_{1}, \dots, F_{r}) \in (\Sc^{\dagger})^{r}$ and $\bm{\theta}=(\theta_1, \dots, \theta_r) \in \RR^{r}$ satisfy assumption $\mathscr{A}$.
	Set $h_{\bm{F}} = n_{F_{1}}^{-1} + \cdots + n_{F_{r}}^{-1}$. Then there exist some positive constants $a_{6} = a_{6}(\bm{F})$
	and $B=B(\bm{F})$ such that
	for any fixed $0 < k \leq a_{6}$, 
	\begin{align} \label{New_MVT1}
		\int_{T}^{2T}\exp\l(2k\min_{1 \leq j \leq r}\Re e^{-\theta_{j}} \log{F_{j}(\tfrac{1}{2}+it)}\r)dt
		\ll_{\bm F} T (\log{T})^{k^2 / h_{\bm{F}} + Bk^{3}}
	\end{align}
	when $\bm{\theta}  \in \l[ -\frac{\pi}{2}, \frac{\pi}{2} \r]^r$,
	and if $\vartheta_{\bm{F}} \leq \frac{1}{r + 1}$, we have
	\begin{align} \label{New_MVT2}
		\int_{T}^{2T}\exp\l(2k\min_{1 \leq j \leq r}\Re e^{-i\theta_{j}} \log{F_{j}(\tfrac{1}{2}+it)}\r)dt
		\gg _{\bm F} T (\log{T})^{k^2 / h_{\bm{F}} - Bk^{3}}
	\end{align}
	when $\bm{\theta}  \in \l[ \frac{\pi}{2}, \frac{3\pi}{2} \r]^r$.
	Here, the above implicit constants depend only on $\bm{F}$.
	In particular, if $\vartheta_{\bm{F}} \leq \frac{1}{r + 1}$, it holds that, for any $0 < k \leq a_{6}$,
	\begin{align*}
		\int_{T}^{2T}\l(\min_{1 \leq j \leq r}|F_{j}(\tfrac{1}{2}+it)|\r)^{2k}dt
		\ll_{\bm F}  T (\log{T})^{k^2 / h_{\bm{F}} + Bk^{3}}
	\end{align*}
	\begin{align*}
		\int_{T}^{2T}\l(\max_{1 \leq j \leq r}|F_{j}(\tfrac{1}{2}+it)|\r)^{-2k}dt
		\gg_{\bm F}  T (\log{T})^{k^2 / h_{\bm{F}} - Bk^{3}}
	\end{align*}
	\begin{align*}
		T (\log{T})^{k^2 / h_{\bm{F}} - Bk^{3}}
		\ll_{\bm F}  \int_{T}^{2T}\exp\l(2k\min_{1 \leq j \leq r}\Im \log F_{j}(\tfrac{1}{2}+it)\r)dt
		\ll_{\bm F} T (\log{T})^{k^2 / h_{\bm{F}} + Bk^{3}}
	\end{align*}
	and
	\begin{align*}
		T (\log{T})^{k^2 / h_{\bm{F}} - Bk^{3}}
		\ll_{ \bm{F}} \int_{T}^{2T}\exp\l(-2k\max_{1 \leq j \leq r}\Im \log F_{j}(\tfrac{1}{2}+it)\r)dt
		\ll_{ \bm{F}} T (\log{T})^{k^2 / h_{\bm{F}} + Bk^{3}}.
	\end{align*}
\end{theorem}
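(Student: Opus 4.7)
My plan is to derive Theorem \ref{New_MVT} from the large-deviation bounds in Corollary \ref{NNGJu} via the layer-cake identity. Writing $W(t) = \min_{1 \leq j \leq r} \Re e^{-i\theta_j} \log F_j(\tfrac{1}{2}+it)$, I start from
\begin{equation*}
\int_T^{2T} \exp(2k W(t))\, dt = \int_{-\infty}^{\infty} 2k\, e^{2kV}\, \meas(\mathcal{S}(T, V))\, dV,
\end{equation*}
where $\mathcal{S}(T,V) = \{t \in [T,2T] : W(t) > V\}$. For $V \leq 0$ one has the trivial bound $\meas(\mathcal{S}(T,V)) \leq T$, contributing at most $O(T)$. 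The decisive contribution is determined by the Gaussian factor $\exp(-h_{\bm F} V^2/\log\log T)$ provided by Corollary \ref{NNGJu}, whose saddle against $e^{2kV}$ sits at $V_0 = k\log\log T/h_{\bm F}$.

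For the upper bound (case $\bm\theta \in [-\tfrac{\pi}{2}, \tfrac{\pi}{2}]^r$) I substitute the estimate \eqref{NNGJu1} into the layer-cake identity on the range $0 \leq V \leq a_3\log\log T$ and expand
\begin{equation*}
2kV - h_{\bm F}\frac{V^2}{\log\log T}\Bigl(1 + O\Bigl(\frac{V}{\log\log T}\Bigr)\Bigr)
\end{equation*}
around $V_0$. A Laplace-type evaluation produces the main factor $(\log T)^{k^2/h_{\bm F}}$; the $O(V/\log\log T)$ correction contributes $O(k^3\log\log T)$ to the exponent, yielding the $(\log T)^{Bk^3}$ loss. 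To handle the tail $V > a_3\log\log T$ I would use the pointwise inequality $\exp(2kW(t)) \leq \prod_{j=1}^{r} \exp(2k\,\Re e^{-i\theta_j}\log F_j(\tfrac12+it)/r)$ (valid since $W \leq \Re e^{-i\theta_j}\log F_j$ for every $j$ when $k \geq 0$), apply Hölder's inequality to reduce to one-variable exponential moments, and then absorb the tail into the bound \eqref{NNGJu1} with a suitable further truncation; the $r=1$ version of the theorem combined with the rapid Gaussian decay at the boundary makes this contribution negligible.

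For the lower bound (case $\bm\theta \in [\tfrac{\pi}{2}, \tfrac{3\pi}{2}]^r$) the Chebyshev-type inequality $\int_T^{2T}\exp(2kW(t))\,dt \geq e^{2kV_0}\meas(\mathcal{S}(T,V_0))$ with $V_0 = k\log\log T/h_{\bm F}$, combined with \eqref{NNGJu2}, already produces the desired $T(\log T)^{k^2/h_{\bm F} - Bk^3}$. The applicability of \eqref{NNGJu2} imposes $V_0 \leq a_3(\log\log T)^{\alpha_{\bm F}/r + 1/2 + 1/(2r)}$; for $k \leq a_6$ small this becomes the purely structural condition $\alpha_{\bm F} \geq (r-1)/2$, which unwinding the definition $\alpha_{\bm F} = \min\{2r,(1-2\vartheta_{\bm F})/(2\vartheta_{\bm F})\}$ is equivalent to $\vartheta_{\bm F} \leq 1/(r+1)$, matching the hypothesis of the theorem exactly.

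The four specializations then follow by choosing $\theta_j = 0$ for $(\min_j|F_j|)^{2k}$ (upper bound), $\theta_j = \pi$ for $(\max_j|F_j|)^{-2k}$ (lower bound), and $\theta_j = \pi/2$ or $\theta_j \in \{-\pi/2, 3\pi/2\}$ for the $\Im\log F_j$ versions, where the relevant $\theta_j$ lies in \emph{both} $[-\tfrac{\pi}{2},\tfrac{\pi}{2}]$ and $[\tfrac{\pi}{2},\tfrac{3\pi}{2}]$, so upper and lower bounds are simultaneously available. The main obstacle I anticipate is the tail analysis $V > a_3 \log\log T$ in the upper bound: Corollary \ref{NNGJu} is only sharp in the Gaussian regime, so one must verify that the $(1+V/\sqrt{\log\log T})^{-r}$ prefactor and the Hölder step do not deteriorate the $k$-uniformity of the implicit constants, and that the resulting loss can be folded into $(\log T)^{Bk^3}$ without inflating $B$ beyond a constant depending only on $\bm F$.
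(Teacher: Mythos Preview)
Your overall architecture matches the paper's proof: the layer-cake representation, the use of Corollary \ref{NNGJu} in the Gaussian regime around the saddle $V_0=k\log\log T/h_{\bm F}$, the Chebyshev argument for the lower bound, and the derivation of the structural condition $\alpha_{\bm F}\geq (r-1)/2\Leftrightarrow \vartheta_{\bm F}\leq 1/(r+1)$ from the applicability window of \eqref{NNGJu2} are all exactly what the paper does. The four specializations are handled identically.

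The one genuine gap is the tail you yourself flag at the end. Your proposed remedy---the pointwise inequality $\exp(2kW)\leq\prod_j\exp(2k\,\Re e^{-i\theta_j}\log F_j/r)$ followed by H\"older to reduce to single-$j$ exponential moments---is circular: it pushes the $r$-variable tail onto the $r=1$ theorem, but the $r=1$ case has the \emph{same} unresolved tail, since Corollary \ref{NNGJu} (with $r=1$) is still only valid for $V\leq a_3\log\log T$. The paper supplies the missing ingredient in the form of Lemma \ref{LLDSL}, proved directly from the approximate formula and the moment bounds for the Dirichlet polynomial and zero-sum pieces: for a single $F$ and $\theta\in[-\tfrac\pi2,\tfrac\pi2]$ one has, for \emph{all} large $V$,
\[
\frac{1}{T}\meas\bigl\{t\in[T,2T]:\Re e^{-i\theta}\log F(\tfrac12+it)>V\bigr\}
\leq \exp\bigl(-a_8 V^2/\log\log T\bigr)+\exp(-a_8 V).
\]
Since $\Phi_{\bm F}(T,V)\leq \meas\{t:\Re e^{-i\theta_j}\log F_j>V\}$ for each $j$, this immediately bounds $\Phi_{\bm F}$ beyond the Gaussian range. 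The paper then splits at $V=D_1 k\log\log T$ with $D_1=4/a_8$ (not at $a_3\log\log T$) and chooses $a_6\leq a_8/4$; with these choices the integrand on the tail is $\ll Te^{-2kV}$ and the tail is $O(T)$. This is the step you are missing.
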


\begin{remark}
	The constant $a_6$ could be computed explicitly by keeping track of the constants in the proof. It depends on the size of $a_{F_i}(p)$, 
  the constants ($n_{F{j}}, O_{F_i}(1), O_{F_i, F_j}(1)$) in condition (A1), the degree $d_{F_{j}} = 2\sum_{\ell = 1}^{k}\lam_{\ell}$,
  the constant $\kappa_{F_j}$, and the implicit constant in \eqref{ZDC1}. We also need $a_6 \ll_{\bm F}\frac{1}{r}$.
\end{remark}
\begin{remark}
We could also allow $k\rightarrow 0$ as $T\rightarrow \infty$. In fact, as shown in the proof, the dependency of the implicit constant in $k$ is of size $\frac{1 + k\sqrt{\log\log T}}{1+(k\sqrt{\log\log T})^r}$. 
\end{remark}
The restriction on $k$ in Theorem \ref{New_MVT} is due to the fact that the range of $\| \bm V\|$ in Theorem \ref{GJu} is only a small multiple of $\log\log T$. If we assume the Riemann Hypothesis for the corresponding $L$-functions, we can apply Theorem \ref{RH_LD_JVD} to establish moment results for all $k>0$.

\begin{theorem}	\label{New_MVT_RH}
	Let
	$\bm{F} = (F_{1}, \dots, F_{r}) \in (\Sc^{\dagger} )^{r}$
	and $\bm{\theta} = (\theta_{1}, \dots, \theta) \in [-\tfrac{\pi}{2}, \tfrac{3\pi}{2}]^r$ satisfy assumption $\mathscr{A}$,
	and assume that the Riemann Hypothesis is true for $F_{1}, \dots, F_{r}$.
	Let $T$ be large, and put $\e(T) = (\log_{3}{T})^{-1}$.
	Then there exists some positive constant $B = B(\bm{F})$ such that for any $k > 0$,
	if $\bm{\theta} \in \l[ -\frac{\pi}{2}, \frac{\pi}{2} \r]^r$, we have
	\begin{align} \label{New_MVT_RH1}
		\int_{T}^{2T}\exp\l(2k\min_{1 \leq j \leq r}\Re e^{-\theta_{j}} \log{F_{j}(\tfrac{1}{2}+it)}\r)dt
		\ll_{k, \bm{F}} T(\log{T})^{k^{2}/h_{\bm{F}} + Bk^{3}\e(T)}
	\end{align}
	and if $\bm{\theta} \in \l[ \frac{\pi}{2}, \frac{3\pi}{2} \r]^r$ and $\vartheta_{\bm{F}} < \frac{1}{r + 1}$, we have
	\begin{align} \label{New_MVT_RH2}
		\int_{T}^{2T}\exp\l(2k\min_{1 \leq j \leq r}\Re e^{-\theta_{j}} \log{F_{j}(\tfrac{1}{2}+it)}\r)dt
		\gg _{k, \bm{F}} T + T(\log{T})^{k^{2}/h_{\bm{F}} - Bk^{3}\e(T)}
	\end{align}
	In particular, if $\vartheta_{\bm{F}} < \frac{1}{r + 1}$, it holds that, for any $k > 0$, $\e > 0$,
	\begin{align*}
		\int_{T}^{2T}\l(\min_{1 \leq j \leq r}|F_{j}(\tfrac{1}{2}+it)|\r)^{2k}dt
		\ll_{\e, k, \bm{F}} T(\log{T})^{k^{2}/h_{\bm{F}} + \e},
	\end{align*}
	\begin{align*}
		\int_{T}^{2T}\l(\max_{1 \leq j \leq r}|F_{j}(\tfrac{1}{2}+it)|\r)^{-2k}dt
		\gg_{\e, k, \bm{F}} T(\log{T})^{k^{2}/h_{\bm{F}} - \e},
	\end{align*}
	\begin{align*}
		T(\log{T})^{k^{2}/h_{\bm{F}} - \e}
		\ll_{\e, k, \bm{F}}\int_{T}^{2T} \exp\l(2k\min_{1 \leq j \leq r}\Im \log F_{j}(\tfrac{1}{2}+it)\r)dt
		\ll_{\e, k, \bm{F}} T(\log{T})^{k^{2}/h_{\bm{F}} + \e},
	\end{align*}
	and
	\begin{align*}
		T(\log{T})^{k^{2}/h_{\bm{F}} - \e}
		\ll_{\e, k, \bm{F}}\int_{T}^{2T} \exp\l(-2k\max_{1 \leq j \leq r}\Im \log F_{j}(\tfrac{1}{2}+it)\r)dt
		\ll_{\e, k, \bm{F}} T(\log{T})^{k^{2}/h_{\bm{F}} + \e}.
	\end{align*}
\end{theorem}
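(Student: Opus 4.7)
The strategy is to reduce the moment integrals to the joint large deviation estimates of Theorem \ref{RH_LD_JVD} via the layer-cake formula. Setting $X(t) = \min_{1 \leq j \leq r}\Re e^{-i\theta_{j}}\log F_{j}(\tfrac{1}{2}+it)$ and $\bm{V}(V) = \bigl(V/\sqrt{(n_{F_j}/2)\log\log T}\bigr)_{j=1}^{r}$, we have
\begin{align*}
\int_{T}^{2T} e^{2kX(t)} dt = 2k \int_{-\infty}^{\infty} e^{2kV} \meas\bigl(\S(T, \bm{V}(V); \bm{F}, \bm{\theta})\bigr) dV + O(T),
\end{align*}
where the ``$+O(T)$'' absorbs the trivial portion $V \leq 0$. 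Since $\sum_{j} V_j(V)^2 = 2h_{\bm{F}} V^2/\log\log T$, the quadratic Gaussian factor produced by Theorem \ref{RH_LD_JVD} combined with $e^{2kV}$ has exponent $2kV - h_{\bm{F}}V^2/\log\log T$, maximized at the saddle point $V_{0} := k\log\log T/h_{\bm{F}}$ with peak value $k^2\log\log T/h_{\bm F}$. This saddle-point value produces the main factor $(\log T)^{k^2/h_{\bm F}}$ in both bounds.

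For the upper bound \eqref{New_MVT_RH1}, I would split the integral into $(0, V^{*}]$ and $(V^{*}, \infty)$, where $V^{*}$ is a small multiple of $\log\log T \cdot \log_3 T$ chosen so that $\bm V(V^*)$ stays within the range of validity of \eqref{RH_LD_JVD1u}. On $(0, V^{*}]$ apply \eqref{RH_LD_JVD1u}: the key observation is that at $V = V_0$ one has $\|\bm V(V_0)\| \asymp k\sqrt{\log\log T}$ and $\log\|\bm V(V_0)\| \asymp \tfrac{1}{2}\log_3 T$, so the error term $O(\|\bm V\|^3/(\sqrt{\log\log T}\log\|\bm V\|))$ in the exponent is $O(k^3\log\log T \cdot \e(T))$, which exponentiates to the desired factor $(\log T)^{Bk^{3}\e(T)}$. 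On $(V^{*}, \infty)$, apply \eqref{RH_LD_JVD2}; the factor $\exp(-a_{5}\|\bm V\|\sqrt{\log\log T}\log\|\bm V\|)$ easily dominates $e^{2kV}$ in this range, rendering the contribution negligible. Summing the three pieces yields \eqref{New_MVT_RH1}.

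For the lower bound \eqref{New_MVT_RH2}, retain only a single test level, discarding the rest of the integral:
\begin{align*}
\int_{T}^{2T} e^{2kX(t)} dt \geq e^{2kV_{0}}\meas\bigl(\S(T, \bm V(V_0); \bm F, \bm \theta)\bigr).
\end{align*}
For fixed $k > 0$ and $T$ sufficiently large, the vector $\bm V(V_0)$ satisfies the hypotheses of \eqref{RH_LD_JVD1l}; in particular, the constraint $\prod_{j}V_j(V_0) \leq a_{4}(\log\log T)^{\alpha_{\bm F}+1/2}$ reduces to $\alpha_{\bm F} \geq (r-1)/2$, which is exactly where the assumption $\vartheta_{\bm F} < 1/(r+1)$ is used. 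Substituting \eqref{RH_LD_JVD1l} gives a lower bound $\gg T(\log\log T)^{-r/2}(\log T)^{k^2/h_{\bm F} - Bk^3\e(T)}$, and the $(\log\log T)^{-r/2}$ factor is absorbed into the error since $\exp(-\tfrac{r}{2}\log_3 T) \gg (\log T)^{-Bk^3\e(T)}$ for $T$ large. The additive $T$ term in \eqref{New_MVT_RH2} is a consequence of $\meas\{t\in[T,2T] : X(t) > 0\} \gg T$, which follows from Theorem \ref{Main_Thm_LD_JVD} at $\bm V = \bm 0$. Finally, the ``in particular'' moment inequalities follow by specializing $\theta_j \in \{0, \pi, \pi/2, -\pi/2\}$ (so that $\Re e^{-i\theta_j}\log F_j$ becomes one of $\pm\Re \log F_j$ or $\pm \Im\log F_j$) and absorbing $Bk^3\e(T) = o(1)$ into any fixed $\e > 0$. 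The main obstacle is the precise accounting of the error term: the presence of $\log\|\bm V\|$ in the denominator of the error in Theorem \ref{RH_LD_JVD} is what converts a naive $O(k^3)$-savings into the much smaller $O(k^3\e(T))$-savings required here.
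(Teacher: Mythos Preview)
Your proposal is correct and follows essentially the same approach as the paper: layer-cake formula, splitting the $V$-integral, applying the bounds \eqref{RH_LD_JVD1u}, \eqref{RH_LD_JVD1l}, \eqref{RH_LD_JVD2} from Theorem \ref{RH_LD_JVD}, and identifying the saddle point $V_0 = k\log\log T/h_{\bm F}$ where the $\log\|\bm V\|$ in the denominator produces the extra $\e(T) = (\log_3 T)^{-1}$ saving. Two minor cosmetic differences: the paper splits the upper-bound integral at $D_2 k\log\log T$ (a smaller threshold that still suffices for \eqref{RH_LD_JVD2} to kill the tail, since the implicit constant may depend on $k$), and for the lower bound the paper integrates over the window $[V_0, V_0+\sqrt{\log\log T}]$ rather than using a single level, which recovers one extra harmless factor of $\sqrt{\log\log T}$. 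One small gap to be aware of: \eqref{RH_LD_JVD1u} is stated only for $V_j \geq 3$, i.e.\ $V \gg \sqrt{\log\log T}$; for smaller $V$ one simply uses the trivial bound $\Phi_{\bm F}(T,V) \leq T$ (or the unconditional Corollary \ref{NNGJu}), whose contribution is $\ll T e^{O(k\sqrt{\log\log T})}$ and is absorbed. Your identification of where $\vartheta_{\bm F} < 1/(r+1)$ enters (forcing $\alpha_{\bm F} > (r-1)/2$ so that the product constraint in \eqref{RH_LD_JVD1l} is met) is exactly right.
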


\begin{remark}
	One would expect that for any $k>0$, it holds that
	\begin{align}
		\int_T^{2T}\exp\l(2k\min_{1 \leq j \leq r}\Re e^{-i\theta_{j}} \log F_{j}(\tfrac{1}{2}+it)\r)dt
		\asymp_{k} T \frac{(\log T)^{k^2 / h_{\bm{F}}}}{(\log{\log{T}})^{(r-1)/2}}.
	\end{align}
	It remains an interesting question to see if the bounds in Theorem \ref{New_MVT_RH} can be made sharp using techniques from Harper \cite{H2013} under the corresponding Riemann Hypothesis.
\end{remark}

\begin{remark}
	An $L$-function is called primitive if it cannot be factored into $L$-functions of smaller degree. It is conjectured that
	\begin{align}
		\int_0^T | F(\tfrac{1}{2}+it)|^{2k} dt \sim C(F, k)T(\log T)^{k^2}
	\end{align}
	for some constant $C(F, k)$ as $T\rightarrow \infty$, see \cite{Conrey05,Heap13}. 
  It is expected that the values of distinct primitive $L$-functions are uncorrelated, which leads to the conjecture
	\begin{align}
		\int_0^T |F_1(\tfrac{1}{2}+it)|^{2k_1} \cdots  |F_r(\tfrac{1}{2}+it)|^{2k_r} dt
		\sim C(\bm F, \bm k) T(\log T)^{k_1^2 + \dots + k_r^2}
	\end{align}
	for some constant $C(\bm F, \bm k)$ as $T\rightarrow \infty$ if $F_i\not=F_j$ for $i\not=j$.
	This has be established for the product of two Dirichlet $L$-functions for $k_1=k_2=1$ (see \cite{Heap13, M1970, Top} and
	for some degree two $L$-functions when $k=1$ and $r=1$ (see \cite{Good, Zhang1, Zhang2}).
	For higher degree $L$-functions and higher values of $k$, obtaining the asymptotic formula seems to be beyond the scope of
	current techniques.
	An upper bound of this kind has been established by Milinovich and Turnage-Butterbaugh \cite{MT} for automorphic $L$-functions of $GL(n)$
	under the Riemann Hypothesis for these $L$-functions. Modifying our approach, we can recover their results (see Section \ref{finalremarks}) and obtain some weaker unconditional results for sufficiently small $k_i$'s when $n\leq 2$.
	Our results give some further evidence that distinct primitive $L$-functions are `` statistically independent".
\end{remark}
\begin{remark}
Our method as well as the works of Bombieri-Hejhal \cite{BH1995} and Selberg \cite{SCR} requires a strong zero density estimate for $L$-functions.
Unfortunately, the estimate has not been proved yet for many $L$-functions.
There have been other methods 
to prove Selberg's central limit theorem without the strong zero density estimate such as Laurin\v{c}ikas \cite{L1987} 
and Radziwi\l\l-Soundararajan \cite{RS2017}.
However, their methods  do not apply to the central limit theorems for $\Im \log \zeta(s)$. 
Nevertheless, Hsu-Wong \cite{HW2020}  proved a joint central limit theorem (for fixed $V_{j}$) 
for Dirichlet $L$-functions and certain $GL(2)$ $L$-functions twisted by Dirichlet characters by using the method in \cite{RS2017}. 
The fact that the coefficients of these $L$-functions satisfy $|\chi(n)| \leq 1$ plays an important role in their proofs, which make it not applicable to more general $L$-functions.
\end{remark}

\subsection{Results for Dirichlet polynomials}
To prove the above theorems, we consider the Dirichlet polynomials associated with $F$.
We need some notation before stating our results. 
Let $\Lam_{F}(n)$ be the von Mangoldt function associated with $F$ defined by $\Lam_{F}(n) = b_{F}(n) \log{n}$.
Let $\bm{x} = (x_{1}, \dots, x_{r}) \in \RR^{r}$, $\bm{z} = (z_{1}, \dots, z_{r}) \in \CC^{r}$,
and $\bm{F} = (F_{1}, \dots, F_{r})$ be an $r$-tuple of Dirichlet series, 
and let $\bm{\theta} = (\theta_{1}, \dots, \theta_{r}) \in \RR^r$.
When $\bm{F}$ satisfies (S4), we define
\begin{align}	\label{def_D_P}
	P_{F}(s, X)
	:= \sum_{p^{\ell} \leq X}\frac{b_{F}(p^{\ell})}{p^{\ell s}}
	= \sum_{2 \leq n \leq X}\frac{\Lam_{F}(n)}{n^{s}\log{n}},
\end{align}
\begin{align}	\label{def_var}
	\s_{F}(X)
	:= \sqrt{\frac{1}{2}\sum_{p \leq X} \sum_{\ell = 1}^{\infty}\frac{|b_{F}(p^{\ell})|^2}{p^{\ell}}},
\end{align}
\begin{align} \label{def_tau_F}
	\tau_{j_{1}, j_{2}}(X) = \tau_{j_{1}, j_{2}}(X; \bm{F}, \bm{\theta})
	:= \frac{1}{2}\sum_{p \leq X}\sum_{\ell = 1}^{\infty}
	\frac{\Re e^{-i\theta_{j_{1}}}b_{F_{j_{1}}}(p^{\ell})\ol{e^{-i\theta_{j_{2}}} b_{F_{j_{2}}}(p^{\ell})}}{p^{\ell}},
\end{align}
\begin{align}	\label{def_K_F}
	K_{\bm{F}, \bm{\theta}}(p, \bm{z})
	:= \sum_{1 \leq j_{1}, j_{2} \leq r}z_{j_{1}} z_{j_{2}}
	\sum_{\ell = 1}^{\infty}\frac{\Re e^{-i\theta_{j_{1}}}b_{F_{j_{1}}}(p^{\ell})\ol{e^{-i\theta_{j_{2}}} b_{F_{j_{2}}}(p^{\ell})}}{p^{\ell}},
\end{align}
and
\begin{align}\label{S_X def}
	\S_{X}(T, \bm{V}; \bm{F}, \bm{\theta})
	:= \set{t \in [T, 2T]}
	{\frac{\Re e^{-i\theta_{j}} P_{F_{j}}(\tfrac{1}{2}+it, X)}  {\s_{F_{j}}(X)}
		> V_{j} \text{ for } j = 1, \dots, r}.
\end{align}
Let $\{ \mathcal{X}(p) \}_{p \in \mathcal{P}}$ be a sequence of independent random variables
on a probability space $(\Omega, \mathscr{A}, \PP)$
with uniformly distributed on the unit circle in $\CC$, where $\mathcal{P}$ is the set of prime numbers.
Denote
\begin{align} \label{def_RP}
	P_{F}(\s, \mathcal{X}, X)
	:= \sum_{p \leq X}\sum_{\ell = 1}^{\infty}\frac{b_{F}(p^{\ell}) \mathcal{X}(p)^{\ell}}{p^{\ell \s}},
\end{align}
\begin{align} \label{def_RDP_MGF}
	M_{p, \s}(\bm{z}) = M_{p, \s}(\bm{z}; \bm{F}, \bm{\theta})
	:= \EXP{\exp\l(\sum_{j = 1}^{r}z_{j} \Re e^{-i\theta_{j}}
		\sum_{\ell = 1}^{\infty}\frac{b_{F_{j}}(p^{\ell}) \mathcal{X}(p)^{\ell}}{p^{\ell \s}}\r)},
\end{align}
where $\EXP{}$ is the expectation. 
Finally, when $\bm{F}$ satisfies (S4) and (S5), we define
\begin{align} \label{def_Xi}
	\Xi_{X}(\bm{x})
	= \Xi_{X}(\bm{x}; \bm{F}, \bm{\theta})
	:= \exp\l( \sum_{1 \leq j_{1} < j_{2} \leq r}x_{j_{1}} x_{j_{2}}\tau_{j_{1}, j_{2}}(X) \r)
	\prod_{p}\frac{M_{p, \frac{1}{2}}(\bm{x})}{\exp\l( K_{\bm{F}, \bm{\theta}}(p, \bm{x})/4 \r)}.
\end{align}
The convergence of the infinite product of \eqref{def_Xi} is shown in Lemma \ref{Prop_Psi_FPP}.

We have the following the joint large deviations results for Dirichlet polynomials.

\begin{proposition}	\label{Main_Prop_JVD}
	Let $\bm{F} = (F_{1}, \dots, F_{r})$ be an $r$-tuple of Dirichlet series and $\bm{\theta} \in \RR^{r}$
	satisfy (S4), (S5), (A1), and (A2).
	Let $T$, $X$ be large numbers with $X^{(\log{\log{X}})^{4(r + 1)}} \leq T$.
	Then there exists some positive constant $a_{7} = a_{7}(\bm{F})$ such that
	for $\bm{V} = (V_{1}, \dots, V_{r}) \in \RR^{r}$ with $|V_{j}| \leq a_{7} \s_{F_{j}}(X)$,
	\begin{align}	\label{MPJVD1}
		 & \frac{1}{T}\meas(\S_{X}(T, \bm{V}; \bm{F}, \bm{\theta}))                                                   \\
		 & = \l( 1 + O_{\bm{F}}\l( \frac{\prod_{k = 1}^{r}(1 + |V_{k}|)}{(\log{\log{X}})^{\a_{\bm{F}} + \frac{1}{2}}}
			+ \frac{1 + \norm[]{\bm{V}}^{2}}{\log{\log{X}}}\r) \r)\prod_{j = 1}^{r}
		\int_{V_{j}}^{\infty}e^{-u^2/2}\frac{du}{\sqrt{2\pi}}.
	\end{align}
\end{proposition}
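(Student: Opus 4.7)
The plan is to prove Proposition \ref{Main_Prop_JVD} via the joint moment generating function method combined with a Berry--Esseen type inversion. First, I would study the joint Laplace transform
\begin{equation*}
\mathcal{L}_T(\bm{z}) := \frac{1}{T}\int_{T}^{2T}\exp\Bigl(\sum_{j=1}^{r}z_{j}\frac{\Re e^{-i\theta_{j}} P_{F_{j}}(\tfrac{1}{2}+it, X)}{\sigma_{F_{j}}(X)}\Bigr)dt
\end{equation*}
for $\bm z$ in a small polydisc. Expanding the exponential as a Taylor series truncated at order roughly $(\log\log X)^{4(r+1)}$ produces a Dirichlet polynomial of length at most $X^{(\log\log X)^{4(r+1)}}\leq T$. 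Applying the orthogonality $\frac{1}{T}\int_T^{2T}(m/n)^{it}\,dt$ to this polynomial, the diagonal contribution $m=n$ matches exactly the expectation under the random model $\{\mathcal{X}(p)\}$, namely $\prod_p M_{p,1/2}(\bm z/\bm\sigma)$ with $\bm\sigma=(\sigma_{F_j}(X))_j$, while the off-diagonal part is negligible in $T$.

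Next, using the identity built into the definition \eqref{def_Xi} of $\Xi_X$, I would rewrite the Euler product as $\exp\bigl(\tfrac{1}{4}\sum_p K_{\bm F,\bm\theta}(p,\bm z/\bm\sigma)\bigr)$ times cross-term factors involving $\tau_{j_1,j_2}$ and $\Xi_X$. Assumption (A1) evaluates the diagonal part of the quadratic form to $\tfrac{1}{2}(z_1^2+\cdots+z_r^2)+O(\norm[]{\bm z}^2/\log\log X)$, while (A1) together with (A2) bounds the off-diagonal sums over distinct $F_j$ by $O(1)$ after normalization by $\sigma_{F_j}(X)$. The remaining factor $\Xi_X(\bm z/\bm\sigma)$ is $1+O(\norm[]{\bm z}^3/(\log\log X)^{\a_{\bm F}+1/2})$; here one uses (S4), (S5), and the bound $b_F(p^\ell)\ll p^{\ell\vartheta_F}$ with $\vartheta_F<1/2$ to control prime-power contributions.

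To convert from $\mathcal{L}_T$ to the measure of $\S_X(T,\bm V;\bm F,\bm\theta)$, I would smooth the indicator function of $\{(x_1,\ldots,x_r):x_j>V_j\}$ by a compactly supported kernel on $\RR^r$ and apply Fourier inversion, shifting the contour through the saddle point $\bm z=\bm V$, which is permitted since $|V_j|\leq a_7\sigma_{F_j}(X)$. The Gaussian integral then yields the main term $\prod_j\int_{V_j}^\infty e^{-u^2/2}du/\sqrt{2\pi}$; the higher-order correction from $\Xi_X$ produces the error $\prod_k(1+|V_k|)/(\log\log X)^{\a_{\bm F}+1/2}$, and the smoothing/Berry--Esseen remainder contributes $(1+\norm[]{\bm V}^2)/\log\log X$.

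The main obstacle is obtaining the sharp multiplicative prefactor $\prod_k(1+|V_k|)$ rather than $(1+\norm[]{\bm V})^r$ in the first error term. This requires the Fourier inversion to decouple the $r$ coordinates at leading order, which in turn uses crucially the Selberg orthonormality assumption (A1) to ensure only $O(1)$ cross-correlation between distinct $F_j$. A secondary technical issue is verifying that the Taylor truncation error in the first step is dominated by the stated error, which is precisely why the length condition $X^{(\log\log X)^{4(r+1)}}\leq T$ is imposed: it allows $\sim(\log\log X)^{4(r+1)}$ joint moments to be computed exactly via orthogonality before the off-diagonal noise dominates.
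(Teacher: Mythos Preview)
Your high-level strategy---compute the joint moment generating function, match it to the random Euler product, then invert---is the same as the paper's. However, the paper's actual implementation differs in an important way, and your plan contains a misattribution of the error terms that points to a real gap.

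The paper does \emph{not} use a direct Fourier inversion with a contour shift through the saddle point. In fact, the authors explicitly remark (Section~\ref{Proof_Props_JVD}) that a Tenenbaum-style line-integral inversion would require the Ramanujan conjecture, because one would need the MGF estimate \eqref{ES_sI1} for arbitrarily large imaginary parts of $\bm z$, and this estimate degrades when $\vartheta_{\bm F}>0$. Instead they use the Cram\'er--Hwang tilted measure $\nu_{T,\bm F,\bm x}$ (Lemma~\ref{SPKLH}) together with the multidimensional Selberg--Beurling approximation (Lemma~\ref{Multi_BSF}) to the indicator of a box. The point of the Selberg--Beurling machinery is precisely that it involves only a \emph{finite} Fourier integral over $|u_j|\leq L$, and one chooses $L=b_4(\log\log X)^{\alpha_{\bm F}}$ so that the MGF bound \eqref{ESTMT1} is valid on this range. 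Your ``compactly supported kernel'' is ambiguous: if it is compactly supported in physical space its Fourier transform is not, and you hit the Ramanujan obstruction; if band-limited, you have to control the polynomial tail carefully, which is exactly what Selberg--Beurling optimizes.

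This also explains why your accounting of the error terms is off. The term $\prod_k(1+|V_k|)/(\log\log X)^{\alpha_{\bm F}+1/2}$ does \emph{not} come from an expansion of $\Xi_X$; for small arguments $\Xi_X(\bm x)=1+O(\|\bm x\|^2)$ only. It comes from the Selberg--Beurling truncation at $L\asymp(\log\log X)^{\alpha_{\bm F}}$ (the $1/(L\sigma_{F_j}(X))$ error in Lemma~\ref{FMnu}), propagated through the integral representation of Lemma~\ref{SPKLH} after the tilt is set to $x_j=\max\{1,V_j\}/\sigma_{F_j}(X)$. The product structure $\prod_k(1+|V_k|)$ arises because each coordinate's error is multiplied by $e^{x_j^2\sigma_{F_j}(X)^2/2-x_jy_j}\asymp (1+V_j)\int_{V_j}^\infty e^{-u^2/2}du$. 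The second error $(1+\|\bm V\|^2)/\log\log X$ comes from the $O(|z_j|^2)$ correction in the MGF formula \eqref{ES_sI2}, not a Berry--Esseen remainder. Finally, the paper handles negative $V_j$ by a separate inclusion--exclusion induction at the end, which your plan does not mention.
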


We could improve the range $V_{j}$ in Propositions \ref{Main_Prop_JVD} with a weaker error term.

\begin{proposition}	\label{Main_Prop_JVD3}
	Let $\bm{F} = (F_{1}, \dots, F_{r})$ be an $r$-tuple of Dirichlet series and $\bm{\theta} \in \RR^{r}$
	satisfy (S4), (S5), (A1), and (A2).
	Let $T$, $X$ be large numbers satisfying $X^{(\log{\log{X}})^{4(r + 1)}} \leq T$.
	Then for any $\bm{V} = (V_{1}, \dots, V_{r}) \in (\RR_{\geq 0})^{r}$ with $\| \bm{V} \| \leq (\log{\log{X}})^{2r}$, we have
	\begin{align} \label{MPJVD2}
		 & \frac{1}{T}\meas(\S_{X}(T, \bm{V}; \bm{F}, \bm{\theta}))
		= (1 + E) \times \Xi_{X}\l(\tfrac{V_{1}}{\s_{F_{1}}(X)}, \dots, \tfrac{V_{r}}{\s_{F_{r}}(X)}\r)
		\prod_{j = 1}^{r}\int_{V_{j}}^{\infty}e^{-u^2/2}\frac{du}{\sqrt{2\pi}},
	\end{align}
	where $E$ satisfies
	\begin{align} \label{ERMPJVD}
		E \ll_{\bm{F}}
		\exp\l( C \l(\frac{\| \bm{V} \|}{\sqrt{\log{\log{X}}}}\r)^{\frac{2 - 2\vartheta_{\bm{F}}}{1 - 2\vartheta_{\bm{F}}}} \r)
		\l\{\frac{\prod_{k = 1}^{r}(1 + V_{k})}{(\log{\log{X}})^{\a_{\bm{F}} + \frac{1}{2}}}
		+ \frac{1}{\sqrt{\log{\log{X}}}}\r\}.
	\end{align}
\end{proposition}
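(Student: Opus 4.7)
The plan is to extend Proposition \ref{Main_Prop_JVD} from the Gaussian regime $|V_{j}| \ll \sigma_{F_{j}}(X)$ to the large-deviation range $\|\bm{V}\| \leq (\log\log X)^{2r}$ via an exponential tilt (saddle-point) argument, in which the factor $\Xi_{X}$ arises as the non-Gaussian multiplicative correction to the joint Gaussian tail.

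Let $Q_{j}(t) := \Re e^{-i\theta_{j}} P_{F_{j}}(\tfrac{1}{2}+it, X)/\sigma_{F_{j}}(X)$ and set
\begin{align*}
  \tilde{\mathcal{L}}(\bm{\lambda}) := \frac{1}{T}\int_{T}^{2T}\exp\!\l(\sum_{j=1}^{r} \lambda_{j} Q_{j}(t)\r) dt.
\end{align*}
The first step is to evaluate $\tilde{\mathcal{L}}$: expanding the exponential into a Dirichlet polynomial and applying a mean value theorem---the hypothesis $X^{(\log\log X)^{4(r+1)}} \leq T$ is precisely what is needed to fit the resulting polynomial inside the $t$-integration range with only a diagonal contribution---one shows
\begin{align*}
  \tilde{\mathcal{L}}(\bm{\lambda}) = (1 + E_{1}) \prod_{p} M_{p, 1/2}\!\l(\tfrac{\lambda_{1}}{\sigma_{F_{1}}(X)}, \ldots, \tfrac{\lambda_{r}}{\sigma_{F_{r}}(X)}; \bm{F}, \bm{\theta}\r).
\end{align*}
Using the identity $\tau_{j, j}(X) = \sigma_{F_{j}}(X)^{2}$ together with the definitions of $\tau_{j_{1}, j_{2}}$ and $K_{\bm{F}, \bm{\theta}}$, the Euler product factors as $\exp(\tfrac{1}{2}\sum_{j} \lambda_{j}^{2})\cdot\Xi_{X}(\lambda_{1}/\sigma_{F_{1}}(X), \ldots, \lambda_{r}/\sigma_{F_{r}}(X); \bm{F}, \bm{\theta})$, namely the MGF of $r$ independent $\mathcal{N}(0, 1)$ variables times the non-Gaussian correction $\Xi_{X}$ (whose infinite product converges by Hypothesis H (S5), via Lemma \ref{Prop_Psi_FPP}).

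Next, I would perform the exponential tilt with saddle-point parameter $\bm{\lambda}^{*} = \bm{V}$ (with a small correction from $\nabla \log \Xi_{X}$) and write
\begin{align*}
  \frac{1}{T}\meas\bigl(\S_{X}(T, \bm{V}; \bm{F}, \bm{\theta})\bigr) = \tilde{\mathcal{L}}(\bm{V})\,\mathbb{E}_{\mu_{\bm{V}}}\!\l[ e^{-\bm{V} \cdot \bm{Q}}\mathbf{1}_{\bm{Q} > \bm{V}} \r],
\end{align*}
where $\mu_{\bm{V}}$ is the tilted probability measure $d\mu_{\bm{V}}(t)/dt = \exp(\bm{V} \cdot \bm{Q}(t))/(T \tilde{\mathcal{L}}(\bm{V}))$. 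Under $\mu_{\bm{V}}$ the vector $\bm{Q}$ has mean approximately $\bm{V}$ and near-identity covariance (the off-diagonal entries being controlled by Selberg orthonormality (A1) and (A2)); applying the CLT/Berry--Esseen techniques that underlie Proposition \ref{Main_Prop_JVD} to $\bm{Z} := \bm{Q} - \bm{V}$, and using the Gaussian identity $\int_{0}^{\infty} e^{-V_{j}z}\tfrac{e^{-z^{2}/2}}{\sqrt{2\pi}}dz = e^{V_{j}^{2}/2}\int_{V_{j}}^{\infty}\tfrac{e^{-u^{2}/2}}{\sqrt{2\pi}}du$ obtained by completing the square, yields $\mathbb{E}_{\mu_{\bm{V}}}[e^{-\bm{V}\cdot \bm{Q}}\mathbf{1}_{\bm{Q} > \bm{V}}] = \exp(-\tfrac{1}{2}\sum_{j} V_{j}^{2})\prod_{j}\int_{V_{j}}^{\infty}e^{-u^{2}/2}du/\sqrt{2\pi}$ up to a multiplicative error $1+E_{2}$. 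Combining this with $\tilde{\mathcal{L}}(\bm{V}) = \exp(\tfrac{1}{2}\sum_{j} V_{j}^{2})\Xi_{X}(V_{1}/\sigma_{F_{1}}(X), \ldots, V_{r}/\sigma_{F_{r}}(X))$ causes the $\exp(\tfrac{1}{2}\sum V_{j}^{2})$ factors to cancel, producing exactly the main term in \eqref{MPJVD2}.

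The main obstacle will be controlling the mean value theorem error $E_{1}$ uniformly over the extended range of $\bm{V}$. Truncating $\exp(\sum_{j}\lambda_{j}Q_{j})$ at degree $K$ in its power series produces a Dirichlet polynomial of length $X^{K}$, and matching its $t$-mean to the random-model prediction requires $X^{K} \leq T$, i.e.\ $K \leq (\log\log X)^{4(r+1)}$; combined with the coefficient growth $|b_{F_{j}}(p^{\ell})| \ll p^{\ell \vartheta_{\bm{F}}}$ from (S4), the optimal choice of $K$ against this growth gives the exponential factor $\exp(C(\|\bm{V}\|/\sqrt{\log\log X})^{(2-2\vartheta_{\bm{F}})/(1-2\vartheta_{\bm{F}})})$ in \eqref{ERMPJVD}. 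The exponent diverges as $\vartheta_{\bm{F}} \to 1/2$, which is precisely why the condition $\vartheta_{\bm{F}} < 1/2$ in (S4) is essential, and why Ramanujan's bound ($\vartheta_{\bm{F}} = 0$) would yield the cleaner error $\exp(C(\|\bm{V}\|/\sqrt{\log\log X})^{2})$. The additive errors $\prod_{k}(1 + V_{k})/(\log\log X)^{\alpha_{\bm{F}}+1/2}$ and $1/\sqrt{\log\log X}$ in \eqref{ERMPJVD} come from the Berry--Esseen-type Gaussian approximation in the tilted measure.
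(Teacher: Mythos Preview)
Your overall architecture is correct and coincides with the paper's: both proofs are Cram\'er-type exponential tilts, passing to the associated (tilted) measure $d\mu_{\bm{V}} \propto e^{\bm{V}\cdot\bm{Q}}\,dt$ and then showing that under this measure $\bm{Q}-\bm{V}$ is approximately standard Gaussian. The paper packages the tilt as Lemma~\ref{SPKLH} and the Gaussian approximation as Lemma~\ref{FMnu}; your identity $\tilde{\mathcal L}(\bm V)\,\mathbb{E}_{\mu_{\bm V}}[e^{-\bm V\cdot\bm Q}\mathbf{1}_{\bm Q>\bm V}]$ is equivalent. One point worth noting is that the paper's Gaussian approximation step is carried out not via a generic Berry--Esseen inequality but via the multidimensional Selberg--Beurling majorant/minorant (Lemma~\ref{Multi_BSF}), which has the advantage of only requiring the MGF on a bounded region of $\CC^{r}$; the paper remarks that a direct Fourier-inversion approach (Tenenbaum's formula, as in Radziwi\l\l) would force the MGF to be controlled at larger complex arguments and would then need Ramanujan to close.

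Your diagnosis of the error term is, however, misplaced. The exponential factor $\exp\bigl(C(\|\bm V\|/\sqrt{\log\log X})^{(2-2\vartheta_{\bm F})/(1-2\vartheta_{\bm F})}\bigr)$ does \emph{not} come from the truncation of the power series in the MGF computation: Proposition~\ref{RKLJVDPP} already shows that for $\|\bm z\|\le 2(\log\log X)^{2r}$ the MGF equals $\prod_{p\le X}M_{p,1/2}(\bm z)$ up to an error $\exp(-b_1(\log\log X)^{4(r+1)})$, which is negligible. The $\vartheta_{\bm F}$-dependent exponential instead arises in the analysis of the Euler product itself, specifically from the small primes $p\le C_1\|\bm x\|^{2/(1-2\vartheta_{\bm F})}$ for which the quadratic approximation $\log M_{p,1/2}(\bm x)\approx \tfrac14 K_{\bm F,\bm\theta}(p,\bm x)$ fails; see the bound \eqref{p_Psi_F_5} in Lemma~\ref{Prop_Psi_FPP} and its propagation through \eqref{Psi_F_2} and \eqref{ES_sI3}. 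In the tilted-measure language, this factor appears when you compare the MGF at $\bm x+i\bm u$ to its value at $\bm x$ during the indicator approximation, via Cauchy estimates on the analytic function $\Psi$. The term $(\log\log X)^{-\alpha_{\bm F}-1/2}$ comes from the Selberg--Beurling parameter $L=b_4(\log\log X)^{\alpha_{\bm F}}$, and $(\log\log X)^{-1/2}$ from integrating the residual Gaussian error over the box.
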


\begin{remark}
	In contrast to Proposition \ref{Main_Prop_JVD}, we allow $V_j$ to be of size $C\sqrt{\log\log X}$ for arbitrarily large $C$,
	which is important in the proof of Theorem \ref{New_MVT_RH}.
	We can prove an estimate similar to \eqref{MPJVD2} for larger $V_{j}$,
	where we need to change the value of $X$ suitably in this case.
	However, our main purpose is to prove Theorems \ref{Main_Thm_LD_JVD}, \ref{GJu}, and \ref{RH_LD_JVD},
	and the case of larger $V_{j}$ is not required in their proofs.
	For this reason, we give only the case $\norm[]{\bm{V}} \leq (\log{\log{X}})^{2r}$ for simplicity.
\end{remark}

Applying Proposition \ref{Main_Prop_JVD3}, we can prove the following theorem.

\begin{theorem} \label{GMDP}
	Let $\bm{F} = (F_{1}, \dots, F_{r})$ be an $r$-tuple of Dirichlet series, and $\bm{\theta} \in \RR^{r}$
	satisfy (S4), (S5), (A1), and (A2). Denote $\vartheta_{\bm{F}}^{*}=\min_{1\leq j\leq r}\vartheta_{F_j}$.
	Then for any fixed $k > 0$
there exist positive constants $c_1=c_1(k, \bm F)$, $c_2=c_2(k, \bm F)$ and $X_{0} = X_{0}(\bm{F}, k)$ such that
	for any large $T$ and
	$X_{0} \leq X \leq \max\{ c_{1}(\log{T} \log{\log{T}})^{\frac{2}{1 + 2\vartheta_{\bm{F}}^{*}}}, 
  c_{2}\frac{(\log{T})^2 \log{\log{T}}}{\log_{3}{T}} \}$,
	we have
	\begin{align*}
		 & \frac{1}{T}\int_{T}^{2T}\exp\l(2k\min_{1 \leq j \leq r}\Re e^{-i\theta_{j}} P_{F_{j}}(\tfrac{1}{2} + it, X)\r)dt \\
		 & = \frac{\exp\l( k^2 H_{\bm{F}}(X) \r) \prod_{j = 1}^{r}\s_{F_{j}}(X)}{\l( \sqrt{2\pi} k H_{\bm{F}}(X) \r)^{r-1}
		\sqrt{\frac{1}{2}H_{\bm{F}}(X)}}
		\Xi_{X}\l( \frac{k H_{\bm{F}}(X)}{\s_{F_{1}}(X)^2}, \dots, \frac{k H_{\bm{F}}(X)}{\s_{F_{r}}(X)^2} \r)\l( 1 + E \r),
	\end{align*}
	where $H_{\bm{F}}(X) = 2\l(\sum_{j = 1}^{r}\s_{F_{j}}(X)^{-2}\r)^{-1}$, and
	\begin{align*}
		E
		\ll_{\bm{F}, k}
		\frac{1}{(\log{\log{X}})^{\a_{\bm{F}} - \frac{r - 1}{2}}}
		+ \frac{\log_{3}{X}}{\sqrt{\log{\log{X}}}}.
	\end{align*}
\end{theorem}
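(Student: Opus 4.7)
The plan is to combine the layer-cake representation with Proposition \ref{Main_Prop_JVD3} and then carry out a saddle-point analysis. Writing $Y(t) := \min_{1 \leq j \leq r}\Re e^{-i\theta_{j}}P_{F_{j}}(\tfrac{1}{2}+it,X)$ and observing that $Y(t) > V$ is equivalent to $\Re e^{-i\theta_{j}}P_{F_{j}}(\tfrac{1}{2}+it,X)/\s_{F_{j}}(X) > V/\s_{F_{j}}(X)$ for every $j$, the layer-cake identity gives
\begin{align*}
\int_{T}^{2T}e^{2kY(t)}\,dt = \int_{-\infty}^{\infty}2k e^{2kV}\meas\bigl(\S_{X}(T,\bm{V}(V);\bm{F},\bm{\theta})\bigr)\,dV,
\end{align*}
where $\bm{V}(V) := (V/\s_{F_{1}}(X),\dots,V/\s_{F_{r}}(X))$. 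I would split the $V$-integral into three ranges: $V < 0$, $0 \leq V \leq \s_{*}(X)(\log{\log{X}})^{2r}$ with $\s_{*}(X) := \min_{j}\s_{F_{j}}(X)$, and the remaining tail. The $V<0$ contribution is bounded by $T/(2k)$ via the trivial estimate $\meas(\cdot)\leq T$ and is negligible against the main term of order $T\exp(k^{2}H_{\bm{F}}(X))$. For $V$ just beyond $\s_{*}(X)(\log{\log{X}})^{2r}$ the proposition still applies and the Gaussian tails give super-polynomial decay in $\log{\log{X}}$, while for $V \gg X^{1/2}$ the crude bound $|P_{F_{j}}(\tfrac{1}{2}+it,X)| \ll X^{1/2}$ forces the level set to be empty.

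In the main range Proposition \ref{Main_Prop_JVD3} gives the exact asymptotic for $\meas(\S_{X}(T,\bm{V}(V);\bm{F},\bm{\theta}))/T$. Applying the standard Gaussian-tail asymptotic $\int_{a}^{\infty}e^{-u^{2}/2}\,du/\sqrt{2\pi} = (1+O(a^{-2}))e^{-a^{2}/2}/(a\sqrt{2\pi})$ to each of the $r$ tail factors, and using the identity $\sum_{j}\s_{F_{j}}(X)^{-2} = 2/H_{\bm{F}}(X)$, the product of Gaussian tails reduces to $(1+O(V^{-2}))\prod_{j}\s_{F_{j}}(X)\cdot V^{-r}(2\pi)^{-r/2}\exp(-V^{2}/H_{\bm{F}}(X))$. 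Up to the $(1+E)$ factor the integrand in the layer-cake formula becomes
\begin{align*}
2k\cdot\Xi_{X}\!\l(\tfrac{V}{\s_{F_{1}}(X)^{2}},\dots,\tfrac{V}{\s_{F_{r}}(X)^{2}}\r)\cdot\frac{\prod_{j}\s_{F_{j}}(X)}{V^{r}(2\pi)^{r/2}}\exp\l(2kV - \frac{V^{2}}{H_{\bm{F}}(X)}\r).
\end{align*}

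The exponent $2kV - V^{2}/H_{\bm{F}}(X)$ is stationary at the saddle $V_{*} := kH_{\bm{F}}(X)$, with value $k^{2}H_{\bm{F}}(X)$. Substituting $V = V_{*}+W$, the Gaussian $\exp(-W^{2}/H_{\bm{F}}(X))$ localises $W$ to the scale $\sqrt{H_{\bm{F}}(X)} \asymp \sqrt{\log{\log{X}}}$, and on this scale both $\Xi_{X}$ and $V^{-r}$ vary by only a multiplicative factor $1 + O(|W|/V_{*}) = 1 + O(1/\sqrt{\log{\log{X}}})$; derivative bounds for $\Xi_{X}$ follow from the absolute convergence of its defining Euler product guaranteed by (S5) together with (A1)--(A2). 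Replacing $\Xi_{X}$ and $V^{-r}$ by their values at $V=V_{*}$, evaluating $\int e^{-W^{2}/H_{\bm{F}}(X)}\,dW = \sqrt{\pi H_{\bm{F}}(X)}$, and using the algebraic identity
\begin{align*}
\frac{2k\sqrt{\pi H_{\bm{F}}(X)}}{V_{*}^{r}(2\pi)^{r/2}} = \frac{1}{(\sqrt{2\pi}\,kH_{\bm{F}}(X))^{r-1}\sqrt{H_{\bm{F}}(X)/2}}
\end{align*}
gives the claimed main term.

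The main obstacle is tracking the error $E$ from Proposition \ref{Main_Prop_JVD3} through this integration. At the saddle one has $V_{j}(V_{*}) = V_{*}/\s_{F_{j}}(X) \asymp_{k} \sqrt{\log{\log{X}}}$, so $\|\bm{V}(V_{*})\|/\sqrt{\log{\log{X}}} = O_{k}(1)$ and the exponential prefactor in \eqref{ERMPJVD} is bounded, while $\prod_{i}(1+V_{i}(V_{*})) \asymp (\log{\log{X}})^{r/2}$, so the first piece of \eqref{ERMPJVD} contributes $(\log{\log{X}})^{(r-1)/2-\a_{\bm{F}}}$ and the second piece contributes $(\log{\log{X}})^{-1/2}$. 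Combining these with the Laplace-approximation errors arising from the Gaussian-tail $(1+O(V^{-2}))$ factor and the Taylor approximation of $\Xi_{X}$ and $V^{-r}$ near the saddle, and widening the effective window of integration to $|W| \leq \sqrt{H_{\bm{F}}(X)\log_{3}{X}}$ to control the tails of the Laplace integral (which accounts for the $\log_{3}{X}/\sqrt{\log{\log{X}}}$ term), yields the stated bound on $E$.
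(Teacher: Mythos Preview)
Your saddle-point analysis of the main range is essentially the paper's argument for what it calls $I_{2}$, and your error tracking there is correct. The genuine gap is in the tail.

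You cover $V\le \s_{*}(X)(\log\log X)^{2r}\asymp(\log\log X)^{2r+1/2}$ via Proposition~\ref{Main_Prop_JVD3} and then assert the level set is empty once $V\gg X^{1/2}$. This leaves the range $(\log\log X)^{2r+1/2}\lesssim V\lesssim X^{1/2}$ untreated, and monotonicity of $\Phi_{\bm F}$ combined with the bound at the edge of the proposition's range is not enough: with $X$ as large as $c\,(\log T)^{2}$, the factor $e^{2kV}$ can reach $e^{2kX^{1/2}}\asymp T^{ck}$, which swamps the $\exp(-c(\log\log X)^{4r})$ decay you would get from Proposition~\ref{Main_Prop_JVD3} at its boundary. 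The paper closes this gap with a separate large-deviation input (Lemma~\ref{LDESRDP}), obtained from the $2k$-th moment bound for $P_{F}$, which gives $\Phi_{\bm F}(T,V,X)\ll T\exp\bigl(-\Delta V\log(c_{0}V/(\Delta\s_{F}(X)^{2}))\bigr)$ uniformly up to $V\le \log(T/\log T)/(\Delta\log X)$; applying this with $\Delta=4k$ and then with $\Delta\asymp k/\log\log T$ handles the two intermediate ranges $Ck\log\log X\le V\le \log T/\log\log T$ and $\log T/\log\log T\le V\le c_{3}\log T$.

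Your pointwise bound $|P_{F_{j}}|\ll X^{1/2}$ is also too crude. The paper shows
\[
\phi_{\bm F}(t,X)\le (1+o(1))\min\Bigl\{\tfrac{2b_{\bm F}^{*}}{1+2\vartheta_{\bm F}^{*}}\,\tfrac{X^{1/2+\vartheta_{\bm F}^{*}}}{\log X},\ \sqrt{n_{\bm F}^{*}\,\tfrac{X\log\log X}{\log X}}\Bigr\},
\]
and it is precisely matching this pointwise maximum to $c_{3}\log T$ (with $c_{3}=1/(4k+\sqrt{\e}/4)$) that produces the constants $c_{1},c_{2}$ constraining $X$ in the theorem statement. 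Without this step you have no explanation for why those particular upper bounds on $X$ appear.
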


\begin{remark}
When $r=1$, these type of results can be obtained via mean value theorems of Dirichlet series (see Gonek-Hughes-Keating \cite[Theorem 2]{GHK2007} for $\zeta(s)$ and Heap \cite[Theorem 2]{Heap13} for Dedekind zeta-functions associated with Galois extension number fields over $\QQ$). Our method has the advantage to handle more than one $L$-functions, though the error term is weaker. 
\end{remark}
\begin{remark}
The exact dependency of $c_1, c_2$ on $\bm F$ and $k$ are determined explicitly in the proof. 
When $\vartheta_{\bm{F}}^{*} = 0$, the upper bound becomes $X \leq c_1(\log T\log\log T)^2$ where $c_1 \ll_{\bm F} \frac{1}{k^2}$.
\end{remark}

The asymptotic behavior can be further evaluated under some additional assumptions.
\begin{corollary}\label{jointDirichlet}
We use the same notation and assumptions as in Theorem \ref{GMDP}. We further assume $\vartheta_{\bm{F}} < \frac{1}{r + 1}$ and the strong Selberg Orthonormality Conjecture, that is, 
\begin{align} \label{SSOC}
	\sum_{n \leq X}\frac{a_{F_{i}}(p) \ol{a_{F_{j}}(p)}}{p}
	= \delta_{F_{i}, F_{j}} n_{F_{j}}\log{\log{X}} + e_{i, j} + o(1), \quad X \rightarrow + \infty
\end{align}
for some constant $e_{i, j}$. 
Here, $\delta_{F_{i}, F_{j}}$ is the Kronecker delta function, that is, $\delta_{F_{i}, F_{j}} = 1$ if $F_{i} = F_{j}$, 
and $\delta_{F_{i}, F_{j}} = 0$ otherwise. 
Then, as $X \rightarrow + \infty$,
\begin{align} \label{GGHK}
  \hspace{-3mm}
  \frac{1}{T}\int_{T}^{2T}
  \exp\l( 2k \min_{1 \leq j \leq r}\Re e^{-i\theta_{j}} \sum_{2 \leq n \leq X}\frac{\Lam_{F_{j}}(n)}{n^{1/2+it} \log{n}} \r) dt
	\sim C(\bm{F}, k, \bm{\theta})\frac{(\log{X})^{k^2 / h_{\bm{F}}}}{(\log{\log{X}})^{(r - 1)/2}}
\end{align}
for some positive constant $C(\bm{F}, k, \bm{\theta})$.
\end{corollary}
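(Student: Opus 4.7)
The plan is to substitute the asymptotic formula of Theorem \ref{GMDP} and evaluate each factor as $X \to \infty$, using the strong Selberg Orthonormality Conjecture \eqref{SSOC} and Hypothesis H (S5). The assumption $\vartheta_{\bm{F}} < 1/(r+1)$ is precisely what makes the error term $E$ in Theorem \ref{GMDP} an $o(1)$ quantity: it yields $(1 - 2\vartheta_{\bm{F}})/(2\vartheta_{\bm{F}}) > (r - 1)/2$, hence $\alpha_{\bm{F}} > (r - 1)/2$, and so both $(\log\log X)^{-(\alpha_{\bm{F}} - (r - 1)/2)}$ and $(\log_3 X)/\sqrt{\log\log X}$ tend to zero.

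For the asymptotics of the various components, first I would use \eqref{SSOC} with $i = j$ to show
\begin{align*}
\sigma_{F_j}(X)^2 = \frac{n_{F_j}}{2}\log\log X + c_j + o(1)
\end{align*}
for some constant $c_j$, where the $\ell \geq 2$ contribution is absorbed into $c_j$ and converges absolutely thanks to (S4) combined with (S5). A direct expansion then gives $H_{\bm{F}}(X) = \frac{\log\log X}{h_{\bm{F}}} + c_H + o(1)$ for a constant $c_H$, and hence $\exp(k^2 H_{\bm{F}}(X)) \sim e^{k^2 c_H}(\log X)^{k^2/h_{\bm{F}}}$. The prefactor in Theorem \ref{GMDP} then becomes
\begin{align*}
\frac{\exp(k^2 H_{\bm{F}}(X)) \prod_j \sigma_{F_j}(X)}{(\sqrt{2\pi}\,k H_{\bm{F}}(X))^{r-1}\sqrt{\tfrac{1}{2} H_{\bm{F}}(X)}}
\sim C_0(\bm{F}, k) \cdot \frac{(\log X)^{k^2/h_{\bm{F}}}}{(\log\log X)^{(r - 1)/2}}
\end{align*}
for a positive constant $C_0(\bm{F}, k)$, since the powers of $\log\log X$ combine as $r/2 - (r - 1) - 1/2 = -(r - 1)/2$.

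Next, I would evaluate $\Xi_X$ at $x_j^{(X)} := k H_{\bm{F}}(X)/\sigma_{F_j}(X)^2$, which tends to $x_j^{*} := 2k/(n_{F_j} h_{\bm{F}})$ as $X \to \infty$. Using \eqref{SSOC} together with (A2)---which forces the $\ell = 1$ contribution to $\tau_{j_1, j_2}(X)$ to vanish when $F_{j_1} = F_{j_2}$, since $e^{-i(\theta_{j_1} - \theta_{j_2})}$ is then purely imaginary---and (S5) for the $\ell \geq 2$ tail, one obtains $\tau_{j_1, j_2}(X) \to \tau_{j_1, j_2}^{*}$ for each $j_1 \ne j_2$. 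Consequently the exponential factor $\exp(\sum_{j_1 < j_2} x_{j_1}^{(X)} x_{j_2}^{(X)} \tau_{j_1, j_2}(X))$ converges. The infinite product $\prod_p M_{p, 1/2}(\bm{x})/\exp(K_{\bm{F}, \bm{\theta}}(p, \bm{x})/4)$ converges by Lemma \ref{Prop_Psi_FPP}, and a Taylor expansion of $M_{p, 1/2}$ around $\bm{x} = 0$ shows that the ratio equals $1 + O(p^{-c})$ with $c = \min(2,\, 2 - 2\vartheta_{\bm{F}}) > 1$ uniformly for $\bm{x}$ in a bounded neighborhood of $\bm{x}^{*}$, giving continuity at $\bm{x}^{*}$. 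Thus $\Xi_X(\bm{x}^{(X)}) \to \Xi^{*} > 0$, and setting $C(\bm{F}, k, \bm{\theta}) := C_0(\bm{F}, k) \cdot \Xi^{*}$ yields \eqref{GGHK}. The main subtlety is establishing the continuity and positivity of the Euler-type infinite product at $\bm{x}^{*}$; this relies on (S5) to control the high-$\ell$ terms and on (A2) to cancel the otherwise divergent $\ell = 1$ cross-terms when two of the $F_j$'s coincide.
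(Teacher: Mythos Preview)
Your proposal is correct and follows essentially the same route as the paper: verify that $\vartheta_{\bm F}<1/(r+1)$ forces $\alpha_{\bm F}>(r-1)/2$ so that $E=o(1)$; use \eqref{SSOC} together with (S4), (S5) to get $\sigma_{F_j}(X)^2=\tfrac{n_{F_j}}{2}\log\log X+c_j+o(1)$ and hence $H_{\bm F}(X)=h_{\bm F}^{-1}\log\log X+c_H+o(1)$; conclude that the prefactor is $\sim C_0\,(\log X)^{k^2/h_{\bm F}}/(\log\log X)^{(r-1)/2}$; and finally show $\Xi_X(\bm x^{(X)})$ converges by combining the convergence of each $\tau_{j_1,j_2}(X)$ (via \eqref{SSOC} and (A2)) with the continuity of the Euler product $\Psi$.

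One small remark: your auxiliary claim that the $p$-th factor of $\Psi$ equals $1+O(p^{-c})$ with $c=\min(2,\,2-2\vartheta_{\bm F})>1$ uniformly on a neighborhood of $\bm x^*$ is not quite right without Ramanujan. The cubic error in Lemma~\ref{EQTM} contributes a term of size $|a_F(p)|^3/p^{3/2}$, which under $|a_F(p)|\ll p^{\vartheta_{\bm F}}$ is only $O(p^{3\vartheta_{\bm F}-3/2})$, and this fails to be $O(p^{-1-\delta})$ once $\vartheta_{\bm F}\ge 1/6$. The paper sidesteps this by not arguing pointwise at all: Lemma~\ref{Prop_Psi_FPP} already establishes that $\Psi$ is \emph{analytic} on $\mathbb C^r$ (via the tail-sum estimates \eqref{ES_sI_PP_p3}--\eqref{ES_sI_PP_p4}, which exploit $|b_F(m)|^n\le |b_F(m)|^2 m^{(n-2)\vartheta_{\bm F}}$ rather than a pointwise bound), so continuity at $\bm x^*$ is immediate. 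Since you already invoke Lemma~\ref{Prop_Psi_FPP}, you can simply drop the Taylor-expansion sentence and cite analyticity directly; positivity of $\Psi(\bm x^*)$ then follows from \eqref{Psi_F_3}.
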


\begin{remark}
 When $r = 1$, $F_{1} = \zeta$, and $\theta_{1} = 0 \text{ or } \pi $ Corollary \ref{jointDirichlet} recovers a result of Heap \cite[Proposition 1]{He2021} for real moments of partial Euler product with $X<(\frac{1}{16k^2}-\epsilon )(\log T\log\log T)^2$.
\end{remark}

\section{\textbf{Approximate formulas for $L$-functions}}\label{appL}

\subsection{Approximate formulas for $L$-functions}

In this section, we give an approximate formula for $\log{F(s)}$.
Here, we choose the branch of $\log{F(\s + it)}$ as follows.
If $t$ is equal to neither imaginary parts of zeros nor poles of $F$, then we choose the branch by the integral
$\log{F(\s + it)} = \int_{\infty + it}^{\s + it}\frac{F'}{F}(z)dz$.
If $t \not= 0$ is equal to an imaginary part of a zero or a pole of $F$,
then we take $\log{F(\s + it)} = \lim_{\e \downarrow 0}\log{F(\s + i(t - \sgn(t)\e))}$.
Here, $\sgn$ is the signum function.
If there exists a pole or a zero of $F$ such that the imaginary part of zero, then we take $\log{F(\s)} = \lim_{\e \downarrow 0} \log{F(\s - \e)}$.
Next we introduce some notation.

\begin{notation*}

	Let $H \geq 1$ be a real parameter.
	The function $f: \RR \rightarrow [0, +\infty)$ is mass one and supported on $[0, 1]$, and further $f$ is a
	$C^{1}([0, 1])$-function, or $f$ belongs to $C^{d-2}(\RR)$ and is a $C^{d}([0, 1])$-function for some $d \geq 2$.
	For such $f$'s, we define the number $D(f)$, and functions $u_{f, H}$, $v_{f, H}$ by
	\begin{align}	\label{def_D}
		D(f) = \max\set{ d \in \ZZ_{\geq 1} \cup \{+\infty\}}{\text{$f$ is a $C^{d}([0, 1])$-function}},
	\end{align}
	$u_{f, H}(x) = Hf(H\log(x/e))/x$, and
	\begin{gather}
		\label{def_v}
		v_{f, H}(y) = \int_{y}^{\infty}u_{f, H}(x)dx.
	\end{gather}
	Further, the function $U_{0}(z)$ is defined by
	\begin{gather}
		\label{def_U}
		U_{0}(z) = \int_{0}^{\infty}u_{f, H}(x)E_{1}(z\log{x})dx
	\end{gather}
	for $\Im(z) \not= 0$.
	Here, $E_{1}(z) = E_{1}(x+iy)$ is the exponential integral defined by
	\begin{align*}
		E_{1}(z)
		:= \int_{x+iy}^{+\infty+iy}\frac{e^{-w}}{w} dw
		= \int_{z}^{\infty}\frac{e^{-w}}{w} dw.
	\end{align*}
	When $\Im(z) = 0$, then $U_{0}(x) = \lim_{\e \uparrow 0}U_{0}(x + i\e)$.

	Let $X \geq 3$ be a real parameter.
	Let $\rho_{F} = \b_{F} + i\gamma_{F}$ be a nontrivial zero of $F$ with $\b_F$, $\gamma_F$ real numbers.
	Here, nontrivial zeros refer to the zeros of $\Phi_F(s)$ that do not come from the Gamma factors $\gamma(s)$.
	We also define $\s_{X, t}(F)$ for $F \not= 1$ and $w_{X}(y)$ by
	\begin{gather}
		\label{def_s_X_F}
		\s_{X, t}(F)
		= \frac{1}{2} + 2\max_{|t - \gamma_{F}| \leq \frac{X^{3|\b_{F} - 1/2|}}{\log{X}}}
		\l\{ \b_{F} - \frac{1}{2}, \frac{2}{\log{X}} \r\},\\
		\label{def_w_X}
		w_{X}(y) = \l\{
		\begin{array}{cl}
			1                                                         & \text{if\, $1 \leq y \leq X$,}       \\[2mm]
			\frac{(\log(X^3/y))^2 - 2(\log(X^2 / y))^2}{2(\log{X})^2} & \text{if\, $X \leq y \leq X^2$,}     \\[2mm]
			\frac{(\log(X^{3}/y))^2}{2(\log{X})^2}                    & \text{if\, $X^{2} \leq y \leq X^3$.}
		\end{array}
		\r.
	\end{gather}
\end{notation*}

\begin{remark}
  The number $\s_{X, t}(F)$ is well-defined as a finite value for every $X \geq 3$, $t \in \RR$, $F \in \Sc^{\dagger} \setminus \{1\}$.
  Actually, by the same method as Conrey-Ghosh \cite[Theorem 2]{CG1993}, we see that $d_{F} \geq 1$ or $F = 1$ for any $F \in \Sc^{\dagger}$.
  We also find that there are infinitely many zeros of $F \in \Sc^{\dagger} \setminus \{1\}$ in the region $0 \leq \s \leq 1$
  by the standard argument in the Riemann-von Mangoldt formula, and that $F$ does not have zeros for $\s > 1$ from (S1) and (S5).
  Hence, we have $\frac{1}{2} + \frac{4}{\log{X}} \leq \s_{X, t}(F) \leq \frac{3}{2}$. 
\end{remark}

Then we have the following theorem, which is a generalization of \cite[Theorem 1]{II2019} in the case when $F$ is the Riemann zeta-function $\zeta(s)$.

\begin{theorem}	\label{Main_F_S}
	Assume $F \in \Sc^{\dagger}$. Let $ D(f)\geq 2$, and $H$, $X$ be real parameters with $H \geq 1$, $X \geq 3$.
	Then, for any $\s \geq 1/2$, $t \geq 14$, we have
	\begin{align}
		\label{EQ_F_S}
		 & \log{F(s)} =                                                                                      \\
		 & \sum_{2 \leq n \leq X^{1+1/H}}\frac{\Lam_{F}(n)v_{f, H}\l( e^{\log{n}/\log{X}} \r)}{n^{s}\log{n}}
		+\sum_{|s - \rho_{F}| \leq \frac{1}{\log{X}}}\log((s - \rho_{F})\log{X}) + R_{F}(s, X, H),
	\end{align}
	where the error term $R_{F}(s, X, H)$ satisfies 	\begin{align}
		\label{ESR_S2}
		 & R_{F}(s, X, H)
		\ll_{f} m_{F}\frac{X^{2(1-\s)} + X^{1-\s}}{t\log{X}}                        \\
		 & +H^{3}(\s_{X, t}(F) - 1/2)(X^{2(\s_{X, t}(F) - \s)} + X^{\s_{X, t}(F) - \s})
		\l( \Bigg| \sum_{n \leq X^3}\frac{\Lam_{F}(n)w_{X}(n)}{n^{\s_{X, t}(F) + it}} \Bigg| + d_{F}\log{t} \r)
	\end{align}
	for $|t| \geq t_{0}(F)$ with $t_{0}(F)$ a sufficiently large constant depending on $F$.
\end{theorem}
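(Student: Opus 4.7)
The plan is to adapt Selberg's derivation \cite{SCR} of the approximate formula for $\log\zeta$, in the refined form of \cite{II2019}, to the broader class $\mathcal{S}^\dagger$. The key analytic inputs are: (a) the Dirichlet identity $\log F(s) = \sum_{n\geq 2}\Lambda_F(n)/(n^s\log n)$ for $\Re s > 1$, from (S1) and (S4); (b) the Hadamard factorization of the entire function $(s-1)^{m_F}\gamma(s)F(s)$ granted by (S2), (S3), giving a partial-fraction expansion of $F'/F$ that exhibits the pole at $s=1$ and the nontrivial zeros; and (c) the standard Riemann--von Mangoldt style zero-counting bound derivable from (S3).

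The heart of the argument is to Mellin-invert the smoothing profile $u_{f,H}$ and represent the right-hand Dirichlet polynomial of \eqref{EQ_F_S} as a contour integral
\begin{equation*}
\frac{1}{2\pi i}\int_{(c)}\log F(s+w)\,\widetilde U(w)\,dw,
\end{equation*}
on a vertical line $\Re w = c$ lying in the region of absolute convergence, where $\widetilde U$ is an appropriately rescaled Mellin transform of $u_{f,H}$; the smoothness $D(f)\geq 2$ furnishes decay $\widetilde U(w)\ll_f(1+|w|/H)^{-2}$ along vertical lines, legitimizing the contour shifts below. One then moves the contour leftward to $\Re w = \sigma_{X,t}(F)-\sigma$. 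This crosses: (i) the kernel pole at $w=0$, with residue $\log F(s)$; (ii) the pole of $F$ at $w=1-s$, producing the first term of $R_F$ once the residue is bounded against $1/|1-s|\ll 1/t$; and (iii) the zeros $w=\rho_F-s$ caught by the definition of $\sigma_{X,t}(F)$, namely those with $|t-\gamma_F|\leq X^{3|\beta_F-1/2|}/\log X$, the ones within $1/\log X$ of $s$ accumulating to the displayed sum $\sum\log((s-\rho_F)\log X)$.

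To bound the remaining integral along $\Re w = \sigma_{X,t}(F)-\sigma$, one substitutes the standard consequence of the Hadamard product and the functional equation,
\begin{equation*}
\frac{F'}{F}(s+w)=\sum_{|\gamma_\rho-t|\leq 1}\frac{1}{s+w-\rho_F}+O(d_F\log t),
\end{equation*}
and then replaces the local zero sum by the Dirichlet polynomial $\sum_{n\leq X^3}\Lambda_F(n)w_X(n)/n^{\sigma_{X,t}(F)+it}$ via a weighted explicit formula with the smooth truncation $w_X$. Integrating this identity against $\widetilde U$ along the shifted line produces the factor $H^{3}(\sigma_{X,t}(F)-\tfrac12)(X^{\sigma_{X,t}-\sigma}+X^{2(\sigma_{X,t}-\sigma)})$ multiplying the Dirichlet polynomial and the $d_F\log t$ term in $R_F$: the power of $H$ comes from the derivative bounds $\|u_{f,H}^{(j)}\|_\infty\ll H^{j+1}$ controlling $\widetilde U$, the factor $\sigma_{X,t}(F)-\tfrac12$ from the effective length of integration about the critical line, and the two powers of $X$ from the single and doubled smooth cutoffs generated by successive partial summations.

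The principal difficulty beyond the zeta case is that $\mathcal{S}^\dagger$ does \emph{not} impose the Ramanujan conjecture: the exponents $b_F(p^\ell)$ in the Euler product are only constrained by $b_F(p^\ell)\ll p^{\ell\vartheta_F}$ with $\vartheta_F<1/2$ from (S4), so prime-power contributions with $\ell\geq 2$ are not automatically negligible in the Dirichlet polynomial or its companion $F'/F$ expansion. Hypothesis H (S5) is precisely the input needed to keep these under control, making $\sum_p|b_F(p^\ell)\log p^\ell|^2/p^\ell$ convergent for every $\ell\geq 2$ and thus allowing the higher prime-power contributions to be absorbed cleanly into the error. A minor but necessary check is that $\sigma_{X,t}(F)\leq 3/2$ uniformly, so that the shifted contour remains in a fixed bounded region; this follows from (S1) and (S4), since $F$ has no zeros for $\sigma>1$.
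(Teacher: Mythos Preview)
Your overall strategy---smooth the Dirichlet series via a Mellin transform, pull out the main term $\log F(s)$, isolate the contribution of zeros near $s$, and control the remainder through the Selberg-type device that bounds a weighted zero sum by the Dirichlet polynomial $\sum_{n\leq X^3}\Lambda_F(n)w_X(n)n^{-\sigma_{X,t}(F)-it}$ plus $d_F\log t$---is exactly the route the paper takes, following \cite{II2019}. Two points, however, deserve correction.

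First, the paper does \emph{not} shift the contour to $\Re w=\sigma_{X,t}(F)-\sigma$ and bound an integral there. Instead it establishes an \emph{exact} identity (Proposition~\ref{GSEF}) expressing $\log F(s)$ as the smoothed Dirichlet polynomial minus $\sum_{\rho_F}U_0((s-\rho_F)\log X)$ plus analogous $U_0$-terms from the pole and trivial zeros, where $U_0(z)=\int_0^\infty u_{f,H}(x)E_1(z\log x)\,dx$. The individual $U_0$-terms are then estimated directly: Lemma~\ref{EUE} gives $U_0((s-\rho_F)\log X)=-\log((s-\rho_F)\log X)+O(1)$ when $|s-\rho_F|\leq 1/\log X$ (producing the displayed zero sum), and Lemma~\ref{GUE} bounds the far zeros. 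Only \emph{after} this does the Selberg inequality \eqref{KSIE} enter, to convert the resulting sums over zeros into the form \eqref{ESR_S2}. Your description of deforming past branch points of $\log F(s+w)$ and ``picking up residues'' is loose---$\log F$ has logarithmic singularities, not poles---and the substitution of the partial-fraction expansion of $F'/F$ into an integral of $\log F$ does not parse as written.

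Second, your remark that Hypothesis~H (S5) is ``precisely the input needed'' here is misplaced. The proof of Theorem~\ref{Main_F_S} uses only (S1)--(S4): the Hadamard product and functional equation for \eqref{GLF}--\eqref{GLFE}, the Euler product bound $b_F(n)\ll n^{\vartheta_F}$ for the convergence arguments, and nothing more. Condition (S5) is invoked only later, in the moment estimates for Dirichlet polynomials (e.g.\ Lemmas~\ref{MomentLambdab} and~\ref{SumbLargeP}), not in the approximate formula itself.
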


\begin{remark}
	Note that in the above theorem we choose the branch of $\log{(s - \rho_{F})}$ as follows.
	If $t \not= \gamma_{F}$, then $-\pi < \arg{(s - \rho_{F})} < \pi$,
	and if $t = \gamma_{F}$, then $\arg{(s - \rho_{F})} = \lim_{\e \uparrow 0}\arg{(\s - \b_{F} + i\e)}$.
\end{remark}

\begin{remark}
	Theorem \ref{Main_F_S} is a modification and generalization of the hybrid formula by Gonek, Hughes, and Keating \cite{GHK2007} to apply
	the method of Selberg-Tsang \cite{KTDT}, where a different formula \cite[Lemma 5.4]{KTDT} was used.
	With our formula, we can find the sign of the contribution from zeros to close $s$ by the form
	$
		\sum_{|s - \rho_{F}|\leq \frac{1}{\log{X}}}\log((s - \rho_{F})\log{X})
	$.
	This fact plays an important role in the proof of the theorems in Section \ref{General} in a fashion similar to the work of Soundararajan \cite{SM2009}. 
\end{remark}

Theorem \ref{Main_F_S} can be obtained in the same method as the proof of \cite[Theorem 1]{II2019},
where we need the following proposition instead of  \cite[Proposition 1]{II2019}.

\begin{proposition}	\label{GSEF}
	Let $F \in \Sc^{\dagger}$.
	Let $X \geq 3$, $H \geq 1$ be real parameters.
	Then, for any $s \in \CC$, we have
	\begin{align*}
		\log{F(s)}
		= & \sum_{2 \leq n \leq X^{1 + 1/H}}\frac{\Lam_{F}(n)v_{f, H}(e^{\log{n}/\log{X}})}{n^{s} \log{n}}
		+ m_{F}^{*}(U_{0}((s - 1)\log{X}) + U_{0}(s \log{X}))                                              \\
		  & - \sum_{\substack{\rho_{F}                                                                     \\ \rho_{F} \not= 0, 1}}U_{0}((s - \rho_{F})\log{X})
		- \sum_{n = 0}^{\infty}\sum_{j = 1}^{k}U_{0}((s + (n+\mu_{j})/\lam_{j})\log{X}),
	\end{align*}
	where the number $m_{F}^{*}$ is the integer such that the function $(s - 1)^{m_{F}^{*}}F(s)$ is entire and not equal to zero at $s = 1$.
\end{proposition}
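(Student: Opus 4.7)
The plan is to derive Proposition \ref{GSEF} by a Perron-style contour integral argument combined with the Hadamard factorization of the completed $L$-function, generalizing the hybrid formula of Gonek-Hughes-Keating \cite{GHK2007} to the class $\Sc^{\dagger}$. Let $\widetilde{u}(z) := \int_0^\infty u_{f,H}(x) x^{-z}\,dx$ denote the Mellin transform of $u_{f,H}$; an explicit computation gives $\widetilde{u}(z) = e^{-z}\int_0^1 f(t)e^{-zt/H}\,dt$, so $\widetilde{u}(0) = 1$, and the smoothness of $f$ yields the decay $\widetilde{u}(z) \ll_{f,D}(1+|z|/H)^{-D(f)}$ in vertical strips. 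Using $v_{f,H}(y) = \int_y^\infty u_{f,H}(x)\,dx$ and Mellin inversion, one verifies $v_{f,H}(e^{\log n/\log X}) = \frac{1}{2\pi i}\int_{(c)}\widetilde{u}(-w\log X)\, n^{-w}\,\frac{dw}{w}$ for $c>0$. Multiplying by $\Lam_{F}(n)/(n^s\log n)$, summing over $n\geq 2$, and interchanging (valid for $c > \max(0,1-\Re s)$ by absolute convergence of $\log F(s+w)$ in $\Re(s+w)>1$), the Dirichlet polynomial in the proposition equals
\begin{align*}
\frac{1}{2\pi i}\int_{(c)}\log F(s+w)\,\frac{\widetilde{u}(-w\log X)}{w}\,dw.
\end{align*}

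The task is to evaluate this integral. Since $\log F(s+w)$ has logarithmic branch points rather than poles, I invoke the Hadamard factorization: by (S2)--(S3), the function $\xi_F(s):=(s(s-1))^{m_F^*}\gamma(s)F(s)$ is entire of order one, and taking logs gives
\begin{align*}
\log F(s+w) = -m_F^*\log(s+w) - m_F^*\log(s+w-1) - \log\gamma(s+w) + A_F + B_F(s+w) + \sum_{\rho}\l[\log(1-(s+w)/\rho)+(s+w)/\rho\r],
\end{align*}
with $\log\gamma$ further unfolded via the Weierstrass product of $\Gamma$ into a sum of $-\log(s+w+(n+\mu_j)/\lambda_j)$ over the trivial-zero locations plus entire regularizers. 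Term-by-term substitution reduces the problem to the basic integral
\begin{align*}
J(s,a) := \frac{1}{2\pi i}\int_{(c)}\log(s+w-a)\,\frac{\widetilde{u}(-w\log X)}{w}\,dw.
\end{align*}
Using $U_0'(z) = -\widetilde{u}(z)/z$, differentiating in $a$ and shifting contours past the pole at $w=a-s$ gives $\partial_a J(s,a) = -(1-\widetilde{u}((s-a)\log X))/(s-a)$, which matches $\partial_a[\log(s-a)+U_0((s-a)\log X)]$; checking the asymptotic $J(s,a) \sim \log(-a)$ as $\Re a\to-\infty$ (via $\widetilde u(0)=1$) identifies the additive constant as zero, so
\begin{align*}
J(s,a) = \log(s-a) + U_0((s-a)\log X).
\end{align*}
Entire pieces (the constants $A_F$, the linear $B_F(s+w)$, and the Weierstrass regularizers $(s+w)/\rho$) just contribute their values at $s$ via the residue at $w=0$.

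Assembling the pieces: $-m_F^*\log(s+w)$ contributes $-m_F^*\log(s) - m_F^* U_0(s\log X)$; $-m_F^*\log(s+w-1)$ contributes $-m_F^*\log(s-1) - m_F^* U_0((s-1)\log X)$; the zero sum contributes $\sum_\rho[\log(1-s/\rho) + s/\rho + U_0((s-\rho)\log X)]$; the Gamma piece contributes $-\log\gamma(s) + \sum_{n,j} U_0((s+(n+\mu_j)/\lambda_j)\log X)$; and the entire part contributes $A_F + B_Fs$. The non-$U_0$ pieces collapse to $\log F(s)$ by the Hadamard formula, so the contour integral equals
\begin{align*}
\log F(s) - m_F^*\l(U_0(s\log X) + U_0((s-1)\log X)\r) + \sum_\rho U_0((s-\rho)\log X) + \sum_{n \geq 0}\sum_{j=1}^{k} U_0((s+(n+\mu_j)/\lambda_j)\log X),
\end{align*}
which rearranges to the claimed identity.

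The main technical obstacles are (i) justifying the contour shifts across the branch cut of each $U_0$ term, which lies on a real ray in $w$ and carries only an integrable logarithmic singularity at the branch point, so it is navigable by the standard indentation argument; and (ii) ensuring absolute convergence of $\sum_\rho U_0((s-\rho)\log X)$ and $\sum_{n,j}U_0((s+(n+\mu_j)/\lambda_j)\log X)$. The former follows from the Riemann--von Mangoldt bound $N_F(T) \ll d_F T \log T$ (which holds for every $F \in \Sc^{\dagger}$ by a standard contour argument from (S2)--(S3)) combined with the decay $U_0(z) \ll e^{-\Re z}/|z|$ for $\Re z \geq 1$ inherited from $E_1$, while the latter is easier since the trivial zeros grow linearly. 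Controlling the termwise exchange with the zero sum, as well as the tails of the Hadamard series, additionally uses the convexity/Phragm\'en--Lindel\"of bound for $\log F$ in vertical strips combined with the rapid decay of $\widetilde u$ off the real axis.
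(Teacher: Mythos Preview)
Your argument is correct and follows essentially the same strategy as the paper, which defers to \cite[Proposition~1]{II2019}: express the smoothed Dirichlet polynomial as a Mellin-type contour integral, then unfold it via the Hadamard/partial-fraction structure of $F$ so that each zero, pole, and trivial zero contributes a $U_0$ term. Your computation of the basic integral $J(s,a)=\log(s-a)+U_0((s-a)\log X)$ by differentiating in $a$ and matching asymptotics is clean and checks out.

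The one genuine technical difference is in how the contour shift is justified. The paper singles out Lemma~\ref{ZAL}, which furnishes horizontal lines on which $F'/F(\sigma+it)\ll(\log T)^2$; this is the classical device for truncating a contour integral of $F'/F$ (or of $\log F$ after an integration by parts) without running into zeros, and then letting the truncation height go to infinity. You instead substitute the Hadamard factorization \emph{before} shifting and treat each $\log(s+w-\rho)$ separately, so that the only growth is logarithmic and the rapid vertical decay of $\widetilde u$ does all the work. This bypasses Lemma~\ref{ZAL} entirely, but transfers the burden to justifying the interchange of the zero sum with the contour integral. That interchange is genuine: the Hadamard series $\sum_\rho[\log(1-(s+w)/\rho)+(s+w)/\rho]$ converges absolutely only like $\sum|\rho|^{-2}$, so dominated convergence on the line $\Re w=c$ requires a uniform tail bound, which in practice comes from the Riemann--von Mangoldt count $N_F(T)\ll d_F T\log T$ together with the $|w|^{-D(f)-1}$ decay of the kernel. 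You gesture at this in your last paragraph; to make it airtight one would either carry out that estimate or, as the paper implicitly does, work with $F'/F$ and its partial-fraction expansion (Lemma~\ref{GHF}) where the interchange is immediate and Lemma~\ref{ZAL} handles the horizontal segments.
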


Using Theorem \ref{Main_F_S}, we obtain the following propositions.

\begin{proposition}	\label{KLI}
	Let $F \in \Sc^{\dagger} $ satisfying \eqref{SNC} and (A3).
	Let $\s \geq 1/2$, and $T$ be large.
	Then, there exist positive constants $\delta_{F}$, $A_{1} = A_{1}(F)$ such that
	for any $k \in \ZZ_{\geq 1}$, $3 \leq X \leq Y := T^{1/k}$,
	\begin{align*}
		 & \frac{1}{T}\int_{T}^{2T}\bigg| \log{F(\s + it)} - P_{F}(\s + it, X)
		- \sum_{|\s + it - \rho_{F}| \leq \frac{1}{\log{Y}}}\log((\s + it - \rho_{F})\log{Y}) \bigg|^{2k}dt \\
		 & \leq A_{1}^{k} k^{2k}T^{\delta_{F}(1 - 2\s)}
		+ A_{1}^{k} k^{k} \l( \sum_{X < p \leq Y}\frac{|a_{F}(p)|^2}{p^{2\s}} \r)^{k}.
	\end{align*}
\end{proposition}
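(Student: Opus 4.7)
The plan is to apply Theorem \ref{Main_F_S} with $X$ replaced by $Y = T^{1/k}$, a mass-one function $f$ with $D(f) \geq 2$, and a parameter $H = H(k, T) \geq 1$ chosen so that $T^{1/H} \leq C_F^{k}$ for some constant $C_F$ depending only on $F$. Since $v_{f, H}(e^{\log n/\log Y}) = 1$ whenever $n \leq Y$, subtracting $P_F(\s + it, X)$ and the zero sum $\sum_{|\s + it - \rho_F| \leq 1/\log Y}\log((\s + it - \rho_F)\log Y)$ from both sides of \eqref{EQ_F_S} collapses the long Dirichlet polynomial to its tail and yields the identity
\begin{align*}
\log F(\s + it) - P_F(\s + it, X) - \sum_{|\s + it - \rho_F| \leq 1/\log Y}\log((\s + it - \rho_F)\log Y) = D(\s + it) + R_F(\s + it, Y, H),
\end{align*}
where $D(s) := \sum_{X < n \leq Y^{1 + 1/H}}\Lam_F(n) v_{f, H}(e^{\log n/\log Y})/(n^s \log n)$. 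By Minkowski's inequality on $L^{2k}([T, 2T])$, it suffices to estimate the $2k$-th moments of $D(\s + it)$ and of $R_F(\s + it, Y, H)$ separately.

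For the Dirichlet polynomial $D$, I split $D = D_{\mathrm{pr}} + D_{\mathrm{pp}}$ according to whether $n$ is a prime or a higher prime power. The bound $|b_F(p^\ell)| \ll p^{\ell \vartheta_F}$ with $\vartheta_F < 1/2$ ensures that $D_{\mathrm{pp}}(\s + it)$ is absolutely and uniformly bounded for $\s \geq 1/2$, so its $2k$-th moment is of order $T$ and fits inside the $A_1^k k^{2k} T^{\delta_F(1 - 2\s)}$ term. For the prime piece $D_{\mathrm{pr}}$, I expand $|D_{\mathrm{pr}}|^{2k}$ as a Dirichlet series and apply the Montgomery--Vaughan mean-value theorem; the $k!$ diagonal matchings of $k$ primes against $k$ primes yield
\begin{align*}
\int_T^{2T}|D_{\mathrm{pr}}(\s + it)|^{2k} dt \ll k! \, (T + T^{1 + 1/H}) \Big(\sum_{X < p \leq Y^{1 + 1/H}}\frac{|a_F(p)|^2}{p^{2\s}}\Big)^k.
\end{align*}
The constraint $T^{1/H} \leq C_F^k$ forces the off-diagonal contribution into $C_F^k k!\, T \, (\text{prime sum})^k \leq A_1^k k^k T\,(\text{prime sum})^k$. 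Finally, by (A1) the tail $Y < p \leq Y^{1 + 1/H}$ contributes only $O_F(1)$ to the variance sum, so truncating the upper limit back to $Y$ produces the second term in the claim.

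The principal obstacle is bounding the $2k$-th moment of $R_F(\s + it, Y, H)$. Its dominant contribution is governed by the factor $\s_{Y, t}(F) - 1/2$, which exceeds $4/\log Y$ precisely when $F$ has a zero close to $1/2 + it$. I will dyadically decompose $[T, 2T]$ into subsets $\mathcal{E}_j$ on which $\s_{Y, t}(F) - 1/2$ is of order $2^j/\log Y$, for $j = 0, 1, 2, \dots$. By the definition of $\s_{Y, t}(F)$, membership in $\mathcal{E}_j$ forces a nontrivial zero $\rho_F = \b_F + i\gamma_F$ with $\b_F - 1/2$ of order $2^j/\log Y$ and $|\gamma_F - t| \leq Y^{O(2^j/\log Y)}/\log Y$; combined with (A3) this bounds $\meas(\mathcal{E}_j)$ by $T \cdot Y^{c_1 2^j/\log Y} \cdot T^{-c_2 \kappa_F 2^j/\log Y}$ for absolute constants $c_1, c_2 > 0$. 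On each $\mathcal{E}_j$, the growth factor $Y^{2(\s_{Y, t}(F) - \s)}$ is $Y^{O(2^j/\log Y)}$ and the short Dirichlet polynomial $\sum_{n \leq Y^3}\Lam_F(n) w_Y(n)/n^{\s_{Y, t}(F) + it}$ is controlled via a Cauchy--Schwarz / second-moment estimate. Raising to the $2k$-th power and summing the resulting geometric series over $j$, the exponential decay from (A3) --- transferred via $Y = T^{1/k}$ into a factor depending on $k$ --- dominates the polynomial blow-up in $Y$ and produces a contribution of the form $A_1^k k^{2k} T^{\delta_F(1 - 2\s)}$ for a suitable $\delta_F > 0$ depending only on $F$, giving the first term in the stated bound.
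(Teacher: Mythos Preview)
Your plan has a genuine gap: applying Theorem~\ref{Main_F_S} directly with the parameter $Y=T^{1/k}$ cannot be made to work, and this is why the paper instead applies it with a much smaller parameter $Z=T^{\delta_F/k}$ (where $\delta_F=\min\{\tfrac14,\tfrac{\kappa_F}{20}\}$) and only afterwards bridges to $X$ and $Y$.

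The dilemma is as follows. You need $T^{1/H}\le C_F^{k}$, i.e.\ $H\gg(\log T)/k$, so that the off-diagonal in the $2k$-th moment of $D_{\mathrm{pr}}$ is acceptable. But the error estimate \eqref{ESR_S2} carries a factor $H^{3}$; on the \emph{generic} set (where $\s_{Y,t}(F)-\tfrac12=4/\log Y$) this already gives $R_F\asymp H^{3}\cdot(\log T)/\log Y\asymp(\log T)^{3}/k^{2}$, hence $\tfrac1T\int_T^{2T}|R_F|^{2k}\,dt\gg(\log T)^{6k}k^{-4k}$, which is nowhere near $A_1^{k}k^{2k}$ unless $k\gg\log T$. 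Taking $H=1$ instead makes the Dirichlet polynomial too long: its $k$-fold product reaches $Y^{2k}=T^{2}$, so neither Lemma~\ref{SLL} nor Montgomery--Vaughan saves the off-diagonal.

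Your dyadic scheme for $\s_{Y,t}(F)$ fails for the same reason. With $Y=T^{1/k}$ one has $\log Y=(\log T)/k$, so the decay from (A3) on the $j$-th shell is $T^{-\kappa_F\cdot 2^{j-1}/\log Y}=e^{-\kappa_F k\,2^{j-1}}$, while the window width contributes $Y^{3\cdot 2^{j-1}/\log Y}=e^{3\cdot 2^{j-1}}$ and the factor $Y^{2(\s_{Y,t}(F)-\s)}$ raised to the $2k$-th power contributes $e^{4k\cdot 2^{j}}$. The net exponent is $2^{j-1}\bigl(3+(8-\kappa_F)k\bigr)$, which diverges as $j\to\infty$ unless $\kappa_F>8$ and $k>3/(\kappa_F-8)$; unconditionally neither is guaranteed (in particular the proposition must hold for $k=1$). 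This is exactly the constraint encoded in Lemma~\ref{SL_s_X}, which requires $X\xi\le T^{\kappa_F/4}$ and forces the scale $Z=T^{\delta_F/k}$ with $\delta_F\le\kappa_F/20$. The paper then passes from $Z$ to $X$ via Lemma~\ref{MomentLambdab}, and from $Z$ to $Y$ in the zero sum via Lemma~\ref{ESRSIZ}.

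A smaller point: the claim that $D_{\mathrm{pp}}$ is absolutely and uniformly bounded from $|b_F(p^\ell)|\ll p^{\ell\vartheta_F}$ alone is false when $0\le\vartheta_F<\tfrac12$, since $\sum_p p^{2\vartheta_F-1}$ diverges; one needs Hypothesis~(S5), as the paper does in Lemma~\ref{MomentLambdab}.
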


\begin{proposition}	\label{KLST}
	Suppose the same situation as Proposition \ref{KLI}.
	Then, there exist positive constants $\delta_{F}$, $A_{2} = A_{2}(F)$ such that
	for any $k \in \ZZ_{\geq 1}$, $3 \leq X \leq Y := T^{1/k}$,
	\begin{align*}
		 & \frac{1}{T}\int_{T}^{2T}\l| \log{F(\s + it)} - P_{F}(\s + it, X) \r|^{2k}dt
		 \leq A_{2}^{k} k^{4k}T^{\delta_{F}(1 - 2\s)}
		+ A_{2}^{k} k^{k} \l( \sum_{X < p \leq Y}\frac{|a_{F}(p)|^{2}}{p^{2\s}} \r)^{k}.
	\end{align*}
\end{proposition}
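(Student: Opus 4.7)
My plan is to reduce Proposition \ref{KLST} to Proposition \ref{KLI} by controlling the close-zero sum
\[
Z(t):=\sum_{|\sigma+it-\rho_F|\leq 1/\log Y}\log\bigl((\sigma+it-\rho_F)\log Y\bigr)
\]
in $L^{2k}[T,2T]$. The triangle inequality inside $|\cdot|^{2k}$ gives
\[
\frac{1}{T}\int_T^{2T}\!|\log F-P_F|^{2k}dt\leq 2^{2k-1}\Bigl(\frac{1}{T}\int_T^{2T}\!|\log F-P_F-Z|^{2k}dt+\frac{1}{T}\int_T^{2T}\!|Z|^{2k}dt\Bigr),
\]
and Proposition \ref{KLI} handles the first summand. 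It therefore suffices to prove $\frac{1}{T}\int_T^{2T}|Z(t)|^{2k}dt\ll A^k k^{4k}T^{\delta_F(1-2\sigma)}$ for some $A=A(F)$. Since $|(\sigma+it-\rho_F)\log Y|\leq 1$ on the close-zero set, I split $|Z|\leq S_1+\pi N$ with
\[
S_1(t)=\!\sum_{|\sigma+it-\rho_F|\leq 1/\log Y}\!\log\frac{1}{|(\sigma+it-\rho_F)\log Y|},\quad N(t)=\#\{\rho_F:|\sigma+it-\rho_F|\leq 1/\log Y\},
\]
so that $|Z|^{2k}\leq 2^{2k-1}(S_1^{2k}+\pi^{2k}N^{2k})$.

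\smallskip

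For the $S_1$-term I expand and apply H\"older's inequality with $2k$ factors:
\[
\int_T^{2T}S_1^{2k}dt=\sum_{\rho_1,\ldots,\rho_{2k}}\int_T^{2T}\prod_{i=1}^{2k}g_{\rho_i}(t)dt\leq \sum_{\rho_1,\ldots,\rho_{2k}}\prod_{i=1}^{2k}\|g_{\rho_i}\|_{L^{2k}[T,2T]},
\]
where $g_\rho(t)=\mathbf{1}_{|\sigma+it-\rho|\leq 1/\log Y}\log\bigl(1/|(\sigma+it-\rho)\log Y|\bigr)$. The one-variable change $u=(t-\gamma_\rho)\log Y$ together with Stirling gives, uniformly in $\rho$,
\[
\|g_\rho\|_{2k}^{2k}\leq\frac{1}{\log Y}\int_{-1}^{1}(-\log|u|)^{2k}du=\frac{2(2k)!}{\log Y}\leq \frac{C^k k^{2k}}{\log Y}.
\]
A nonzero product forces all $\rho_i$ to lie within $2/\log Y$ of a common base zero $\rho_1$, with $\gamma_{\rho_1}$ essentially in $[T,2T]$ and $|\beta_{\rho_1}-\sigma|\leq 1/\log Y$. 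A Jensen-type local count shows that any disk of radius $2/\log Y$ contains $O(\log T/\log Y+1)=O(k)$ zeros of $F$ (using $\log Y=\log T/k$); and the zero-density estimate (A3), together with the functional equation, bounds the number of eligible $\rho_1$ by $\ll T^{1-2\delta_F(\sigma-1/2)}\log T$ for some $\delta_F=\delta_F(\kappa_F)>0$. Putting these together,
\[
\int_T^{2T}S_1^{2k}dt\ll T^{1+\delta_F(1-2\sigma)}\log T\cdot(Ck)^{2k-1}\cdot\frac{C^k k^{2k}}{\log Y}\ll T^{1+\delta_F(1-2\sigma)}C^k k^{4k},
\]
and the analogous (easier) count for $\int N^{2k}dt$ is dominated. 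Combined with Proposition \ref{KLI}, this gives Proposition \ref{KLST} after absorbing absolute constants into $A_2=A_2(F)$.

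\smallskip

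The main obstacle is the combinatorics of $2k$-tuples of zeros clustered within $2/\log Y$. The key identity $\log Y=\log T/k$ is what makes each local zero count $O(k)$, yielding $k^{2k-1}$ from the secondary zeros and $k^{2k}$ from the $L^{2k}$ norms of $g_\rho$; both factors are needed to reach the target $k^{4k}$. A secondary subtlety is that (A3) yields a power saving only for $\sigma-1/2\gg 1/\log Y$; when $\sigma\in[1/2,1/2+O(1/\log Y)]$ the factor $T^{\delta_F(1-2\sigma)}$ is $O(1)$ and the trivial bound $N_F(1/2,T)\ll T\log T$ on the count of eligible $\rho_1$, together with the functional equation, suffices.
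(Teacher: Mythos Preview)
Your reduction to bounding $\int_T^{2T}|Z(t)|^{2k}dt$ via Proposition~\ref{KLI} and the triangle inequality, the splitting $|Z|\le S_1+\pi N$, and the single-zero bound $\|g_\rho\|_{2k}^{2k}\le 2(2k)!/\log Y$ are all correct. The genuine gap is the ``Jensen-type local count'' asserting that a disk of radius $2/\log Y$ contains $O(\log T/\log Y)=O(k)$ zeros of $F$. Unconditionally this is not known: the Riemann--von Mangoldt formula gives only $N(t+h)-N(t)=O(\log t)$ for any $0<h\le 1$, because the two $S(t)$-type error terms are each $O(\log t)$ and need not cancel, and a Jensen argument on a disk of bounded radius centered at $2+it$ likewise yields at best $O(\log t/\log\log t)$ zeros in the small inner disk (the denominator $\log(R/r)$ is only $\asymp\log\log T$). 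With the correct pointwise bound $M(\rho_1)=O(\log T)$, your tuple count acquires a factor $(\log T)^{2k-1}$ in place of $k^{2k-1}$, and the resulting estimate for $\int S_1^{2k}$ is off by a factor $(\log T/k)^{2k-1}$, which is fatal for the target $A_2^k k^{4k}$.

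The paper circumvents this by controlling $N(t)$ in moments rather than pointwise. It first rescales from $Y=T^{1/k}$ to $Z=T^{\delta_F/k}$, writing the log-sum at radius $1/\log Y$ as the log-sum at radius $1/\log Z$ plus a term bounded by $\sum_{|\sigma+it-\rho|\le 1/\log Z}1$, and then invokes Lemma~\ref{ESRSIZ}. That lemma bounds both $\int N^{2k}$ and $\int|\text{log-sum}|^{2k}$ at the $Z$-scale via Selberg's inequality \eqref{NZVDP}, which expresses $N(t)$ through $\sigma_{X,t}(F)-\tfrac12$ and a Dirichlet polynomial, combined with the zero-density input (A3) through Lemma~\ref{SL_s_X}. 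The rescaling to $Z\le T^{\delta_F/k}$ is precisely what makes Lemma~\ref{ESRSIZ} applicable and produces the factor $(\log T/\log Z)^{2k}\asymp k^{2k}$ that you were hoping to get pointwise. If you replace your pointwise local count by this moment machinery, the argument essentially becomes the paper's.
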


Proposition \ref{KLST} has been essentially proved by Selberg \cite[Theorem 1]{S1992}.
Our range of $X$ and his are different, but this difference is filled by the argument in \cite[Sections 3, 5]{KTDT}.
On the other hand, we do not assume the Ramanujan Conjecture (S5).
Hence, we give the proof of Proposition \ref{KLST} for completeness.

\subsection{\textbf{Proof of Proposition \ref{GSEF} and Theorem \ref{Main_F_S}}}

We give the proofs of Proposition \ref{GSEF} and Theorem \ref{Main_F_S}, but the proofs are almost the same as
the proofs of \cite[Proposition 2]{II2019} and \cite[Theorem 1]{II2019}. Therefore, we give the sketches only.

\begin{lemma}	\label{GHF}
	Let $F \in \Sc^{\dagger} \setminus \{ 1 \}$. For all $s \in \CC$ neither equaling to a pole nor a zero of $F$, we have
	\begin{align}	\label{GLF}
		\frac{F'}{F}(\s + it)
		= & \sum_{\substack{\rho_{F}                                                                          \\ \rho_{F} \not= 0, 1}}\l( \frac{1}{s - \rho_{F}} + \frac{1}{\rho_{F}} \r)
		+ \eta_F - m_{F}\l(\frac{1}{s - 1} + \frac{1}{s}\r) - \log{Q}                                       \\
		  & - \sum_{n = 0}^{\infty}\sum_{j = 1}^{k}\lam_{j}\frac{\Gamma'}{\Gamma}\l( \lam_{j}s + \mu_{j} \r),
	\end{align}
	where $\eta_F$ is a complex number and satisfies $\Re(\eta_{F}) = -\Re\sum_{\rho_{F}}(1/\rho_{F})$.
	In particular, for $|t| \geq t_{0}(F)$, we have
	\begin{align}	\label{GLFE}
		\frac{F'}{F}(\s + it)
		= \sum_{\substack{\rho_{F} \\ \rho_{F} \not= 0, 1}}\l( \frac{1}{s - \rho_{F}} + \frac{1}{\rho_{F}} \r)
		+ O(d_{F} \log{|t|}).
	\end{align}
\end{lemma}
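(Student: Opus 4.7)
The plan is to obtain \eqref{GLF} as the logarithmic derivative of a Hadamard product for the completed function
\begin{align*}
  \xi_{F}(s) := s^{m_{F}}(s - 1)^{m_{F}}\P_{F}(s),
\end{align*}
and then to derive \eqref{GLFE} from Stirling's asymptotic for $\G'/\G$. First I would verify that $\xi_{F}$ is entire of order $1$. By (S2), $(s - 1)^{m_{F}}F(s)$ is entire of finite order; the functional equation (S3) together with the pole structure of $\gamma(s)$ then implies that $\P_{F}(s)$ has poles only at $s = 0, 1$, each of order $m_{F}$, so multiplication by $s^{m_{F}}(s - 1)^{m_{F}}$ makes $\xi_{F}$ entire. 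For the growth, (S1) gives $\log{F(s)} = O_{F}(1)$ for $\s \geq 1 + \delta$; Stirling applied to $\gamma(s)$ yields order-$1$ growth of $\P_{F}$ there, (S3) transports this bound to $\s \leq -\delta$, and Phragm\'en--Lindel\"of closes the critical strip. Combined with the lower estimate on $|\gamma(s)|$, this gives that $\xi_{F}$ has order $1$.

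Hadamard's theorem then produces
\begin{align*}
  \xi_{F}(s) = e^{A_{F} + B_{F}s}\prod_{\rho_{F} \not= 0, 1}
  \l( 1 - \frac{s}{\rho_{F}} \r) e^{s/\rho_{F}},
\end{align*}
where $\rho_{F}$ runs over the zeros of $\xi_{F}$, i.e.\ the nontrivial zeros of $F$ other than $0, 1$. Taking logarithmic derivatives, using
\begin{align*}
  \frac{\gamma'}{\gamma}(s) = \log{Q} + \sum_{j = 1}^{k}\lam_{j}\frac{\G'}{\G}(\lam_{j}s + \mu_{j}),
\end{align*}
and rearranging, I obtain \eqref{GLF} with $\eta_{F} := B_{F}$. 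The relation $\Re(\eta_{F}) = -\Re\sum_{\rho_{F}}(1/\rho_{F})$ is extracted by computing $\xi_{F}'(s)/\xi_{F}(s)$ at a convenient real point from both the Hadamard product and the defining formula for $\xi_{F}$ and comparing real parts, or equivalently by using (S3) to relate $B_{F}$ and $\ol{B_{F}}$.

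For \eqref{GLFE}, Stirling's formula gives
\begin{align*}
  \frac{\G'}{\G}(\lam_{j}s + \mu_{j}) = \log(\lam_{j}|t|) + O_{F}(1/|t|)
\end{align*}
uniformly for $|t| \geq t_{0}(F)$ and $\s$ in any bounded range, so $\gamma'/\gamma(s) = \tfrac{d_{F}}{2}\log|t| + O_{F}(1)$. The pole terms $m_{F}(\frac{1}{s - 1} + \frac{1}{s})$, together with $\log Q$ and $\eta_{F}$, are each $O_{F}(1)$. Combining these estimates collapses every contribution other than the sum over zeros into $O(d_{F}\log|t|)$, yielding \eqref{GLFE}. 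The main obstacle is the very first step — verifying that $\xi_{F}$ has order $1$ within the weaker class $\Sc^{\dagger}$, i.e.\ without the Ramanujan conjecture (S5$'$); however (S1) together with (S4)--(S5) still force $F(s) = O_{F}(1)$ on $\Re s = 1 + \delta$, which is all the Phragm\'en--Lindel\"of argument above needs.
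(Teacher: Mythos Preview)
Your proposal is correct and follows essentially the same route as the paper: the paper simply cites the standard argument in \cite[eq.~(10.29)]{MV}, which is precisely the Hadamard factorization of the completed function followed by logarithmic differentiation, and then invokes Stirling's formula for \eqref{GLFE}. Your explicit verification that $\xi_{F}$ has order $1$ in the weaker class $\Sc^{\dagger}$ (using only (S1), (S3), (S4)--(S5) and Phragm\'en--Lindel\"of) is a welcome elaboration of a point the paper leaves implicit.
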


\begin{proof}
	We obtain equation \eqref{GLF} in the same way as the proof of \cite[eq. (10.29)]{MV}.
	Moreover, by applying the Stirling formula to equation \eqref{GLF}, we can also obtain equation \eqref{GLFE}.
\end{proof}

\begin{lemma}	\label{lem_far_zero}
	Let $F \in \Sc^{\dagger}$.
	For $|t| \geq t_0(F)$, $1 \leq H \leq \frac{|t|}{2}$, we have
	\begin{gather}
		\label{SIZDS}
		\sum_{|t - \gamma_{F}| \leq H}1 \ll d_{F} H \log{|t|},\\
		\label{EEZS}
		\sum_{|t - \gamma_{F}| \geq H}\frac{1}{(t - \gamma_{F})^2}
		\ll \frac{d_{F} \log{|t|}}{H}.
	\end{gather}
\end{lemma}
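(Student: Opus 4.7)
The plan is to derive both bounds from the Hadamard-type partial fraction expansion of $F'/F$ supplied by Lemma \ref{GHF}, specialized to $s=2+it$; this is the classical approach for $\zeta(s)$ adapted to general $F\in\Sd$.

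First I would evaluate \eqref{GLFE} at $s=2+it$ with $|t|\geq t_{0}(F)$. By (S4), $|b_{F}(p^{\ell})|\ll p^{\ell\vartheta_{F}}$ with $\vartheta_{F}<1/2$, so the Dirichlet series $F'/F(2+it)=-\sum_{n}\Lam_{F}(n)/n^{2+it}$ converges absolutely and is $O(1)$. Hence \eqref{GLFE} yields
\[
\sum_{\rho_{F}}\l(\frac{1}{2+it-\rho_{F}}+\frac{1}{\rho_{F}}\r)\ll d_{F}\log|t|.
\]
Taking real parts expresses the left-hand side as $\sum_{\rho_{F}}\frac{2-\b_{F}}{(2-\b_{F})^2+(t-\g_{F})^2}+\sum_{\rho_{F}}\frac{\b_{F}}{|\rho_{F}|^2}$. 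Since $F$ has no zeros for $\Re s>1$ by (S4), and the functional equation (S3) together with the Gamma factors accounts for the zeros in $\Re s<0$, the non-trivial zeros satisfy $0\leq\b_{F}\leq 1$, so both summands above are non-negative and hence each sum is individually $O(d_{F}\log|t|)$. Using $1\leq 2-\b_{F}\leq 2$ in the first sum I obtain the key estimate
\[
\sum_{\rho_{F}}\frac{1}{4+(t-\g_{F})^2}\ll d_{F}\log|t|.
\]

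Specializing this bound to zeros with $|t-\g_{F}|\leq 1$ immediately gives $\#\{\rho_{F}:|t-\g_{F}|\leq 1\}\ll d_{F}\log|t|$. For the general range $1\leq H\leq |t|/2$ in \eqref{SIZDS}, I would cover $[t-H,t+H]$ by $O(H)$ unit intervals centered at the integer translates $t+k$, $|k|\leq H$. Because $|t+k|\asymp|t|$ throughout this range, each unit interval contains $O(d_{F}\log|t|)$ zeros, and summing over $k$ yields \eqref{SIZDS}.

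For \eqref{EEZS} I would dyadically decompose the tail $|t-\g_{F}|\geq H$. Set $\mathcal{Z}_{j}=\{\rho_{F}:2^{j}H\leq|t-\g_{F}|<2^{j+1}H\}$ for $j\geq 0$. When $2^{j+1}H\leq|t|/2$, applying \eqref{SIZDS} with $H$ replaced by $2^{j+1}H$ gives $|\mathcal{Z}_{j}|\ll d_{F}2^{j+1}H\log|t|$, so its contribution to $\sum 1/(t-\g_{F})^2$ is $\ll d_{F}\log|t|/(2^{j}H)$, and the resulting geometric series sums to $O(d_{F}\log|t|/H)$. For the far-tail range where $2^{j+1}H>|t|/2$ and \eqref{SIZDS} no longer applies directly, I would invoke the global Riemann--von Mangoldt-type bound $\#\{|\g_{F}|\leq T\}\ll d_{F}T\log T$, which itself follows from summing the unit-interval bound over integer values of $t$; a further dyadic splitting with $T\asymp 2^{j}|t|$ shows that this tail contributes only $O(d_{F}\log|t|/|t|)=O(d_{F}\log|t|/H)$ since $H\leq|t|/2$. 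The main nuisance of the proof is precisely this far-tail region, where one must patch the sharp local bound \eqref{SIZDS} with a coarser global zero-counting estimate; everything else is routine.
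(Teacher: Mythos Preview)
Your argument is correct, but the paper takes a shorter route. Instead of evaluating the partial fraction expansion at the fixed point $s=2+it$ and then bootstrapping, the paper evaluates \eqref{GLFE} directly at $s=H+it$ with the \emph{same} variable $H$ as in the statement (for $H\geq 2$). Since $0\leq\b_{F}\leq 1<H$, every term $\frac{H-\b_{F}}{(H-\b_{F})^2+(t-\g_{F})^2}$ is positive; for zeros with $|t-\g_{F}|\leq H$ each term is $\gg 1/H$, and for zeros with $|t-\g_{F}|\geq H$ each term is $\gg H/(t-\g_{F})^2$. Combined with $|(F'/F)(H+it)|\ll_{F}\zeta(H-\tfrac12)-1\ll 2^{-H}$, this gives \eqref{SIZDS} and \eqref{EEZS} simultaneously for $H\geq 2$, and the range $1\leq H\leq 2$ then follows trivially from the case $H=2$.

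The gain in the paper's approach is that moving the evaluation point with $H$ eliminates both the covering argument for \eqref{SIZDS} and the entire dyadic decomposition for \eqref{EEZS}; in particular the ``far-tail nuisance'' you flag never arises, because no global Riemann--von Mangoldt bound is needed. Your approach is the standard textbook method (as for $\zeta$) and has the minor advantage of only requiring the Euler product bound at a single fixed abscissa, but it costs several extra steps.
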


\begin{proof}
	Applying the Stirling formula to Lemma \ref{GHF},
	for $|t| \geq t_{0}(F)$, we have
	\begin{align*}
		\Re\l(\frac{F'}{F}(H + it)\r)
		= \sum_{\rho}\frac{H - \b_{F}}{(H - \b_{F})^2 + (t - \gamma_{F})^2} + O\l(d_{F} \log{|t|}\r).
	\end{align*}
	On the other hand, it holds that
	\begin{gather*}
		\sum_{\rho_{F}}\frac{H - \b_{F}}{(H - \b_{F})^2 + (t - \gamma_{F})^2}
		\gg \sum_{|t - \gamma_{F}| \leq H}\frac{1}{H},\\
		\sum_{\rho_{F}}\frac{H - \b_{F}}{(H - \b_{F})^2 + (t - \gamma_{F})^2}
		\gg \sum_{|t - \gamma_{F}| \geq H}\frac{H}{(t - \gamma_{F})^2}.
	\end{gather*}
	Since $b_{F}(n)\log{n} \ll_{F} n^{1/2}$ from (S4), we find that
	\begin{align*}
		|(F'/F)(H + it)| = \bigg|\sum_{n = 2}^{\infty}b_{F}(n)\log{n}/n^{H + it}\bigg|
		\ll_{F} \zeta(H - 1/2) - 1 \ll 2^{-H}.
	\end{align*}
	Hence we obtain \eqref{SIZDS} and \eqref{EEZS} for $H \geq 2$.
	In addition, we immediately obtain these inequality the case
	$1 \leq H \leq 2$ from \eqref{SIZDS} and \eqref{EEZS} in the case $H = 2$.
\end{proof}

\begin{lemma}	\label{ZAL}
	Let $F \in \Sc^{\dagger}$. For any $T \geq t_{0}(F)$, there exists some $t \in [T, T + 1]$ such that,
	uniformly for $1/2 \leq \s \leq 2$,
	\begin{align*}
		\frac{F'}{F}(\s + it)
		\ll_{F} (\log{T})^{2}.
	\end{align*}
\end{lemma}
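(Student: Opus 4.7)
The plan is to combine the partial fraction representation of $F'/F$ in \eqref{GLFE} with a pigeonhole argument that locates a $t \in [T, T+1]$ well separated from the ordinates of all nearby nontrivial zeros.

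First, I would apply \eqref{GLFE} at $s = \sigma + it$ and at $s = 2 + it$ and subtract, obtaining
\begin{align*}
\frac{F'}{F}(\sigma + it)
= \frac{F'}{F}(2+it)
+ \sum_{\rho_F \neq 0,1}\left(\frac{1}{\sigma+it-\rho_F} - \frac{1}{2+it-\rho_F}\right)
+ O_F(\log T).
\end{align*}
The term $F'/F(2+it)$ is $O_F(1)$ by absolute convergence of the Dirichlet series for $F'/F$, which follows from (S4) together with $\vartheta_F < 1/2$. I then split the sum over zeros according to whether $|t-\gamma_F| \leq 1$ or $> 1$.

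For far zeros ($|t-\gamma_F|>1$), each summand is bounded by $|2-\sigma|/(|s-\rho_F|\,|2+it-\rho_F|) \ll 1/|t-\gamma_F|^2$, so by \eqref{EEZS} with $H=1$ the total contribution is $O_F(\log T)$. For near zeros ($|t-\gamma_F|\leq 1$), the inequality $|2+it-\rho_F|\geq 2-\beta_F \geq 1$ shows that the subtracted term is $O(1)$; and the elementary bound $|s-\rho_F|\geq|t-\gamma_F|$, which is $\sigma$-uniform, reduces the matter to controlling $\sum_{|t-\gamma_F|\leq 1} 1/|t-\gamma_F|$.

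To control this near-zero sum, I would use \eqref{SIZDS} with $H=2$: the number of zeros with $|T-\gamma_F|\leq 2$ is at most $N \ll_F \log T$. Removing an open interval of length $1/(4N)$ around each such $\gamma_F$ excises total length at most $1/2$ from $[T,T+1]$, so there exists $t\in[T,T+1]$ with $|t-\gamma_F|\geq 1/(4N)\gg_F 1/\log T$ for every nearby zero. For this $t$, the near-zero sum is bounded by $N\cdot O(\log T) = O_F((\log T)^2)$, which dominates all the other error terms and gives the claim uniformly in $\sigma \in [1/2,2]$.

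The argument is fairly routine; the only mild delicacy is that the selected $t$ must work uniformly in $\sigma$, which is automatic since both the partial fraction decomposition and the separation $|s-\rho_F|\geq|t-\gamma_F|$ are independent of $\sigma$. The crux of the proof is therefore the pigeonhole step — verifying that the total excised length $N\cdot 1/(4N) = 1/4$ is strictly less than $1$ so that an admissible $t$ exists in $[T,T+1]$.
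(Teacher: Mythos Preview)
Your proof is correct and is precisely the standard argument the paper invokes: the paper's proof simply cites Lemma~\ref{lem_far_zero} and defers to \cite[Lemma~12.2]{MV}, whose proof is exactly the subtract-at-$\sigma=2$ plus pigeonhole argument you have written out. There is a harmless slip in the pigeonhole bookkeeping (an interval of \emph{length} $1/(4N)$ about $\gamma_F$ gives $|t-\gamma_F|\geq 1/(8N)$, not $1/(4N)$), but the conclusion $|t-\gamma_F|\gg_F 1/\log T$ and hence the $(\log T)^2$ bound are unaffected.
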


\begin{proof}
	Using Lemma \ref{lem_far_zero}, we obtain this lemma in the same argument as the proof of \cite[Lemma 12.2]{MV}.
\end{proof}

\begin{proof}[Proof of Proposition \ref{GSEF}]
	By using Lemma \ref{ZAL}, we obtain Proposition \ref{GSEF}
	in the same method as the proof of \cite[Proposition 1]{II2019}.
\end{proof}

\begin{lemma}	\label{GUE}
	Let $d$ be a nonnegative integer with $d < D = D(f) + 1$.
	Let $s = \s + it$ be a complex number.
	Set $X \geq 3$ be a real parameter.
	Then, for any $z = a + ib$ with $a \in \RR$, $b \in \RR \setminus \{ t \}$, we have
	\begin{align*}
		U_{0}((s - z)\log{X})
		\ll_{f, d} \frac{X^{(1+1/H)(a - \s)} + X^{a - \s}}{|t - b|\log{X}}
		\min_{0 \leq l \leq d}\l\{\l(\frac{H}{|t - b|\log{X}} \r)^{l}\r\}.
	\end{align*}
\end{lemma}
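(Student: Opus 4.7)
The plan is as follows. Set $W = (s-z)\log X$, $\eta = \Re W = (\sigma-a)\log X$, and $\xi = \Im W = (t-b)\log X$. The substitution $x = e^{1+\tau/H}$ in \eqref{def_U} rewrites
\begin{align*}
U_0(W) = \int_0^1 f(\tau)\, E_1\bigl(W(1+\tau/H)\bigr)\, d\tau.
\end{align*}

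Differentiating under the integral sign and using $E_1'(w) = -e^{-w}/w$ yields the first-order ODE
\begin{align*}
W\, U_0'(W) = -\Psi(W), \qquad \Psi(W) := e^{-W}\hat f(W/H),
\end{align*}
where $\hat f(\alpha) := \int_0^1 f(\tau) e^{-\alpha\tau}\, d\tau$. Since $\xi \neq 0$ keeps $W+r$ off the branch cut $(-\infty,0]$ of $E_1$ for every $r \geq 0$, and since $U_0(W+r) \to 0$ as $r \to \infty$, integrating this ODE along the horizontal ray gives the integral representation
\begin{align*}
U_0(W) = \int_0^\infty \frac{\Psi(W+r)}{W+r}\, dr.
\end{align*}

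Next I would estimate $\hat f$ by iterated integration by parts. Under the hypothesis $d < D(f)+1$, the derivatives satisfy $f^{(k)}(0) = f^{(k)}(1) = 0$ for $0 \leq k \leq d-2$, and $f^{(l)}$ is bounded on $[0,1]$ for $0 \leq l \leq d$. Hence $l$ successive integrations by parts yield, for every $0 \leq l \leq d$,
\begin{align*}
\bigl|\hat f(\alpha)\bigr| \ll_{f,l} \frac{1 + e^{-\Re\alpha}}{|\alpha|^l},
\end{align*}
where, when $l = d$, a single unavoidable boundary term involving $f^{(d-1)}$ is absorbed into the same bound.

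Finally, combining these with $|W+r| \geq |\xi|$ and $\Re\bigl((W+r)/H\bigr) = (\eta+r)/H$,
\begin{align*}
|U_0(W)|
\ll_{f,l} \frac{H^l}{|\xi|^{l+1}} \int_0^\infty e^{-\eta-r}\bigl(1 + e^{-(\eta+r)/H}\bigr)\, dr
\ll \frac{H^l}{|\xi|^{l+1}}\bigl(e^{-\eta} + e^{-(1+1/H)\eta}\bigr),
\end{align*}
which in the original variables is precisely the claimed bound for each admissible $l$, hence also for the minimum over $l \in \{0, \dots, d\}$. The main technical point is the integral representation derived from the ODE: it requires analyticity of $U_0$ along the horizontal ray (which follows from $\xi \neq 0$ avoiding the branch cut) together with the decay $U_0(W+r) \to 0$ as $r \to \infty$ (which follows from the exponential decay of $E_1$ in the right half plane).
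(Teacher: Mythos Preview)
Your proof is correct. The paper does not give a proof here but refers to \cite[Lemma 2]{II2019}; your argument is a clean realization of what that reference presumably contains. The substitution $x=e^{1+\tau/H}$ reducing $U_0$ to $\int_0^1 f(\tau)E_1(W(1+\tau/H))\,d\tau$, the ODE $WU_0'(W)=-\Psi(W)$ and the resulting integral representation $U_0(W)=\int_0^\infty \Psi(W+r)/(W+r)\,dr$, and the iterated integration by parts on $\hat f$ all check out. One small remark: the endpoint vanishing $f^{(k)}(0)=f^{(k)}(1)=0$ for $0\le k\le d-2$ that you invoke relies on $f\in C^{d-2}(\RR)$ together with $\operatorname{supp} f\subset[0,1]$, which is precisely the paper's condition (b) on $f$; for $l\le 1$ no vanishing is needed since the boundary term at the last step is already of the right size, so the case $D(f)=1$ is covered as well.
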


\begin{proof}
	This lemma can be proved in the same way as \cite[Lemma 2]{II2019}.
\end{proof}

\begin{lemma}	\label{EUE}
	Let $s = \s + it$ be a complex number.
	Set $X \geq 3$ be a real parameter.
	Then, for any complex number $z = a + ib$ with $|t - b| \leq 1 / \log{X}$, we have
	\begin{align}
		U_{0}((s - z)\log{X})
		= \l\{
		\begin{array}{cl}
			-\log((s - z)\log{X}) + O(1)             & \text{if \; $|s - z| \leq 1/\log{X}$, } \\[5mm]
			O\l(X^{(1+1/H)(a - \s)} + X^{a - \s} \r) & \text{if \; $|s - z| > 1 / \log{X}$.}
		\end{array}
		\r.
	\end{align}
\end{lemma}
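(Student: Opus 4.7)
The plan is to begin by making the substitution $y = H\log(x/e)$ in the integral defining $U_{0}$. Since $u_{f,H}$ is supported on $[e, e^{1+1/H}]$ and $u_{f,H}(x)\,dx = f(y)\,dy$ under this change of variable, one arrives at the clean form
\begin{align*}
U_{0}(w) = \int_{0}^{1} f(y)\, E_{1}(w(1 + y/H))\, dy, \qquad \text{where } w := (s-z)\log X.
\end{align*}
The hypothesis $|t-b|\le 1/\log X$ gives $|\Im w| \le 1$, so for every $y \in [0,1]$ the point $\zeta := w(1+y/H)$ satisfies $|\Im \zeta| \le 1+1/H \le 2$. The two cases of the lemma correspond to $|w|\le 1$ versus $|w|>1$.

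For the first case ($|w|\le 1$, hence $|\zeta|\le 2$), I would invoke the expansion $E_{1}(\zeta) = -\gamma - \log\zeta + \Ein(\zeta)$ with $\Ein(\zeta) = \sum_{n \ge 1} (-1)^{n+1}\zeta^{n}/(n \cdot n!)$, noting that $|\Ein(\zeta)| \ll |\zeta|$ uniformly for $|\zeta| \le 2$. Because $1+y/H$ is positive real, $\log(w(1+y/H)) = \log w + \log(1+y/H)$ is free of branch ambiguity, so $\int_{0}^{1} f(y)\,dy = 1$ together with $|\log(1+y/H)| \le y/H \le 1$ yields
\begin{align*}
U_{0}(w) = -\log w - \gamma - \int_{0}^{1} f(y) \log(1+y/H)\,dy + O(|w|) = -\log w + O(1),
\end{align*}
as claimed.

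For the second case ($|w|>1$), I would first establish the uniform bound $|E_{1}(\zeta)| \ll e^{-\Re\zeta}$ for $\zeta$ with $|\zeta|>1$ and $|\Im\zeta|\le 2$, by splitting according to the size of $\Re\zeta$. When $\Re\zeta \ge 1$, the classical inequality $|E_{1}(\zeta)| \le e^{-\Re\zeta}/\Re\zeta$ suffices. When $|\Re\zeta| \le 1$, $\zeta$ lies in a compact set away from $0$ so $|E_{1}(\zeta)| = O(1)$, while $e^{-\Re\zeta}\ge e^{-1}$. When $\Re\zeta \le -1$, the relation $|\zeta| \le -\Re\zeta + 2$ gives $|\Ein(\zeta)| \le e^{|\zeta|} \le e^{2}\,e^{-\Re\zeta}$ and $|\log\zeta| \ll \log(-\Re\zeta) \ll e^{-\Re\zeta}$, completing the uniform bound. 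Since $e^{-\Re\zeta} = X^{(a-\sigma)(1+y/H)}$ and $y \mapsto X^{(a-\sigma)(1+y/H)}$ is monotone, integration gives
\begin{align*}
|U_{0}(w)| \ll \int_{0}^{1} f(y)\, X^{(a-\sigma)(1+y/H)}\, dy \ll X^{a-\sigma} + X^{(1+1/H)(a-\sigma)},
\end{align*}
as required. The main obstacle is the sub-case $\Re\zeta \le -1$, where $E_{1}$ lies to the left of its branch cut and the $|t-b|^{-1}$-type pointwise estimate from Lemma \ref{GUE} breaks down; the hypothesis $|t-b|\le 1/\log X$ is what prevents $|\Im\zeta|$ from being large and permits the factor $e^{|\Im\zeta|}$ to be absorbed into the implicit constant.
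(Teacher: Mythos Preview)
Your proof is correct. The paper itself does not give a proof of this lemma, simply citing \cite[Lemma 3]{II2019}; your argument supplies exactly the kind of direct verification one would expect there, via the substitution $y = H\log(x/e)$ and the expansion $E_{1}(\zeta) = -\gamma - \log\zeta + \Ein(\zeta)$, together with a clean case split on $\Re\zeta$ to control $|E_{1}(\zeta)|$ when $|w|>1$.
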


\begin{proof}
	This lemma can be proved in the same way as \cite[Lemma 3]{II2019}.
\end{proof}

\begin{proof}[Proof of Theorem \ref{Main_F_S}]
	It follows from Proposition \ref{GSEF}, Lemma \ref{GUE}, and Lemma \ref{EUE} that
  \begin{align*}
    \log{F(s)} = 
		\sum_{2 \leq n \leq X^{1+1/H}}\frac{\Lam_{F}(n)v_{f, H}\l( e^{\log{n}/\log{X}} \r)}{n^{s}\log{n}}
		+\sum_{|s - \rho_{F}| \leq \frac{1}{\log{X}}}\log((s - \rho_{F})\log{X}) + R_{F}(s, X, H),    
  \end{align*}
  where
	\begin{multline}	
		R_{F}(s, X, H)
		\ll_{f, d} \frac{m_{F}(X^{2(1-\s)} + X^{1-\s})}{t\log{X}}
		+ \sum_{|t - \gamma_{F}| \leq \frac{1}{\log{X}}}(X^{2(\b_{F} - \s)} + X^{\b_{F} - \s})\\
		+ \frac{1}{\log{X}}\sum_{|t - \gamma_{F}| > \frac{1}{\log{X}}}
		\frac{X^{2(\b_{F} - \s)} + X^{\b_{F} - \s}}{|t - \gamma_{F}|}
		\min_{0 \leq l \leq d}\l\{\l( \frac{H}{|t - \gamma_{F}|\log{X}} \r)^{l}\r\}.
	\end{multline}
	Hence, it suffices to show estimate \eqref{ESR_S2} on the range $|t| \geq t_{0}(F)$.
	Following the proof of \cite[Proposition 2]{II2019}, we see that it suffices to check
	\begin{align}	\label{KSIE}
		\sum_{\rho_{F}}\frac{\s_{X, t}(F) - 1/2}{(\s_{X, t}(F) - \b_{F})^2 + (t - \gamma_{F})^2}
		\ll \l| \sum_{n \leq X^3}\frac{\Lam_{F}(n)w_{X}(n)}{n^{\s_{X, t}(F) + it}} \r| + d_{F} \log{|t|},
	\end{align}
	which can be shown by the same proofs as  in \cite[eq. (4.4); eq. (4.7)]{SCR} by using equation \eqref{GLFE} instead of \cite[Lemma 11]{SCR}.
\end{proof}

\subsection{Proof of Propositions \ref{KLI} and Proposition \ref{KLST}}


\begin{lemma}	\label{SLL}
	Let $T \geq 5$, and let $X \geq 3$. Let $k$ be a positive integer such that $X^{k} \leq T$.
	Then, for any complex numbers $a(p)$, we have
	\begin{align*}
		\int_{T}^{2T}\bigg| \sum_{p \leq X}\frac{a(p)}{p^{it}} \bigg|^{2k}dt
		\ll T k! \l( \sum_{p \leq X}|a(p)|^2 \r)^{k}.
	\end{align*}
	Here, the above sums run over prime numbers.
\end{lemma}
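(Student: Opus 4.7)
The plan is to reduce the $2k$-th moment of the prime-supported Dirichlet polynomial to a second moment of a longer Dirichlet polynomial whose length is $\leq X^{k} \leq T$, and then apply the standard Montgomery--Vaughan mean value theorem.

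First, I would expand the $k$-th power by setting
\[
  \Bigl( \sum_{p \leq X} \frac{a(p)}{p^{it}} \Bigr)^{k}
  = \sum_{m} \frac{B(m)}{m^{it}},
  \qquad
  B(m) := \sum_{\substack{ p_{1}, \dots, p_{k} \text{ prime} \\ p_{i} \leq X,\ p_{1}\cdots p_{k} = m }} a(p_{1})\cdots a(p_{k}),
\]
so that the integrand equals $|\sum_{m} B(m)/m^{it}|^{2}$. The support of $B$ is contained in $\{m \leq X^{k}\}$, and by hypothesis $X^{k} \leq T$.

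Next I would invoke the Montgomery--Vaughan mean value theorem in the form
\[
  \int_{T}^{2T} \Bigl| \sum_{m} \frac{B(m)}{m^{it}} \Bigr|^{2} dt
  = \sum_{m} |B(m)|^{2} \bigl( T + O(m) \bigr),
\]
so that the condition $m \leq X^{k} \leq T$ collapses the right-hand side to $\ll T \sum_{m} |B(m)|^{2}$.

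It remains to show $\sum_{m} |B(m)|^{2} \leq k!\,\bigl( \sum_{p \leq X} |a(p)|^{2} \bigr)^{k}$. For this I would use unique factorization: $p_{1}\cdots p_{k} = q_{1}\cdots q_{k}$ for primes $p_{i}, q_{j}$ forces the multisets $\{p_{i}\}$ and $\{q_{j}\}$ to coincide. Expanding the square gives
\[
  \sum_{m} |B(m)|^{2}
  = \sum_{p_{1}, \dots, p_{k} \leq X} N(p_{1}, \dots, p_{k}) \, |a(p_{1})|^{2} \cdots |a(p_{k})|^{2},
\]
where $N(p_{1},\dots,p_{k})$ is the number of permutations $\sigma \in S_{k}$ with $p_{\sigma(i)} = p_{i}$ for all $i$ — equivalently, the multinomial coefficient $\binom{k}{e_{1}, \dots, e_{j}}$ if the distinct primes occur with multiplicities $e_{1}, \dots, e_{j}$. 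Since any such count is at most $k!$, we obtain
\[
  \sum_{m} |B(m)|^{2} \leq k! \sum_{p_{1}, \dots, p_{k} \leq X} \prod_{i=1}^{k} |a(p_{i})|^{2}
  = k! \Bigl( \sum_{p \leq X} |a(p)|^{2} \Bigr)^{k},
\]
which combined with the Montgomery--Vaughan step completes the proof.

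The only delicate point is the combinatorial bookkeeping for the permutation count, but it is handled cleanly by unique factorization; there is no genuine obstacle. The key quantitative input is the constraint $X^{k} \leq T$, which is exactly what makes the $O(m)$ error in Montgomery--Vaughan absorbable into the main term.
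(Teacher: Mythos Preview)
Your proof is correct and follows essentially the same approach as the paper: the paper simply cites \cite[Lemma 3.3]{KTDT} (Tsang's thesis), which is precisely the Montgomery--Vaughan plus combinatorial expansion you carry out, and then absorbs the error term $k!\bigl(\sum_{p\leq X} p|a(p)|^2\bigr)^k \leq k! X^k \bigl(\sum_{p\leq X}|a(p)|^2\bigr)^k$ using $X^k\leq T$. One small slip: your verbal description of $N(p_1,\dots,p_k)$ as ``the number of permutations $\sigma$ with $p_{\sigma(i)}=p_i$'' describes the stabilizer (size $e_1!\cdots e_j!$), not the multinomial coefficient $\binom{k}{e_1,\dots,e_j}$ you actually need --- but since both are $\leq k!$ the conclusion is unaffected.
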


\begin{proof}
	By \cite[Lemma 3.3]{KTDT}, we have
  \begin{align*}
    \int_{T}^{2T}\bigg| \sum_{p \leq X}\frac{a(p)}{p^{it}} \bigg|^{2k}dt
    &\leq T k! \l( \sum_{p \leq X}|a(p)|^2 \r)^{k}
    + O\l( k! \l( \sum_{p \leq X}p|a(p)|^2 \r)^{k} \r)\\
    &\leq T k! \l( \sum_{p \leq X}|a(p)|^2 \r)^{k}
    + O\l( k! X^{k}\l( \sum_{p \leq X}|a(p)|^2 \r)^{k} \r).
  \end{align*}
  Hence, we obtain this lemma when $X^{k} \leq T$.
\end{proof}


The next lemma is an analogue and a generalization of \cite[Lemma 5.2]{KTDT}.

\begin{lemma}	\label{SL_s_X}
	Let $F \in \Sc^{\dagger} \setminus \{ 1 \}$
	be an $L$-function satisfying (A3).
	Let $T$ be large, and $\kappa_{F}$ be the positive constant in \eqref{ZDC1}.
	For $k \in \ZZ_{\geq 1}$, ${3 \leq X \leq T^{2/3}}$, $\xi \geq 1$
	with $X \xi \leq T^{\kappa_{F} / 4}$, we have
	\begin{align*}
		\int_{T}^{2T}\l( \s_{X, t}(F) - \frac{1}{2} \r)^{k}\xi^{\s_{X, t}(F) - 1/2}dt
		\ll_{F} T \l( \frac{4^{k} \xi^{\frac{4}{\log{X}}}}{(\log{X})^{k}} + \frac{C^{k} k!}{\log{X}(\log{T})^{k-1}} \r),
	\end{align*}
	where $C = C(F)$ is a positive constant.
\end{lemma}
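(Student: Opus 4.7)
\textbf{Proof proposal for Lemma \ref{SL_s_X}.}
The plan is to bound the distribution function of $\sigma_t := \s_{X,t}(F) - 1/2$ on $[T, 2T]$ via the zero density hypothesis (A3), and then recover the moment-type integral by the layer cake formula. Throughout, I use the key consequence of the hypothesis $X\xi \leq T^{\kappa_F/4}$: since $\xi \geq 1$, we have $X^3 T^{-\kappa_F} \leq T^{-\kappa_F/4} \xi^{-3}$, so in particular $(X^3/T^{\kappa_F})^v \leq T^{-\kappa_F v/4}\xi^{-3v}$ for all $v \geq 0$.

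First I would establish, for $u > 2/\log X$, the estimate
\begin{align*}
  \mu(u) := \meas\{t \in [T, 2T] : \sigma_t \geq 2u\}
  \leq \frac{2}{\log X}\sum_{\substack{\rho_F : \beta_F - 1/2 \geq u \\ 0 \leq \gamma_F \leq 3T}} X^{3(\beta_F - 1/2)}.
\end{align*}
This follows from the definition of $\s_{X,t}(F)$: if $\sigma_t \geq 2u$ with $u > 2/\log X$, then there is a zero $\rho_F$ with $\beta_F - 1/2 \geq u$ whose $|t - \gamma_F| \leq X^{3(\beta_F-1/2)}/\log X$; summing the lengths $2X^{3(\beta_F-1/2)}/\log X$ of these intervals and noting that $X \leq T^{2/3}$ keeps $\gamma_F \in [0, 3T]$ gives the bound. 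Applying Abel summation and (A3) in the form $N_F(1/2 + v, 3T) \ll_F T^{1 - \kappa_F v}\log T$, then using the key inequality $(X^3/T^{\kappa_F})^v \leq T^{-\kappa_F v/4}\xi^{-3v}$, I would obtain
\begin{align*}
  \mu(u) \ll_F \frac{T\log T}{\log X}\,T^{-\kappa_F u/4}\xi^{-3u}.
\end{align*}

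Next, writing $f(x) = x^k \xi^x$, the layer cake identity gives
\begin{align*}
  \int_T^{2T} \sigma_t^k \xi^{\sigma_t}\,dt
  = T f(4/\log X) + \int_{4/\log X}^{1} f'(x)\,\meas\{t : \sigma_t > x\}\,dx,
\end{align*}
where the trivial first term is exactly $T \cdot 4^k \xi^{4/\log X}/(\log X)^k$, giving the first target term. For the remaining integral I use $\meas\{\sigma_t > x\} \leq \mu(x/2)$ together with the bound above. The crucial cancellation is that $f'(x) = (kx^{k-1} + x^k \log \xi)\xi^x$ combines with $\xi^{-3x/2}$ to give a factor $\xi^{-x/2} \leq 1$, so the $\xi^x$ growth is absorbed and I am left with $\int_0^\infty (kx^{k-1} + x^k \log \xi)\,T^{-\kappa_F x/8}\,dx$, a standard Gamma-integral evaluation yielding $\ll_F k!\,C^k/(\log T)^{k-1}$ upon using $\log \xi \leq \log T$. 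Multiplying by the $T(\log T)/\log X$ factor from $\mu$ produces the second target term.

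The main obstacle is handling the $\xi$ dependence efficiently: a crude bound $\mu(u) \ll_F T\log T \cdot T^{-\kappa_F u/4}$ would force a spurious $\xi$ factor in the second term of the final bound. Retaining the $\xi^{-3u}$ from the hypothesis $X\xi \leq T^{\kappa_F/4}$ in the distributional estimate is what allows the $\xi^x$ in $f'$ to cancel. A secondary technical point is verifying carefully that the max in the definition of $\s_{X,t}(F)$ really does translate to a statement about zeros with $\beta_F - 1/2 \geq u$ for all admissible $u$, and that the baseline value $\s_{X,t}(F) - 1/2 \geq 4/\log X$ accounts correctly for the $x \in [0, 4/\log X]$ region in the layer cake formula; this is handled by the initial ``trivial'' contribution $T f(4/\log X)$.
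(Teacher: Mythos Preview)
Your approach is correct and essentially the same as the paper's: both split off the baseline contribution $\sigma_{X,t}(F)-\tfrac12 = 4/\log X$ and control the remainder via a sum over zeros bounded using (A3) together with the hypothesis $X\xi \leq T^{\kappa_F/4}$. The paper organizes this by directly bounding the integrand pointwise by a weighted zero-sum $\tfrac{2^k}{\log X}\sum_{\rho_F}(\beta_F-\tfrac12)^k(X^3\xi^2)^{\beta_F-1/2}$ and then rewriting that sum as an integral against $N_F(\sigma,3T)$, whereas you first bound the distribution function of $\sigma_{X,t}(F)-\tfrac12$ and then recover the moment via the layer cake formula; these are Fubini-equivalent rearrangements of the same computation, and your separation of the zero-counting step from the moment-weighting step is arguably cleaner. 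One minor arithmetic slip: the Gamma integral $\int_0^\infty(kx^{k-1}+x^k\log\xi)T^{-\kappa_F x/8}\,dx$ evaluates to $\ll_F k!\,C^k/(\log T)^k$, not $(\log T)^{k-1}$; after multiplying by the factor $T(\log T)/\log X$ from $\mu$ this gives exactly the target second term $T\,C^k k!/\bigl(\log X\,(\log T)^{k-1}\bigr)$.
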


\begin{proof}
	By definition \eqref{def_s_X_F} of $\s_{X, t}(F)$, we obtain
	\begin{align}	\label{SL_INE1}
		 & \int_{T}^{2T}\l( \s_{X, t}(F) - \frac{1}{2} \r)^{k}\xi^{\s_{X, t}(F) - 1/2}dt \\
		 & \leq T \xi^{\frac{4}{\log{X}}}\l( \frac{4}{\log{X}} \r)^{k}
		+ \frac{2^{k}}{\log{X}}
		\sum_{\substack{T - \frac{X^{3|\b_{F} - 1/2|}}{\log{X}} \leq \gamma_{F} \leq 2T + \frac{X^{3|\b_{F} - \frac{1}{2}|}}{\log{X}}
		\\ \b_{F} \geq 1/2}}
		\l(\b_{F} - \frac{1}{2} \r)^{k}(X^3 \xi^2)^{\b_{F} - \frac{1}{2}}.
	\end{align}
	From the equation
	\begin{align*}
		 & \l(\b_{F} - \frac{1}{2} \r)^{k} (X^3 \xi^2)^{\b_{F} - \frac{1}{2}}
		= \int_{\frac{1}{2}}^{\b_{F}}
		\l\{ (\s - 1/2)^{k}\log{(X^3 \xi^2)} + k \l(\s - 1/2 \r)^{k-1} \r\}(X^3 \xi^2)^{\s - \frac{1}{2}}d\s,
	\end{align*}
	we find that
	\begin{align*}
		 & \sum_{\substack{T - \frac{X^{3|\b_{F} - 1/2|}}{\log{X}} \leq \gamma_{F} \leq 2T + \frac{X^{3|\b_{F} - \frac{1}{2}|}}{\log{X}}
		\\ \b_{F} \geq 1/2}}
		\l(\b_{F} - \frac{1}{2} \r)^{k}(X^3 \xi^2)^{\b_{F} - \frac{1}{2}}                                                                \\
		 & \leq \sum_{\substack{0 \leq \gamma_{F} \leq 3T                                                                                \\ \b_{F} \geq 1/2}}\int_{\frac{1}{2}}^{\b_{F}}
		\l\{ (\s - 1/2)^{k}\log{(X^3 \xi^2)} + k \l(\s - 1/2 \r)^{k-1} \r\}(X^3 \xi^2)^{\s - \frac{1}{2}}d\s                             \\
		 & \leq \int_{\frac{1}{2}}^{1}\l\{ (\s - 1/2)^{k}\log{(X^3 \xi^2)} + k \l(\s - 1/2 \r)^{k-1} \r\}(X^3 \xi^2)^{\s - \frac{1}{2}}
		\sum_{\substack{0 \leq \gamma_{F} \leq 3T                                                                                        \\ \b_{F} \geq \s}}1 d\s\\
		 & = \int_{\frac{1}{2}}^{1}\l\{ (\s - 1/2)^{k}\log{(X^3 \xi^2)} + k \l(\s - 1/2 \r)^{k-1} \r\}(X^3 \xi^2)^{\s - \frac{1}{2}}
		N_{F}(\s, 3T)d\s.
	\end{align*}
	By assumption (A3), we can use the estimate $N_{F}(\s, T) \ll_{F} T^{1 - \kappa_{F}(\s - 1/2)}\log{T}$,
	and so, for $X \xi \leq T^{\kappa_{F}/4}$, the above most right hand side is
	\begin{align*}
		 & \ll_{F} T \log{T}\int_{\frac{1}{2}}^{1}\l\{ (\s - 1/2)^{k}\log{(X^3 \xi^2)} + k \l(\s - 1/2 \r)^{k-1} \r\}
		\l(\frac{X^3 \xi^2}{T^{\kappa_{F}}}\r)^{\s - \frac{1}{2}}d\s
		\ll T \frac{C^{k} k!}{(\log{T})^{k-1}}
	\end{align*}
	for some $C = C(F) > 0$.
	Hence, by this estimate and inequality \eqref{SL_INE1}, we obtain this lemma.
\end{proof}



\begin{lemma}\label{MomentLambdab}
	Let $F$ be a Dirichlet series satisfying (S4), (S5), and \eqref{SNC}.
	Suppose there exists some positive constant $A$ such that $|w(n)|\leq (\log n)^A$.
	Then for any $c \in \RR$, $\sigma\geq \frac{1}{2}$, $k \in \ZZ_{\geq 1}$ and any $X \geq Z \geq 2$ with $X^{k} \leq T$, we have
	\begin{align}
		 & \int_T^{2T}\left| \sum_{Z \leq n \leq X}\frac{b_{F}(n)w(n) (\log(X n))^{c}}{n^{\s + it }} \right| ^{2k}dt \\
		 & \leq T C^{k} k^{k} \l((\log{X})^{2c} \sum_{Z \leq p \leq X}\frac{|b_{F}(p) w(p)|^2}{p^{2\sigma}}\r)^k
		+ T C^{k} k^{k} Z^{k(1 - 2\s)}(\log{X})^{2(A + c)k},
	\end{align}
	where $C=C(F)$ is some positive constant.
\end{lemma}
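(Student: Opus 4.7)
The plan is to separate the contributions of primes and proper prime powers, handle each by a different technique, and combine via $|a+b|^{2k} \leq 2^{2k}(|a|^{2k}+|b|^{2k})$. Write
\begin{align*}
\sum_{Z \leq n \leq X} \frac{b_{F}(n) w(n) (\log(Xn))^{c}}{n^{\sigma+it}} = S_{1}(t) + S_{2}(t),
\end{align*}
where $S_{1}$ collects the terms with $n = p$ prime and $S_{2}$ those with $n = p^{\ell}$, $\ell \geq 2$.

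For $S_{1}$, the observation $\log(Xp) \in [\log X,\, 2\log X]$ for primes $p \leq X$ yields $(\log(Xp))^{c} \ll_{c} (\log X)^{c}$. Since $X^{k} \leq T$, Lemma \ref{SLL} applied to the coefficient sequence $a(p) = b_{F}(p) w(p) (\log(Xp))^{c}/p^{\sigma}$, together with the bound $k! \leq k^{k}$, yields
\begin{align*}
\int_{T}^{2T} |S_{1}(t)|^{2k} dt \ll T\, C^{k} k^{k} (\log X)^{2ck} \biggl( \sum_{Z \leq p \leq X} \frac{|b_{F}(p) w(p)|^{2}}{p^{2\sigma}} \biggr)^{k},
\end{align*}
which is the first term on the right-hand side of the claimed bound.

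For $S_{2}$ I first extract the uniform pointwise bounds $|w(p^{\ell})| \leq (\log X)^{A}$ and $(\log(Xp^{\ell}))^{c} \ll (\log X)^{c}$, reducing matters to controlling $J := \int_{T}^{2T} |\sum_{\ell \geq 2,\, Z \leq p^{\ell} \leq X} b_{F}(p^{\ell}) / p^{\ell(\sigma+it)}|^{2k} dt$. Expanding $|\cdot|^{2k}$, using the orthogonality relation $\int_{T}^{2T} e^{it \alpha} dt \ll \min\{T, 1/|\alpha|\}$, and absorbing the off-diagonal contribution (bounded by $X^{k}$ times the square of the $L^{1}$-norm of the coefficients) into the diagonal via the hypothesis $X^{k} \leq T$, a Cauchy--Schwarz/divisor-type estimate on the resulting multiplicative sum $\sum_{\prod_{j} n_{j} = \prod_{j} m_{j}}$ produces
\begin{align*}
J \ll T\, C^{k} k^{k} \biggl( \sum_{\substack{\ell \geq 2 \\ Z \leq p^{\ell} \leq X}} \frac{|b_{F}(p^{\ell})|^{2}}{p^{2\ell \sigma}} \biggr)^{k}.
\end{align*}
Using $p^{\ell(1-2\sigma)} \leq Z^{1-2\sigma}$ for $\sigma \geq 1/2$ and $p^{\ell} \geq Z$, the inner sum is at most $Z^{1-2\sigma} \sum_{\ell \geq 2} \sum_{p} |b_{F}(p^{\ell})|^{2}/p^{\ell}$. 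This last double sum is $O_{F}(1)$: each $\ell$-slice is finite by Hypothesis H (S5), and for $\ell$ beyond some $\ell_{0} = \ell_{0}(F)$ depending only on $\vartheta_{F}$, the trivial bound $|b_{F}(p^{\ell})| \ll p^{\ell \vartheta_{F}}$ from (S4) yields $\sum_{p} |b_{F}(p^{\ell})|^{2}/p^{\ell} \ll \sum_{p} p^{-2}$, summable over $\ell \geq \ell_{0}$. Combined with the prefactor $(\log X)^{2(A+c)k}$, this produces the second term in the claimed bound.

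The main obstacle is controlling the diagonal $\prod_{j} n_{j} = \prod_{j} m_{j}$ in the estimate of $J$. Unlike the primes-only setting of Lemma \ref{SLL}, where unique factorization forces this condition to correspond to permutations (giving the clean $k!(\sum|a(p)|^{2})^{k}$ bound), on prime-power support one has extra matchings such as $p^{2} \cdot p^{4} = p^{3} \cdot p^{3}$. These must be bounded by a divisor-type estimate restricted to prime-power factorizations, with the associated combinatorial overcounting absorbed into the constant $C^{k}$ using the multiplicative structure of the summand.
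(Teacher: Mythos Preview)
Your treatment of $S_{1}$ via Lemma~\ref{SLL} matches the paper and is correct. The gap is in $S_{2}$: the ``divisor-type estimate'' you invoke does not yield the claimed factor $C^{k}k^{k}$ with $C$ depending only on $F$. Concretely, when all the $n_{j}$ are powers of a single prime $p$, the diagonal condition $\prod n_{j}=\prod m_{j}$ reduces to an equality of exponent sums, and the number of ordered compositions of a given integer into $k$ parts from $\{2,\dots,\lfloor\log_{2}X\rfloor\}$ can be as large as $(C\log X)^{k-1}$. Thus the overcounting cannot be absorbed into $C^{k}$; a naive Cauchy--Schwarz on the diagonal produces extra powers of $\log X$ that are not allowed in the stated bound.

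The paper avoids this by splitting $S_{2}$ once more according to the exponent $\ell$. Set $K_{1}=2(\tfrac{1}{2}-\vartheta_{F})^{-1}$. The tail $\ell>K_{1}$ is handled \emph{pointwise}: by (S4) one has $\sum_{\ell>K_{1}}\sum_{p^{\ell}\geq Z}|b_{F}(p^{\ell})|/p^{\ell\sigma}\ll_{F}Z^{1/2-\sigma}$ uniformly in $t$, so its $2k$-th moment is trivially $\ll T\,C^{k}Z^{k(1-2\sigma)}$. For each fixed $2\leq\ell\leq K_{1}$, the sum is over primes $p$ with $Z^{1/\ell}\leq p\leq X^{1/\ell}$, so Lemma~\ref{SLL} applies directly and gives
\[
T\,k!\Bigl(\sum_{Z^{1/\ell}\leq p\leq X^{1/\ell}}\frac{|b_{F}(p^{\ell})w(p^{\ell})(\log(Xp^{\ell}))^{c}|^{2}}{p^{2\ell\sigma}}\Bigr)^{k}
\ll_{F} T\,k^{k}\bigl((\log X)^{2(A+c)}Z^{1-2\sigma}\bigr)^{k},
\]
using (S5) and $p^{2\ell\sigma}\geq p^{\ell}Z^{2\sigma-1}$. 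Since there are only $K_{1}+1=O_{F}(1)$ pieces, combining them via $|\sum_{j\leq m}a_{j}|^{2k}\leq m^{2k}\sum_{j}|a_{j}|^{2k}$ costs only a factor $C^{k}$ with $C=C(F)$, and the lemma follows without any prime-power combinatorics.
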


\begin{proof}
	Set $K_{1} = 2(1/2-\vartheta_F)^{-1}$.
	Using $|w(n)| \leq (\log n)^A$, we find that for any $\sigma \geq \frac{1}{2}$
	\begin{align}
		\Bigg|\sum_{\ell > K_{1}}\sum_{Z\leq p^{\ell} \leq X}
		\frac{b_{F}(p^{\ell}) w(p^{\ell}) (\log(X p^{\ell}))^{c}}{p^{\ell(\sigma + it)}}\Bigg|
		 & \ll_{F} (\log{X})^{c}\sum_{p>Z}\sum_{\ell > K_{1}}\frac{(\log (p^\ell))^A}{p^{\ell(\sigma-1/2)}p^{(1/2-\theta_F) \ell}} \\
		 & \ll_{F} (\log{X})^{c} Z^{1/2-\sigma}\sum_{p}\frac{1}{p^{2-\epsilon}}
		\ll_{F} (\log{X})^{c} Z^{1/2-\sigma}.\label{LargePrimespower}
	\end{align}
	Therefore, by Lemma \ref{SLL}, we have
	\begin{align}
		 & \int_T^{2T}\left|\sum_{Z\leq n\leq X}\frac{b_F(n)w(n)(\log(Xn))^{c}}{n^{\sigma+it}}\right|^{2k}dt \\
		 & \leq T C^k k^k \sum_{1 \leq \ell \leq K_1}
		\l(\sum_{Z^{1/\ell}\leq p\leq X^{1/\ell}}\frac{|b_F(p^\ell) w(p^\ell) (\log(X p^{\ell}))^{c}|^2}{p^{2\ell \sigma}}\r)^k
		+ T C^{k} Z^{k(1 - 2\s)} (\log{X})^{2ck}
	\end{align}
	for some constant $C=C(F)>0$.
	From assumption (S5), we see that, for $2 \leq \ell \leq K_{1}$,
	\begin{align*}
		\sum_{Z^{1/\ell}\leq p\leq X^{1/\ell}}\frac{|b_F(p^\ell) w(p^\ell) (\log(X p^{\ell}))^{c}|^2}{p^{2\ell \sigma}}
		\leq (\log{X})^{2(A + c)}Z^{1 - 2\s}\sum_{p}\frac{|b_{F}(p^{\ell})|^{2}}{p^{\ell}}
		\ll_{F} Z^{1 - 2\s} (\log{X})^{2(A + c)},
	\end{align*}
	which completes the proof of this lemma.
\end{proof}

\begin{lemma}\label{SumbLargeP}
	Let $F$ be a Dirichlet series satisfying (S4), (S5), and \eqref{SNC}.
	Then for any $x \geq 3$ and $c > 1$,
	\begin{gather}
		\label{SumbLargeP1}
		2\s_{F}(x)^{2}
		= \sum_{n\leq x}\frac{|b_F(n)|^2}{n} + O_{F}(1)
		=\sum_{p\leq x}\frac{|b_F(p)|^2}{p} + O_{F}(1)
		=n_F\log \log x+O_{F}(1),\\
		\label{SumbLargeP2}
		\sum_{n \geq x }\frac{|b_F(n)|^2}{n^c}\ll x^{1-c}, \quad x \rightarrow \infty,
	\end{gather}
	where $\s_{F}(x)$ is defined by \eqref{def_var}.
\end{lemma}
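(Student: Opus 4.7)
The plan is to reduce everything to a single key estimate controlling the contribution of proper prime powers ($\ell \geq 2$), and then handle the primes via the Selberg Orthonormality Conjecture. First observe that expanding the Euler product $F_p(s) = \exp\!\l(\sum_\ell b_F(p^\ell)/p^{\ell s}\r)$ and matching the $p^{-s}$ coefficient yields $a_F(p) = b_F(p)$ for every prime $p$; the rightmost equality in \eqref{SumbLargeP1} then reads \eqref{SNC} verbatim.

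The central technical step is the uniform bound
\[
\sum_{\ell \geq 2}\sum_p \frac{|b_F(p^\ell)|^2}{p^\ell} = O_F(1).
\]
To prove it I would split at a threshold $L_0 = L_0(F)$ chosen so that $L_0(1 - 2\vartheta_F) > 1$. For $\ell \geq L_0$, the size bound $|b_F(p^\ell)| \ll p^{\ell\vartheta_F}$ from (S4) gives $|b_F(p^\ell)|^2/p^\ell \ll p^{\ell(2\vartheta_F - 1)}$, whence $\sum_{\ell \geq L_0}\sum_p p^{\ell(2\vartheta_F - 1)} \ll \sum_p p^{L_0(2\vartheta_F - 1)} < \infty$. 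For each of the finitely many $\ell$ with $2 \leq \ell < L_0$, Hypothesis H (S5) directly yields $\sum_p |b_F(p^\ell)|^2/p^\ell \leq (\ell\log 2)^{-2}\sum_p |b_F(p^\ell)\log p^\ell|^2/p^\ell < \infty$. With this lemma in hand, the first two equalities in \eqref{SumbLargeP1} are immediate from the definition \eqref{def_var}: the quantity $2\s_F(x)^2$ equals $\sum_{p \leq x}|b_F(p)|^2/p$ plus $\sum_{p \leq x}\sum_{\ell \geq 2}|b_F(p^\ell)|^2/p^\ell = O_F(1)$, while $\sum_{n \leq x}|b_F(n)|^2/n$ differs from $\sum_{p \leq x}|b_F(p)|^2/p$ by $\sum_{\ell \geq 2,\, p^\ell \leq x}|b_F(p^\ell)|^2/p^\ell$, which is also controlled by the same double sum.

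For the tail bound \eqref{SumbLargeP2} I would again split into primes and proper prime powers. For the prime-power part, the elementary inequality $p^{\ell c} \geq p^\ell \cdot x^{c-1}$, valid whenever $p^\ell \geq x$, gives
\[
\sum_{\ell \geq 2}\sum_{p^\ell \geq x}\frac{|b_F(p^\ell)|^2}{p^{\ell c}} \leq x^{1-c}\sum_{\ell \geq 2}\sum_p \frac{|b_F(p^\ell)|^2}{p^\ell} \ll_F x^{1-c}.
\]
For the prime part I would apply partial summation against $A(t) := \sum_{p \leq t}|b_F(p)|^2/p = n_F \log\log t + O_F(1)$, obtaining
\[
\sum_{p > x}\frac{|b_F(p)|^2}{p^c} = (c - 1)\int_x^\infty \frac{A(t) - A(x)}{t^c}\,dt,
\]
and then substituting $t = xu$ and using $\log(1 + \log u/\log x) \leq \log u/\log x$ to bound the integral by $x^{1-c}$ times a constant depending on $c$ and $F$.

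The only mildly subtle point is that Hypothesis H yields convergence only for each individual $\ell \geq 2$, with no a priori summability in $\ell$; hence the size condition in (S4) is essential for handling the tail $\ell \geq L_0$. This dichotomy between a finite range of $\ell$ treated via (S5) and an infinite tail treated via (S4) is the main (minor) obstacle the argument must navigate.
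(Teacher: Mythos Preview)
Your proposal is correct and follows essentially the same route as the paper: both establish the key bound $\sum_{\ell \geq 2}\sum_p |b_F(p^\ell)|^2/p^\ell = O_F(1)$ by splitting at a threshold in $\ell$ (the paper uses $K_1 = 2(1/2-\vartheta_F)^{-1}$, you use any $L_0$ with $L_0(1-2\vartheta_F)>1$), handling large $\ell$ via (S4) and small $\ell$ via (S5), then combine with $a_F(p)=b_F(p)$ and \eqref{SNC}. The only cosmetic difference is in \eqref{SumbLargeP2}: the paper applies partial summation directly to the full sum over prime powers $n$ using the already-proved \eqref{SumbLargeP1}, whereas you split into primes and higher powers first; both arguments are equally short.
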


\begin{proof}
	Set $K_1=2(1/2-\vartheta_F)^{-1}$.
	Similar to \eqref{LargePrimespower}, we have
	\begin{align}
		\sum_{\ell \geq K_1}\sum_{p}\frac{|b_F(p^{\ell})|^2}{p^{\ell}}
		\ll_{F} \sum_{p}\sum_{\ell}\frac{1}{p^{2}}
		\ll 1.
	\end{align}
	From assumption (S5), we also have
	\begin{align}
		\sum_{2 \leq \ell \leq K_1}	\sum_{p}\frac{|b_F(p^\ell)|^2}{p^\ell}
		\ll_{F} 1,
	\end{align}
	and thus 
	\begin{align*}
		\sum_{\ell = 2}^{\infty}	\sum_{p}\frac{|b_F(p^\ell)|^2}{p^\ell}
		\ll_{F} 1.
	\end{align*}
	Therefore, by assumption \eqref{SNC}
	\begin{align}
		2\s_{F}(x)^2
		&= \sum_{p \leq x}\sum_{\ell = 1}^{\infty}\frac{|b_F(p^{\ell})|^2}{p^{\ell}}
		= \sum_{n\leq x}\frac{|b_F(n)|^2}{n} + O_{F}(1)\\
		&= \sum_{p\leq x}\frac{b_F(p)}{p} + O_{F}(1)
		= n_F\log\log x + O_{F}(1),
	\end{align}
  which is the assertion of \eqref{SumbLargeP1}.

	From partial summation and \eqref{SumbLargeP1}, we also find that
	\begin{align}
		\sum_{n\geq x}\frac{|b_F(n)|^2}{n^c}
		 & =\int_{x}^{\infty}\frac{1}{\xi^{c-1}}d\sum_{x \leq n \leq \xi}\frac{|b_F(n)|^2}{n}       \\
		 & = c\int_{x}^{\infty}\frac{n_{F}\log{\log{\xi} - n_{F}\log{\log{x}} + O_{F}(1)}}{\xi^{c}}
		\ll_{F} x^{1-c}.
	\end{align}
	\end{proof}

\begin{lemma}	\label{ESMDP}
	Let $F \in \Sc^{\dagger}$ be an $L$-function satisfying \eqref{SNC} and (A3).
	Let $T$ be large. Put $\delta_{F} = \min\{ \frac{1}{4}, \frac{\kappa_{F}}{20} \}$
	with $\kappa_{F}$ the positive constant in \eqref{ZDC1}.
	For any $k \in \ZZ_{\geq 1}$, $X \geq 3$ with $X \leq T^{\delta_{F} / k}$, we have
	\begin{align*}
		\int_{T}^{2T}\Bigg| \sum_{n \leq X^{3}}\frac{\Lam_{F}(n) w_{X}(n)}{n^{\s_{X, t}(F) + it}} \Bigg|^{2k}dt
		\ll T C^{k} k^{k} (\log{X})^{2k},
	\end{align*}
	where $w_{X}$ is the smoothing function defined by \eqref{def_w_X}, and $C = C(F)$ is a positive constant.
\end{lemma}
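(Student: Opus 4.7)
The key difficulty in Lemma \ref{ESMDP} is that $\sigma_{X,t}(F)$ depends on $t$ through nearby zeros of $F$, so the sum is not a Dirichlet polynomial on a fixed vertical line and Lemma \ref{MomentLambdab} cannot be applied directly. My plan is to use partial summation to rewrite $S(t) := \sum_{n \leq X^3} \Lambda_F(n) w_X(n) n^{-\sigma_{X,t}(F) - it}$ in terms of the stationary Dirichlet polynomial $B(y, t) := \sum_{n \leq y} \Lambda_F(n) w_X(n) n^{-1/2 - it}$, and then to handle the residual $t$-dependent weight by combining Lemmas \ref{MomentLambdab} and \ref{SL_s_X} in tandem.

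Setting $\sigma^* = \sigma_{X,t}(F)$ for brevity and applying Abel summation with the weight $y \mapsto y^{-(\sigma^* - 1/2)}$, I would obtain the identity
\[
S(t) = X^{-3(\sigma^* - 1/2)} B(X^3, t) + (\sigma^* - 1/2) \int_1^{X^3} \frac{B(y, t)}{y^{\sigma^* + 1/2}} dy.
\]
The first term has modulus $\leq |B(X^3, t)|$, whose $2k$-th moment is bounded directly by Lemma \ref{MomentLambdab} applied with $\sigma = 1/2$, $w(n) = w_X(n) \log n$, $A = 1$, $c = 0$; combined with the estimate $\sum_{p \leq X^3} |b_F(p) \log p|^2 / p \asymp (\log X)^2$ (proved by partial summation from \eqref{SNC} using $b_F(p) = a_F(p)$), this produces $\ll T C^k k^k (\log X)^{2k}$. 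For the integral, I would apply Jensen's inequality with respect to the probability measure proportional to $y^{-(\sigma^*+1/2)} dy$ on $[1, X^3]$ (whose total mass $(\sigma^*-1/2)\int_1^{X^3} y^{-(\sigma^*+1/2)}dy$ is at most $1$) to obtain the pointwise bound
\[
\Bigl|(\sigma^* - 1/2) \int_1^{X^3} \frac{B(y, t)}{y^{\sigma^* + 1/2}} dy\Bigr|^{2k} \leq (\sigma^* - 1/2) \int_1^{X^3} \frac{|B(y, t)|^{2k}}{y^{\sigma^* + 1/2}} dy.
\]

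After integrating in $t$ and swapping the order of integration, the remaining task is to estimate $\int_1^{X^3} dy \int_T^{2T} (\sigma^*(t) - 1/2) y^{-(\sigma^*(t) + 1/2)} |B(y, t)|^{2k} dt$. I would split the $t$-range according to whether $\sigma^*(t)$ attains its minimum value $\frac{1}{2}+4/\log X$ (the ``typical'' set) or exceeds it. On the typical set the weight equals $(4/\log X) y^{-1-4/\log X} \asymp y^{-1}/\log X$, and Lemma \ref{MomentLambdab} applied at parameter $y$ gives $\int |B(y, t)|^{2k} dt \ll T C^k k^k (\log y)^{2k}$; the subsequent integration $\int_1^{X^3} (\log y)^{2k} dy / y = (3\log X)^{2k+1}/(2k+1)$ combines with the $1/\log X$ from the weight to yield the claimed $T C^k k^k (\log X)^{2k}$. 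On the atypical set where $\sigma^*(t) - 1/2 > 4/\log X$, the measure shrinks rapidly and Lemma \ref{SL_s_X}, applied with a suitable $\xi \geq 1$, shows that this contribution is negligible. The hypothesis $X \leq T^{\delta_F/k}$ with $\delta_F \leq \min\{1/4, \kappa_F/20\}$ ensures both Lemmas \ref{MomentLambdab} and \ref{SL_s_X} apply on the enlarged polynomial range $y \leq X^3$ and for the values $\xi \leq T^{\kappa_F/4}$ needed; the principal delicacy will be in treating the atypical-set contribution carefully so as not to incur factors exceeding $C^k k^k$.
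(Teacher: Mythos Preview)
Your decomposition via Abel summation in the $n$-variable is a genuinely different (and in some ways cleaner) route than the paper's. The paper instead differentiates in the $\sigma$-variable: it writes
\[
\sum_{n\le X^3}\frac{\Lambda_F(n)w_X(n)}{n^{\sigma_{X,t}+it}}
=\sum_{n\le X^3}\frac{\Lambda_F(n)w_X(n)}{n^{1/2+it}}
-\int_{1/2}^{\sigma_{X,t}}\sum_{n\le X^3}\frac{\Lambda_F(n)w_X(n)\log n}{n^{\alpha'+it}}\,d\alpha',
\]
then applies Cauchy--Schwarz in $t$ to split off the factor $(\sigma_{X,t}-\tfrac12)X^{\sigma_{X,t}-1/2}$ (handled by Lemma~\ref{SL_s_X}) from the Dirichlet polynomial (handled by Lemma~\ref{MomentLambdab} at the $4k$-th moment). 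Your boundary term $B(X^3,t)$ and your treatment of the ``typical'' set are correct and match what is needed.

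The gap is in the atypical-set step. Knowing from Lemma~\ref{SL_s_X} that $\{t:\sigma^*(t)-\tfrac12>4/\log X\}$ has small measure does \emph{not} by itself bound $\int_{\mathcal A^c}(\sigma^*-\tfrac12)\,|B(y,t)|^{2k}\,dt$, because $|B(y,t)|$ has no usable pointwise bound (trivially $|B(y,t)|\ll y^{1/2+o(1)}$, which is far too large). To decouple you would need H\"older in $t$, which forces moments of $B$ higher than $2k$ and brings you back to the paper's Cauchy--Schwarz argument. A cleaner fix that avoids the split (and Lemma~\ref{SL_s_X}) entirely: after your Jensen step, use the elementary inequality $u\,e^{-u\log y}\le 1/(e\log y)$ with $u=\sigma^*-\tfrac12$ to get $(\sigma^*-\tfrac12)\,y^{-(\sigma^*-1/2)}\le 1/(e\log y)$ uniformly in $t$. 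Since $B(y,t)=0$ for $y<2$, the integral term is then bounded by
\[
\int_2^{X^3}\frac{1}{y\log y}\int_T^{2T}|B(y,t)|^{2k}\,dt\,dy
\ll TC^{k}k^{k}\int_2^{X^3}\frac{(\log y)^{2k-1}}{y}\,dy
\ll T(9C)^{k}k^{k-1}(\log X)^{2k},
\]
using only Lemma~\ref{MomentLambdab} at level $2k$ (which holds since $X^{3k}\le T^{3\delta_F}\le T$).
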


\begin{proof}
	For brevity, we write $\s_{X, t}(F)$ as $\s_{X, t}$.
	\begin{align*}
		\sum_{n \leq X^{3}}\frac{\Lam_{F}(n) w_{X}(n)}{n^{\s_{X, t} + it}}
		 & = \sum_{n \leq X^{3}}\frac{\Lam_{F}(n) w_{X}(n)}{n^{1/2 + it}}
		- \sum_{n \leq X^{3}}\frac{\Lam_{F}(n) w_{X}(n)}{n^{1/2 + it}}(1 - n^{1/2 - \s_{X, t}}) \\
		 & = \sum_{n \leq X^{3}}\frac{\Lam_{F}(n) w_{X}(n)}{n^{1/2 + it}}
		-\int_{1/2}^{\s_{X, t}}\sum_{n \leq X^{3}}\frac{\Lam_{F}(n) w_{X}(n) \log{n}}{n^{\a' + it}}d\a',
	\end{align*}
	Note that for $1/2 \leq \a' \leq \s_{X, t}$,
	\begin{align*}
		\bigg| \sum_{n \leq X^{3}}\frac{\Lam_{F}(n) w_{X}(n) \log{n}}{n^{\a' + it}} \bigg|
		 & = \Bigg|X^{\a' - 1/2} \int_{\a'}^{\infty}X^{1/2 - \a}
		\sum_{n \leq X^{3}}\frac{\Lam_{F}(n) w_{X}(n) \log{(X n)} \log{n}}{n^{\a + it}}d\a\Bigg|                                                   \\
		 & \leq X^{\s_{X, t}  - 1/2}\int_{1/2}^{\infty}X^{1/2 - \a}
		\bigg|\sum_{n \leq X^{3}}\frac{\Lam_{F}(n) w_{X}(n) \log{(Xn)} \log{n}}{n^{\a + it}}\bigg|d\a.
	\end{align*}
	Thus
	\begin{align}\label{S1+S2}
		 & \frac{1}{T}\int_T^{2T}\left|\sum_{n\leq X^3}\frac{\Lambda_F(n)w_X(n)}{n^{\sigma_{X,t}+it}} \right|^{2k}dt\ll C_1^k (\mathcal{S}_1+\mathcal{S}_2),
	\end{align}
	where
	\begin{align}
		 & \mathcal{S}_1=\frac{1}{T}\int_{T}^{2T}\left|\sum_{n\leq X^3}\frac{\Lambda_F(n)w_X(n)}{n^{1/2+it}}\right|^{2k}dt,         \\
		 & \mathcal{S}_2=\frac{1}{T}\int_T^{2T}\left| (\sigma_{X,t}-\frac{1}{2})X^{\s_{X, t}  - 1/2}\int_{1/2}^{\infty}X^{1/2 - \a}
		\bigg|\sum_{n \leq X^{3}}\frac{\Lam_{F}(n) w_{X}(n) \log{(Xn)} \log{n}}{n^{\a + it}}
		\bigg|d\a\right|^{2k}dt.
	\end{align}
	Since $\Lambda_F(n)=b_F(n)\log n$ and $b_F(p)=a_F(p)$, we have
	\begin{align}
		\sum_{n \leq X^{3}}\frac{|\Lam_{F}(n)|^2}{n}
		 & = (\log{X^{3}})^{2}\sum_{p \leq X^{3}}\frac{|a_{F}(p)|^2}{p}
		- \int_{2}^{X^{3}}\frac{2\log{\xi}}{\xi}\sum_{p \leq \xi}\frac{|a_{F}(p)|^2}{p} d\xi+O(1) \\ \nonumber
		 & = n_{F}(\log{X^{3}})^{2}\log{\log{X^3}}
		- 2n_{F}\int_{2}^{X^{3}}\frac{\log{\xi} \times \log{\log{\xi}}}{\xi}d\xi
		+ O_{F}\l( (\log{X})^{2} \r)                                                              \\ \nonumber
		 & = n_{F}(\log{X^{3}})^{2}\log{\log{X^3}}
		- n_{F}(\log{X^{3}})^{2}\log{\log{X^3}} + \frac{n_{F}}{2}(\log{X^3})^2
		+ O_{F}\l( (\log{X})^{2} \r)                                                              \\
		\label{Lambda^2/p}
		 & \ll_{F} (\log{X})^{2}.
	\end{align}Combining Lemma \ref{MomentLambdab} with \eqref{Lambda^2/p}, we obtain
	\begin{align}\label{S1}
		\mathcal{S}_1\ll C_2^k k^k \left(\sum_{n\leq X^3}\frac{|b_F(n)|^2\log^2 n}{n}\right)^k\ll C_3^k k^k (\log X)^{2k}.
	\end{align}
	By the Cauchy-Schwarz inequality and Lemma \ref{SL_s_X}, when $\delta_{F} \leq \kappa_{F} / 20$,
	it holds that
	\begin{align}
		\mathcal{S}_2
		 & \leq \Bigg(\int_{T}^{2T}(\s_{X, t}-1/2)^{4k}X^{4k(\s_{X, t} - 1/2)}dt\Bigg)^{1/2} \times \\
		\nonumber
		 & \qquad \quad \times \l(\int_{T}^{2T}\l( \int_{1/2}^{\infty}X^{1/2 - \a}
		\bigg|\sum_{n \leq X^{3}}
		\frac{\Lam_{F}(n) w_{X}(n) \log{(X n)} \log{n}}{n^{\a + it}}\bigg|d\a\r)^{4k}dt
		\r)^{\frac{1}{2}}                                                                           \\
		 & \ll\frac{T^{\frac{1}{2}} C_{4}^{k}}{(\log{X})^{2k}}
		\l(\int_{T}^{2T}\l( \int_{1/2}^{\infty}X^{1/2 - \a}
		\bigg|\sum_{n \leq X^{3}}
		\frac{\Lam_{F}(n) w_{X}(n) \log{(X n)} \log{n}}{n^{\a + it}}\bigg|d\a\r)^{4k}dt\r)^{\frac{1}{2}}\label{S21},
	\end{align}
	for some constant $C_{4} = C_{4}(F)>0$.
	We apply H\"older's inequality to the innermost integral to obtain
	\begin{align*}
		 & \l( \int_{1/2}^{\infty}X^{\frac{1}{2} - \a}
		\bigg|\sum_{n \leq X^{3}}\frac{\Lam_{F}(n) w_{X}(n) \log{(X n)} \log{n}}{n^{\a + it}}
		\bigg|d\a\r)^{4k}                                                      \\
		 & \leq \l(\int_{1/2}^{\infty}X^{\frac{1}{2} - \a}d\a\r)^{4k-1} \times
		\l( \int_{1/2}^{\infty} X^{\frac{1}{2} - \a}\bigg|
		\sum_{n \leq X^{3}}\frac{\Lam_{F}(n) w_{X}(n) \log{(X n)} \log{n}}
		{n^{\a + it}}\bigg|^{4k}d\a \r)                                      \\
		 & = \frac{1}{(\log{X})^{4k-1}}
		\int_{1/2}^{\infty} X^{1/2 - \a}\bigg|\sum_{n \leq X^{3}}
		\frac{\Lam_{F}(n) w_{X}(n) \log{(X n)} \log{n}}{n^{\a + it}}\bigg|^{4k}d\a.
	\end{align*}
	Therefore, by Lemma \ref{MomentLambdab} and \eqref{Lambda^2/p}, we find that
	\begin{align}
		 & \int_{T}^{2T}\l( \int_{1/2}^{\infty}X^{\frac{1}{2} - \a}
		\bigg|\sum_{n \leq X^{3}}\frac{\Lam_{F}(n) w_{X}(n) \log{(X n)} \log{n}}{n^{\a + it}}
		\bigg|d\a\r)^{4k}dt\nonumber                                                                                            \\
		 & \leq \frac{1}{(\log{X})^{4k-1}}\int_{1/2}^{\infty} X^{1/2-\a}\l(
		\int_{T}^{2T}\bigg|\sum_{n \leq X^{3}}\frac{b_{F}(n) \log{(X n)} \log{n}}{n^{\a + it}}
		\bigg|^{4k}dt\r)d\a\nonumber                                                                                            \\
		 & \leq \frac{T C^{k} k^{2k}}{(\log{X})^{4k-1}}\int_{1/2}^{\infty}X^{1/2-\a}
		\l\{\l( (\log{X})^{2}\sum_{n \leq X^{3}}\frac{|\Lam_{F}(n)|^2 (\log{n})^2}{n^{2  \a}} \r)^{2k} + (\log{X})^{8k} \r\}d\a \\
		 & \ll \frac{T C^{k} k^{2k}}{(\log{X})^{4k}}
		\l( \l((\log{X})^{4} \sum_{n \leq X^{3 }}\frac{|\Lam_{F}(n)|^2}{n} \r)^{2k} + (\log{X})^{8k} \r)                        \\
		 & \ll T C_{5}^{k} k^{2k} (\log{X})^{8k}.\label{S22}
	\end{align}
	Combining \eqref{S1+S2}, \eqref{S1}, \eqref{S21} and \eqref{S22}, we complete the proof of Lemma \ref{ESMDP}.
\end{proof}

\begin{lemma}	\label{ESRSIZ}
	Let $F \in \Sc^{\dagger}$ be an $L$-function satisfying \eqref{SNC} and (A3).
	Let $\s \geq 1/2$,  $T$ be large.
	Let $\kappa_{F}$, $\delta_{F}$ be the same constants as in Lemma \ref{ESMDP}.
	There exists a positive constant $C = C(F)$ such that
	for any $k \in \ZZ_{\geq 1}$, $3 \leq X \leq T^{\delta_{F}/k}$,
	\begin{align}	\label{ESRSIZ1}
		\int_{T}^{2T}\l( \sum_{|\s + it - \rho_F| \leq \frac{1}{\log{X}}}1 \r)^{2k}dt
		\leq C^{k} T^{1 - (2\s - 1)\delta_{F} + \frac{8\delta_{F}}{\log{X}}}\l( \frac{\log{T}}{\log{X}} \r)^{2k},
	\end{align}
	and
	\begin{align}	\label{ESRSIZ2}
		\begin{aligned}
			 & \int_{T}^{2T}\Bigg| \sum_{|\s + it - \rho_{F}| \leq \frac{1}{\log{X}}}\log((\s + it - \rho_{F}) \log{X}) \Bigg|^{k} dt \\
			 & \leq (C k)^{k} T^{1 - (\s - 1/2)\delta_{F} + \frac{4\delta_{F}}{\log{X}}}
			\l( \frac{\log{T}}{\log{X}} \r)^{k+\frac{1}{2}}.
		\end{aligned}
	\end{align}
\end{lemma}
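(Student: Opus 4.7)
The plan for both estimates is to expand the moments as sums over tuples of nontrivial zeros of $F$, bound the contribution of each tuple by its Lebesgue measure in $t$, and count admissible tuples via a combination of the zero density estimate \eqref{ZDC1} and Riemann--von Mangoldt. For \eqref{ESRSIZ1}, set $N(t) := \#\{\rho_F : |\s + it - \rho_F| \leq 1/\log X\}$. I would start from the expansion
\begin{align*}
\int_T^{2T} N(t)^{2k}\, dt
= \sum_{\rho_1, \ldots, \rho_{2k}} \meas\bigl\{t \in [T, 2T] : |\s + it - \rho_j| \leq 1/\log X \text{ for all } j\bigr\},
\end{align*}
noting that the summand vanishes unless all $\gamma_j$ lie in a common vertical interval of length $2/\log X$ and all $\b_j \geq \s - 1/\log X$, in which case the measure is at most $2/\log X$.

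Parametrizing each tuple by a seed $\rho_1$, the number of admissible tuples is at most $\sum_{\rho_1} C(\rho_1)^{2k-1}$, where $C(\rho_1)$ counts nontrivial zeros $\rho$ with $|\rho - \rho_1| \leq 2/\log X$ and $\b \geq \s - 1/\log X$. I would bound $C(\rho_1)$ via Riemann--von Mangoldt on a short vertical interval, and the total seed count $\tilde M := \#\{\rho_F : \b_F \geq \s - 1/\log X,\ 0 \leq \gamma_F \leq 3T\}$ via \eqref{ZDC1} evaluated at the shifted abscissa $\s - 1/\log X$ when $\s \geq 1/2 + 1/\log X$ (and via the trivial bound $\tilde M \ll T \log T$ otherwise). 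The relation $\delta_F \leq \kappa_F/20$ then allows one to convert the exponent $1 - \kappa_F(\s - 1/2) + \kappa_F/\log X$ produced by \eqref{ZDC1} into the stated form $1 - (2\s - 1)\delta_F + 8\delta_F/\log X$.

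For \eqref{ESRSIZ2} I would decompose $\log((\s + it - \rho)\log X) = \log|(\s + it - \rho)\log X| + i\arg((\s + it - \rho)\log X)$. The imaginary part contributes at most $\pi$ per zero, so its absolute value is dominated by $\pi N(t)$ and its $k$-th moment reduces to \eqref{ESRSIZ1} via Cauchy--Schwarz. For the real part, the constraint $|(\s + it - \rho)\log X| \leq 1$ forces it to be nonpositive, so H\"older's inequality yields
\begin{align*}
\Bigl|\sum_\rho \log|(\s + it - \rho)\log X|\Bigr|^k
\leq N(t)^{k-1}\sum_\rho \Bigl(\log \tfrac{1}{|(\s + it - \rho)\log X|}\Bigr)^k,
\end{align*}
and each one-variable integral of $\bigl(\log(1/|(\s + it - \rho)\log X|)\bigr)^k$ over $t$ is $O(k!/\log X)$ by the integrability of the logarithm near zero. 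Closing up with another Cauchy--Schwarz and \eqref{ESRSIZ1} applied to $\int N(t)^{2k-2}\, dt$ gives the second bound.

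The main obstacle will be extracting the precise exponent $T^{1 - (2\s - 1)\delta_F + 8\delta_F/\log X}$ (and similarly for \eqref{ESRSIZ2}) rather than a $k$-dependent exponent: this requires carefully balancing the zero density estimate at the shifted abscissa $\s - 1/\log X$ against the local Riemann--von Mangoldt count, while using both $\delta_F \leq \kappa_F/20$ and the hypothesis $X \leq T^{\delta_F/k}$ to absorb the various cross-terms produced by shifting the abscissa and by the applications of Cauchy--Schwarz and H\"older.
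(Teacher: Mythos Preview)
Your approach to \eqref{ESRSIZ1} has a genuine gap. Riemann--von Mangoldt on an interval of length $2/\log X$ does \emph{not} give $C(\rho_1)\ll \log T/\log X$: the formula $N_F(T+h)-N_F(T)=\frac{d_F}{2\pi}h\log T+O(\log T)$ has an error term that dominates when $h<1$, so pointwise one only gets $C(\rho_1)\ll \log T$. Feeding this into your tuple count yields
\[
\int_T^{2T}N(t)^{2k}\,dt\;\ll\;\frac{1}{\log X}\,\tilde M\,(C\log T)^{2k-1}
\;\ll\;\frac{T^{1-\kappa_F(\sigma-1/2)+\kappa_F/\log X}(\log T)^{2k}}{\log X},
\]
which is weaker than the target $T^{1-(2\sigma-1)\delta_F+8\delta_F/\log X}(\log T/\log X)^{2k}$ by a factor of $(\log X)^{2k-1}$ (already at $\sigma=1/2$). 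This loss is fatal for the downstream applications: in the proof of \eqref{Main_Thm_LD_JVD2} one takes $X=T^{\delta_F/k}$ so that $\log T/\log X\asymp k$, and the sharp bound $C^k T k^{2k}$ is exactly what is needed to get $\meas(\mathcal C_j)\ll T\exp(-c\mathcal L)$ after choosing $k\asymp\mathcal L$.

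The paper obtains the missing $(\log X)^{2k-1}$ saving not by counting zeros directly but by the Selberg device: one first shows the \emph{pointwise} inequality
\[
\sum_{|t-\gamma_F|\le 1/\log X}1
\;\ll\;(\sigma_{X,t}(F)-\tfrac12)\Bigl(\Bigl|\sum_{n\le X^3}\frac{\Lambda_F(n)w_X(n)}{n^{\sigma_{X,t}(F)+it}}\Bigr|+d_F\log T\Bigr)
\]
(this is \eqref{NZVDP}, coming from \eqref{KSIE}), then inserts the weight $\xi^{\sigma_{X,t}(F)-\sigma}$ with $\xi=T^{\delta_F/k}$, raises to the $2k$-th power, and controls the resulting moments via Lemma~\ref{SL_s_X} (moments of $\sigma_{X,t}(F)-\tfrac12$, which is where the zero density estimate actually enters, in an integrated form) together with Lemma~\ref{ESMDP} (moments of the Dirichlet polynomial). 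The point is that $\sigma_{X,t}(F)-\tfrac12$ is \emph{on average} of size $1/\log X$, so each of the $2k$ factors picks up a $1/\log X$; a direct tuple expansion cannot see this because it only uses the zero density once, at the level of the seed $\rho_1$. Your treatment of \eqref{ESRSIZ2} is in the same spirit as the paper's (the paper uses $g_X(s)=\log(1/|(s-\rho_s)\log X|)$ for the nearest zero $\rho_s$ and then Cauchy--Schwarz), but it rests on \eqref{ESRSIZ1} and therefore inherits the same defect.
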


\begin{proof}
	From \eqref{def_s_X_F},
	there are no zeros of $F$ with $|\s + it - \rho_{F}| \leq \frac{1}{\log{X}}$ when $\s \geq \s_{X, t}(F)$.
	Put $\xi := T^{\delta_{F} / k}$.
	Note that $\xi \geq 1$ because we suppose that $3 \leq X \leq T^{\delta_{F}/k} = \xi$.
	From these facts, we have
	\begin{align*}
		\sum_{|\s + it - \rho_{F}| \leq \frac{1}{\log{X}}}1
		\leq \xi^{\s_{X, t}(F) - \s}\sum_{|t - \gamma_{F}| \leq \frac{1}{\log{X}}}1
	\end{align*}
	for $\s \geq 1/2$.
	By definition \eqref{def_s_X_F} and the line symmetry of nontrivial zeros of $F$, we find that
	\begin{align*}
		\sum_{|t - \gamma_{F}| \leq \frac{1}{\log{X}}}1
		\leq 2\sum_{\substack{|t - \gamma_{F}| \leq \frac{1}{\log{X}} \\ \b_{F} \geq 1/2}}1
		\ll \sum_{\substack{|t - \gamma_{F}| \leq \frac{1}{\log{X}}   \\ \b_{F} \geq 1/2}}
		\frac{(\s_{X, t}(F) - 1/2)^2}{(\s_{X, t}(F) - \b_{F})^2 + (t - \gamma_{F})^2}.
	\end{align*}
	Applying estimate \eqref{KSIE} to the above right hand side, we obtain
	\begin{align}	\label{NZVDP}
		\sum_{|t - \gamma_{F}| \leq \frac{1}{\log{X}}}1
		\ll (\s_{X, t}(F) - 1/2)\l(\bigg| \sum_{n \leq X^3}\frac{\Lam_{F}(n) w_{X}(n)}{n^{\s_{X, t}(F) + it}}\bigg| + d_{F}\log{T} \r)
	\end{align}
	for $t \in [T, 2T]$.
	Noting $X \xi^{2k} \leq T^{\kappa_{F} / 4}$ and using Lemmas \ref{SL_s_X}, \ref{ESMDP},
	we have
	\begin{align}
		 & \int_{T}^{2T}(\s_{X, t}(F) - 1/2)^{2k}\xi^{2k(\s_{X, t}(F) - \s)}
		\l(\bigg| \sum_{n \leq X^3}\frac{\Lam_{F}(n) w_{X}(n)}{n^{\s_{X, t}(F) + it}}\bigg| + d_{F}\log{T} \r)^{2k}dt \\
		 & \leq C^{k}\xi^{2k(1/2 - \s)}\Bigg\{(\log{T})^{2k}
		\int_{T}^{2T}(\s_{X, t}(F) - 1/2)^{2k}\xi^{2k(\s_{X, t}(F)-1/2)}dt                                          \\
		 & + \l(\int_{T}^{2T}(\s_{X, t}(F) - 1/2)^{2k}\xi^{2k(\s_{X, t}(F)-1/2)}dt\r)^{1/2}
		\l( \int_{T}^{2T}\bigg| \sum_{n \leq X^3}\frac{\Lam_{F}(n) w_{X}(n)}{n^{\s_{X, t}(F) + it}}\bigg|^{2k}dt \r)^{1/2}
		\Bigg\}                                                                                                         \\
		 & \leq C^{k} \xi^{2k(1/2 - \s)} \l(T \xi^{\frac{8k}{\log{X}}} \l( \frac{\log{T}}{\log{X}} \r)^{2k}
		+ T \xi^{\frac{4k}{\log{X}}} k^{k/2}\r)                                                                         \\
		\label{PESRSIZ1}
		 & \leq C_{2}^{k} T^{1 - (2\s - 1)\delta_{F} + \frac{8\delta_{F}}{\log{X}}}\l( \frac{\log{T}}{\log{X}} \r)^{2k}
	\end{align}
	for some constant $C_{2} = C_{2}(F) > 0$.
	Hence, we obtain estimate \eqref{ESRSIZ1}.

	Next, we show estimate \eqref{ESRSIZ2}.
	We find that
	\begin{align*}
		\Bigg|\sum_{|\s + it - \rho_{F}| \leq \frac{1}{\log{X}}}\log\l( (\s + it - \rho_{F})\log{X} \r)\Bigg|
		\leq \l(g_{X}(s) + \pi\r)
		\times \sum_{|\s + it - \rho_{F}| \leq \frac{1}{\log{X}}}1,
	\end{align*}
	where $g_{X}(s) = g_{X}(\s+it)$ is the function defined by
	\begin{align*}
		g_{X}(s) = \l\{
		\begin{array}{cl}
			\log\l(\frac{1}{|(\s + it - \rho_{s})\log{X}|}\r)
			  & \text{if there exists a zero $\rho_{F}$ with $|\s + it - \rho_{F}| \leq \frac{1}{\log{X}}$,} \\
			0 & \text{otherwise.}
		\end{array}
		\r.
	\end{align*}
	Here, $\rho_{s}$ indicates the zero of $F$ nearest from $s = \s + it$.
	By using the Cauchy-Schwarz inequality and estimate \eqref{ESRSIZ1}, we obtain
	\begin{align}
		\label{PESRSIZ2}
		 & \int_{T}^{2T}\Bigg| \sum_{|\s + it - \rho_{F}| \leq \frac{1}{\log{X}}}\log((\s + it - \rho_{F}) \log{X}) \Bigg|^{k} dt \\
		 & \leq C_{3}^{k}\l(\int_{T}^{2T} g_{X}(\s+it)^{2k}dt + \pi^{2k} T\r)^{1/2}
		\times T^{\frac{1}{2} - (\s - 1/2)\delta_{F} + \frac{4\delta_{F}}{\log{X}}}
		\l( \frac{\log{T}}{\log{X}} \r)^{k}
	\end{align}
	for some constant $C_{3} = C_{3}(F) > 0$.
	Moreover, we find that
	\begin{align*}
		\int_{T}^{2T}g_{X}(s)^{2k}dt
		 & \leq \int_{T}^{2T}\sum_{|\s + it - \rho_{F}| \leq \frac{1}{\log{X}}}
		\l( \log{\l( \frac{1}{|\s+it-\rho_{F}|\log{X}} \r)} \r)^{2k}dt                             \\
		 & \leq \sum_{T-\frac{1}{\log{X}} \leq \gamma_{F} \leq 2T + \frac{1}{\log{X}}}
		\int_{\gamma_{F} - \frac{1}{\log{X}}}^{\gamma_{F} + \frac{1}{\log{X}}}
		\l(\log\l( \frac{1}{|t - \gamma_{F}|\log{X}} \r)\r)^{2k}dt                                 \\
		 & \ll \frac{1}{\log{X}}\sum_{T-1 \leq \gamma_{F} \leq 2T + 1}
		\int_{0}^{1}\l(\log\l( \frac{1}{\ell} \r)\r)^{2k}d\ell                                     \\
		 & \ll_{F} T\frac{\log{T}}{\log{X}}\int_{0}^{1}\l(\log\l( \frac{1}{\ell} \r)\r)^{2k}d\ell.
	\end{align*}
	By induction, we can easily confirm that the last integral is equal to $(2k)!$. Hence, we obtain
	\begin{align*}
		\int_{T}^{2T}g_{X}(s)^{2k}dt
		\ll_{F} (2k)!T\frac{\log{T}}{\log{X}}.
	\end{align*}
	By substituting this estimate to inequality \eqref{PESRSIZ2}, we obtain this lemma.
\end{proof}

\begin{proof}[Proof of Proposition \ref{KLI}]
	Let $f$ be a fixed function satisfy the condition of this paper (see Notation) and $D(f) \geq 2$.
	Let $\s \geq 1/2$, $k \in \ZZ_{\geq 1}$, and $3 \leq Z := T^{\delta_{F} / k}$,
	where $\delta_{F} = \min\{ \frac{\kappa_{F}}{20}, \frac{1}{4} \}$.
	It holds that
	\begin{align*}
		Z^{2(\s_{Z, t}(F) - \s)} + Z^{\s_{Z, t}(F) - \s}
		&= Z^{2(1/2-\s)} \cdot Z^{2(\s_{Z, t}(F) - 1/2)} + Z^{1/2-\s} \cdot Z^{\s_{Z, t}(F) - 1/2}\\
		&\leq 2Z^{1/2 - \s} \cdot Z^{2(\s_{Z, t}(F) - 1/2)}
	\end{align*}
	for $\s \geq 1/2$.
	Using this inequality and estimate \eqref{ESR_S2} as $H = 1$, we find that
	there exists a positive constant $C_{1} = C_{1}(F)$ such that
	\begin{align}	\label{PKLI0}
		 & \bigg| \log{F(\s + it)} - P_{F}(\s + it, Z)
		- \sum_{|\s + it - \rho_{F}| \leq \frac{1}{\log{Z}}}\log((\s + it - \rho_{F})\log{Z}) \bigg|^{2k} \\
		 & \leq C_{1}^{k}\bigg| \sum_{Z < n \leq Z^{2}}
		\frac{\Lam_{F}(n) v_{f, 1}(e^{\log{n} / \log{Z}})}{n^{\s+it}\log{n}} \bigg|^{2k} +                \\ \nonumber
		 & \quad + C_{1}^{k}Z^{2k(1/2-\s)}(\s_{Z, t}(F) - 1/2)^{2k}Z^{4k(\s_{Z, t}(F) - 1/2)}
		\l( \bigg| \sum_{n \leq Z^3}\frac{\Lam_{F}(n) w_{Z}(n)}{n^{\s_{Z, t}(F) + it}} \bigg| + \log{T} \r)^{2k}
	\end{align}
	for $t \in [T, 2T]$.
	Since  $|v_{f, 1}(e^{\log{p^{\ell}} / \log{Z}})| \leq 1$, $\Lam_{F}(n) = b_{F}(n)\log n$, and $a_{F}(p) = b_{F}(p)$,
	we can apply Lemma \ref{MomentLambdab} and Lemma \ref{SumbLargeP} to bound the first sum in \eqref{PKLI0} so that
	\begin{align}
		\hspace{-5mm}
		\label{PKLI3}
		\frac{1}{T}\int_{T}^{2T}\bigg| \sum_{Z < n \leq Z^{2}}
		\frac{\Lam_{F}(n) v_{f, 1}(e^{\log{n} / \log{Z}})}{n^{\s+it}\log{n}} \bigg|^{2k}dt
		\leq C_3^k k^k\l( \sum_{Z< n \leq Z^2}\frac{|a_{F}(p)|^2}{p^{2\s}}\r)^{k}+C_3^k k^{k} Z^{k(1 - 2\sigma)}.
	\end{align}
	For the second sum  in \eqref{PKLI0} , we can follow the proof of estimate \eqref{PESRSIZ1} and obtain
	\begin{align}
		\nonumber
		 & Z^{2k(1/2 - \s)}\int_{T}^{2T} (\s_{Z, t}(F) - 1/2)^{2k}Z^{4k(\s_{Z, t}(F) - 1/2)}
		\l( \bigg| \sum_{n \leq Z^3}\frac{\Lam_{F}(n) w_{Z}(n)}{n^{\s_{Z, t}(F) + it}} \bigg| + \log{T} \r)^{2k} dt \\
		\label{PKLI1}
		 & \ll C^{k} T^{1 - \delta_{F}(2\s - 1) + \frac{16\delta_F}{\log{Z}}} \l( \frac{\log{T}}{\log{Z}} \r)^{2k}
		\ll T^{1 - \delta_{F}(2\s - 1)} C_{2}^{k} k^{2k}
	\end{align}
	for some positive constant $C_{2} = C_{2}(F)$.
		Combining \eqref{PKLI0}, \eqref{PKLI1} and \eqref{PKLI3}, we have 
  \begin{align} \label{PKLI4}
    & \frac{1}{T}\int_{T}^{2T}\bigg| \log{F(\s + it)} - P_{F}(\s + it, Z)
    - \sum_{|\s + it - \rho_{F}| \leq \frac{1}{\log{Z}}}\log((\s + it - \rho_{F})\log{Z}) \bigg|^{2k}dt \\
    & \leq C^{k} k^{2k}T^{\delta_{F}(1 - 2\s)}
    + C^{k} k^{k} \l( \sum_{Z< p \leq Z^{2}}\frac{|a_{F}(p)|^2}{p^{2\s}} \r)^{k}
  \end{align}
  for some constant $C = C(F) > 0$.

 Now for any $3 \leq X \leq Y = T^{1/k}$, 
  we write
  \begin{align*}
    &\log{F(\s + it)} - P_{F}(\s + it, X)
		- \sum_{|\s + it - \rho_{F}| \leq \frac{1}{\log{Y}}}\log((\s + it - \rho_{F})\log{Y})\\
    &= \log{F(\s + it)} - P_{F}(\s + it, Z) - \sum_{|\s + it - \rho_{F}| \leq \frac{1}{\log{Z}}}\log((\s + it - \rho_{F})\log{Z})\\
    &- (P_{F}(\s + it, X) - P_{F}(\s + it, Z)) 
    + \sum_{\frac{1}{\log{Y}} < |\s + it - \rho_{F}| \leq \frac{1}{\log{Z}}}\log((\s + it - \rho_{F})\log{Z})\\
    &\qqqquad \qqquad - \sum_{|\s + it - \rho_{F}| \leq \frac{1}{\log{Y}}}\log\l( \frac{\log{Y}}{\log{Z}} \r).
  \end{align*}
  By Lemma \ref{MomentLambdab}, we have
  \begin{align*}
    \int_{T}^{2T}|P_{F}(\s + it, X) - P_{F}(\s + it, Z)|^{2k}dt
    \leq T C^{k} k^{k} \l( \sum_{\min\{ X, Z \} < p \leq \max\{ X, Z \}}\frac{|a_{F}(p)|^2}{p^{2\s}} \r)^{k}.
  \end{align*}
  for some constant $C = C(F) > 0$.
  If $Z < X \leq Y$, we see, using \eqref{SNC}, that the sum on the right hand side is $\ll_{F} Z^{1 - 2\s} = T^{\delta_{F}(1 - 2\s)/k}$. 
  If $X < Z$, it holds that the sum is $\leq \sum_{X < p \leq Y}\frac{|a_{F}(p)|^2}{p^{2\s}}$.
  Hence, we have
  \begin{align}\label{XToZ}
    \hspace{-5mm}
    \int_{T}^{2T}|P_{F}(\s + it, X) - P_{F}(\s + it, Z)|^{2k}dt
    \leq T C^{k} k^{k} T^{\delta_{F}(1 - 2\s)} + T C^{k} k^{k} \l( \sum_{X < p \leq Y}\frac{|a_{F}(p)|^2}{p^{2\s}} \r)^{k}.
  \end{align}
  Also, it holds that
  \begin{align*}
    \sum_{\frac{1}{\log{Y}} < |\s + it - \rho_{F}| \leq \frac{1}{\log{Z}}}\log((\s + it - \rho_{F})\log{Z})
    - \sum_{|\s + it - \rho_{F}| \leq \frac{1}{\log{Y}}}\log\l( \frac{\log{Y}}{\log{Z}} \r)
    \ll \sum_{|\s + it - \rho_{F}| \leq \frac{1}{\log{Z}}}1.
  \end{align*}
  Therefore, by this inequality and Lemma \ref{ESRSIZ}, we have
  \begin{align}
    &\int_{T}^{2T}\l\{ \sum_{\frac{1}{\log{Y}} < |\s + it - \rho_{F}| \leq \frac{1}{\log{Z}}}\log((\s + it - \rho_{F})\log{Z})
    + \sum_{|\s + it - \rho_{F}| \leq \frac{1}{\log{Y}}}\log\l( \frac{\log{Y}}{\log{Z}} \r) \r\}^{2k} dt\\
    &\leq T^{1 - \delta_{F}(2\s - 1)} C^{k} k^{2k}.\label{zerosYZ}
  \end{align}
  Combining \eqref{PKLI4}, \eqref{XToZ} and \eqref{zerosYZ}, we obtain
  \begin{align*}
    &\frac{1}{T}\int_{T}^{2T}\bigg|\log{F(\s + it)} - P_{F}(\s + it, X)
		- \sum_{|\s + it - \rho_{F}| \leq \frac{1}{\log{Y}}}\log((\s + it - \rho_{F})\log{Y})\bigg|^{2k}\\
    &\leq C^{k} k^{2k} T^{\delta_{F}(1 - 2\s)} + C^{k} k^{k} \l( \sum_{Z < p \leq Z^{2}}\frac{|a_{F}(p)|^2}{p^{2\s}} \r)^{k}
    + C^{k} k^{k} \l( \sum_{X < p \leq Y}\frac{|a_{F}(p)|^2}{p^{2\s}} \r)^{k}.
  \end{align*}
  Using \eqref{SNC}, we see that $\sum_{Z < p \leq Z^{2}}\frac{|a_{F}(p)|^2}{p^{2\s}} \ll_{F} Z^{1 - 2\s} = T^{\delta_{F}(1 - 2\s)/k}$.
  Therefore, the second term is absorbed into the first term. Thus, we complete the proof of Proposition \ref{KLI}.
\end{proof}

\begin{proof}[Proof of Proposition \ref{KLST}]
	By Proposition \ref{KLI}, it suffices to show that there exists a positive constant $A_{3} = A_{3}(F)$ such that
	\begin{align} \label{PKLST1}
		\int_{T}^{2T}\Bigg| \sum_{|\s + it - \rho_{F}| \leq \frac{1}{\log{Y}}}\log((\s+it - \rho_{F})\log{Y}) \Bigg|^{2k}
		\leq T^{1 - (2\s - 1)\delta_{F}} A_{3}^{k} k^{4k}
	\end{align}
	with $Y = T^{1 / k}$. 
  Put $Z = T^{\delta_{F} / k}$.
  We write as
  \begin{align*}
    &\sum_{|\s + it - \rho_{F}| \leq \frac{1}{\log{Y}}}\log((\s+it - \rho_{F})\log{Y})\\
    &= \sum_{|\s + it - \rho_{F}| \leq \frac{1}{\log{Z}}}\log((\s + it - \rho_{F})\log{Z})
    -\sum_{\frac{1}{\log{Y}} < |\s + it - \rho_{F}| \leq \frac{1}{\log{Z}}}\log((\s + it - \rho_{F})\log{Z})\\
    &\qqqquad \qqquad + \sum_{|\s + it - \rho_{F}| \leq \frac{1}{\log{Y}}}\log\l( \frac{\log{Y}}{\log{Z}} \r).
  \end{align*}
  It holds that
  \begin{align*}
    -\sum_{\frac{1}{\log{Y}} < |\s + it - \rho_{F}| \leq \frac{1}{\log{Z}}}\log((\s + it - \rho_{F})\log{Z})
    + \sum_{|\s + it - \rho_{F}| \leq \frac{1}{\log{Y}}}\log\l( \frac{\log{Y}}{\log{Z}} \r)
    \ll \sum_{|\s + it - \rho_{F}| \leq \frac{1}{\log{Z}}} 1.
  \end{align*}
  Hence, by using Lemma \ref{ESRSIZ}, we obtain \eqref{PKLST1}.
\end{proof}

\section{\textbf{Proofs of results for Dirichlet polynomials}}\label{dpoly}

In this section, we prove Propositions \ref{Main_Prop_JVD}, \ref{Main_Prop_JVD3}, Theorem \ref{GMDP} and Corollary \ref{jointDirichlet}.
Throughout this section, we assume that $\bm{F} = (F_1, \dots, F_{r})$ is an $r$-tuple of Dirichlet series,
and $\bm{\theta} = (\theta_{1}, \dots, \theta_{r}) \in \RR^{r}$.
We also assume that $\{ \mathcal{X}(p) \}_{p \in \mathcal{P}}$ is a sequence of independent random variables 
on a probability space $(\Omega, \mathscr{A}, \PP)$ with uniformly distributed on the unit circle in $\CC$.

\subsection{Approximate formulas for moment generating functions}


We will relate the moment generating function of $(P_{F_{j}}(\tfrac{1}{2}+it, X))_{j = 1}^{r}$
to the moment generating function of random $L$-series.
Recall that the Dirichlet polynomial $P_{F_{j}}(s, X)$ is defined by \eqref{def_D_P}.
To do this, we work with a subset of $[T,2T]$ such that the Dirichlet polynomials do not obtain large values. 
More precisely, define the set $\mathcal{A} = \mathcal{A}(T, X, \bm{F})$ by
\begin{align} \label{def_sA_JVDPP}
	\mathcal{A}
	= \bigcap_{j = 1}^{r}\set{t \in [T, 2T]}{\frac{|P_{F_{j}}(\tfrac{1}{2}+it, X)|}{\s_{F_{j}}(X)} \leq (\log{\log{X}})^{2(r + 1)}}.
\end{align}
We show that the measure of $\mathcal A$ is sufficiently close to $T$, 
and thus it enough to consider the moment generating function of $(P_{F_{j}}(\tfrac{1}{2}+it, X))_{j = 1}^{r}$ on $\mathcal A$.

We first show that the measure of $\mathcal A$ is close to $T$.
\begin{lemma} \label{ESAEG_JVDPP}
	Assume that $\bm{F}$ satisfies (S4), (S5), (A1).
	Let $T, X$ be large with $X^{(\log{\log{X}})^{4(r + 1)}} \leq T$.
	Then there exists a positive constant $b_{0} = b_{0}(\bm{F})$ such that
	\begin{align*}
		\frac{1}{T}\meas([T, 2T] \setminus \mathcal{A})
		\leq \exp\l( -b_{0}(\log{\log{X}})^{4(r + 1)} \r).
	\end{align*}
\end{lemma}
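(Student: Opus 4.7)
The plan is to prove this by a straightforward union bound together with Markov's inequality applied to a carefully chosen high moment, using Lemma \ref{MomentLambdab} as the key input.

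First I would reduce to a single $j$ via the union bound: since $\mathcal{A}$ is an intersection of $r$ events,
\begin{align*}
\meas([T,2T]\setminus \mathcal A) \leq \sum_{j=1}^{r} \meas\l\{ t\in[T,2T] : |P_{F_j}(\tfrac12+it,X)| > \s_{F_j}(X)(\log\log X)^{2(r+1)} \r\}.
\end{align*}
For a fixed $j$, I would bound each of these measures by Markov's inequality with the $2k$-th moment for an integer $k$ to be chosen. The moment itself is controlled by Lemma \ref{MomentLambdab} applied with $Z=2$, $\s=1/2$, $w(n)\equiv 1$, $c=0$, $A=0$ (note $P_{F_j}(s,X)=\sum_{2\leq n\leq X}b_{F_j}(n)n^{-s}$). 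Combining with (A1) and \eqref{SumbLargeP1}, which give $\sum_{p\leq X}|b_{F_j}(p)|^2/p = 2\s_{F_j}(X)^2 + O_{\bm F}(1)$, this yields
\begin{align*}
\int_T^{2T}|P_{F_j}(\tfrac12+it,X)|^{2k}\,dt \leq T \, C^k k^k \,\s_{F_j}(X)^{2k} + T\,C^k k^k
\end{align*}
for some constant $C=C(\bm F)>0$, provided $X^k\leq T$.

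Markov's inequality then gives, for each $j$,
\begin{align*}
\meas\l\{|P_{F_j}|>\s_{F_j}(X)(\log\log X)^{2(r+1)}\r\} \leq T\l(\frac{Ck}{(\log\log X)^{4(r+1)}}\r)^{\!k}\l(1+\s_{F_j}(X)^{-2k}\r).
\end{align*}
The factor $(1+\s_{F_j}(X)^{-2k})\leq 2$ for large $X$. I would then optimize by choosing $k=\lfloor (\log\log X)^{4(r+1)}/(eC)\rfloor$, which turns the bound into $2T\, e^{-k}\leq T\exp\bigl(-b_0(\log\log X)^{4(r+1)}\bigr)$ for a suitable $b_0=b_0(\bm F)>0$. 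The hypothesis $X^{(\log\log X)^{4(r+1)}}\leq T$ is exactly what is needed to guarantee $X^k \leq T$, since then $k\log X \leq (\log\log X)^{4(r+1)}\log X \leq \log T$ (enlarging $C$ if necessary so $eC\geq 1$). Summing over $j=1,\dots,r$ produces an extra factor of $r$, which can be absorbed into $b_0$ (after slightly decreasing it) for $X$ sufficiently large.

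There is no real obstacle here: the argument is entirely routine once one has both the moment bound from Lemma \ref{MomentLambdab} and the variance asymptotic from Lemma \ref{SumbLargeP}. The only point requiring some care is verifying that the exponent $k$ we choose is compatible with the hypothesis $X^k\leq T$ required to invoke the moment bound, and that is precisely why the quantitative condition $X^{(\log\log X)^{4(r+1)}}\leq T$ appears in the statement.
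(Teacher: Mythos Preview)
Your proposal is correct and follows essentially the same approach as the paper: a union bound over $j$, Markov's inequality on the $2k$-th moment of $P_{F_j}$ (bounded via Lemma \ref{MomentLambdab} and Lemma \ref{SumbLargeP}), and then the choice $k\asymp_{\bm F}(\log\log X)^{4(r+1)}$. The paper presents the steps in a slightly different order and packages the moment bound directly as $T(2C_j k\,\s_{F_j}(X)^2)^k$, but the substance is identical.
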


\begin{proof}
	By Lemmas \ref{MomentLambdab}, \ref{SumbLargeP}, there exists a constant $C_{j}$ depending only on $F_{j}$ such that
	\begin{align} \label{MVEDPCLPP}
		\int_{T}^{2T}| P_{F_{j}}(\tfrac{1}{2}+it, X)|^{2k}dt
		\leq T C_{j}^k k^k \l( \sum_{n\leq X}\frac{|b_{F_{j}}(n)|^2}{n} \r)^{k}
		\leq T (2C_{j} k \s_{F_{j}}(X)^2)^{k}
	\end{align}
	for $1 \leq k \leq 2^{-1}(\log{\log{X}})^{4(r + 1)}$.
	This implies that
	\begin{align*}
		\frac{1}{T}\meas\set{t \in [T, 2T]}{\frac{|P_{F_{j}}(\tfrac{1}{2}+it, X)|}{\s_{F_{j}}(X)} > (\log{\log{X}})^{2(r + 1)}}
		\ll \l( \frac{2C_{j} k}{(\log{\log{X}})^{4(r + 1)}} \r)^{k}.
	\end{align*}
	Hence, there exists some constant $C = C(\bm{F}) > 0$ such that
	\begin{align*}
		\frac{1}{T}\meas([T, 2T] \setminus \mathcal{A})
		\leq \l( \frac{2Ck}{(\log{\log{X}})^{4(r + 1)}} \r)^{k}.
	\end{align*}
	Choosing $k = \lfloor (2 e C)^{-1} (\log{\log{X}})^{4(r + 1)} \rfloor$, we obtain this lemma.
\end{proof}
Next we consider the moment generating function of $(P_{F_{j}}(\tfrac{1}{2}+it, X))_{j = 1}^{r}$ on $\mathcal A$.
\begin{proposition}	\label{RKLJVDPP}
	Assume that $\bm{F}$, $\bm{\theta}$ satisfy (S4), (S5), (A1), and (A2).
	Let $T$, $X$ be large numbers with $X^{(\log{\log{X}})^{4(r + 1)}} \leq T$.
	For any $\bm{z} = (z_1, \dots, z_r) \in \CC^{r}$ with $\norm[]{\bm{z}} \leq 2(\log{\log{X}})^{2r}$,
  \begin{align} \label{RKLJVDPP1}
		& \frac{1}{T}\int_{\mathcal{A}}\exp\l( \sum_{j = 1}^{r} z_{j} \Re e^{-i\theta_{j}}P_{F_{j}}(\tfrac{1}{2}+it, X) \r)dt\\
		&= \prod_{p \leq X}M_{p, \frac{1}{2}}(\bm{z})
		+ O\l(\exp\l( -b_{1}(\log{\log{X}})^{4(r + 1)} \r)\r),
	\end{align}
	where
	$M_{p, \frac{1}{2}}$ is the function defined by \eqref{def_RDP_MGF}. 
	Here, $b_{1}$ is a positive constant depending only on $\bm{F}$.
\end{proposition}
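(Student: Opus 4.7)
The plan is to write
$\int_\mathcal{A} e^{E(t)}\,dt = \int_T^{2T} e^{E(t)}\,dt - \int_{[T,2T] \setminus \mathcal{A}} e^{E(t)}\,dt$
where $E(t) := \sum_{j=1}^r z_j \Re e^{-i\theta_j} P_{F_j}(\tfrac{1}{2}+it, X)$, and compare the first integral on the right with the product $\prod_{p \leq X} M_{p,\frac{1}{2}}(\bm{z})$ via orthogonality of the characters $t \mapsto n^{-it}$ on $[T, 2T]$. Cauchy--Schwarz reduces the complementary integral to the measure estimate from Lemma \ref{ESAEG_JVDPP} combined with an $L^2$-type control of $e^{E(t)}$. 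For the latter, expand $\exp(2\Re E(t)) = \sum_n (2\Re E(t))^n/n!$, bound each $\int_T^{2T} |E(t)|^n\,dt$ via Lemma \ref{MomentLambdab}, and sum, obtaining $\int_T^{2T} e^{2\Re E(t)}\,dt \ll T \exp(C \|\bm{z}\|^2 \log\log X) = T\exp(O((\log\log X)^{4r+1}))$. Since $4(r+1) > 4r+1$, this is dominated by the measure factor $T^{1/2}\exp(-\tfrac{b_0}{2}(\log\log X)^{4(r+1)})$ from Lemma \ref{ESAEG_JVDPP}, leaving an acceptable contribution.

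For the main piece, factor $e^{E(t)} = \prod_{p \leq X} \exp(\Phi_p(t;\bm z))$, where $\Phi_p(t;\bm z) := \sum_{j} z_j \Re e^{-i\theta_j} \sum_{\ell:\, p^\ell \leq X} b_{F_j}(p^\ell) p^{-\ell(\frac{1}{2}+it)}$. Independence of the $\{\mathcal X(p)\}_p$ gives immediately $\mathbb E[\prod_p \exp(\Phi_p(\mathcal X;\bm z))] = \prod_p M_{p,\frac{1}{2}}(\bm z)$ on the random side. On $\mathcal A$ one has $|E(t)| \leq \|\bm z\| \sum_j |P_{F_j}(\tfrac{1}{2}+it, X)| \ll (\log\log X)^{4r+5/2}$, so $e^{E(t)}$ may be Taylor-truncated at order $N = c_0(\log\log X)^{4(r+1)}$, with remainder $\ll (e|E(t)|/N)^{N+1} \ll \exp(-c_1 N \log_{3} X)$ uniformly for $t \in \mathcal A$. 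An analogous truncation of $\exp(E(\mathcal X))$ in the random model produces an error of the same quality, controlled via (S5) and moment bounds from (A1).

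The core computation is to match the Dirichlet-polynomial integral with the random expectation after truncation. Expanding $E(t)^n = (\sum_p \Phi_p(t;\bm z))^n$ via the multinomial theorem and expanding each $\Phi_p^{k_p}$ into monomials $p^{-i s_p t}$ (with $|s_p| \leq k_p \log X/\log p$), we obtain a finite sum of terms of the form $c(\bm s)(N'/M')^{-it}$, where $N', M'$ are positive integers satisfying $\max(N',M') \leq X^n \leq X^N$. Integrating over $[T, 2T]$, the diagonal part (those $\bm s$ with $s_p = 0$ for all $p$) reassembles into $T \cdot \mathbb E[\sum_{n \leq N} E(\mathcal X)^n/n!]$ via $\mathbb E[\mathcal X(p)^a \overline{\mathcal X(p)}^b] = \delta_{ab}$, while off-diagonal terms produce integrals $\int_T^{2T} (N'/M')^{-it} dt \ll 1/|\log(N'/M')| \ll \max(N',M') \leq X^N$.

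The main obstacle is controlling the aggregate off-diagonal error. The hypothesis $X^{(\log\log X)^{4(r+1)}} \leq T$ is tailored so that, for $c_0$ chosen small, $X^N \leq T^{c_0}$; summing off-diagonal terms weighted by their Dirichlet coefficients, bounded via (A1), (S4), (S5), and Lemma \ref{MomentLambdab}, then yields a total off-diagonal contribution $\ll T \exp(-b_1 (\log\log X)^{4(r+1)})$ for some $0 < b_1 < b_0/2$. The delicate balance is on $N$: it must be large enough that the Taylor remainders on both the Dirichlet and random sides are negligible, yet small enough that $X^N$ remains much smaller than $T$. The exponent $4(r+1)$ in the hypothesis on $X$ is chosen precisely to make this window non-empty and to match the scale at which Lemma \ref{ESAEG_JVDPP} provides near-full measure for $\mathcal A$.
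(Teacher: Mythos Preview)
Your outline contains the right ingredients—Taylor truncation using the pointwise bound on $\mathcal{A}$, orthogonality matching via the diagonal, off-diagonal control using $X^N \leq T^{c_0}$, and comparison with the random model—and these coincide with the paper's approach. However, the order in which you apply them creates a genuine gap.

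You propose to write $\int_{\mathcal{A}} e^{E(t)}\,dt = \int_T^{2T} e^{E(t)}\,dt - \int_{[T,2T]\setminus\mathcal{A}} e^{E(t)}\,dt$ and bound the second piece by Cauchy--Schwarz against $\bigl(\int_T^{2T} e^{2\Re E(t)}\,dt\bigr)^{1/2}$. Your justification for the latter—expand $e^{2\Re E(t)}$ as a power series and bound each moment via Lemma~\ref{MomentLambdab}—does not work: that lemma (like Lemma~\ref{SLL}) requires $X^k \leq T$, so it only controls moments up to order $k \lesssim \log T/\log X$. For larger $k$ the off-diagonal in the mean-value estimate contributes a factor $X^k$, and summing those tail terms produces something of size $\exp\bigl(c\,\|\bm z\|\, X^{1/2+\vartheta_{\bm F}}\bigr)$ rather than $T\exp\bigl(O((\log\log X)^{4r+1})\bigr)$. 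In short, $e^{E(t)}$ is not controlled on $[T,2T]\setminus\mathcal{A}$, and the full-line $L^2$ bound you invoke is not available from the moment estimates at hand. The same objection applies to treating $\int_T^{2T} e^{E(t)}\,dt$ as the ``main piece'': your Taylor remainder bound holds only for $t \in \mathcal{A}$, so it does not control the truncation error over all of $[T,2T]$.

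The paper avoids this by reversing the order of operations: first truncate $e^{E(t)}$ to $\sum_{k \leq Y} E(t)^k/k!$ \emph{on $\mathcal{A}$} (your bound $|E(t)| \ll (\log\log X)^{4r+5/2}$ there is exactly what is used), and only then extend each truncated moment $\int_{\mathcal{A}} E(t)^k\,dt$ to $\int_T^{2T} E(t)^k\,dt$ via Cauchy--Schwarz. For each fixed $k \leq Y = \tfrac14(\log\log X)^{4(r+1)}$ the required $2k$-th moment bound $\int_T^{2T}|E(t)|^{2k}\,dt \leq T(C k \|\bm z\|^2 \sigma_{\bm F}^2)^k$ \emph{is} in the admissible range of Lemma~\ref{SLL}, and combined with Lemma~\ref{ESAEG_JVDPP} this gives an acceptable error. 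Once you make this reordering, your argument coincides with the paper's.
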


We prepare some lemmas for the proof of Proposition \ref{RKLJVDPP}.

\begin{lemma} \label{GRLR}
	Let $\{b_{1}(m)\}, \dots, \{ b_{n}(m) \}$ be complex sequences.
	For any $q_{1}, \dots, q_{s}$ distinct prime numbers and any $k_{1, 1}, \dots k_{1, n}, \dots,  k_{s, 1}, \dots, k_{s, n} \in \ZZ_{\geq 1}$,
	we have
	\begin{align*}
		 & \frac{1}{T}\int_{T}^{2T}\prod_{\ell = 1}^{s}
		\l(\Re b_{1}(q_{\ell}^{k_{1, \ell}}) q_{\ell}^{-i t k_{1, \ell}}\r)
		\cdots \l(\Re b_{n}(q_{\ell}^{k_{n, \ell}}) q_{\ell}^{-i t k_{n, \ell}}\r)dt                               \\
		 & =  \EXP{\prod_{\ell = 1}^{s}\l(\Re b_{1}(q_{\ell}^{k_{1, \ell}}) \mathcal{X}(q_{\ell})^{k_{1, \ell}}\r)
		\cdots \l(\Re b_{n}(q_{\ell}^{k_{n, \ell}}) \mathcal{X}(q_{\ell})^{k_{n, \ell}}\r)}
		+ O\l( \frac{1}{T}\prod_{\ell = 1}^{s}\prod_{j = 1}^{n}q_{\ell}^{k_{j, \ell}}|b_{j}(q_{\ell}^{k_{j, \ell}})| \r).
	\end{align*}
\end{lemma}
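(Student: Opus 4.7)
The plan is to prove the lemma by expanding both sides via $\Re z = \tfrac{1}{2}(z+\bar z)$ and identifying the diagonal frequencies on the integral side with the nonzero-expectation terms on the probabilistic side.

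First, for each pair $(j,\ell)$ I would write
\[
  \Re b_{j}(q_{\ell}^{k_{j,\ell}}) q_{\ell}^{-itk_{j,\ell}}
  = \tfrac{1}{2}\bigl( b_{j}(q_{\ell}^{k_{j,\ell}}) q_{\ell}^{-itk_{j,\ell}}
     + \overline{b_{j}(q_{\ell}^{k_{j,\ell}})} q_{\ell}^{itk_{j,\ell}} \bigr),
\]
and similarly for the random analogue with $q_{\ell}^{-it}$ replaced by $\mathcal{X}(q_{\ell})$. Expanding the whole product as a sum over choices $\boldsymbol\varepsilon=(\varepsilon_{j,\ell})\in\{+,-\}^{sn}$, the integral side becomes a sum $\sum_{\boldsymbol\varepsilon} c_{\boldsymbol\varepsilon} \cdot\frac{1}{T}\int_T^{2T} e^{itL(\boldsymbol\varepsilon)}\,dt$, where $c_{\boldsymbol\varepsilon}=2^{-sn}\prod_{j,\ell} b_{j}^{(\varepsilon_{j,\ell})}(q_{\ell}^{k_{j,\ell}})$ (with $b^{(+)}=b$, $b^{(-)}=\bar b$) and $L(\boldsymbol\varepsilon)=\sum_{\ell=1}^s M_\ell(\boldsymbol\varepsilon)\log q_\ell$, $M_\ell(\boldsymbol\varepsilon)=\sum_{j=1}^n \varepsilon_{j,\ell} k_{j,\ell}$. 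The expectation side similarly becomes $\sum_{\boldsymbol\varepsilon} c_{\boldsymbol\varepsilon} \prod_{\ell}\mathbb{E}[\mathcal{X}(q_{\ell})^{-M_\ell(\boldsymbol\varepsilon)}]$, using independence of $\{\mathcal{X}(q_\ell)\}$.

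For the main term, note that $\mathbb{E}[\mathcal{X}(q_\ell)^{m}]=\mathbf{1}_{m=0}$ for $m\in\mathbb{Z}$, so the expectation side collects exactly the contribution from $\boldsymbol\varepsilon$ with all $M_\ell(\boldsymbol\varepsilon)=0$. On the integral side, distinct primes $q_1,\dots,q_s$ have $\mathbb{Q}$-linearly independent logarithms, so $L(\boldsymbol\varepsilon)=0$ if and only if $M_\ell(\boldsymbol\varepsilon)=0$ for every $\ell$. Hence the two main terms agree term by term.

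For the error, every non-main term on the integral side contributes $\frac{c_{\boldsymbol\varepsilon}}{T}\cdot O\bigl(1/|L(\boldsymbol\varepsilon)|\bigr)$ from $\frac{1}{T}\int_T^{2T}e^{itL}\,dt$. The main obstacle (though mild) is to get a usable lower bound on $|L(\boldsymbol\varepsilon)|$ when it is nonzero. The plan here is to write $e^{L(\boldsymbol\varepsilon)}=\prod_\ell q_\ell^{M_\ell(\boldsymbol\varepsilon)}=p/q$ as a reduced positive rational, with $\max(p,q)\le\prod_\ell q_\ell^{|M_\ell|}\le\prod_{\ell,j}q_\ell^{k_{j,\ell}}$ since $|M_\ell|\le\sum_j k_{j,\ell}$. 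Since $p\ne q$, $|p-q|\ge 1$, giving $|e^{L}-1|\ge 1/q$, and consequently
\[
  |L(\boldsymbol\varepsilon)| \;\gg\; \prod_{\ell,j} q_\ell^{-k_{j,\ell}}.
\]
Using $|c_{\boldsymbol\varepsilon}|\le 2^{-sn}\prod_{j,\ell}|b_j(q_\ell^{k_{j,\ell}})|$ and summing over the $2^{sn}$ choices of $\boldsymbol\varepsilon$ then yields the stated bound $O\bigl(\frac{1}{T}\prod_{\ell,j} q_\ell^{k_{j,\ell}} |b_j(q_\ell^{k_{j,\ell}})|\bigr)$, completing the proof. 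No further tools beyond this elementary rationality bound and the standard van der Corput-type estimate for $\int e^{itL}dt$ are required.
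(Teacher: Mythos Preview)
Your proposal is correct and follows essentially the same approach as the paper: expand each factor via $\Re z=\tfrac12(z+\bar z)$ (the paper uses the equivalent polar form $b=|b|e^{i\psi}$), match the diagonal terms $M_\ell=0$ with the nonzero expectations via $\mathbb{E}[\mathcal{X}(p)^m]=\mathbf{1}_{m=0}$, and bound the off-diagonal integrals using the elementary lower bound $|\sum_\ell M_\ell\log q_\ell|\gg\prod_{\ell,j}q_\ell^{-k_{j,\ell}}$ coming from the rationality of $e^{L}$. The only cosmetic difference is that the paper first expands within each prime $q_\ell$ and then multiplies over $\ell$, whereas you expand all $sn$ factors at once; the resulting $2^{sn}$ terms and the error estimate are identical.
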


\begin{proof}
	Denote $\psi_{h, \ell} = \arg b_{h}(q_{\ell}^{k_{h, \ell}})$.
	Then we can write
	\begin{align}
		 & \l(\Re b_{1}(q_{\ell}^{k_{1, \ell}}) q_{\ell}^{-i t k_{1, \ell}}\r)
		\cdots (\Re b_{n}(q_{\ell}^{k_{n, \ell}}) q_{\ell}^{-i t k_{n, \ell}})                         \\
		 & = \frac{|b_{1}(q_{\ell}^{k_{1, \ell}}) \cdots b_{n}(q_{\ell}^{k_{n, \ell}})|}{2^{n}}
		\sum_{\e_{1}, \dots, \e_{n} \in \{ -1, 1 \}}e^{i(\e_{1}\psi_{1, \ell} + \cdots + \e_{n}\psi_{n, \ell})}
		q_{\ell}^{-i(\e_{1} k_{1, \ell} + \cdots \e_{n} k_{n, \ell})t}                                 \\
		\label{pGRLR1}
		 & = \frac{|b_{1}(q_{\ell}^{k_{1, \ell}}) \cdots b_{n}(q_{\ell}^{k_{n, \ell}})|}{2^{n}}
		\sum_{\substack{\e_{1}, \dots, \e_{n} \in \{ -1, 1 \}                                          \\ \e_{1}k_{1, \ell} + \cdots + \e_{n}k_{n, \ell} = 0}}
		e^{i(\e_{1}\psi_{1, \ell} + \cdots + \e_{n}\psi_{n, \ell})}                                    \\
		 & \qquad + \frac{|b_{1}(q_{\ell}^{k_{1, \ell}}) \cdots b_{n}(q_{\ell}^{k_{n, \ell}})|}{2^{n}}
		\sum_{\substack{\e_{1}, \dots, \e_{n} \in \{ -1, 1 \}                                          \\ \e_{1}k_{1, \ell} + \cdots + \e_{n}k_{n, \ell} \not= 0}}
		e^{i(\e_{1}\psi_{1, \ell} + \cdots + \e_{n}\psi_{n, \ell})}e^{-i(\e_{1} k_{1, \ell} + \cdots + \e_{n} k_{n, \ell})t \log{q_{\ell}}}.
	\end{align}
	Thus, we obtain
	\begin{align*}
		 & \prod_{\ell = 1}^{s}\l(\Re b_{1}(q_{\ell}^{k_{1, \ell}}) q_{\ell}^{-i t k_{1, \ell}}\r)
		\cdots (\Re b_{n}(q_{\ell}^{k_{n, \ell}}) q_{\ell}^{-i t k_{n, \ell}})                                      \\
		 & = \prod_{\ell = 1}^{s}\frac{|b_{1}(q_{\ell}^{k_{1, \ell}}) \cdots b_{n}(q_{\ell}^{k_{n, \ell}})|}{2^{n}}
		\sum_{\substack{\e_{1}, \dots, \e_{n} \in \{ -1, 1 \}                                                       \\ \e_{1}k_{1} + \cdots + \e_{n}k_{n} = 0}}
		e^{i(\e_{1}\psi_{1, \ell} + \cdots + \e_{n}\psi_{n, \ell})}
		+ E(t),
	\end{align*}
	where $E(t)$ is the sum whose the number of terms is less than $2^{n s}$,
	and the form of each term is $\delta'' e^{it(\b_{1}\log{q_{1}} + \cdots + \b_{s}\log{q_{s}})}$
	with $0 \leq |\b_{\ell}| \leq \a_{\ell} := k_{1, \ell} + \cdots k_{n, \ell}$ and $\b_{u} \not= 0$ for some $1 \leq u \leq s$.
	Here, $\delta''$ is a complex number independent of $t$, and satisfies
	$|\delta''| \leq  2^{-n s}\prod_{\ell = 1}^{s}\prod_{j = 1}^{n}|b_{j}(q_{\ell}^{k_{j, \ell}})|$.
	Since $|\b_{1}\log{q_{1}} + \cdots + \b_{s}\log{q_{s}}| \gg \prod_{\ell = 1}^{s}\prod_{j = 1}^{n}q_{\ell}^{-k_{j, \ell}}$,
	the integral of each term of $E(t)$ is bounded by
	$2^{-n s}\prod_{\ell = 1}^{s}\prod_{j = 1}^{n}q_{\ell}^{k_{j, \ell}}|b_{j}(q_{\ell}^{k_{j, \ell}})|$.
	Hence, by this bound of $E$ and the bound for the number of terms of $E$,
	we have
	\begin{align*}
		\int_{T}^{2T}E(t) dt
		\ll \prod_{\ell = 1}^{s}\prod_{j = 1}^{n}q_{\ell}^{k_{j, \ell}}|b_{j}(q_{\ell}^{k_{j, \ell}})|.
	\end{align*}
	Therefore, we have
	\begin{align*}
		 & \frac{1}{T}\int_{T}^{2T}\prod_{\ell = 1}^{s}\l(\Re b_{1}(q_{\ell}^{k_{1, \ell}}) q_{\ell}^{-i t k_{1, \ell}}\r)
		\cdots \l(\Re b_{n}(q_{\ell}^{k_{n, \ell}}) q_{\ell}^{-i t k_{n, \ell}}\r)dt                                       \\
		 & = \prod_{\ell = 1}^{s}\frac{|b_{1}(q_{\ell}^{k_{1, \ell}}) \cdots b_{n}(q_{\ell}^{k_{n, \ell}})|}{2^{n}}
		\sum_{\substack{\e_{1}, \dots, \e_{n} \in \{ -1, 1 \}                                                              \\ \e_{1}k_{1} + \cdots + \e_{n}k_{n} = 0}}
		e^{i(\e_{1}\psi_{1, \ell} + \cdots + \e_{n}\psi_{n, \ell})}
		+ O\l( \frac{1}{T}\prod_{\ell = 1}^{s}\prod_{j = 1}^{n}q_{\ell}^{k_{j, \ell}}|b_{j}(q_{\ell}^{k_{j, \ell}})| \r).
	\end{align*}
	On the other hand, by the independence of $\mathcal{X}(p)$'s, it follows that
	\begin{align*}
		 & \EXP{\prod_{\ell = 1}^{s}\l(\Re b_{1}(q_{\ell}^{k_{1, \ell}}) \mathcal{X}(q_{\ell})^{k_{1, \ell}}\r)
		\cdots \l(\Re b_{n}(q_{\ell}^{k_{n, \ell}}) \mathcal{X}(q_{\ell})^{k_{n, \ell}}\r)}                       \\
		 & = \prod_{\ell = 1}^{s}\EXP{\l(\Re b_{1}(q_{\ell}^{k_{1, \ell}}) \mathcal{X}(q_{\ell})^{k_{1, \ell}}\r)
		\cdots \l(\Re b_{n}(q_{\ell}^{k_{n, \ell}}) \mathcal{X}(q_{\ell})^{k_{n, \ell}}\r)}.
	\end{align*}
	As in \eqref{pGRLR1}, we can write
	\begin{align*}
		 & \l(\Re b_{1}(q_{\ell}^{k_{1, \ell}}) \mathcal{X}(q_{\ell})^{k_{1, \ell}}\r)
		\cdots \l(\Re b_{n}(q_{\ell}^{k_{n, \ell}}) \mathcal{X}(q_{\ell})^{k_{n, \ell}}\r)             \\
		 & = \frac{|b_{1}(q_{\ell}^{k_{1, \ell}}) \cdots b_{n}(q_{\ell}^{k_{n, \ell}})|}{2^{n}}
		\sum_{\substack{\e_{1}, \dots, \e_{n} \in \{ -1, 1 \}                                          \\ \e_{1}k_{1, \ell} + \cdots + \e_{n}k_{n, \ell} = 0}}
		e^{i(\e_{1}\psi_{1, \ell} + \cdots + \e_{n}\psi_{n, \ell})}                                    \\
		 & \qquad + \frac{|b_{1}(q_{\ell}^{k_{1, \ell}}) \cdots b_{n}(q_{\ell}^{k_{n, \ell}})|}{2^{n}}
		\sum_{\substack{\e_{1}, \dots, \e_{n} \in \{ -1, 1 \}                                          \\ \e_{1}k_{1, \ell} + \cdots + \e_{n}k_{n, \ell} \not= 0}}
		e^{i(\e_{1}\psi_{1, \ell} + \cdots + \e_{n}\psi_{n, \ell})}\mathcal{X}(q_{\ell})^{-i(\e_{1} k_{1, \ell} + \cdots + \e_{n} k_{n, \ell})}.
	\end{align*}
	Since $\mathcal{X}(p)$ is uniformly distributed on the unit circle in $\CC$, we have
	\begin{align} \label{BEQXp}
		\EXP{\mathcal{X}(p)^{a}}
		= \l\{
		\begin{array}{cc}
			1 & \text{if \; $a = 0$,} \\
			0 & \text{otherwise}
		\end{array}
		\r.
	\end{align}
	for any $a \in \ZZ$.
	Hence, we obtain
	\begin{align*}
		 & \prod_{\ell = 1}^{s}\EXP{\l(\Re b_{1}(q_{\ell}^{k_{1, \ell}}) \mathcal{X}(q_{\ell})^{k_{1, \ell}}\r)
		\cdots \l(\Re b_{n}(q_{\ell}^{k_{n, \ell}}) \mathcal{X}(q_{\ell})^{k_{n, \ell}}\r)}                         \\
		 & = \prod_{\ell = 1}^{s}\frac{|b_{1}(q_{\ell}^{k_{1, \ell}}) \cdots b_{n}(q_{\ell}^{k_{n, \ell}})|}{2^{n}}
		\sum_{\substack{\e_{1}, \dots, \e_{n} \in \{ -1, 1 \}                                                       \\ \e_{1}k_{1, \ell} + \cdots + \e_{n}k_{n, \ell} = 0}}
		e^{i(\e_{1}\psi_{1, \ell} + \cdots + \e_{n}\psi_{n, \ell})},
	\end{align*}
	which completes the proof of the lemma.
\end{proof}

\begin{lemma} \label{UBDRP}
	Let $\{a(p)\}_{p \in \mathcal{P}}$ be a complex sequence.
	Then, for any $k \in \ZZ_{\geq 1}$, $X \geq 3$, we have
	\begin{align*}
		\EXP{\bigg| \sum_{p \leq X}a(p)\mathcal{X}(p) \bigg|^{2k}}
		\leq k! \l( \sum_{p \leq X}|a(p)|^2 \r)^{k}.
	\end{align*}
\end{lemma}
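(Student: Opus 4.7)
The plan is to expand $|\cdot|^{2k}=(\cdot)^{k}\overline{(\cdot)}^{k}$ and exploit the orthogonality of the independent random phases $\{\mathcal{X}(p)\}_{p}$. Concretely, I would first write
\[
\bigg|\sum_{p\leq X}a(p)\mathcal{X}(p)\bigg|^{2k}
=\sum_{p_{1},\dots,p_{k}\leq X}\sum_{q_{1},\dots,q_{k}\leq X}\prod_{i=1}^{k}a(p_{i})\overline{a(q_{i})}\prod_{p}\mathcal{X}(p)^{m_{p}(\mathbf{p})-m_{p}(\mathbf{q})},
\]
where $\mathbf{p}=(p_{1},\dots,p_{k})$, $\mathbf{q}=(q_{1},\dots,q_{k})$, and $m_{p}(\cdot)$ denotes the multiplicity of the prime $p$ in the given tuple.

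Taking the expectation inside the double sum, by independence of the $\{\mathcal{X}(p)\}_{p}$ together with the identity \eqref{BEQXp} (which applies for any integer exponent, positive, zero, or negative), the expectation of $\prod_{p}\mathcal{X}(p)^{m_{p}(\mathbf{p})-m_{p}(\mathbf{q})}$ equals $1$ when $\mathbf{p}$ and $\mathbf{q}$ coincide as multisets and vanishes otherwise. Hence only the pairs with $\mathbf{q}$ a permutation of $\mathbf{p}$ survive, and each such pair contributes $\prod_{i=1}^{k}|a(p_{i})|^{2}$.

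To finish, for each fixed $\mathbf{p}$ the number of orderings $\mathbf{q}$ is the multinomial coefficient $k!/\prod_{p}m_{p}(\mathbf{p})!\leq k!$. Substituting this bound and recognising the remaining sum as the full multinomial expansion of $\bigl(\sum_{p\leq X}|a(p)|^{2}\bigr)^{k}$ yields
\[
\EXP{\bigg|\sum_{p\leq X}a(p)\mathcal{X}(p)\bigg|^{2k}}
\leq k!\bigg(\sum_{p\leq X}|a(p)|^{2}\bigg)^{k}.
\]
This is a direct instance of orthogonality of monomials on the infinite torus (essentially a Parseval/Khintchine-type bound); I do not foresee any substantive obstacle beyond the bookkeeping in the multinomial count.
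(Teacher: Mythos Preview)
Your argument is correct and is essentially identical to the paper's proof: both expand the $2k$-th moment, invoke independence together with the orthogonality relation \eqref{BEQXp} (stated in the paper as \eqref{BEUIR}) to reduce to multiset-matching pairs $(\mathbf{p},\mathbf{q})$, and then bound the number of such $\mathbf{q}$ by $k!$. The only cosmetic difference is that you make the multinomial count $k!/\prod_{p}m_{p}(\mathbf{p})!$ explicit before bounding it by $k!$, whereas the paper writes the inequality directly.
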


\begin{proof}
	Since $\mathcal{X}(p)$'s are independent and uniformly distributed on the unit circle in $\CC$, it holds that
	\begin{align}	\label{BEUIR}
		\EXP{ \frac{\mathcal{X}(p_1)^{a_{1}} \cdots \mathcal{X}(p_{k})^{a_{k}}}{ \mathcal{X}(q_1)^{b_1} \cdots \mathcal{X}(q_{\ell})^{b_{\ell}} } }
		=
		\begin{cases}
			1 & \text{if $p_{1}^{a_1} \cdots p_{k}^{a_{k}} = q_{1}^{b_1} \cdots q_{\ell}^{b_\ell}$},    \\
			0 & \text{if $p_{1}^{a_1} \cdots p_{k}^{a_{k}} \neq q_{1}^{b_1} \cdots q_{\ell}^{b_\ell}$}.
		\end{cases}
	\end{align}
	It follows that
	\begin{align*}
		\EXP{ \bigg| \sum_{p \leq X}a(p) \mathcal{X}(p) \bigg|^{2k} }
		 & = \sum_{\substack{p_1, \dots, p_{k} \leq X \\ q_{1}, \dots, q_{k} \leq X}}
		a(p_1) \cdots a(p_{k}) \ol{a(q_{1}) \cdots a(q_{k})}
		\EXP{\frac{\mathcal{X}(p_1) \cdots \mathcal{X}(p_{k}) }{ \mathcal{X}(q_{1}) \cdots \mathcal{X}(q_{k})} } \\
		 & \leq k!\sum_{p_{1}, \dots, p_{k} \leq X}|a(p_{1})|^2 \cdots |a(p_{k})|^{2}
		\leq k! \l( \sum_{p \leq X}|a(p)|^2 \r)^{k},
	\end{align*}
	which completes the proof of the lemma.
\end{proof}

\begin{lemma} \label{UBMVRDP}
	Assume that $F$ is a Dirichlet series satisfying (S4), (S5), and \eqref{SNC}.
  Let $P_{F}(\s, \mathcal{X}, X)$ be the random Dirichlet polynomial defined by \eqref{def_RP}.
	There exists a positive constant $C = C(F)$ such that for any $k \in \ZZ_{\geq 1}$, and any large $X$, we have
	\begin{align*}
		\EXP{| P_{F}(\tfrac{1}{2}, \mathcal{X}, X)|^{2k}}
		\ll (C k \s_{F}(X)^{2})^{k}.
	\end{align*}
\end{lemma}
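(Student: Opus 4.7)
The plan is to decompose $P_{F}(\tfrac{1}{2}, \mathcal{X}, X)$ by prime powers and apply (a routine variant of) Lemma \ref{UBDRP} to each piece. Write
\begin{align*}
P_{F}(\tfrac{1}{2}, \mathcal{X}, X) = \sum_{\ell = 1}^{\infty} P_{F}^{[\ell]}(X),
\qquad
P_{F}^{[\ell]}(X) := \sum_{p \leq X} \frac{b_{F}(p^{\ell}) \mathcal{X}(p)^{\ell}}{p^{\ell/2}}.
\end{align*}
For each fixed $\ell \geq 1$ the random variables $\{\mathcal{X}(p)^{\ell}\}_{p}$ are independent, and each is uniformly distributed on the unit circle in $\CC$. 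Hence the orthogonality relation \eqref{BEUIR} continues to hold with $\mathcal{X}(p)$ replaced by $\mathcal{X}(p)^{\ell}$, and the proof of Lemma \ref{UBDRP} applied with coefficients $a(p) = b_{F}(p^{\ell})/p^{\ell/2}$ yields
\begin{align*}
\EXP{|P_{F}^{[\ell]}(X)|^{2k}}
\leq k! \l(\sum_{p \leq X}\frac{|b_{F}(p^{\ell})|^{2}}{p^{\ell}}\r)^{k}
\leq (k \, S_{\ell}(X))^{k},
\end{align*}
where $S_{\ell}(X) := \sum_{p \leq X} |b_{F}(p^{\ell})|^{2}/p^{\ell}$ and I used $k! \leq k^{k}$.

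Next I would apply Minkowski's inequality in $L^{2k}(\Omega)$ (the interchange with the infinite sum is justified by monotone convergence on finite truncations) to obtain
\begin{align*}
\EXP{|P_{F}(\tfrac{1}{2}, \mathcal{X}, X)|^{2k}}^{1/(2k)}
\leq \sum_{\ell \geq 1} \EXP{|P_{F}^{[\ell]}(X)|^{2k}}^{1/(2k)}
\leq \sqrt{k}\sum_{\ell \geq 1}\sqrt{S_{\ell}(X)}.
\end{align*}
The remaining task is to show that $\sum_{\ell \geq 1}\sqrt{S_{\ell}(X)} \ll_{F} \s_{F}(X)$ uniformly for large $X$. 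For $\ell = 1$, the definition \eqref{def_var} of $\s_{F}(X)$ gives $S_{1}(X) \leq 2\s_{F}(X)^{2}$ immediately. For $\ell \geq 2$, I would invoke the bound $|b_{F}(p^{\ell})| \ll p^{\ell\vartheta_{F}}$ from (S4), where $\vartheta_{F} < 1/2$: choosing $K_{1} = K_{1}(F)$ so that $K_{1}(1 - 2\vartheta_{F}) > 1$, one obtains
\begin{align*}
S_{\ell}(X) \ll \sum_{p} p^{-\ell(1 - 2\vartheta_{F})} \ll 2^{-\ell(1 - 2\vartheta_{F})} \qquad (\ell \geq K_{1}),
\end{align*}
which is geometrically summable in $\ell$. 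For the finite range $2 \leq \ell < K_{1}$, Hypothesis H (S5) yields $S_{\ell}(X) \ll_{F} 1$, so this contribution is also $\ll_{F} 1$.

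Combining these bounds gives $\sum_{\ell \geq 2}\sqrt{S_{\ell}(X)} \ll_{F} 1$, and therefore $\sum_{\ell \geq 1}\sqrt{S_{\ell}(X)} \ll_{F} \s_{F}(X)$ as soon as $\s_{F}(X) \geq 1$, which holds for all $X$ sufficiently large in terms of $F$. Substituting back yields $\EXP{|P_{F}(\tfrac{1}{2}, \mathcal{X}, X)|^{2k}}^{1/(2k)} \ll_{F} \sqrt{k} \, \s_{F}(X)$, equivalently $\EXP{|P_{F}(\tfrac{1}{2}, \mathcal{X}, X)|^{2k}} \ll (C k \s_{F}(X)^{2})^{k}$ with $C = C(F)$, which is the claimed estimate. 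The only technical point is the uniform control of the tail $\sum_{\ell \geq 2}\sqrt{S_{\ell}(X)}$, but this is handled cleanly by splitting at $K_{1}$ and exploiting (S4) for the geometric tail together with Hypothesis H (S5) for the finitely many intermediate $\ell$, so no serious obstacle arises.
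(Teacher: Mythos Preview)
Your proof is correct and follows essentially the same approach that the paper indicates: the paper's proof simply reads ``Using Lemma \ref{UBDRP}, we can prove this lemma in the same way as Lemma \ref{MomentLambdab},'' and your decomposition by prime-power exponent $\ell$, application of (the $\mathcal{X}(p)^{\ell}$-variant of) Lemma \ref{UBDRP} to each piece, and the splitting at $K_{1}$ with (S4) handling the geometric tail and Hypothesis H (S5) handling the finitely many intermediate $\ell$, is exactly what that instruction unpacks to. The only cosmetic difference is that you combine the pieces via Minkowski's inequality in $L^{2k}$, whereas the template in Lemma \ref{MomentLambdab} bounds the tail $\ell > K_{1}$ pointwise and then uses a cruder $(a+b)^{2k} \leq C^{k}(a^{2k}+b^{2k})$ splitting; both lead to the same conclusion.
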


\begin{proof}
	Using Lemma \ref{UBDRP}, we can prove this lemma in the same way as Lemma \ref{MomentLambdab}.
\end{proof}

\begin{proof}[Proof of Proposition \ref{RKLJVDPP}] 
	Let $T$, $X$ be large numbers such that $X^{(\log{\log{X}})^{4(r+1)}} \leq T$.
	Let $\bm{z} = (z_{1}, \dots, z_{r}) \in \CC^{r}$ with $\norm[]{\bm{z}} \leq 2(\log{\log{X}})^{2r}$.
	From \eqref{def_sA_JVDPP}, we have
	\begin{align*}
		 & \frac{1}{T}\int_{\mathcal{A}}\exp\l( \sum_{j = 1}^{r} z_{j} \Re e^{-i\theta_{j}}P_{F_{j}}(\tfrac{1}{2}+it, X) \r)dt \\
		 & = \frac{1}{T}\sum_{0 \leq k \leq Y}\frac{1}{k!}\int_{\mathcal{A}}
		\bigg(\sum_{j = 1}^{r} z_{j} \Re e^{-i\theta_{j}}P_{F_{j}}(\tfrac{1}{2}+it, X) \bigg)^{k}dt                            \\
		 & \qqqquad \qqquad+ O\l(\sum_{k > Y}\frac{1}{k!}
		\l( C (\log{\log{X}})^{2r + 5/2} \norm[]{\bm{z}} \r)^{k} \r)
	\end{align*}
	with $Y = \frac{1}{4}(\log{\log{X}})^{4(r+1)}$.
	Here, $C = C(\bm{F})$ is some positive constant.
	We see that this $O$-term is $\ll \exp\l( - (\log{\log{X}})^{4(r+1)} \r)$ by the bound for $\|\bm z\|$.
	Using the Cauchy-Schwarz inequality, we find that
	\begin{align*}
		 & \frac{1}{T}\int_{\mathcal{A}}\bigg(\sum_{j = 1}^{r} z_{j} \Re e^{-i\theta_{j}}P_{F_{j}}(\tfrac{1}{2}+it, X) \bigg)^{k}dt \\
		 & = \frac{1}{T}\int_{T}^{2T}\bigg(\sum_{j = 1}^{r} z_{j} \Re e^{-i\theta_{j}}P_{F_{j}}(\tfrac{1}{2}+it, X) \bigg)^{k}dt    \\
		 & +O\l( \frac{1}{T}(\meas([T, 2T] \setminus \mathcal{A}))^{1/2}
		\l(\int_{T}^{2T} \bigg|\sum_{j = 1}^{r} z_{j} \Re e^{-i\theta_{j}}P_{F_{j}}(\tfrac{1}{2}+it, X) \bigg|^{2k}dt\r)^{1/2} \r).
	\end{align*}
	By Lemma \ref{ESAEG_JVDPP}, estimate \eqref{MVEDPCLPP}, and the bound for $\|\bm{z}\|$, this $O$-term is
	\begin{align*}
		\ll \exp\l( -(2 b_{0})^{-1} (\log{\log{X}})^{4(r+1)} \r) \l( C_{1} k^{1/2}(\log{\log{X}})^{2r + 1/2}  \r)^{k}
	\end{align*}
	for $0 \leq k \leq Y$, where $C_{1} = C_{1}(\bm{F}) > 0$ is a positive constant.
	It holds from this estimate and the Stirling formula that
	\begin{align}	\label{GRKLG5}
		 & \frac{1}{T}\int_{\mathcal{A}}\exp\l( \sum_{j = 1}^{r} z_{j} \Re e^{-i\theta_{j}}P_{F_{j}}(\tfrac{1}{2}+it, X) \r)dt \\
		 & = \frac{1}{T}\sum_{0 \leq k \leq Y}\frac{1}{k!}
		\int_{T}^{2T}\bigg(\sum_{j = 1}^{r} z_{j} \Re e^{-i\theta_{j}}P_{F_{j}}(\tfrac{1}{2}+it, X) \bigg)^{k}dt               \\
		 & \qqquad+ O\l(\exp\l( -(2 b_{0})^{-1} (\log{\log{X}})^{4(r + 1)} \r)
		\sum_{0 \leq k \leq Y}\frac{(C_{1} e  (\log{\log{X}})^{2r+1/2})^{k}}{k^{k/2}}\r).
	\end{align}
	When $X$ is sufficiently large, it follows that
	\begin{align*}
		\sum_{0 \leq k \leq Y}\frac{(C_{1} e  (\log{\log{X}})^{2r+1/2})^{k}}{k^{k/2}}
		 & = \sum_{0 \leq k \leq (\log{\log{X}})^{4r + 2}}\frac{(C_{1} e  (\log{\log{X}})^{2r+1/2})^{k}}{k^{k/2}} + O(1) \\
		 & \ll \exp\l( (\log{\log{X}})^{4r + 3} \r).
	\end{align*}
	Hence, the $O$-term on the right hand side of \eqref{GRKLG5} is $\ll \exp\l( -(3c_{1})^{-1}(\log{\log{X}})^{4(r + 1)} \r)$.
	
	Now, we can write
	\begin{align*}
		 & \int_{T}^{2T}\bigg(\sum_{j = 1}^{r} z_{j} \Re e^{-i\theta_{j}}P_{F_{j}}(\tfrac{1}{2}+it, X) \bigg)^{k}dt      \\
		 & = \sum_{1 \leq j_{1}, \dots, j_{k} \leq r}z_{j_{1}} \cdots z_{j_{k}}
		\sum_{p_{1}^{\ell_{1}}, \dots, p_{k}^{\ell_{k}} \leq X}\frac{1}{p_{1}^{\ell_{1}/2} \cdots p_{k}^{\ell_{k}/2}}    \\
		 & \qqquad \times \int_{T}^{2T}\l(\Re e^{-i\theta_{j_{1}}} b_{F_{j_{1}}}(p^{\ell_{1}}) p_{1}^{-i t \ell_{1}} \r)
		\cdots \l(\Re e^{-i\theta_{j_{k}}} b_{F_{j_{k}}}(p^{\ell_{k}}) p_{k}^{-i t \ell_{k}} \r)dt.
	\end{align*}
	From this equation and Lemma \ref{GRLR}, we have
	\begin{align*}
		 & \frac{1}{T}\int_{T}^{2T}\bigg(\sum_{j = 1}^{r} z_{j} \Re e^{-i\theta_{j}}P_{F_{j}}(\tfrac{1}{2}+it, X) \bigg)^{k}dt \\
		 &= \EXP{\bigg(\sum_{j = 1}^{r} z_{j} \Re e^{-i\theta_{j}}P_{F_{j}}(\tfrac{1}{2}, \mathcal{X}, X) \bigg)^{k}}         \\
		  &\quad+O\l( \frac{1}{T}\sum_{1 \leq j_{1}, \dots, j_{k} \leq r}|z_{j_{1}} \cdots z_{j_{k}}|
		\sum_{p_{1}^{\ell_{1}}, \dots, p_{k}^{\ell_{k}} \leq X}|b_{F_{j_{1}}}(p_{1}^{\ell_{1}}) \cdots b_{F_{j_{k}}}(p_{k}^{\ell_{k}})|
		p_{1}^{\ell_{1}/2} \cdots p_{k}^{\ell_{k}/2} \r).
	\end{align*}
	Additionally, we see that this $O$-term is
	\begin{align*}
		\ll \frac{1}{T}\l( \sum_{j = 1}^{r}|z_{j}|\sum_{n \leq X}|b_{F_{j}}(n)|n^{1/2} \r)^{k}
		\leq \frac{(C X)^{2k}}{T}
		\leq \frac{C^{2k}}{T^{1/2}}
		\leq \exp\l( -(3b_{0})^{-1} (\log{\log{X}})^{4(r + 1)} \r)
	\end{align*}
	for $0 \leq k \leq Y$ when $T$ is sufficiently large.
	Therefore, we have
	\begin{align*}
		 & \frac{1}{T}\int_{\mathcal{A}}\exp\l( \sum_{j = 1}^{r} z_{j} \Re e^{-i\theta_{j}}P_{F_{j}}(\tfrac{1}{2}+it, X) \r)dt                     \\
		 & = \sum_{0 \leq k \leq Y}\frac{1}{k!}\EXP{\l( \sum_{j = 1}^{r}z_{j}\Re e^{-i\theta_{j}} P_{F_{j}}(\tfrac{1}{2}, \mathcal{X}, X) \r)^{k}}
		+ O\l( \exp\l( -(3c_{1})^{-1}(\log{\log{X}})^{4(r + 1)} \r) \r)                                                                            \\
		 & = \EXP{\exp\l( \sum_{j = 1}^{r}z_{j} \Re e^{-i\theta_{j}}P_{F_{j}}(\tfrac{1}{2}, \mathcal{X}, X) \r)}
		- \sum_{k > Y}\frac{1}{k!}\EXP{\l( \sum_{j = 1}^{r}z_{j}\Re e^{-i\theta_{j}} P_{F_{j}}(\tfrac{1}{2}, \mathcal{X}, X) \r)^{k}}              \\
		 & \qqqquad \qqqquad + O\l( \exp\l( -(3b_{0})^{-1}(\log{\log{X}})^{4(r + 1)} \r) \r).
	\end{align*}
	The independence of $\mathcal{X}(p)$'s yields that
	\begin{align*}
		\EXP{\exp\l( \sum_{j = 1}^{r}z_{j} \Re e^{-i\theta_{j}}P_{F_{j}}(\tfrac{1}{2}, \mathcal{X}, X) \r)}
		= \prod_{p \leq X}M_{p, \frac{1}{2}}(\bm{z}).
	\end{align*}
	Using Lemmas \ref{SumbLargeP}, \ref{UBMVRDP}, the Cauchy-Schwarz inequality, and the bound for $\| \bm{z} \|$, we obtain
	\begin{align}
		\EXP{\l( \sum_{j = 1}^{r}z_{j}\Re e^{-i\theta_{j}} P_{F_{j}}(\tfrac{1}{2}, \mathcal{X}, X) \r)^{k}}
		\leq k^{k/2}\l(C (\log{\log{X}})^{2r + 1/2}\r)^{k}
	\end{align}
	for some constant $C = C(\bm{F}) > 0$.
	Therefore, it holds that
	\begin{align*}
		\sum_{k > Y}\frac{1}{k!}\EXP{\l( \sum_{j = 1}^{r}z_{j}\Re e^{-i\theta_{j}} P_{F_{j}}(\tfrac{1}{2}, \mathcal{X}, X) \r)^{k}}
		 & \leq \sum_{k > Y}\frac{\l(C (\log{\log{X}})^{2r + 1/2}\r)^{k}}{k^{k/2}}
		\leq \sum_{k > Y}\l( \frac{2C}{(\log{\log{X}})^{1/2}} \r)^{k}              \\
		 & \leq \exp\l( -(3b_{0})^{-1}(\log{\log{X}})^{4(r + 1)} \r).
	\end{align*}
	Thus, we complete the proof of Proposition \ref{RKLJVDPP}.
\end{proof}

\subsection{Estimates for the main term of the moment generating function.}
We give some lemmas to estimate the main term in Proposition \ref{RKLJVDPP}.

\begin{lemma} \label{EQTM}
	Assume that $\bm{F}$, $\bm{\theta}$ satisfy (S4), (S5).
	For any $\bm{z} = (z_{1}, \dots, z_{r}) \in \CC^{r}$, $\s \geq 1/2$, we have
	\begin{align*}
		M_{p, \s}(\bm{z})
		 & = 1 + \frac{1}{4}\sum_{1 \leq j_{1}, j_{2} \leq r}z_{j_{1}} z_{j_{2}}\sum_{\ell = 1}^{\infty}
		\frac{\Re e^{-i\theta_{j_{1}}}b_{F_{j_{1}}}(p^{\ell})\ol{e^{-i\theta_{j_{2}}}b_{F_{j_{2}}}(p^{\ell})}}{p^{2 \ell \s}} \\
		 & \qqqquad
		+ O\l( \sum_{n = 3}^{\infty}\frac{1}{n!}\l(\sum_{j = 1}^{r} |z_{j}|
		\sum_{\ell = 1}^{\infty}\frac{|b_{F_{j}}(p^{\ell})|}{p^{\ell \s}} \r)^{n} \r).
	\end{align*}
\end{lemma}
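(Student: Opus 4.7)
The plan is to expand the exponential defining $M_{p,\sigma}(\bm{z})$ as a Taylor series, evaluate the first three terms explicitly using the distributional properties of $\mathcal{X}(p)$, and bound the tail crudely. Write $A_p = \sum_{j=1}^r z_j \Re C_j$ where
\begin{align*}
C_j := e^{-i\theta_j}\sum_{\ell=1}^\infty \frac{b_{F_j}(p^\ell)\mathcal{X}(p)^\ell}{p^{\ell\sigma}}.
\end{align*}
Since $|\mathcal{X}(p)|=1$, the random variable $|A_p|$ is bounded by $B_p := \sum_{j=1}^r |z_j|\sum_{\ell=1}^\infty |b_{F_j}(p^\ell)|/p^{\ell\sigma}$, which is deterministic, so I may interchange expectation and the series $\exp(A_p) = \sum_{n\ge 0} A_p^n/n!$ by dominated convergence.

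First I compute the low-order terms. The $n=0$ term yields $1$. For $n=1$, writing $\Re C_j = \tfrac{1}{2}(C_j + \bar C_j)$ with $\bar C_j = e^{i\theta_j}\sum_{\ell\ge 1}\overline{b_{F_j}(p^\ell)}\mathcal{X}(p)^{-\ell}/p^{\ell\sigma}$, every monomial involves $\mathcal{X}(p)^{\pm\ell}$ with $\ell\ge 1$; by \eqref{BEQXp} the expectation is zero, so $\EXP{A_p}=0$. For $n=2$, I expand
\begin{align*}
A_p^2 = \tfrac{1}{4}\sum_{j_1,j_2} z_{j_1}z_{j_2}\bigl(C_{j_1}+\bar C_{j_1}\bigr)\bigl(C_{j_2}+\bar C_{j_2}\bigr).
\end{align*}
By \eqref{BEQXp}, the cross terms $\EXP{C_{j_1}C_{j_2}}$ and $\EXP{\bar C_{j_1}\bar C_{j_2}}$ vanish (exponent $\ell_1+\ell_2>0$), while the diagonal surviving terms force $\ell_1=\ell_2=:\ell$, giving
\begin{align*}
\EXP{C_{j_1}\bar C_{j_2}}=e^{-i\theta_{j_1}}e^{i\theta_{j_2}}\sum_{\ell=1}^\infty \frac{b_{F_{j_1}}(p^\ell)\overline{b_{F_{j_2}}(p^\ell)}}{p^{2\ell\sigma}},
\end{align*}
and $\EXP{\bar C_{j_1}C_{j_2}}$ is its complex conjugate. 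Adding the pair and dividing by $2\cdot 2!=4$ reproduces exactly the main term displayed in the lemma, since $e^{-i\theta_{j_1}}b_{F_{j_1}}(p^\ell)\cdot\overline{e^{-i\theta_{j_2}}b_{F_{j_2}}(p^\ell)}$ is precisely the quantity inside $\Re(\cdot)$ after re-grouping.

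For the tail $n\ge 3$, the triangle inequality gives $|A_p^n|\le B_p^n$ pointwise, hence $|\EXP{A_p^n}|\le B_p^n$, and summing $\sum_{n\ge 3}B_p^n/n!$ produces the error term stated. There is no real obstacle here; the only subtlety is carefully tracking the real-part operation when $\bm{z}$ is complex, since $\Re$ acts only on the inner sum and not on $z_j$. Writing $\Re C_j = \tfrac{1}{2}(C_j+\bar C_j)$ throughout and invoking the orthonormality identity \eqref{BEQXp} for $\mathcal{X}(p)$ handles this cleanly.
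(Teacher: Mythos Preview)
Your proof is correct and follows essentially the same approach as the paper: both expand the exponential as a Taylor series, use the orthogonality relation \eqref{BEQXp} for $\mathcal{X}(p)$ to show the $n=1$ term vanishes and to extract the $n=2$ term, and bound the tail $n\ge 3$ by the deterministic quantity $B_p^n$. The paper phrases the $n=2$ computation via phases $\psi_h$ and sign choices $\e_h\in\{-1,1\}$ rather than via $C_j,\bar C_j$, but this is only a notational difference; one small slip in your bookkeeping is that the combined prefactor is $\tfrac{1}{2!}\cdot\tfrac{1}{4}=\tfrac{1}{8}$ (not $\tfrac{1}{4}$), which after adding the conjugate pair indeed yields the stated $\tfrac{1}{4}$.
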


\begin{proof}
	We write 
	\begin{align*}
		 & \exp\l( \sum_{j = 1}^{r}z_{j} \Re e^{-i\theta_{j}}\sum_{\ell = 1}^{\infty}\frac{b_{F_{j}}(p^{\ell})\mathcal{X}(p)^{\ell}}{p^{\ell \s}} \r) \\
		 & =1 + \sum_{n = 1}^{\infty}\frac{1}{n!}\sum_{1 \leq j_{1}, \dots, j_{n} \leq r}z_{j_{1}} \cdots z_{j_{n}}
		\sum_{\ell_{1}, \dots, \ell_{n} = 1}^{\infty}
		\frac{\Re e^{-i\theta_{j_{1}}} b_{F_{j_{1}}}(p^{\ell_{1}}) \mathcal{X}(p)^{\ell_{1}}}{p^{\ell_{1} \s}}
		\cdots
		\frac{\Re e^{-i\theta_{j_{n}}} b_{F_{j_{n}}}(p^{\ell_{n}}) \mathcal{X}(p)^{\ell_{n}}}{p^{\ell_{n} \s}}.
	\end{align*}
	Let $\psi_{h} = \arg b_{F_{h}}(p^{\ell_{h}}) - \theta_{j_{h}}$.
	Then, using equation \eqref{BEQXp}, we obtain
	\begin{align*}
		 & \EXP{\exp\l( \sum_{j = 1}^{r}z_{j} \Re e^{-i\theta_{j}}
		\sum_{\ell = 1}^{\infty}\frac{b_{F_{j}}(p^{\ell})\mathcal{X}(p)^{\ell}}{p^{\ell \s}} \r)} =                      \\
		 & 1 + \sum_{n = 1}^{\infty}\frac{1}{2^{n} n!}\sum_{1 \leq j_{1}, \dots, j_{n} \leq r}z_{j_{1}} \cdots z_{j_{n}}
		\sum_{\ell_{1}, \dots, \ell_{n} = 1}^{\infty}\frac{|b_{F_{j_{1}}}(p^{\ell_{1}}) \cdots b_{F_{j_{n}}}(p^{\ell_{n}})|}
		{p^{(\ell_{1} + \cdots + \ell_{n})\s}}
		\sum_{\substack{\e_{1}, \dots, \e_{n} \in \{ -1, 1 \}                                                            \\ \e_{1}\ell_{1} + \cdots + \e_{n}\ell_{n} = 0}}
		e^{i(\e_{1} \psi_{1} + \cdots + \e_{n} \psi_{n})}.
	\end{align*}
	If $n = 1$, then the equation $\e_{1}\ell_{1} = 0$ is always false, so the term corresponds to $n = 1$ vanishes.
	The term becomes corresponds to $n=2$ is 
	\begin{align*}
		 & \frac{1}{4}\sum_{1 \leq j_{1}, j_{2} \leq r}z_{j_{1}} z_{j_{2}}\sum_{\ell = 1}^{\infty}
		\frac{\Re e^{-i\theta_{j_{1}}}b_{F_{j_{1}}}(p^{\ell})\ol{e^{-i\theta_{j_{2}}}b_{F_{j_{2}}}(p^{\ell})}}{p^{2 \ell \s}}.
	\end{align*}
	The term corresponds to $n\geq 3$ can be bounded by
	\begin{align*}
		 & \leq \frac{1}{n!}\sum_{1 \leq j_{1}, \dots, j_{n} \leq r}|z_{j_{1}} \cdots z_{j_{n}}|
		\sum_{\ell_{1}, \dots, \ell_{n} = 1}^{\infty}\frac{|b_{F_{j_{1}}}(p^{\ell_{1}}) \cdots b_{F_{j_{n}}}(p^{\ell_{n}})|}
		{p^{(\ell_{1} + \cdots + \ell_{n})\s}}
		= \frac{1}{n!}\l( \sum_{j = 1}^{r} |z_{j}| \sum_{\ell = 1}^{\infty}\frac{|b_{F_{j}}(p^{\ell})|}{p^{\ell \s}} \r)^{n}.
	\end{align*}
	This completes the proof of this lemma.
\end{proof}

\begin{lemma}	\label{Prop_Psi_FPP}
	Assume that $\bm{F}$ satisfies (S4), (S5), (A1), and (A2).
	Put
	\begin{align} \label{def_Psi_F}
		\Psi(\bm{z})
		= \Psi(\bm{z}; \bm{F}, \bm{\theta})
		:= \prod_{p}\frac{M_{p, \frac{1}{2}}(\bm{z})}
		{\exp\l( K_{\bm{F}, \bm{\theta}}(p, \bm{z}) / 4 \r)}.
	\end{align}
	Then, the infinity product is uniformly convergent on any compact set in $\CC^{r}$.
  In particular, $\Psi$ is analytic on $\CC^{r}$.
  Moreover, it holds that
	\begin{align} \label{Psi_F_1}
		| \Psi(\bm{z})|
		\leq \bigg|\prod_{j = 1}^{r}\exp\l( -\frac{z_{j}^2}{2}\s_{F_{j}}\l( |z_{j}| \r)^2
		+ O_{\bm{F}}\l( |z_{j}|^{2} + |z_{j}|^{\frac{2 - 2\vartheta_{\bm{F}}}{1 - 2\vartheta_{\bm{F}}}} \r) \r)\bigg|
	\end{align}
	for any $\bm{z} = (z_{1}, \dots, z_{r}) \in \CC$, and that
	\begin{align} \label{Psi_F_3}
		\Psi(\bm{x})
		= \prod_{j = 1}^{r}\exp\l( -\frac{x_{j}^2}{2}\s_{F_{j}}\l( |x_{j}| \r)^2
		+ O_{\bm{F}}\l( x_{j}^{2} + |x_{j}|^{\frac{2 - 2\vartheta_{\bm{F}}}{1 - 2\vartheta_{\bm{F}}}} \r) \r)
	\end{align}
	for $\bm{x} \in \RR^{r}$.
	Furthermore, for any $\bm{z} = (x_{1} + iu_{1}, \dots, x_{r} + iu_{r}) \in \CC^{r}$
	satisfying $x_{j}, u_{j} \in \RR$ with $\| \bm{u} \| \leq 1$,
	we have
	\begin{align} \label{Psi_F_2}
		\Psi(\bm{z})
		=\Psi(x_{1}, \dots, x_{r})\prod_{j = 1}^{r}\l(1
		+ O_{\bm{F}}\l(|u_{j}| \exp\l( D_{1} \|\bm{x}\|^{\frac{2 - 2\vartheta_{\bm{F}}}{1 - 2\vartheta_{\bm{F}}}} \r) \r)\r).
	\end{align}
	Here, $D_{1} = D_{1}(\bm{F})$ is a positive constant.
\end{lemma}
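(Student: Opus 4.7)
The plan is to analyze each factor $M_{p,1/2}(\bm z)/\exp(K_{\bm F, \bm\theta}(p, \bm z)/4)$ in the product defining $\Psi$ via the Taylor expansion from Lemma \ref{EQTM}, then split the prime sum at a threshold depending on $\|\bm z\|$ and $\vartheta_{\bm F}$.

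First I would apply Lemma \ref{EQTM} with $\sigma = 1/2$ to write
\[
M_{p,1/2}(\bm z) = 1 + \tfrac{1}{4}K_{\bm F, \bm\theta}(p, \bm z) + E_p(\bm z),
\]
with $|E_p(\bm z)| \ll \sum_{n \geq 3}(\|\bm z\| C_p)^n/n!$ and $C_p := \sum_{j=1}^{r}\sum_{\ell \geq 1}|b_{F_j}(p^\ell)|/p^{\ell/2}$. For any compact $B \subset \CC^r$, condition (S4) (giving $|b_F(p^\ell)| \ll p^{\ell \vartheta_{\bm F}}$ with $\vartheta_{\bm F} < 1/2$) together with Hypothesis H (S5) force $C_p \to 0$, so $\|\bm z\| C_p \leq 1/2$ for all $\bm z \in B$ and $p \geq p_0(B)$. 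For such primes, taking logarithms yields $\log M_{p,1/2}(\bm z) - \tfrac{1}{4}K_{\bm F, \bm \theta}(p, \bm z) = O((\|\bm z\|C_p)^3)$, and via partial summation against (A1) (i.e., using Lemma \ref{SumbLargeP}) the tail $\sum_{p \geq p_0} C_p^3$ is absolutely summable. This establishes uniform convergence of the product on $B$, hence analyticity of $\Psi$ on $\CC^r$.

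For the decay estimate \eqref{Psi_F_1} (whose real-$\bm z$ specialization is \eqref{Psi_F_3}), I would split the product at $P = \|\bm z\|^{1/(1/2 - \vartheta_{\bm F})}$, the threshold beyond which the Taylor expansion is uniformly valid. For $p > P$, summing the remainder and applying partial summation against \eqref{SNC} gives
\[
\sum_{p > P}\l|\log M_{p,1/2}(\bm z) - \tfrac{K_{\bm F, \bm\theta}(p, \bm z)}{4}\r|
\ll \|\bm z\|^3 \sum_{p > P} C_p^3
\ll \|\bm z\|^{3} \cdot \|\bm z\|^{-1}/\log\|\bm z\|
\ll \|\bm z\|^2,
\]
where we used $|b_F(p)|^3/p^{3/2} \leq |b_F(p)|^2/p^{3/2 - \vartheta_{\bm F}}$ and Hypothesis H for $\ell \geq 2$. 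For $p \leq P$, the direct evaluation
\[
\sum_{p \leq P}\tfrac{1}{4}K_{\bm F, \bm\theta}(p, \bm z)
= \sum_{j=1}^{r}\tfrac{z_j^2}{2}\sigma_{F_j}(P)^2 + \sum_{j_1 < j_2}z_{j_1}z_{j_2}\tau_{j_1, j_2}(P),
\]
together with $\sigma_{F_j}(P)^2 = \sigma_{F_j}(|z_j|)^2 + O(1)$ from Lemma \ref{SumbLargeP} and the off-diagonal bound $O(\|\bm z\|^2)$ from \eqref{SOC2} and (A2), produces the main term $-\sum_j z_j^2\sigma_{F_j}(|z_j|)^2/2$ in the exponent. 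The remaining quantity $\sum_{p \leq P}\log|M_{p,1/2}(\bm z)|$ is controlled by Laplace-type asymptotics on each factor and Cauchy--Schwarz against \eqref{SNC}, yielding the extra error $O(\|\bm z\|^{(2-2\vartheta_{\bm F})/(1-2\vartheta_{\bm F})})$.

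Finally, for the Lipschitz-type estimate \eqref{Psi_F_2}, I would apply Cauchy's integral formula in each coordinate $z_j$ over a disc of radius $1$ centered at $x_j$: since \eqref{Psi_F_1} holds on this disc, one obtains $|\partial_{z_j}\Psi(\bm z)| \ll |\Psi(\bm x)| \exp(D_1 \|\bm x\|^{(2-2\vartheta_{\bm F})/(1-2\vartheta_{\bm F})})$ for some $D_1 = D_1(\bm F) > 0$ along the segment $\bm x \to \bm x + i\bm u$, and integration gives \eqref{Psi_F_2}. The main obstacle is the sharp control of the small-prime contribution: the naive triangle-inequality bound $\log|M_{p,1/2}(\bm z)| \leq \|\bm z\|C_p$ is too lossy to yield the exponent $(2 - 2\vartheta_{\bm F})/(1 - 2\vartheta_{\bm F})$, and one must instead exploit the cancellation in $V(p) = \sum_j z_j a_{F_j}(p)$ via Cauchy--Schwarz and the Selberg orthonormality \eqref{SNC}; the analysis becomes increasingly delicate as $\vartheta_{\bm F} \to 1/2$, where Hypothesis H (S5) is essential to control higher prime-power contributions.
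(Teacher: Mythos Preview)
Your proposal is correct and follows essentially the same strategy as the paper: split the product at the threshold $P \asymp \|\bm z\|^{2/(1-2\vartheta_{\bm F})}$, control the tail $p > P$ via the Taylor remainder from Lemma~\ref{EQTM} together with partial summation against \eqref{SNC} (the paper's estimates \eqref{ES_sI_PP_p3}--\eqref{ES_sI_PP_p4}), extract the main term $-\sum_j z_j^2 \sigma_{F_j}(|z_j|)^2/2$ from $\sum_{p\le P} K_{\bm F,\bm\theta}(p,\bm z)/4$ using (A1)--(A2), and deduce \eqref{Psi_F_2} from \eqref{Psi_F_1}--\eqref{Psi_F_3} via Cauchy's integral formula and Taylor expansion (the paper uses polydisc radius $2$ rather than $1$, which is immaterial).

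One clarification on your closing remark: the bound $\log|M_{p,1/2}(\bm z)| \le r\|\bm z\|\,C_p$ is \emph{not} too lossy and is exactly what the paper uses for $p \le P$ (see \eqref{p_Psi_F_5}); the key step is that $\sum_{p\le P} C_p \ll P^{1/2}(\log\log P)^{1/2}$ by Cauchy--Schwarz against \eqref{SNC} and the prime number theorem, which already yields the exponent $(2-2\vartheta_{\bm F})/(1-2\vartheta_{\bm F})$. No cancellation in $\sum_j z_j a_{F_j}(p)$ is exploited. Also note the paper's small case split when some $|z_j| \le \|\bm z\|^{1/2}$, in which case $z_j^2\sigma_{F_j}(P)^2$ is absorbed directly into the $O(\|\bm z\|^2)$ error rather than rewritten as $z_j^2\sigma_{F_j}(|z_j|)^2$.
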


\begin{proof}
	By the definition of $K_{\bm{F}, \bm{\theta}}$ (see \eqref{def_K_F}), it holds that 
	\begin{align}	\label{ES_sInp2}
		|K_{\bm{F}, \bm{\theta}}(p, \bm{z})|
		\leq 2 r \| \bm{z} \|^{2} \sum_{\ell = 1}^{\infty}\frac{\|b_{\bm{F}}(p^{\ell})\|^2}{p^{\ell}}.
	\end{align}
	Here, $b_{\bm{F}}(p^{\ell})$ indicates the vector $(b_{F_{1}}(p^{\ell}), \dots, b_{F_{r}}(p^{\ell}))$.
	Using this estimate and Lemma \ref{EQTM},
	we find that
	\begin{align}
		 & \frac{M_{p, \frac{1}{2}}(\bm{z})}{\exp(K_{\bm{F}, \bm{\theta}}(p, \bm{z}) / 4)}         \\
		 & = \l\{ 1 + \frac{1}{4}K_{\bm{F}, \bm{\theta}}(p, \bm{z})
		+ O\l( \sum_{n = 3}^{\infty}\frac{1}{n!}\l(r \| \bm{z} \|
		\sum_{\ell = 1}^{\infty}\frac{\| b_{\bm{F}}(p^{\ell}) \|}{p^{\ell/2}} \r)^{n} \r) \r\}      \\
		 & \qqqquad \times \l\{ 1 - \frac{1}{4}K_{\bm{F}, \bm{\theta}}(p, \bm{z})
		+ O\l( \sum_{n = 3}^{\infty}\frac{1}{n!}\l(\frac{r}{2} \| \bm{z} \|^2
		\sum_{\ell = 1}^{\infty}\frac{\| b_{\bm{F}}(p^{\ell}) \|^2}{p^{\ell}} \r)^{n} \r) \r\} \\
		\label{ES_sI_new_p1}
		 & = 1 + O\l( \sum_{n = 3}^{\infty}\frac{(r \| \bm{z} \|)^{n}}{n!}
		\l( \sum_{\ell = 1}^{\infty}\frac{\| b_{\bm{F}}(p^{\ell}) \|}{p^{\ell/2}} \r)^{n}
		+ \sum_{n = 2}^{\infty}\frac{(r \|\bm{z}\|^2)^{n}}{n!}
		\l(\sum_{\ell = 1}^{\infty}\frac{\| b_{\bm{F}}(p^{\ell}) \|^2}{p^{\ell}} \r)^{n} \r). 
	\end{align}
	Let $Y \geq C_{1} (\| \bm{z} \|)^{\frac{2}{1 - 2\vartheta_{\bm{F}}}}$ with $C_{1} = C_{1}(\bm{F}) > 0$ a suitably large constant.
	Then, by estimate \eqref{ES_sI_new_p1} and the definition of $\vartheta_{\bm{F}}$, it holds that
	\begin{align*}
		\bigg|\frac{M_{p, \frac{1}{2}}(\bm{z})}{\exp(K_{\bm{F}, \bm{\theta}}(p, \bm{z}) / 4)} - 1\bigg|
		\leq \frac{1}{2}
	\end{align*}
	for any $p \geq Y$.
	From this inequality and estimate \eqref{ES_sI_new_p1}, we obtain
	\begin{align}
		 & \frac{M_{p, \frac{1}{2}}(\bm{z})}{\exp(K_{\bm{F}, \bm{\theta}}(p, \bm{z}) / 4)}
		= \exp\l(\log \frac{M_{p, \frac{1}{2}}(\bm{z})}{\exp(K_{\bm{F}, \bm{\theta}}(p, \bm{z}) / 4)}\r) \\
		\label{ES_sI_PP_p1}
		 & = \exp\l\{ O\l( \sum_{n = 3}^{\infty}\frac{(r \| \bm{z} \|)^{n}}{n!}
		\l( \sum_{\ell = 1}^{\infty}\frac{\| b_{\bm{F}}(p^{\ell}) \|}{p^{\ell/2}} \r)^{n}
		+ \sum_{n = 2}^{\infty}\frac{(r \|\bm{z}\|^2)^{n}}{n!}
		\l(\sum_{\ell = 1}^{\infty}\frac{\| b_{\bm{F}}(p^{\ell}) \|^2}{p^{\ell}} \r)^{n} \r) \r\}
	\end{align}
	for any $p \geq Y$.
	Set $K_{1} = 2(\frac{1}{2} - \vartheta_{\bm{F}})^{-1}$.
	Then, it holds that
	$$
		\sum_{\ell > K_{1}}\frac{\| b_{\bm{F}}(p^{\ell}) \|}{p^{\ell/2}}
		\ll_{\bm{F}} p^{-2},\text{ and }
		\sum_{\ell > K_{1}}\frac{\| b_{\bm{F}}(p^{\ell}) \|^2}{p^{\ell}}
		\ll_{\bm{F}} p^{-4}.
	$$
	Therefore, there exists some $C = C(F)>0$ such that
	\begin{align} \label{ES_sI_PP_p5}
		\l( \sum_{\ell = 1}^{\infty}\frac{\| b_{\bm{F}}(p^{\ell}) \|}{p^{\ell/2}} \r)^{n}
		 & \leq C^{n}\l(\max_{1 \leq \ell \leq K_{1}} \frac{\| b_{\bm{F}}(p^{\ell}) \|^{n}}{p^{\ell n/2}} + p^{-2n} \r)
		\leq C^{n}\l(\sum_{1 \leq \ell \leq K_{1}} \frac{\| b_{\bm{F}}(p^{\ell}) \|^{n}}{p^{\ell n/2}} + p^{-2n} \r)                    \\
		 & \leq C^{n}\l(\sum_{j = 1}^{r} \sum_{1 \leq \ell \leq K_{1}} \frac{\| b_{F_{j}}(p^{\ell}) \|^{n}}{p^{\ell n/2}} + p^{-2n} \r)
	\end{align}
	and similarly that
	\begin{align} \label{ES_sI_PP_p51}
		\l(\sum_{\ell = 1}^{\infty}\frac{\| b_{\bm{F}}(p^{\ell}) \|^2}{p^{\ell}} \r)^{n}
		\leq C^{n}\l(\sum_{j = 1}^{r} \sum_{1 \leq \ell \leq K_{1}} \frac{\| b_{F_{j}}(p^{\ell}) \|^{2 n}}{p^{\ell n}} + p^{-4n} \r).
	\end{align}
	Using these inequalities and Lemma \ref{SumbLargeP}, we have for any $Z \geq 2, n\in \mathbb{Z}$
	\begin{align} \label{ES_sI_PP_p3}
		\sum_{p > Z}\l( \sum_{\ell = 1}^{\infty}\frac{\| b_{\bm{F}}(p^{\ell}) \|}{p^{\ell/2}} \r)^{n}
		 & \leq C^{n}\sum_{j = 1}^{r}\sum_{m > Z}\frac{|b_{F_{j}}(m)|^2}{m^{1 + (1/2 - \vartheta_{\bm{F}})(n - 2)}} + C^{n} Z^{1 - 2n} \\
		 & \ll_{\bm{F}} C^{n} Z^{-(\frac{1}{2} - \vartheta_{\bm{F}})(n-2)}, \quad n \geq 3
	\end{align}
	and
	\begin{align} \label{ES_sI_PP_p4}
		\sum_{p > Z} \l(\sum_{\ell = 1}^{\infty}\frac{\| b_{\bm{F}}(p^{\ell}) \|^2}{p^{\ell}} \r)^{n}
		 & \leq C^{n}\sum_{j = 1}^{r}\sum_{m > Z}\frac{|b_{F_{j}}(m)|^2}{m^{1 + (1 - 2\vartheta_{\bm{F}})(n - 1)}} + C^{n} Z^{1 - 4n} \\
		 & \ll_{\bm{F}} C^{n} Z^{-(1 - 2\vartheta_{\bm{F}})(n-1)}, \quad n \geq 2.
	\end{align}
	Applying these estimates to \eqref{ES_sI_PP_p1}, we obtain
	\begin{align}
		\prod_{p > Y}\frac{M_{p, \frac{1}{2}}(\bm{z})}{\exp(K_{\bm{F}, \bm{\theta}}(p, \bm{z}) / 4)}
		 & = \exp\l( O_{\bm{F}}\l( \frac{\| \bm{z} \|^3}{Y^{\frac{1}{2} - \vartheta_{\bm{F}}}} \r) \r)             \\
		\label{ES_sI_PP_p2}
		 & = \prod_{j = 1}^{r}\exp\l( O_{\bm{F}}\l( \frac{|z_{j}|^3}{Y^{\frac{1}{2} - \vartheta_{\bm{F}}}} \r) \r)
	\end{align}
	for any $Y \geq C_{1} (\| \bm{z} \|)^{\frac{2}{1 - 2\vartheta_{\bm{F}}}}$ with $C_{1} = C_{1}(\bm{F})$ a suitably large constant.
	In particular, we see that this estimate implies the infinite product
	$
		\prod_{p}\frac{M_{p, \frac{1}{2}}(\bm{z})}{\exp(K_{\bm{F}, \bm{\theta}}(p, \bm{z}) / 4)}
	$
	is uniformly convergent for $\bm{z} \in D$ with $D$ an arbitrary compact set in $\CC^{r}$.
	Hence, $\Psi$ is analytic on $\CC^{r}$.

	Next, we prove \eqref{Psi_F_1}.
	Put $L = C_{1} (\| \bm{z} \|)^{\frac{2}{1 - 2\vartheta_{\bm{F}}}}$.
	Then we divide the range of the product as
	\begin{align}	\label{p_Psi_F_1}
		\prod_{p}\frac{M_{p, \frac{1}{2}}(\bm{z})}{\exp\l( K_{\bm{F}, \bm{\theta}}(p, \bm{z}) / 4 \r)}
		= \l(\prod_{p \leq L} \times \prod_{p > L}\r)
		\frac{M_{p, \frac{1}{2}}(\bm{z})}{\exp\l( K_{\bm{F}, \bm{\theta}}(p, \bm{z}) / 4 \r)}.
	\end{align}
	If $L < 2$, then
	$
		\prod_{p \leq L}M_{p, \frac{1}{2}}(\bm{z})
		= 1
		= \exp\l( O\l( \| \bm{z} \|^{\frac{2 - 2\vartheta_{\bm{F}}}{1 - 2\vartheta_{\bm{F}}}} \r) \r).
	$
	Next, we suppose $L \geq 2$.
	Using Lemma \ref{EQTM}, we see that
	\begin{align*}
		|M_{p, \frac{1}{2}}(\bm{z})|
		\leq \exp\l( \sum_{j = 1}^{r} |z_{j}| \sum_{\ell = 1}^{\infty}\frac{|b_{F_{j}}(p^{\ell})|}{p^{\ell/2}} \r)
		\leq \exp\l( r \| \bm{z} \| \sum_{\ell = 1}^{\infty}\frac{\| b_{\bm{F}}(p^{\ell}) \|}{p^{\ell/2}} \r).
	\end{align*}
	By \eqref{ES_sI_PP_p5} and the Cauchy-Schwarz inequality, we also find that
	\begin{align*}
		\sum_{p \leq L}\sum_{\ell = 1}^{\infty}\frac{\|b_{\bm{F}}(p^{\ell})\|}{p^{\ell/2}}
		\ll_{\bm{F}} \max_{1 \leq \ell \leq K_{1}}\sum_{p \leq L}\frac{\| b_{\bm{F}}(p^{\ell}) \|}{p^{\ell/2}} + 1
		\leq \max_{1 \leq \ell \leq K_{1}}\l(\sum_{p \leq L}\frac{\| b_{\bm{F}}(p^{\ell}) \|^2}{p^{\ell}}\r)^{1/2}\l( \sum_{p \leq L} 1 \r)^{1/2} + 1.
	\end{align*}
	Assumptions (S5), (A1), and the prime number theorem yield that
	\begin{align*}
		\max_{1 \leq \ell \leq K_{1}}\l(\sum_{p \leq L}\frac{\| b_{\bm{F}}(p^{\ell}) \|^2}{p^{\ell}}\r)^{1/2}\l( \sum_{p \leq L} 1 \r)^{1/2}
		\ll_{\bm{F}} L^{1/2}
		\ll_{\bm{F}} \| \bm{z} \|^{\frac{1}{1 - 2\vartheta_{\bm{F}}}}.
	\end{align*}
	Therefore, we obtain
	\begin{align*}
		\prod_{p \leq L}M_{p, \frac{1}{2}}(\bm{z})
		\leq \exp\l( O_{\bm{F}}\l( \| \bm{z} \|^{\frac{2 - 2\vartheta_{\bm{F}}}{1 - 2\vartheta_{\bm{F}}}} \r) \r)
		= \prod_{j = 1}^{r} \exp\l( O_{\bm{F}}\l( |z_{j}|^{\frac{2 - 2\vartheta_{\bm{F}}}{1 - 2\vartheta_{\bm{F}}}} \r) \r)
	\end{align*}
	for $L \geq 2$.
	Combing this estimate with the estimate in the case $L < 2$, we have
	\begin{align} \label{p_Psi_F_5}
		\prod_{p \leq L}M_{p, \frac{1}{2}}(\bm{z})
		\leq \prod_{j = 1}^{r} \exp\l( O_{\bm{F}}\l( |z_{j}|^{\frac{2 - 2\vartheta_{\bm{F}}}{1 - 2\vartheta_{\bm{F}}}} \r) \r)
	\end{align}
	for all $L \geq 0$.
	Also, it follows from the definitions of $\s_{F}(X)$, $\tau_{j_{1}, j_{2}}(X)$, and $K_{\bm{F}, \bm{\theta}}(p, \bm{z})$ that
	\begin{align}  \label{ES_sI_PP_p6}
		\sum_{p \leq L}K_{\bm{F}, \bm{\theta}}(p, \bm{z})
		= 2\sum_{j = 1}^{r}z_{j}^2 \s_{F_{j}}(L)^2
		+ 4\sum_{1 \leq j_{1} < j_{2} \leq r} z_{j_{1}} z_{j_{2}} \tau_{j_{1}, j_{2}}(L).
	\end{align}
	Using (A1), (A2), we find that
	\begin{align*}
		\tau_{j_{1}, j_{2}}(Z)
		= \frac{1}{2}\sum_{p \leq Z}\sum_{\ell = 2}^{\infty}
		\frac{\Re e^{-i\theta_{j_{1}}}b_{F_{j_{1}}}(p^{\ell}) \ol{e^{-i\theta_{j_{2}}}b_{F_{j_{2}}}(p^{\ell})}}{p^{\ell}}
		+ O_{\bm{F}}(1)
	\end{align*}
	for any $Z \geq 2$ when $j_{1} \not= j_{2}$.
	Similarly to \eqref{ES_sI_PP_p51}, we obtain
	\begin{align*}
		\frac{1}{2}\sum_{p \leq Z}\sum_{\ell = 2}^{\infty}
		\frac{\Re e^{-i\theta_{j_{1}}}b_{F_{j_{1}}}(p^{\ell}) \ol{e^{-i\theta_{j_{2}}}b_{F_{j_{2}}}(p^{\ell})}}{p^{\ell}}
		\ll_{\bm{F}} \sum_{p}\sum_{2 \leq \ell \leq K_{1}}\frac{\|b_{\bm{F}}(p^{\ell})\|^2}{p^{\ell}} + 1,
	\end{align*}
	and by (S5), this is $\ll_{\bm{F}} 1$.
	Hence, assumptions (S4), (S5), (A1), and (A2) yield that
	\begin{align} \label{BDtau}
		\tau_{j_{1}, j_{2}}(Z)
		\ll_{\bm{F}} 1
	\end{align}
	for any $Z \geq 2$ when $j_{1} \not= j_{2}$.
	By Lemma \ref{SumbLargeP},
	\begin{align} \label{ESTsigma}
		\s_{F_{j}}(Z)^2 = \frac{n_{F_{j}}}{2} \log{\log{Z}} + O_{F_{j}}(1)
	\end{align}
	for any $Z \geq 2$.
	If $|z_{j}| \leq \|\bm{z}\|^{1/2}$, then $z_{j}^{2} \s_{F_{j}}(L)^2 \ll_{\bm{F}} \| \bm{z} \|^{2}$.
	If $|z_{j}| > \|\bm{z}\|^{1/2}$, then we use \eqref{ESTsigma} to obtain
	\begin{align*}
		\frac{z_{j}^2}{2}\s_{F_{j}}(L)^2
		= \frac{z_{j}^{2}}{2}\s_{F_{j}}(|z_{j}|)^2 + \frac{z_{j}^{2}}{4} \sum_{|z_{j}| < p \leq L}\frac{|a_{F_{j}}|^2}{p}
		= \frac{z_{j}^{2}}{2}\s_{F_{j}}(|z_{j}|)^2 + O_{F_{j}}\l(\| \bm{z} \|^{2}\r).
	\end{align*}
	From this observation and \eqref{ES_sI_PP_p6}, we find that
	\begin{align}
		\prod_{p \leq L}\exp\l( - K_{\bm{F}, \bm{\theta}}(p, \bm{z}) / 4 \r)
		\label{p_Psi_F_6}
		 & = \prod_{j = 1}^{r}\exp\l( -\frac{z_{j}^2}{2}\s_{F_{j}}(|z_{j}|)^2 + O_{\bm{F}}(|z_{j}|^2) \r).
	\end{align}
	Hence, we obtain
	\begin{align} \label{p_Psi_F_2}
		\bigg|\prod_{p \leq L}\frac{M_{p, \frac{1}{2}}(\bm{z})}{\exp\l( K_{\bm{F}, \bm{\theta}}(p, \bm{z}) / 4 \r)}\bigg|
		\leq \bigg|\prod_{j = 1}^{r}\exp\l( -\frac{z_{j}^2}{2}\s_{F_{j}}\l( |z_{j}| \r)^2
		+ O_{\bm{F}}\l( |z_{j}|^{2} + |z_{j}|^{\frac{2 - 2\vartheta_{\bm{F}}}{1 - 2\vartheta_{\bm{F}}}} \r) \r)\bigg|.
	\end{align}
	We also see that
	\begin{align} \label{ES_sI_PP_p7}
		\prod_{p > L}\frac{M_{p, \frac{1}{2}}(\bm{z})}{\exp\l( K_{\bm{F}, \bm{\theta}}(p, \bm{z})/4 \r)}
		= \exp\l( O_{\bm{F}}\l(\| \bm{z} \|^2\r) \r)
		= \prod_{j = 1}^{r}\exp\l( O_{\bm{F}}\l( |z_{j}|^2 \r) \r)
	\end{align}
	from \eqref{ES_sI_PP_p2}.
	These estimates yield inequality \eqref{Psi_F_1}.

	Next, we show \eqref{Psi_F_3}.
	It holds from the definition of $M_{p, \frac{1}{2}}(\bm{z})$ that
	\begin{align*}
		\exp\l( -\sum_{j = 1}^{r}|x_{j}|\sum_{k = 1}^{\infty}\frac{|b_{F_{j}}(p^{k})|}{p^{k/2}} \r)
		\leq M_{p, \frac{1}{2}}(\bm{x})
		\leq \exp\l( \sum_{j = 1}^{r}|x_{j}|\sum_{k = 1}^{\infty}\frac{|b_{F_{j}}(p^{k})|}{p^{k/2}} \r)
	\end{align*}
	for any $\bm{x} = (x_{1}, \dots, x_{r}) \in \RR^{r}$.
	Therefore, we have
	\begin{align*}
		\prod_{p \leq L}M_{p, \frac{1}{2}}(\bm{x})
		= \exp\l( O_{\bm{F}}\l( \| \bm{x} \| \sum_{p \leq L}\sum_{k = 1}^{\infty}\frac{\|b_{\bm{F}}(p^{k})\|}{p^{k/2}} \r) \r).
	\end{align*}
	Similarly to \eqref{p_Psi_F_5} and by this equation, we have
	\begin{align*}
		\prod_{p \leq L}M_{p, \frac{1}{2}}(\bm{x})
		= \prod_{j = 1}^{r}\exp\l( O_{\bm{F}}\l(|x_{j}|^{\frac{2 - 2\vartheta_{\bm{F}}}{1 - 2\vartheta_{\bm{F}}}} \r) \r).
	\end{align*}
	We can calculate the other parts similarly to the proof of \eqref{Psi_F_1}, and obtain \eqref{Psi_F_3}.

	Finally, we prove equation \eqref{Psi_F_2}.
	Since $\Psi$ is analytic on $\CC^{r}$, we can write
	\begin{align*}
		\Psi(x_{1} + iu_{1}, \dots, x_{r} + iu_{r})
		 & = \sum_{n = 0}^{\infty}\sum_{\substack{k_{1} + \dots + k_{r} = n \\ k_{1}, \dots, k_{r} \geq 0}}
		\frac{1}{k_{1}! \cdots k_{r}!}
		\frac{\partial^{n} \Psi(x_{1}, \dots, x_{r})}{\partial z_{1}^{k_{1}} \cdots \partial z_{r}^{k_{r}}}
		(iu_{1})^{k_{1}} \cdots (iu_{r})^{k_{r}}.
	\end{align*}
	It follows from estimates \eqref{Psi_F_1} and \eqref{Psi_F_3} that
	\begin{align*}
		|\Psi(z_{1}, \dots, z_{r})|
		\leq \Psi(x_{1}, \dots, x_{r})\exp\l( C\l( \| \bm{x} \|^2 + \| \bm{x} \|^{\frac{2 - 2\vartheta_{\bm{F}}}{1 - 2\vartheta_{\bm{F}}}}\r) \r)
	\end{align*}
	for some $C = C(\bm{F}) > 0$ when $|z_{1} - x_{1}| = \cdots |z_{r} - x_{r}| = 2$.
	Using this estimate and Cauchy's integral formula, we find that
	\begin{align*}
		\frac{\partial^{n} \Psi(x_{1}, \dots, x_{r})}{\partial z_{1}^{k_{1}} \cdots \partial z_{r}^{k_{r}}}
		 & = \frac{k_{1}! \cdots k_{r}!}{(2\pi i)^{r}}\int_{|z_{r} - x_{r}| = 2} \cdots \int_{|z_{1} - x_{1}| = 2}
		\frac{\Psi(z_{1}, \dots, z_{r})}{(z_{1} - x_{1})^{k_{1}} \cdots (z_{r} - x_{r})^{k_{r}}}dz_{1} \cdots dz_{r} \\
		 & \leq 2^{-(k_{1} + \cdots + k_{r})}k_{1}! \cdots k_{r}! \Psi(x_{1}, \dots, x_{r})
		\exp\l(C \l( \| \bm{x} \|^2 + \|\bm{x}\|^{\frac{2 - 2\vartheta_{\bm{F}}}{1 - 2\vartheta_{\bm{F}}}}\r) \r).
	\end{align*}
	Hence, when $\| \bm{u} \| \leq 1$, we have
	\begin{align*}
		\Psi(x_{1} + iu_{1}, \dots, x_{r} + iu_{r})
		 & = \Psi(x_{1}, \dots, x_{r})\l(1
		+ O_{\bm{F}}\l(\| \bm{u} \| \exp\l( C\| \bm{x} \|^{\frac{2 - 2\vartheta_{\bm{F}}}{1 - 2\theta_{\bm{F}}}} \r) \r)\r) \\
		 & = \Psi(x_{1}, \dots, x_{r})\prod_{j = 1}^{r}\l(1
		+ O_{\bm{F}}\l(|u_{j}| \exp\l( C \| \bm{x} \|^{\frac{2 - 2\vartheta_{\bm{F}}}{1 - 2\theta_{\bm{F}}}} \r) \r)\r),
	\end{align*}
	which completes the proof of \eqref{Psi_F_2}.
\end{proof}

\begin{lemma} \label{EST_Xi_lem}
  Let $\Xi_{X}$ be the function defined by \eqref{def_Xi}.
	For $\bm{x} = (x_{1}, \dots, x_{r}) \in (\RR_{\geq 0})^{r}$, we have
	\begin{align} \label{EST_Xi}
		\Xi_{X}(\bm{x})
		= \prod_{j = 1}^{r}\exp\l( -\frac{x_{j}^2}{2}\s_{F_{j}}(|x_{j}|)^{2}
		+ O_{\bm{F}}\l( x_{j}^2 + |x_{j}|^{\frac{2 - 2\vartheta_{\bm{F}}}{1 - 2\vartheta_{\bm{F}}}} \r) \r).
	\end{align}
\end{lemma}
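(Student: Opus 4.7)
The plan is to decompose $\Xi_{X}(\bm{x})$ into a cross-term factor and the quantity $\Psi(\bm{x})$ introduced in Lemma \ref{Prop_Psi_FPP}, and then apply the already proved asymptotic \eqref{Psi_F_3} after showing that the cross terms contribute only $O_{\bm{F}}(\|\bm{x}\|^{2})$.

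First I would observe, directly from definitions \eqref{def_Xi} and \eqref{def_Psi_F}, that
\begin{align*}
\Xi_{X}(\bm{x})
= \exp\!\l( \sum_{1 \leq j_{1} < j_{2} \leq r} x_{j_{1}} x_{j_{2}} \tau_{j_{1}, j_{2}}(X) \r) \cdot \Psi(\bm{x}).
\end{align*}
Thus the problem splits cleanly into two pieces.

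For the cross-term factor, the key input is the bound \eqref{BDtau} established during the proof of Lemma \ref{Prop_Psi_FPP}, namely $\tau_{j_{1}, j_{2}}(X) \ll_{\bm{F}} 1$ for $j_{1} \neq j_{2}$, which in turn uses (A1) to handle the $\ell = 1$ off-diagonal contribution, (A2) to kill the diagonal $F_{j_{1}} = F_{j_{2}}$ case via $|\theta_{j_{1}} - \theta_{j_{2}}| \equiv \pi/2 \pmod{\pi}$, and (S4), (S5) to control the $\ell \geq 2$ tail. Combining this with the elementary inequality $|x_{j_{1}} x_{j_{2}}| \leq \tfrac{1}{2}(x_{j_{1}}^{2} + x_{j_{2}}^{2})$ gives
\begin{align*}
\sum_{1 \leq j_{1} < j_{2} \leq r} x_{j_{1}} x_{j_{2}} \tau_{j_{1}, j_{2}}(X) \ll_{\bm{F}} \sum_{j = 1}^{r} x_{j}^{2},
\end{align*}
so the exponential of the cross term can be absorbed into $\prod_{j=1}^{r} \exp(O_{\bm{F}}(x_{j}^{2}))$. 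For the $\Psi(\bm{x})$ factor I simply invoke the already proved \eqref{Psi_F_3}, which gives the factored form
\begin{align*}
\Psi(\bm{x}) = \prod_{j=1}^{r} \exp\!\l( -\tfrac{x_{j}^{2}}{2} \s_{F_{j}}(|x_{j}|)^{2} + O_{\bm{F}}\bigl( x_{j}^{2} + |x_{j}|^{\frac{2 - 2\vartheta_{\bm{F}}}{1 - 2\vartheta_{\bm{F}}}} \bigr) \r).
\end{align*}
Multiplying the two pieces yields exactly \eqref{EST_Xi}.

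There is no real obstacle here: everything has been done in the proof of Lemma \ref{Prop_Psi_FPP}, and the lemma is essentially a bookkeeping step that records the relation between $\Xi_{X}$ and $\Psi$ once the cross terms are recognized as uniformly bounded. The only point that needs a line of care is noting that the cross-term bound is uniform in $X$, so the estimate truly holds for $\Xi_{X}$ for every $X$ (and not merely in a limit), which follows from \eqref{BDtau} being an estimate that does not depend on $Z = X$.
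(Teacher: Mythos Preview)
Your proof is correct and follows essentially the same approach as the paper: decompose $\Xi_{X}(\bm{x})$ via its definition into the cross-term exponential times $\Psi(\bm{x})$, bound the cross terms using \eqref{BDtau}, and invoke \eqref{Psi_F_3} for $\Psi(\bm{x})$. The paper's version is terser but uses exactly the same three ingredients.
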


\begin{proof}
	From \eqref{BDtau}, formula \eqref{Psi_F_3}, and the definition of $\Xi_{X}$, we have
	\begin{align*}
		\Xi_{X}(\bm{x})
		 & = \prod_{j = 1}^{r}\exp\l( -\frac{x_{j}^2}{2}\s_{F_{j}}(|x_{j}|)^{2}
		+ O_{\bm{F}}\l( \| \bm{x} \|^{2} + x_{j}^2 + |x_{j}|^{\frac{2 - 2\vartheta_{\bm{F}}}{1 - 2\vartheta_{\bm{F}}}} \r) \r) \\
		 & = \prod_{j = 1}^{r}\exp\l( -\frac{x_{j}^2}{2}\s_{F_{j}}(|x_{j}|)^{2}
		+ O_{\bm{F}}\l( x_{j}^2 + |x_{j}|^{\frac{2 - 2\vartheta_{\bm{F}}}{1 - 2\vartheta_{\bm{F}}}} \r) \r),
	\end{align*}
	which completes this lemma.
\end{proof}

\begin{lemma}	\label{ES_sI}
	Assume that $\bm{F}$ satisfies (S4), (A1), and (A2).
	For $\bm{z} = (z_1, \dots, z_r) \in \CC^{r}$, $X \geq C \|\bm{z}\|^{\frac{2}{1-2\vartheta_{\bm{F}}}} + 3$
	with $C = C(\bm{F})$ a sufficiently large positive constant, we have
	\begin{align}
		\label{ES_sI1}
		\bigg|\prod_{p \leq X}M_{p, \frac{1}{2}}(\bm{z})\bigg|
		\leq \bigg|\prod_{j = 1}^{r}
		\exp\l(\frac{z_{j}^2}{2} \l(\s_{F_{j}}(X)^2 - \s_{F_{j}}(|z_{j}| \r)^2
		+ O_{\bm{F}}\l( |z_{j}|^{2} + |z_{j}|^{\frac{2 - 2\vartheta_{\bm{F}}}{1 - 2\vartheta_{\bm{F}}}} \r) \r)\bigg|,
	\end{align}
	where $\s_{F_{j}}(X)$ is defined by \eqref{def_var}.
	Moreover, there exists a positive constant $b_{2} = b_{2}(\bm{F})$ such that,
	for any $X \geq 3$ and any $\bm{z} = (z_{1}, \dots, z_{r}) \in \CC^{r}$ with $\norm[]{\bm{z}} \leq b_{2}$, we have
	\begin{align}	\label{ES_sI2}
		\prod_{p \leq X}M_{p, \frac{1}{2}}(\bm{z})
		= \prod_{j = 1}^{r}\l( 1 + O_{\bm{F}}\l( |z_{j}|^2 \r) \r)\exp\l( \frac{z_{j}^2}{2} \s_{F_{j}}(X)^2 \r).
	\end{align}
	Furthermore, for any $\bm{z} = (x_{1} + iu_{1}, \dots, x_{r} + iu_{r}) \in \CC^{r}$ with $x_{j}, u_{j} \in \RR$ and $\| \bm{u} \| \leq 1$,
	and any $X \geq C \|\bm{x} \|^{\frac{2}{1-2\vartheta_{\bm{F}}}} + 3$
	with $C = C(\bm{F})$ a sufficiently large positive constant, we have
	\begin{align} \label{ES_sI3}
		 & \prod_{p \leq X}M_{p, \frac{1}{2}}(\bm{z}) \\
		 & = \Xi_{X}(\bm{x})
		\prod_{j = 1}^{r}\l( 1 + O_{\bm{F}}\l( |u_{j}|\exp\l( D_{1}\| \bm{x} \|^{\frac{2 - 2\vartheta_{\bm{F}}}{1 - 2\vartheta_{\bm{F}}}} \r)
		+ \frac{|z_{j}|^{4}}{\log{X}} \r) \r)\exp\l( \frac{z_{j}^2}{2}\s_{F_{j}}(X)^2 \r),
	\end{align}
	where $\Xi_{X}$ is the function defined by \eqref{def_Xi}, and $D_{1}$ is the same constant as in Lemma \ref{Prop_Psi_FPP}.
\end{lemma}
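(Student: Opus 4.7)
The plan is to combine three ingredients: Lemma \ref{Prop_Psi_FPP} (which controls the infinite product $\Psi(\bm z)$), Lemma \ref{EQTM} (the per-prime Taylor expansion of $M_{p,\frac{1}{2}}$), and the explicit evaluation of $\sum_{p\leq X} K_{\bm F,\bm\theta}(p,\bm z)/4$ carried out in \eqref{ES_sI_PP_p6}. Everything starts from the identity
\begin{equation}
\prod_{p\leq X} M_{p,\frac{1}{2}}(\bm z)
= \exp\!\Bigl(\tfrac{1}{4}\sum_{p\leq X} K_{\bm F,\bm\theta}(p,\bm z)\Bigr)
\cdot \Psi(\bm z)
\cdot \prod_{p>X}\frac{\exp(K_{\bm F,\bm\theta}(p,\bm z)/4)}{M_{p,\frac{1}{2}}(\bm z)},
\end{equation}
which is immediate from \eqref{def_Psi_F}.

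For \eqref{ES_sI1}, I would evaluate the front factor using \eqref{ES_sI_PP_p6} together with $\tau_{j_1,j_2}(X)\ll_{\bm F} 1$ from \eqref{BDtau}; this gives $\exp\bigl(\tfrac{1}{2}\sum_j z_j^2\sigma_{F_j}(X)^2 + O_{\bm F}(\|\bm z\|^2)\bigr)$. The factor $\Psi(\bm z)$ is then controlled by \eqref{Psi_F_1}. Finally, under the hypothesis $X \geq C\|\bm z\|^{2/(1-2\vartheta_{\bm F})}$ with $C=C(\bm F)$ chosen large enough, the tail product is handled by \eqref{ES_sI_PP_p2}, giving a factor of $\exp(O_{\bm F}(\|\bm z\|^2))$. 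Multiplying the three pieces and distributing $O_{\bm F}(\|\bm z\|^2)=\sum_j O_{\bm F}(|z_j|^2)$ into the $j$-th exponential yields \eqref{ES_sI1}.

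For \eqref{ES_sI2}, since $\|\bm z\|\leq b_2$ may be taken as small as we wish, I would avoid $\Psi$ altogether and argue directly. By Lemma \ref{EQTM} and the uniform-in-$p$ bounds \eqref{ES_sI_PP_p5} and \eqref{ES_sI_PP_p51},
\begin{equation}
\log M_{p,\frac{1}{2}}(\bm z) = \tfrac{1}{4} K_{\bm F,\bm\theta}(p,\bm z)
+ O_{\bm F}\!\Bigl(\|\bm z\|^2 \sum_{\ell\geq 1}\frac{\|b_{\bm F}(p^\ell)\|^2}{p^\ell}
+ \|\bm z\|^3\Bigl(\sum_{\ell\geq 1}\frac{\|b_{\bm F}(p^\ell)\|}{p^{\ell/2}}\Bigr)^{3}\Bigr)
\end{equation}
for $b_2$ small enough. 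Summing over $p\leq X$, the main term produces $\tfrac{1}{2}\sum_j z_j^2 \sigma_{F_j}(X)^2$ together with cross-terms that are $O_{\bm F}(\|\bm z\|^2)$ by \eqref{BDtau}, while the remainders sum to $O_{\bm F}(\|\bm z\|^2)$ using \eqref{ES_sI_PP_p3} and \eqref{ES_sI_PP_p4} applied with $Z$ a fixed constant. Exponentiating and splitting $O_{\bm F}(\|\bm z\|^2)$ coordinatewise gives \eqref{ES_sI2}.

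For \eqref{ES_sI3}, which is the main obstacle, I would again use the factorization above but now rewrite the front factor so that the quantity $\Xi_X(\bm x)=\exp\bigl(\sum_{j_1<j_2} x_{j_1}x_{j_2}\tau_{j_1,j_2}(X)\bigr)\Psi(\bm x)$ appears explicitly. Writing $z_j = x_j + iu_j$, the identity $z_{j_1}z_{j_2} = x_{j_1}x_{j_2} + O(|u_{j_1}|\|\bm x\| + |u_{j_2}|\|\bm x\| + \|\bm u\|^2)$ turns the front factor into $\exp\bigl(\tfrac{1}{2}\sum_j z_j^2\sigma_{F_j}(X)^2\bigr)\exp\bigl(\sum_{j_1<j_2}x_{j_1}x_{j_2}\tau_{j_1,j_2}(X)\bigr)$ times $\prod_j(1+O_{\bm F}(|u_j|))$, and the factor $\Psi(\bm z)$ is replaced by $\Psi(\bm x)$ at the cost of the multiplicative error $\prod_j\bigl(1+O_{\bm F}(|u_j|\exp(D_1\|\bm x\|^{(2-2\vartheta_{\bm F})/(1-2\vartheta_{\bm F})}))\bigr)$ from \eqref{Psi_F_2}. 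The most delicate step is the tail $\prod_{p>X}\exp(K/4)/M_{p,\frac{1}{2}}$: rather than applying \eqref{ES_sI_PP_p2} as in \eqref{ES_sI1}, one must expand $\log(M_{p,\frac{1}{2}}/\exp(K/4))$ in \eqref{ES_sI_PP_p1} one order further and use \eqref{ES_sI_PP_p3}, \eqref{ES_sI_PP_p4} more carefully so that, in the range $X\geq C\|\bm x\|^{2/(1-2\vartheta_{\bm F})}$, the tail error takes the stated form $\sum_j O_{\bm F}(|z_j|^4/\log X)$ (here the factor $1/\log X$ is what forces the extra order in the expansion, since the naive bound $\|\bm z\|^3/X^{1/2-\vartheta_{\bm F}}$ from \eqref{ES_sI_PP_p2} is too weak). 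Collecting all the multiplicative factors and identifying $\Xi_X(\bm x)$ produces \eqref{ES_sI3}.
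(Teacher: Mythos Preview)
Your treatment of \eqref{ES_sI1} and \eqref{ES_sI2} is essentially identical to the paper's: the same factorization through $\Psi$, the appeal to \eqref{Psi_F_1} and \eqref{ES_sI_PP_p2} for the first part, and the direct Taylor expansion of $\log M_{p,\frac12}$ via Lemma \ref{EQTM} together with \eqref{ES_sI_PP_p3} for the second. For \eqref{ES_sI3} your overall scheme --- the same factorization, passing from $\Psi(\bm z)$ to $\Psi(\bm x)$ via \eqref{Psi_F_2}, and splitting $z_{j_1}z_{j_2}\tau_{j_1,j_2}(X)$ into $x_{j_1}x_{j_2}\tau_{j_1,j_2}(X)+O_{\bm F}(|u_{j_1}|+|u_{j_2}|)$ using \eqref{BDtau} --- is also exactly what the paper does.

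The one place where you diverge is the tail $\prod_{p>X}\exp(K/4)/M_{p,\frac12}$, and there your reasoning has a gap. You assert that the bound $\|\bm z\|^3/X^{1/2-\vartheta_{\bm F}}$ from \eqref{ES_sI_PP_p2} is ``too weak'' and that expanding \eqref{ES_sI_PP_p1} one order further will produce the stated $|z_j|^4/\log X$. It will not: the extra order contributes a quartic term of size $\|\bm z\|^4/X^{1-2\vartheta_{\bm F}}$ (via \eqref{ES_sI_PP_p4} with $n=2$), but the cubic term $\|\bm z\|^3/X^{1/2-\vartheta_{\bm F}}$ from \eqref{ES_sI_PP_p3} remains and dominates. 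The paper does not attempt any such refinement; it simply applies \eqref{ES_sI_PP_p2} and records the tail as $\prod_j\bigl(1+O_{\bm F}(|z_j|^3/X^{1/2-\vartheta_{\bm F}})\bigr)$. The form $|z_j|^4/\log X$ written in the lemma statement is a mild imprecision (it is not literally implied by the cubic bound when $|z_j|$ is small), but this is harmless: in the only application, the proof of Lemma \ref{FMnu}, either form of the tail error is absorbed by the $\exp(C\|\bm x\|^{(2-2\vartheta_{\bm F})/(1-2\vartheta_{\bm F})})/\sqrt{\log\log X}$ contribution to $E_1$. So the tail step is in fact the most routine part of \eqref{ES_sI3}, not the most delicate.
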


\begin{proof}
	First, we prove \eqref{ES_sI1}.
	It holds that
	\begin{align} \label{ES_sI_PP1}
		 & \prod_{p \leq X}M_{p, \frac{1}{2}}(\bm{z})
		= \Psi(\bm{z})\prod_{p \leq X}\exp\l( K_{\bm{F}, \bm{\theta}}(p, \bm{z}) / 4 \r)
		\times \prod_{p > X}\frac{\exp\l( K_{\bm{F}, \bm{\theta}}(p, \bm{z}) / 4 \r)}{M_{p, \frac{1}{2}}(\bm{z})}.
	\end{align}
	Using \eqref{ES_sI_PP_p2}, we have
	\begin{align} \label{ES_sI_PP2}
		\prod_{p > X}\frac{\exp\l( K_{\bm{F}, \bm{\theta}}(p, \bm{z}) / 4 \r)}{M_{p, \frac{1}{2}}(\bm{z})}
		 & = \prod_{j = 1}^{r}\exp\l( O_{\bm{F}}\l( \frac{|z_{j}|^3}{X^{\frac{1}{2} - \vartheta_{\bm{F}}}} \r) \r) \\
		 & = \prod_{j = 1}^{r}\exp\l( O_{\bm{F}}\l( |z_{j}|^2 \r) \r)
	\end{align}
	when $X \geq C \| \bm{z} \|^{\frac{2}{1 - 2\vartheta_{\bm{F}}}}$ with $C$ a suitably large constant.
	Also, as in the proof of \eqref{p_Psi_F_6}, we find that
	\begin{align} \label{ES_sI_PP3}
		\prod_{p \leq X}\exp\l( K_{\bm{F}, \bm{\theta}}(p, \bm{z}) / 4 \r)
		= \prod_{j = 1}^{r}\exp\l( \frac{z_{j}^2}{2}\s_{F_{j}}(X)^{2} + O_{\bm{F}}\l( |z_{j}|^{2} \r) \r).
	\end{align}
	Combing the above two estimates and inequality \eqref{Psi_F_1}, we have estimate \eqref{ES_sI1}.

	Next, we prove \eqref{ES_sI2}.
	When $\| \bm{z} \| \leq b_{2}$ with $b_{2} = b_{2}(\bm{F})$ sufficiently small, it holds from Lemma \ref{EQTM} that
	\begin{align*}
		|M_{p, \frac{1}{2}}(\bm{z}) - 1|
		\leq \frac{1}{2},
	\end{align*}
	and that
	\begin{align*}
		M_{p, \frac{1}{2}}(\bm{z})
		=1 + \frac{1}{4}K_{\bm{F}, \bm{\theta}}(p, \bm{z})
		+ O_{\bm{F}}\l( \|\bm{z}\|^3 \l(\sum_{\ell = 1}^{\infty}\frac{\| b_{\bm{F}}(p^{\ell}) \|}{p^{\ell/2}}\r)^{3} \r)
	\end{align*}
	with $b_{\bm{F}}(p^{\ell}) = (b_{F_{1}}(p^{\ell}), \dots, b_{F_{r}}(p^{\ell}))$.
	Using these and estimate \eqref{ES_sI_PP_p3},
	we obtain
	\begin{align*}
		\sum_{p \leq X}\log{M_{p, \frac{1}{2}}(\bm{z})}
		 & = \sum_{p \leq X}\l(\frac{1}{4}K_{\bm{F}, \bm{\theta}}(p, \bm{z})
		+ O_{\bm{F}}\l( \| \bm{z} \|^{3} \l(\sum_{\ell = 1}^{\infty}\frac{\| b_{\bm{F}}(p^{\ell}) \|}{p^{\ell/2}}\r)^{3} \r) \r) \\
		 & = \frac{1}{4}\sum_{p \leq X}K_{\bm{F}, \bm{\theta}}(p, \bm{z})
		+ O_{\bm{F}}\l(\norm{\bm{z}}^{3}\r).
	\end{align*}
	Similarly to \eqref{ES_sI_PP_p6}, we also have
	\begin{align}
		\label{EQKF2}
		 & \frac{1}{4}\sum_{p \leq X}K_{\bm{F}, \bm{\theta}}(p, \bm{z}) 
		= \sum_{j=1}^{r}\frac{z_{j}^2}{2}\s_{F_{j}}(X)^2
		+ O_{\bm{F}}\l( \| \bm{z} \|^2 \r).
	\end{align}
	Hence, it holds that
	\begin{align*}
		\sum_{p \leq X}\log{M_{p, \frac{1}{2}}(\bm{z})}
		 & = \sum_{j = 1}^{r}\frac{z_{j}^2}{2}\s_{F_{j}}(X)^2
		+ O_{\bm{F}}\l( \|\bm{z}\|^2 \r)                                                   \\
		 & = \sum_{j = 1}^{r}\frac{z_{j}^2}{2}\l(\s_{F_{j}}(X)^2 + O_{\bm{F}}\l( 1 \r)\r),
	\end{align*}
	which completes the proof of \eqref{ES_sI2}.

	Finally, we prove \eqref{ES_sI3}.
	It follows from equations \eqref{ES_sI_PP1} and \eqref{ES_sI_PP2} that
	\begin{align*}
		\prod_{p \leq X}M_{p, \frac{1}{2}}(\bm{z})
		= \Psi(\bm{z})\exp\l( \frac{1}{4}\sum_{p \leq X}K_{\bm{F}, \bm{\theta}}(p, \bm{z}) \r)
		\prod_{j = 1}^{r}\l( 1 + O_{\bm{F}}\l( \frac{| z_{j} |^3}{X^{\frac{1}{2} - \vartheta_{\bm{F}}}} \r) \r).
	\end{align*}
	for $X \geq C \|\bm x\|^{\frac{2}{1 - 2\vartheta_{\bm{F}}}} + 3$ with $C = C(\bm{F})$ sufficiently large.
	Moreover, by equation \eqref{ES_sI_PP_p6}, it holds that
	\begin{align}
		\exp\l( \frac{1}{4}\sum_{p \leq X}K_{\bm{F}, \bm{\theta}}(p, \bm{z}) \r)
		= \exp\l( \sum_{j = 1}^{r}\frac{z_{j}^2}{2}\s_{F_{j}}(X)
		+ \sum_{1 \leq j_{1} < j_{2} \leq r}z_{j_{1}} z_{j_{2}} \tau_{j_{1}, j_{2}}(X) \r).
	\end{align}
	In particular, from \eqref{BDtau}, the estimate $\tau_{{l_{1}}, {l_{2}}}(X) \ll_{\bm{F}} 1$ holds
	for all $1 \leq j_{1} < j_{2} \leq r$ by assumptions (S4), (S5), (A1), and (A2),
	and so the above is equal to
	\begin{align}
		\exp\l(\sum_{1 \leq l_{1} < l_{2} \leq r}x_{l_{1}} x_{l_{2}} \tau_{l_{1}, l_{2}}(X) \r)
		\prod_{j = 1}^{r}(1 + O_{\bm{F}}( |u_{j}| \cdot \| \bm{x} \| + u_{j}^2 ))\exp\l( \frac{z_{j}^2}{2}\s_{F_{j}}(X) \r).
	\end{align}
	Additionally, we have
	$
		\Psi(\bm{z}) = \Psi(\bm{x})\prod_{j = 1}^{r}\l(1 + O_{\bm{F}}\l(|u_{j}|
		\exp\l( D_{1} \| \bm{x} \|^{(2-2\vartheta_{\bm{F}}) / (1 - 2\vartheta_{\bm{F}})} \r) \r)\r)
	$
	by \eqref{Psi_F_2}.
	From the above estimates and the definition of $\Xi_{X}$ \eqref{def_Xi}, we also obtain formula \eqref{ES_sI3}.
	Thus, we complete the proof of this lemma.
\end{proof}

\subsection{Proofs of Propositions \ref{Main_Prop_JVD} and \ref{Main_Prop_JVD3}} \label{Proof_Props_JVD}

In this section, we prove Propositions \ref{Main_Prop_JVD} and Proposition \ref{Main_Prop_JVD3}.
Proposition \ref{Main_Prop_JVD3} is a generalization of Radziwi\l\l's result\footnote{That there are some incomplete arguments 
when truncating the integral in his proof, but we can fix his proof by a nontrivial modification.} \cite[Proposition 2]{Ra2011}.
Radziwi\l\l \ adopted Tenenbaum's formula \cite{Te1988} which relates the measure of the large values of the Dirichlet polynomial 
to a line integral. 
If we follow the same approach, we will need to assume Ramanujan's conjecture. 
The reason is that
we have to use formula \eqref{RKLJVDPP1} in a larger range of $\bm{z}$ when truncating the integral and
the estimate \eqref{ES_sI1} in Lemma \ref{ES_sI} is not enough when $\vartheta_{\bm{F}} > 0$.
We avoid these obstacles by probabilistic methods in large deviation theory (see Cramér \cite{Cramer}, Hwang \cite{HH1996})). 
We consider the associated distribution $\nu_{T, \bm{F}, \bm{x}}$ (see below)
and use the Selberg-Beurling functions to approximate regions in $\mathbb R^r$.
The associated distribution $\nu_{T, \bm{F}, \bm{x}}$ plays the role of localizing the measure by changing $\bm{x}$.
Eventually, we choose a suitable $\bm{x}$ in the proof of Propositions \ref{Main_Prop_JVD}, \ref{Main_Prop_JVD3}, 
and the $\bm{x}$ is the saddle point of the integrand in Lemma \ref{SPKLH}.
The Selberg-Beurling formula has the advantage that it only involves finite integrals. 

We introduce some notation.
Define the $\RR^{r}$-valued function $\bm{F}_{\bm{\theta}, X}(t)$ by
\begin{align*}
	\bm{F}_{\bm{\theta}, X}(t)
	= (\Re e^{-i\theta_{1}} P_{F_{1}}(\tfrac{1}{2}+it, X), \dots, \Re e^{-i\theta_{r}} P_{F_{r}}(\tfrac{1}{2}+it, X)),
\end{align*}
and $\mu_{T, \bm{F}}$ the measure on $\RR^{r}$ by
$
	\mu_{T, \bm{F}}(B)
	:= \frac{1}{T}\meas(\bm{F}_{\bm{\theta}, X}^{-1}(B) \cap \mathcal{A})
$
for $B \in \mathcal{B}(\RR^{r})$.
Put $y_{j} = V_{j} \s_{F_{j}}(X)$.
Then we find that
\begin{align}
	\label{RTmuS}
	 & \frac{1}{T}\meas(\S_{X}(T, \bm{V}; \bm{F}, \bm{\theta}))                \\
	 & = \mu_{T, \bm{F}}((y_{1}, \infty) \times \cdots \times (y_{r}, \infty))
	+ O_{\bm{F}}\l(\exp\l( -b_{0}(\log{\log{X}})^{4(r + 1)} \r)\r)
\end{align}
since $\meas([T, 2T] \setminus \mathcal{A}) \ll_{\bm{F}} T\exp( -b_{0}(\log{\log{X}})^{4(r + 1)} )$ by Lemma \ref{ESAEG_JVDPP}.
For $\bm{x} = (x_{1}, \dots, x_{r}) \in \RR^{r}$, set
\begin{align*}
	\nu_{T, \bm{F}, \bm{x}}(B)
	:= \int_{B}e^{x_{1} \xi_{1} + \cdots + x_{r} \xi_{r}}d\mu_{T, \bm{F}}(\bm{\xi})
\end{align*}
for $B \in \mathcal{B}(\RR^{r})$.
Note that $\nu_{T, \bm{F}, \bm{x}}$ is a measure on $\RR^{r}$, and has a finite value
for every $B \in \mathcal{B}(\RR^{r})$, $\bm{x} \in \RR^{r}$, $X \geq 3$
in the sense
\begin{align*}
	\nu_{T, \bm{F}, \bm{x}}(B)
	\leq \nu_{T, \bm{F}, \bm{x}}(\RR^{r})
	= \frac{1}{T}\int_{\mathcal{A}}\exp\l( \sum_{j = 1}^{r}x_{j}\Re e^{-i\theta_{j}}P_{F_{j}}(\tfrac{1}{2}+it, X) \r)dt
	< +\infty.
\end{align*}
This is an analogue of the associated distribution function of $\tilde{M}_{n}(w)$ in Hwang \cite[page 300]{HH1996}.
Under the above notation, we state and prove three lemmas.

\begin{lemma}	\label{SPKLH}
	For $x_{1}, \dots, x_{r} > 0$, we have
	\begin{align*}
		 & \mu_{T, \bm{F}}((y_{1}, \infty) \times \cdots \times (y_{r}, \infty))                               \\
		 & = \int_{x_{r} y_{r}}^{\infty} \cdots \int_{x_{1} y_{1}}^{\infty}e^{-(\tau_{1} + \cdots + \tau_{r})}
		\nu_{T, \bm{F}, \bm{x}}((y_{1}, \tau_{1}/x_{1}) \times \cdots \times (y_{r}, \tau_{r}/x_{r}))d\tau_{1} \cdots d\tau_{r}.
	\end{align*}
\end{lemma}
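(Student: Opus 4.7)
The plan is to establish this identity via a straightforward application of Tonelli's theorem, since the lemma is essentially a Cramér-type tilting identity linking the distribution $\mu_{T,\bm{F}}$ to its exponentially tilted companion $\nu_{T,\bm{F},\bm{x}}$.

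First I would substitute the definition
\begin{align*}
\nu_{T,\bm{F},\bm{x}}\bigl((y_{1},\tau_{1}/x_{1})\times\cdots\times(y_{r},\tau_{r}/x_{r})\bigr)
=\int_{\prod_{j}(y_{j},\tau_{j}/x_{j})} e^{x_{1}\xi_{1}+\cdots+x_{r}\xi_{r}}\,d\mu_{T,\bm{F}}(\bm{\xi})
\end{align*}
into the right-hand side of the claimed identity. Since the integrand $e^{-(\tau_{1}+\cdots+\tau_{r})}e^{x_{1}\xi_{1}+\cdots+x_{r}\xi_{r}}$ is non-negative and the underlying measure $\mu_{T,\bm{F}}$ is a finite positive measure, Tonelli's theorem allows me to interchange the $\tau$-integration and the $\bm{\xi}$-integration without any convergence issues.

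After swapping, for each fixed $\bm{\xi}=(\xi_{1},\dots,\xi_{r})$ the simultaneous constraints $\tau_{j}>x_{j}y_{j}$ and $\xi_{j}<\tau_{j}/x_{j}$ translate (because $x_{j}>0$) into $\tau_{j}>\max\{x_{j}y_{j},x_{j}\xi_{j}\}$, together with the automatic condition $\xi_{j}>y_{j}$ inherited from the inner region. Since $\xi_{j}>y_{j}$, the maximum is just $x_{j}\xi_{j}$, so the $\tau$-integral factorizes as
\begin{align*}
\prod_{j=1}^{r}\int_{x_{j}\xi_{j}}^{\infty} e^{-\tau_{j}}\,d\tau_{j}
=\prod_{j=1}^{r} e^{-x_{j}\xi_{j}}.
\end{align*}

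Plugging this back, the exponential tilt $e^{x_{1}\xi_{1}+\cdots+x_{r}\xi_{r}}$ cancels against $e^{-(x_{1}\xi_{1}+\cdots+x_{r}\xi_{r}})$, leaving
\begin{align*}
\int_{y_{1}}^{\infty}\cdots\int_{y_{r}}^{\infty} d\mu_{T,\bm{F}}(\bm{\xi})
=\mu_{T,\bm{F}}\bigl((y_{1},\infty)\times\cdots\times(y_{r},\infty)\bigr),
\end{align*}
which is exactly the left-hand side. There is no serious obstacle here; the only point requiring care is the observation that the joint constraint on $\tau_{j}$ collapses to $\tau_{j}>x_{j}\xi_{j}$ thanks to the range $\xi_{j}>y_{j}$, and that positivity of $x_{j}$ is used to rearrange the inequality $\xi_{j}<\tau_{j}/x_{j}$.
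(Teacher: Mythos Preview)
Your proof is correct and is essentially the same Fubini/Tonelli argument as the paper's, just run in the opposite direction: the paper starts from the left-hand side, writes $\mu_{T,\bm{F}}(B)=\int_{B}e^{-(x_{1}v_{1}+\cdots+x_{r}v_{r})}\,d\nu_{T,\bm{F},\bm{x}}(\bm{v})$, expresses each factor $e^{-x_{j}v_{j}}$ as $\int_{x_{j}v_{j}}^{\infty}e^{-\tau_{j}}\,d\tau_{j}$, and then swaps the order of integration, whereas you start from the right-hand side and undo the same swap. The content is identical.
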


\begin{proof}
	For every $B \in \mathcal{B}(\RR^{r})$, it holds that
	\begin{align}
		\mu_{T, \bm{F}}(B)
		 & = \int_{B}e^{-(x_{1}v_{1} + \cdots + x_{r}v_{r})}d\nu_{T, \bm{F}, \bm{x}}(\bm{v}).
	\end{align}
	By Fubini's theorem, we find that
	\begin{align*}
		 & \int_{(y_{1}, \infty) \times \cdots \times (y_{r}, \infty)}e^{-(x_{1}v_{1} + \cdots + x_{r}v_{r})}
		d\nu_{T, \bm{F}, \bm{x}}(\bm{v})                                                                       \\
		 & = \int_{(y_{1}, \infty) \times \cdots \times (y_{r}, \infty)}
		\l(\int_{x_{r} v_{r}}^{\infty} \cdots \int_{x_{1} v_{1}}^{\infty}
		e^{-(\tau_{1} + \cdots + \tau_{r})}d\tau_{1} \cdots d\tau_{r}\r)
		d\nu_{T, \bm{F}, \bm{x}}(\bm{v})                                                                       \\
		 & = \int_{x_{r} y_{r}}^{\infty} \cdots \int_{x_{1} y_{1}}^{\infty}e^{-(\tau_{1} + \cdots + \tau_{r})}
		\l(\int_{(y_{1}, \tau_{1}/x_{1}) \times \cdots \times (y_{r}, \tau_{r}/x_{r})}1d\nu_{T, \bm{F}, \bm{x}}(\bm{v})\r)
		d\tau_{1} \cdots d\tau_{r}                                                                             \\
		 & = \int_{x_{r} y_{r}}^{\infty} \cdots \int_{x_{1} y_{1}}^{\infty}e^{-(\tau_{1} + \cdots + \tau_{r})}
		\nu_{T, \bm{F}, \bm{x}}((y_{1}, \tau_{1}/x_{1}) \times \cdots \times (y_{r}, \tau_{r}/x_{r}))d\tau_{1} \cdots d\tau_{r}.
	\end{align*}
\end{proof}

The next lemma is a generalization of \cite[Lemma 6.2]{LLR2019} in multidimensions.
Define
\begin{gather}
	G(u) = \frac{2u}{\pi} + \frac{2 (1 - u)u}{\tan{\pi u}}, \ \
	f_{c, d}(u) = \frac{e^{-2\pi i c u} - e^{-2 \pi i d u}}{2}.
\end{gather}
For a set $A$, we denote the indicator function of $A$ by $\bm{1}_{A}$.

\begin{lemma}	\label{Multi_BSF}
	Let $L$ be a positive number.
	Let $c_{1}, \dots, c_{r}, d_{1}, \dots, d_{r}$ be real numbers with $c_{j} < d_{j}$.
	Put $\mathscr{R} = (c_{1}, d_{1}) \times \cdots \times (c_{r}, d_{r}) \subset \RR^{r}$.
	For any $\bm{\xi} = (\xi_{1}, \dots, \xi_{r}) \in \RR^{r}$, we have
	\begin{align*}
		\bm{1}_{\mathscr{R}}(\bm{\xi})
		 & = W_{L, \mathscr{R}}(\bm{\xi})
		+ O_{r}\l( \sum_{j = 1}^{r}\l\{ \l( \frac{\sin(\pi L(\xi_{j} - c_{j}))}{\pi L(\xi_{j} - c_{j})} \r)^2
		+ \l( \frac{\sin(\pi L(\xi_{j} - d_{j}))}{\pi L(\xi_{j} - d_{j})} \r)^2 \r\} \r),
	\end{align*}
	where $W_{L, \mathscr{R}}(\bm{\xi})$ is defined as if $r$ is even,
	\begin{align*}
		\frac{i^{r}}{2^{r-1}}\sum_{j = 1}^{r}(-1)^{j-1}
		\Re \prod_{h = 1}^{r}\int_{0}^{L}G\l( \frac{u}{L} \r)
		e^{2\pi i \e_{j}(h) u \xi_{h}}f_{c_{h}, d_{h}}(\e_{j}(h)u)\frac{du}{u},
	\end{align*}
	if $r$ is odd,
	\begin{align*}
		\frac{i^{r+1}}{2^{r-1}}\sum_{j = 1}^{r}(-1)^{j-1}
		\Im \prod_{h = 1}^{r}\int_{0}^{L}G\l( \frac{u}{L} \r)
		e^{2\pi i \e_{j}(h) u \xi_{h}}f_{c_{h}, d_{h}}(\e_{j}(h)u)\frac{du}{u}.
	\end{align*}
	Here, $\e_{j}(h) = 1$ if $1 \leq h \leq j-1$, and $\e_{j}(h) = -1$ otherwise.
\end{lemma}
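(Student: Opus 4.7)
The natural approach is to reduce the $r$-dimensional statement to the one-dimensional Beurling--Selberg-type approximation via the product structure $\mathbf{1}_{\mathscr{R}}(\bm\xi) = \prod_{h=1}^{r}\mathbf{1}_{(c_{h}, d_{h})}(\xi_{h})$, and then reorganize the main term into the closed form in the statement. The one-dimensional base case is essentially the content of \cite[Lemma 6.2]{LLR2019}: one has
\begin{align*}
	\mathbf{1}_{(c, d)}(\xi) = W^{(1)}(\xi; c, d) + E_{1}(\xi; c, d), \quad |E_{1}(\xi; c, d)| \ll \Bigl(\tfrac{\sin \pi L(\xi-c)}{\pi L(\xi-c)}\Bigr)^{2} + \Bigl(\tfrac{\sin \pi L(\xi-d)}{\pi L(\xi-d)}\Bigr)^{2},
\end{align*}
where $W^{(1)}$ is the $r = 1$ specialization of $W_{L, \mathscr{R}}$. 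This is obtained by pairing $\widehat{\mathbf{1}_{(c,d)}}(u) = f_{c,d}(u)/(\pi i u)$ against a Beurling mollifier supported in $[-L, L]$ built from $G$, splitting $\int_{-L}^{L}$ into $\int_{-L}^{0} + \int_{0}^{L}$, and using $\overline{f_{c,d}(u)} = f_{c,d}(-u)$ together with the reality of $G$ on $\RR$ to combine the two halves into a single integral over $[0, L]$.

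Writing $W_{h} := W^{(1)}(\xi_{h}; c_{h}, d_{h})$ and $E_{h} := E_{1}(\xi_{h}; c_{h}, d_{h})$, the product expansion gives
\begin{align*}
	\mathbf{1}_{\mathscr{R}}(\bm\xi) - \prod_{h=1}^{r} W_{h}
	= \sum_{\emptyset \neq S \subseteq \{1,\ldots,r\}}\prod_{h\in S} E_{h} \prod_{h\notin S} W_{h}.
\end{align*}
Since $|\mathbf{1}_{(c_{h},d_{h})}| \leq 1$ forces $|W_{h}| \leq 1 + |E_{h}|$, and each $|E_{h}|$ is itself pointwise bounded by a constant via the Fej\'er-type squares, every term with $S \neq \emptyset$ is bounded by an $O_{r}(1)$ multiple of a single $|E_{j}|$ for some $j \in S$. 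Summing over $S$ produces exactly the claimed total error $\ll_{r} \sum_{h=1}^{r} \bigl\{\bigl(\tfrac{\sin \pi L(\xi_{h} - c_{h})}{\pi L(\xi_{h} - c_{h})}\bigr)^{2} + \bigl(\tfrac{\sin \pi L(\xi_{h} - d_{h})}{\pi L(\xi_{h} - d_{h})}\bigr)^{2}\bigr\}$.

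The remaining and main task is to identify $\prod_{h=1}^{r} W_{h}$ with the explicit sum $W_{L, \mathscr{R}}(\bm\xi)$ in the statement. Setting $J^{h}_{\pm} := \int_{0}^{L} G(u/L)\, e^{\pm 2\pi i u \xi_{h}} f_{c_{h}, d_{h}}(\pm u)\, du/u$, one has $J^{h}_{-} = \overline{J^{h}_{+}}$ by the same conjugation identity, and $W_{h}$ is (up to a sign/normalization) $\Im J^{h}_{+}$. Expanding $\prod_{h=1}^r W_{h}$ produces a sum over $2^{r}$ sign patterns $\bm\e \in \{\pm 1\}^{r}$ of monomials $\prod_{h} J^{h}_{\e_{h}}$; pairing $\bm\e$ with $-\bm\e$ via $\overline{J^{h}_{+}} = J^{h}_{-}$, and repeatedly applying the identities $\Re z = \tfrac{1}{2}(z + \bar z)$ and $\Im z = \tfrac{1}{2i}(z - \bar z)$, one collapses the expansion into the $r$-term sum indexed by the staircase patterns $\e_{j}(h)$ of the statement, with alternating signs $(-1)^{j-1}$ and the prefactor $i^{r}/2^{r-1}$ (respectively $i^{r+1}/2^{r-1}$), the parity of $r$ dictating whether $\Re$ or $\Im$ is taken outside. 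I expect this combinatorial collapse from $2^{r-1}$ independent real monomials down to only $r$ to be the main obstacle; the cleanest route is induction on $r$, at each step multiplying the $(r{-}1)$-dimensional $W_{L, \mathscr{R}'}$ (for $\mathscr{R}' = \prod_{h<r}(c_{h}, d_{h})$) by $W^{(1)}(\xi_{r}; c_{r}, d_{r})$ and invoking a product identity such as $2\,\Im(A)\,\Im(B) = \Re(A\bar B) - \Re(AB)$ (and its $\Im$-counterpart) to extend the alternating-$j$ sum by one level while keeping the prefactor of the correct shape.
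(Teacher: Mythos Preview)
Your approach is exactly the paper's: apply the one-dimensional formula \cite[eq.~(6.1)]{LLR2019} coordinate-wise, multiply, control the cross-error using $\Im\int_0^L(\cdots)\ll 1$, and then rewrite $\prod_{h=1}^r\Im w_h$ (with $w_h=\int_0^L G(u/L)e^{2\pi iu\xi_h}f_{c_h,d_h}(u)\,du/u$) algebraically as a linear combination of $\Re$ or $\Im$ of products $w_1^{\pm}\cdots w_r^{\pm}$.

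Your caution about the ``collapse from $2^{r-1}$ down to $r$'' is well placed: that collapse is in fact \emph{false} for $r\ge 3$, so neither your proposed induction nor any other device will produce it. Expanding $\prod_h\Im w_h=(2i)^{-r}\prod_h(w_h-\bar w_h)$ and pairing each sign pattern with its complement yields $2^{r-1}$ genuinely independent real terms, one per subset $S\subset\{1,\dots,r\}$ modulo complementation; the formula for $W_{L,\mathscr{R}}$ in the lemma retains only the $r$ ``staircase'' subsets $\{j,\dots,r\}$. A direct check at $r=3$ with $w_1=w_2=w_3=i$ gives $\prod_h\Im w_h=1$, whereas the stated three-term expression evaluates to $3/4$. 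The paper asserts the identity for $\prod_h\Im w_h$ without proof, and it is incorrect for $r\ge 3$; so the gap is shared by your proposal and the paper.

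The repair is cosmetic: redefine $W_{L,\mathscr{R}}$ as the full $2^{r-1}$-term sum over all sign patterns modulo complementation (with the signs $(-1)^{|S|}$ coming from the expansion). Your error analysis is unaffected, and every subsequent use of the lemma treats $W_{L,\mathscr{R}}$ only as a finite linear combination of products $\prod_h\int_0^L G(u/L)e^{\pm 2\pi iu\xi_h}f_{c_h,d_h}(\pm u)\,du/u$, so the downstream arguments go through verbatim with $2^{r-1}$ summands in place of $r$.
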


\begin{proof}
	We use the following formula (cf.  \cite[equation (6.1)]{LLR2019})
	\begin{align*}
		\bm{1}_{(c_{h}, d_{h})}(\xi_{h})
		 & = \Im \int_{0}^{L}G\l( \frac{u}{L} \r)e^{2\pi i u \xi_{h}}f_{c_{h}, d_{h}}(u)\frac{du}{u} \\
		 & \quad + O\l( \l( \frac{\sin(\pi L(\xi_{h} - c_{h}))}{\pi L(\xi_{h} - c_{h})} \r)^2
		+ \l( \frac{\sin(\pi L(\xi_{h} - d_{h}))}{\pi L(\xi_{h} - d_{h})} \r)^2 \r),
	\end{align*}
	which  leads to
	the estimate $\Im \int_{0}^{L}G\l( \frac{u}{L} \r)e^{2\pi i u \xi_{j}}f_{c_{j}, d_{j}}(u)\frac{du}{u} \ll 1$.
	Therefore, we obtain
	\begin{align}
		\label{Multi_BSF_1}
		\bm{1}_{\mathscr{R}}(\bm{\xi})
		 & = \prod_{h = 1}^{r}\Im \int_{0}^{L}G\l( \frac{u}{L} \r)
		e^{2\pi i u \xi_{h}}f_{c_{h}, d_{h}}(u) \frac{du}{u}                                                           \\
		 & \quad + O_{r}\l( \sum_{j = 1}^{r}\l\{ \l( \frac{\sin(\pi L(\xi_{j} - c_{j}))}{\pi L(\xi_{j} - c_{j})} \r)^2
		+ \l( \frac{\sin(\pi L(\xi_{j} - d_{j}))}{\pi L(\xi_{j} - d_{j})} \r)^2 \r\} \r).
	\end{align}
	For any complex numbers $w_{1}, \dots, w_{r}$, we observe that
	\begin{align*}
		\Im(w_{1}) \cdots \Im(w_{r})
		= \frac{i^{r}}{2^{r}}\sum_{j = 1}^{r}(-1)^{j - 1}
		\l(w_{1} \cdots w_{j - 1}\ol{w_{j} \cdots w_{r}} + (-1)^{r} \ol{w_{1} \cdots \ol{w_{j} \cdots w_{r}}}\r).
	\end{align*}
	In particular, if $r$ is even, then
	\begin{align*}
		\Im(w_{1}) \cdots \Im(w_{r})
		= \frac{i^{r}}{2^{r-1}}\Re\sum_{j = 1}^{r}(-1)^{j - 1}w_{1} \cdots w_{j - 1}\ol{w_{j} \cdots w_{r}},
	\end{align*}
	and if $r$ is odd, then
	\begin{align*}
		\Im(w_{1}) \cdots \Im(w_{r})
		= \frac{i^{r+1}}{2^{r-1}}\Im\sum_{j = 1}^{r}(-1)^{j - 1}w_{1} \cdots w_{j - 1}\ol{w_{j} \cdots w_{r}}.
	\end{align*}
	Substituting these to \eqref{Multi_BSF_1}, we obtain Lemma \ref{Multi_BSF}.
\end{proof}

\begin{lemma}	\label{FMnu}
	Assume that $\bm{F}$, $\bm{\theta}$ satisfy (S4), (A1), and (A2).
	Let $c_{1}, \dots, c_{r}, d_{1}, \dots, d_{r}$ be real numbers with $c_{j} < d_{j}$.
	Put $\mathscr{R} = (c_{1}, d_{1}) \times \cdots \times (c_{r}, d_{r})$.
	Let $T$, $X$ be large numbers depending on $\bm{F}$ and satisfying $X^{(\log{\log{X}})^{4(r + 1)}} \leq T$.
	Then for any $\bm{x} = (x_{1}, \dots, x_{r}) \in \RR^{r}$ satisfying $\|\bm{x}\| \leq (\log{\log{X}})^{2r}$, we have
	\begin{align} \label{FMnu1}
		\nu_{T, \bm{F}, \bm{x}}(\mathscr{R})
		 & = \Xi_{X}(\bm{x})\l( \prod_{h = 1}^{r}e^{\frac{x_{h}^2}{2}\s_{F_{h}}(X)^2} \r)
		\times \l\{\prod_{j = 1}^{r}\int_{x_{j} \s_{F_{j}}(X) - \frac{d_{j}}{\s_{F_{j}}(X)}}^{
			x_{j}\s_{F_{j}}(X) - \frac{c_{j}}{\s_{F_{j}}(X)}}e^{-v^2/2}\frac{dv}{\sqrt{2\pi}} + E_{1}\r\},
	\end{align}
	where the error term $E_{1}$ satisfies
	\begin{align*}
		 & E_{1}
		\ll_{\bm{F}}
		\frac{\exp\l( C \| \bm{x} \|^{\frac{2 - 2\vartheta_{\bm{F}}}{1 - 2\vartheta_{\bm{F}}}} \r)}{(\log{\log{X}})^{\a_{\bm{F}} + \frac{1}{2}}}
		+ \frac{\exp\l( C \| \bm{x} \|^{\frac{2 - 2\vartheta_{\bm{F}}}{1 - 2\vartheta_{\bm{F}}}} \r)}{\sqrt{\log{\log{X}}}}
		\prod_{h = 1}^{r}\frac{d_{h} - c_{h}}{\s_{F_{h}}(X)}
	\end{align*}
	for some constant $C = C(\bm{F}) > 0$.
	Moreover, if $\|\bm{x}\| \leq b_{3}$ with $b_{3} = b_{3}(\bm{F}) > 0$ sufficiently small,
	we have
	\begin{align} \label{FMnu2}
		\nu_{T, \bm{F}, \bm{x}}(\mathscr{R})
		=\l( \prod_{h = 1}^{r}e^{\frac{x_{h}^2}{2}\s_{F_{h}}(X)^2} \r)
		\times \l\{\prod_{j = 1}^{r}\int_{x_{j} \s_{F_{j}}(X) - \frac{d_{j}}{\s_{F_{j}}(X)}}^{
			x_{j}\s_{F_{j}}(X) - \frac{c_{j}}{\s_{F_{j}}(X)}}e^{-v^2/2}\frac{dv}{\sqrt{2\pi}} + E_{2}\r\},
	\end{align}
	where the error term $E_{2}$ satisfies
	\begin{align*}
		 & E_{2}
		\ll_{\bm{F}} \frac{1}{(\log{\log{X}})^{\a_{\bm{F}} + \frac{1}{2}}}
		+ \sum_{k = 1}^{r}\l(\frac{x_{k}^2 (d_{k} - c_{k})}{\s_{F_{k}}(X)} + \frac{1}{\s_{F_{k}}(X)^2} \r)
		\prod_{\substack{h = 1 \\ h \not= k}}^{r}\frac{d_{h} - c_{h}}{\s_{F_{h}}(X)}.
	\end{align*}
\end{lemma}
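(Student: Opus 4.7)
The plan is to combine the Selberg--Beurling approximation from Lemma \ref{Multi_BSF} with the moment generating function estimates from Proposition \ref{RKLJVDPP} and Lemma \ref{ES_sI}, and to recognize that the leading moment generating function of $\mu_{T, \bm{F}}$ (after dividing by $\Xi_{X}(\bm{x})$) is exactly the moment generating function of a tuple of independent Gaussians with variances $\s_{F_{h}}(X)^{2}$.

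First I would apply Lemma \ref{Multi_BSF} to approximate $\bm{1}_{\mathscr{R}}(\bm{\xi})$ by $W_{L, \mathscr{R}}(\bm{\xi})$, with a parameter $L$ chosen so that $L \cdot \| \bm{x} \| \ll (\log{\log{X}})^{2r}$ (keeping us in the range where Proposition \ref{RKLJVDPP} applies) while $L \cdot \s_{F_{h}}(X)^{-1} \to \infty$ (keeping the Selberg--Beurling error small). Substituting into the definition of $\nu_{T, \bm{F}, \bm{x}}(\mathscr{R})$ and interchanging the order of integration, the main term splits into a sum over $j$ of $r$-fold integrals over $(u_{1}, \dots, u_{r}) \in (0, L)^{r}$, with the inner time integral equal to
\[
\frac{1}{T}\int_{\mathcal{A}}\exp\biggl(\sum_{h = 1}^{r} z_{h} \Re e^{-i\theta_{h}} P_{F_{h}}(\tfrac{1}{2}+it, X)\biggr)dt, \qquad z_{h} = x_{h} + 2\pi i \e_{j}(h) u_{h}.
\]
By Proposition \ref{RKLJVDPP} this equals $\prod_{p \leq X}M_{p, \frac{1}{2}}(\bm{z})$ up to an exponentially small error, and then equation \eqref{ES_sI3} of Lemma \ref{ES_sI} rewrites this as $\Xi_{X}(\bm{x}) \prod_{h}(1 + \mathrm{err}_{h}) \exp(z_{h}^{2} \s_{F_{h}}(X)^{2}/2)$.

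The crucial observation is that $\prod_{h}\exp(z_{h}^{2} \s_{F_{h}}(X)^{2}/2)$ is exactly the moment generating function of an independent Gaussian tuple $(N_{1}, \dots, N_{r})$ with $N_{h} \sim N(0, \s_{F_{h}}(X)^{2})$; hence running the preceding calculation \emph{backwards} for the Gaussian measure (reapplying Lemma \ref{Multi_BSF} to it), the $u_{h}$-integrals assemble into
\[
\prod_{h = 1}^{r}\int_{c_{h}}^{d_{h}}e^{x_{h}\xi_{h}}e^{-\xi_{h}^{2}/(2\s_{F_{h}}(X)^{2})} \frac{d\xi_{h}}{\sqrt{2\pi}\, \s_{F_{h}}(X)} = \prod_{h = 1}^{r}e^{x_{h}^{2}\s_{F_{h}}(X)^{2}/2}\int_{x_{h}\s_{F_{h}}(X) - d_{h}/\s_{F_{h}}(X)}^{x_{h}\s_{F_{h}}(X) - c_{h}/\s_{F_{h}}(X)}e^{-v^{2}/2}\frac{dv}{\sqrt{2\pi}},
\]
after completing the square and substituting $v = (x_{h}\s_{F_{h}}(X)^{2} - \xi_{h})/\s_{F_{h}}(X)$. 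This yields exactly the main term of \eqref{FMnu1}. For \eqref{FMnu2}, the same argument uses \eqref{ES_sI2} in place of \eqref{ES_sI3}; since $\| \bm{x} \|$ is small, $\Xi_{X}(\bm{x}) = 1 + O_{\bm{F}}(\| \bm{x} \|^{2})$ is absorbed into the factor $\prod_{h}(1 + O_{\bm{F}}(|z_{h}|^{2}))$, and after integrating in $u_{h}$ these $|z_{h}|^{2}$-terms match the stated form of $E_{2}$ (using that $\int u_{h}^{2} e^{-2\pi^{2}u_{h}^{2}\s_{F_{h}}(X)^{2}}|f_{c_{h}, d_{h}}(\e u_{h})/u_{h}|du_{h}$ contributes the $(d_{h} - c_{h})/\s_{F_{h}}(X)^{3}$ factors).

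The main obstacle is error bookkeeping. The Selberg--Beurling remainder requires controlling integrals of the form $\int (\sin(\pi L(\xi_{j} - c_{j}))/\pi L(\xi_{j} - c_{j}))^{2} e^{\langle \bm{x}, \bm{\xi}\rangle} d\mu_{T, \bm{F}}$; I would estimate these by Cauchy--Schwarz against the moment generating function identity already derived, using Lemma \ref{MomentLambdab} to handle the second moment of $P_{F_{j}}$ on $\mathcal{A}$, which produces the factor $(\log{\log{X}})^{-(\a_{\bm{F}} + 1/2)}$ in $E_{1}, E_{2}$ after optimizing $L$. Moreover, I must verify that the Gaussian tails cut off at $u_{h} \asymp \s_{F_{h}}(X)^{-1} \ll L$ so that extending the $u_{h}$-integrals back to $(0, \infty)$ (and replacing $G(u_{h}/L)$ by $1$) contributes negligibly; the concentration of the factor $e^{-2\pi^{2} u_{h}^{2}\s_{F_{h}}(X)^{2}}$ is what forces the precise ``Selberg--Beurling error for the ideal Gaussian'' to match the remainder produced in Step 1 and gives the cancellation of imaginary parts in the sum over $j$.
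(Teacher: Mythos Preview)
Your overall strategy---Selberg--Beurling approximation, swap to get the moment generating function $\tilde{M}_T$, apply Proposition \ref{RKLJVDPP} and \eqref{ES_sI3}, then recognize the Gaussian MGF and ``run the calculation backwards''---is exactly the paper's approach, and your main-term computation is correct.

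The genuine gap is in your handling of the Selberg--Beurling error
\[
\int_{\RR^r} \Bigl(\frac{\sin(\pi L(\xi_j - c_j))}{\pi L(\xi_j - c_j)}\Bigr)^2 e^{\langle \bm{x}, \bm{\xi}\rangle}\, d\mu_{T,\bm{F}}(\bm{\xi}).
\]
Cauchy--Schwarz here loses badly: it produces a factor $\tilde{M}_T(2\bm{x})^{1/2} \asymp \prod_h e^{x_h^2 \s_{F_h}(X)^2}$, which is too large by a factor exponential in $\|\bm{x}\|^2 \log\log X$ relative to the required $\Xi_X(\bm{x})\prod_h e^{x_h^2\s_{F_h}(X)^2/2}\cdot E_1$, and Lemma \ref{MomentLambdab} does nothing to recover this. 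The paper instead uses the Fej\'er-kernel identity
\[
\Bigl(\frac{\sin(\pi L y)}{\pi L y}\Bigr)^2 = \frac{2}{L^2}\Re\int_0^L (L-u)\, e^{2\pi i y u}\, du
\]
(equation \eqref{STTBS}), which converts the error integral directly into $\frac{2}{L^2}\Re\int_0^L (L-u) e^{-2\pi i c_j u}\, \tilde{M}_T(x_1,\dots,x_j + 2\pi i u,\dots,x_r)\,du$. This is then bounded pointwise by \eqref{ESTMT1}, yielding the precise size $(L\s_{F_j}(X))^{-1} \cdot \Xi_X(\bm{x})\prod_h e^{x_h^2\s_{F_h}(X)^2/2}$ with no loss; the same trick is reused for the Selberg--Beurling error on the Gaussian side.

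Two related points: the correct choice is $L = b_4(\log\log X)^{\a_{\bm{F}}}$, where the exponent $\a_{\bm{F}} = \min\{2r, \tfrac{1-2\vartheta_{\bm{F}}}{2\vartheta_{\bm{F}}}\}$ is forced by the requirement that the secondary term $u_j^2 L^{2\vartheta_{\bm{F}}/(1-2\vartheta_{\bm{F}})}$ in \eqref{ES_sI1} be absorbed into $u_j^2\s_{F_j}(X)^2$ for all $|u_j|\leq L$, not merely by keeping $\|\bm{z}\|$ in the range of Proposition \ref{RKLJVDPP}. And \eqref{ES_sI3} requires $\|\bm{u}\|\leq 1$, so the paper first splits $[0,L]^r = [0,1]^r \cup \bigcup_k D_k$ and discards the $D_k$ via the Gaussian decay from \eqref{ESTMT1}; your last paragraph gestures at this, but the splitting must happen \emph{before} \eqref{ES_sI3} can be invoked.
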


\begin{proof}
	We show formula \eqref{FMnu1}.
	Put $L = b_{4} (\log{\log{X}})^{\a_{\bm{F}}}$ with $b_{4} = b_{4}(\bm{F})$ a small positive constant to be chosen later.
  Recall that $\a_{\bm{F}} = \min\l\{ 2r, \frac{1 - 2\vartheta_{\bm{F}}}{2\vartheta_{\bm{F}}} \r\}$.
	It follows from Lemma \ref{Multi_BSF} that
	\begin{align}
		\label{MPnp1}
		 & \nu_{T, \bm{F}, \bm{x}}(\mathscr{R})
		= \int_{\RR^{r}}W_{L, \mathscr{R}}(\bm{\xi})e^{x_{1}\xi_{1} + \cdots + x_{r}\xi_{r}}
		d\mu_{T, \bm{F}}(\bm{\xi}) + E,
	\end{align}
	where the error term $E$ satisfies the estimate
	\begin{align*}
		E
		\ll \sum_{j = 1}^{r}\int_{\RR^{r}}
		\l\{ \l( \frac{\sin(\pi L(\xi_{j} - c_{j}))}{\pi L(\xi_{j} - c_{j})} \r)^2
		+ \l( \frac{\sin(\pi L(\xi_{j} - d_{j}))}{\pi L(\xi_{j} - d_{j})} \r)^2 \r\}
		e^{x_{1}\xi_{1} + \cdots + x_{r}\xi_{r}}
		d\mu_{T, \bm{F}}(\bm{\xi}).
	\end{align*}
	First, we estimate $E$.
	For $\bm{z} = (z_{1}, \dots, z_{r}) \in \CC^{r}$, define
	\begin{align*}
		\tilde{M}_{T}(\bm{z})
		= \int_{\RR^{r}}e^{z_{1} \xi_{1} + \cdots + z_{r} \xi_{r}}d\mu_{T, \bm{F}}(\bm{\xi}).
	\end{align*}
	Then, it holds that
	\begin{align*}
		\tilde{M}_{T}(\bm{z})
		=\frac{1}{T}\int_{\mathcal{A}}\exp\l( \sum_{j = 1}^{r}z_{j}\Re e^{-i\theta_{j}}P_{F_{j}}(\tfrac{1}{2}+it, X) \r)dt.
	\end{align*}
	Put $\bm{w} = (w_{1}, \dots, w_{r}) = (x_{1} + i u_{1}, \dots, x_{r} + i u_{r})$ with $u_{j} \in \RR$.
	When $\norm[]{(u_{1}, \dots, u_{r})} \leq L$ holds,
	we have
	\begin{align*}
		 & |\tilde{M}_{T}(\bm{w})|                                                                                                     \\
		 & \leq \bigg| \prod_{j = 1}^{r}\exp\l( \frac{(x_{j} + i u_{j})^2}{2}
		\l( \s_{F_{j}}(X)^2 - \s_{F_{j}}(|w_{j}|)^2 \r)
		+ O_{\bm{F}}\l(|x_{j} + i u_{j}|^2 + |x_{j} + i u_{j}|^{\frac{2 - 2\vartheta_{\bm{F}}}{1 - 2\vartheta_{\bm{F}}}}\r) \r) \bigg| \\
		 & \qqqquad + O_{\bm{F}}\l( \exp\l( -b_{1}(\log{\log{X}})^{4(r + 1)} \r) \r)                                                   \\
		 & \ll_{\bm{F}} \exp\l( C \| \bm{x} \|^{\frac{2 - 2\vartheta_{\bm{F}}}{1 - 2\vartheta_{\bm{F}}}} \r)
		\prod_{j = 1}^{r}\exp\l( \frac{x_{j}^2}{2}\l(\s_{F_{j}}(X)^2 - \s_{F_{j}}(|w_{j}|)^{2} \r) - \frac{u_{j}^2}{3}\s_{F_{j}}(X)^2
		+ O_{\bm{F}}\l(u_{j}^2L^{\frac{2\vartheta_{\bm{F}}}{1 - 2\vartheta_{\bm{F}}}}\r)  \r)
	\end{align*}
	by Proposition \ref{RKLJVDPP} and \eqref{ES_sI1}, where $C = C(\bm{F})$ is some positive constant.
	Additionally, by \eqref{EST_Xi}, we find that
	\begin{align*}
		\prod_{j = 1}^{r}\exp\l( -\frac{x_{j}^2}{2}\s_{F_{j}}(|w_{j}|)^{2}\r)
		\leq \prod_{j = 1}^{r}\exp\l( -\frac{x_{j}^2}{2}\s_{F_{j}}(|x_{j}|)^{2} \r)
		\ll_{\bm{F}} \Xi_{X}(\bm{x})\exp\l( C\| \bm{x} \|^{\frac{2 - 2\vartheta_{\bm{F}}}{1 - 2\vartheta_{\bm{F}}}} \r)
	\end{align*}
	for some $C = C(\bm{F}) > 0$.
	By the definition of $\a_{\bm{F}}$, the inequality
	$
		L^{\frac{2\vartheta_{\bm{F}}}{1 - 2\vartheta_{\bm{F}}}}
		\leq (2 b_{4})^{^{\frac{2\vartheta_{\bm{F}}}{1 - 2\vartheta_{\bm{F}}}}}\log{\log{X}}
	$ holds.
	Therefore, when $b_{4}$ is sufficiently small, we have
	\begin{align} \label{ESTMT1}
		 & |\tilde{M}_{T}(x_{1} + iu_{1}, \dots, x_{r} + iu_{r})|                                                           \\
		 & \ll_{\bm{F}} \Xi_{X}(\bm{x})\exp\l( C \| \bm{x} \|^{\frac{2 - 2\vartheta_{\bm{F}}}{1 - 2\vartheta_{\bm{F}}}} \r)
		\prod_{j = 1}^{r}\exp\l( \l(\frac{x_{j}^2}{2} - \frac{u_{j}^2}{4} \r)\s_{F_{j}}(X)^2 \r)
	\end{align}
	for $\|(u_{1}, \dots, u_{r})\| \leq L$.
	For any $\ell, \xi \in \RR$, we can write
	\begin{align}
		\l( \frac{\sin(\pi L (\xi - \ell))}{\pi L (\xi - \ell)} \r)^{2}
		 & = \frac{2}{L^2}\int_{0}^{L}(L - u)\cos(2\pi (\xi - \ell) u)du      \\
		\label{STTBS}
		 & = \frac{2}{L^2}\Re \int_{0}^{L}(L - u)e^{2\pi i (\xi - \ell) u}du.
	\end{align}
	Thus
	\begin{align*}
		 & \int_{\RR^{r}}\l( \frac{\sin(\pi L(\xi_{j} - \ell))}{\pi L(\xi_{j} - \ell)} \r)^2
		e^{x_{1}\xi_{1} + \cdots + x_{r}\xi_{r}}d\mu_{T, \bm{F}}(\bm{\xi})                                                           \\
		 & = \frac{2}{L^2}\Re \int_{0}^{L}(L - u)\int_{\RR^{r}}e^{2\pi i (\xi_{j} - \ell) u}e^{x_{1}\xi_{1} + \cdots + x_{r}\xi_{r}}
		d\mu_{T, \bm{F}}(\bm{\xi}) du                                                                                                \\
		 & = \frac{2}{L^2}\Re \int_{0}^{L}e^{-2\pi i \ell u}
		(L - u)\tilde{M}_{T}(x_{1}, \dots, x_{j-1}, x_{j} + 2\pi i u, x_{j+1}, \dots, x_{r})du,
	\end{align*}
	which, by  \eqref{ESTMT1}, is
	\begin{align*}
		 & \ll_{\bm{F}} \Xi_{X}(\bm{x})\frac{\exp\l( C \| \bm{x} \|^{\frac{2 - 2\vartheta_{\bm{F}}}{1 - 2\vartheta_{\bm{F}}}} \r)}{L^2}\l(\prod_{k = 1}^{r}
		\exp\l( \frac{x_{k}^2}{2}\s_{F_{k}}(X) \r)\r)
		\int_{0}^{L}(L - u)\exp\l( - (\pi \s_{F_{j}}(X) u)^2 \r)du                                                                                          \\
		 & \ll \Xi_{X}(\bm{x})\frac{\exp\l( C \| \bm{x} \|^{\frac{2 - 2\vartheta_{\bm{F}}}{1 - 2\vartheta_{\bm{F}}}} \r)}{L\s_{F_{j}}(X)}
		\prod_{k = 1}^{r}\exp\l( \frac{x_{k}^2}{2}\s_{F_{k}}(X)^2 \r).
	\end{align*}
	It then follows that equation \eqref{MPnp1} satisfies
	\begin{align}
		\label{EqnubS}
		\nu_{T, \bm{F}, \bm{x}}(\mathscr{R})
		 & = \int_{\RR^{r}}W_{L, \mathscr{R}}(\bm{\xi})e^{x_{1}\xi_{1} + \cdots + x_{r}\xi_{r}}
		d\mu_{T, \bm{F}}(\bm{\xi})+E
	\end{align}
	with
	\begin{align*}
		E
		\ll_{\bm{F}} \Xi_{X}(\bm{x})\frac{\exp\l( C \| \bm{x} \|^{\frac{2 - 2\vartheta_{\bm{F}}}{1 - 2\vartheta_{\bm{F}}}} \r)}{L\sqrt{\log{\log{X}}}}
		\prod_{k = 1}^{r}\exp\l( \frac{x_{k}^2}{2}\s_{F_{k}}(X)^2 \r).
	\end{align*}

	For the main term in \eqref{EqnubS}, it is enough to calculate
	\begin{align}
		\label{MPnp2}
		\int_{\RR^{r}}\l(\prod_{h = 1}^{r}\int_{0}^{L}G\l( \frac{u}{L} \r)
		e^{2\pi i \e_{j}(h) u \xi_{h}}f_{c_{h}, d_{h}}(\e_{j}(h) u)\frac{du}{u}\r)
		e^{x_{1}\xi_{1} + \cdots + x_{r}\xi_{r}}
		d\mu_{T, \bm{F}}(\bm{\xi})
	\end{align}
	for every fixed $1 \leq j \leq r$.
	Using Fubini's theorem, we find that \eqref{MPnp2} is equal to
	\begin{multline*}
		\int_{0}^{L} \cdots \int_{0}^{L}
		\l(\prod_{h = 1}^{r}G\l( \frac{u_{h}}{L} \r)\frac{f_{c_{h}, d_{h}}(\e_{j}(h) u_{h})}{u_{h}}\r)\\
		\times \tilde{M}_{T}\l( x_{1} + 2\pi i \e_{j}(1) u_{1}, \dots, x_{r} + 2 \pi i \e_{j}(r) u_{r} \r)
		du_{1} \cdots du_{r}.
	\end{multline*}
	Here we divide the range of this integral as
	\begin{align*}
		\int_{0}^{L} \cdots \int_{0}^{L}
		= \int_{0}^{1} \cdots \int_{0}^{1} +
		\sum_{k = 0}^{r-1} \int \cdots \int_{D_{k}},
	\end{align*}
	where
	\begin{align*}
		\int \cdots \int_{D_{k}}
		= \int_{0}^{1} \cdots \int_{0}^{1} \int_{1}^{L}
		\overbrace{\int_{0}^{L} \cdots \int_{0}^{L}}^{k}.
	\end{align*}
	By estimate \eqref{ESTMT1} and the estimates $\frac{f_{c, d}(\pm u)}{u} \ll d - c$,
	$G(u / L) \ll 1$ for $0 \leq u \leq L$, the integral over $D_{r-k}$ for $1 \leq k \leq r$ is
	\begin{align*}
		 & \ll_{\bm{F}} \Xi_{X}(\bm{x})\exp\l( C \| \bm{x} \|^{\frac{2 - 2\vartheta_{\bm{F}}}{1 - 2\vartheta_{\bm{F}}}} \r)
		\l(\prod_{h = 1}^{k-1}(d_{h} - c_{h})\int_{0}^{1}\exp\l( \l(\frac{x_{h}^2}{2} - (\pi u)^2\r)\s_{F_{h}}(X)^2 \r)du\r) \\
		 & \qqquad \times (d_{k} - c_{k})\int_{1}^{L}\exp\l( \l(\frac{x_{k}^2}{2} - (\pi u)^2 \r)\s_{F_{k}}(X)^2 \r)du       \\
		 & \qqquad \times \l(\prod_{h = k+1}^{r}(d_{h} - c_{h})\int_{0}^{L}
		\exp\l( \l(\frac{x_{h}^2}{2}  - (\pi u)^2 \r)\s_{F_{h}}(X)^2 \r)du\r)                                                \\
		 & \ll_{\bm{F}} \Xi_{X}(\bm{x})\exp\l( C \| \bm{x} \|^{\frac{2 - 2\vartheta_{\bm{F}}}{1 - 2\vartheta_{\bm{F}}}} \r)
		e^{-\s_{F_{k}}(X)^2}\prod_{h = 1}^{r}\frac{d_{h} - c_{h}}{\s_{F_{h}}(X)}
		\exp\l( \frac{x_{h}^2}{2}\s_{F_{h}}(X)^2 \r).
	\end{align*}
	Hence, integral \eqref{MPnp2} is equal to
	\begin{multline}  \label{MPnp3}
		\int_{0}^{1} \cdots \int_{0}^{1}
		\l(\prod_{h = 1}^{r}G\l( \frac{u_{h}}{L} \r)\frac{f_{c_{h}, d_{h}}(\e_{j}(h) u_{h})}{u_{h}}\r)
		\tilde{M}_{T}\l( x_{1} + 2\pi i \e_{j}(1) u_{1}, \dots, x_{r} + 2 \pi i \e_{j}(r) u_{r} \r)du_{1} \cdots du_{r}\\
		+ O_{\bm{F}}\l(\sum_{k=1}^r \Xi_{X}(\bm{x})\exp\l( C \| \bm{x} \|^{\frac{2 - 2\vartheta_{\bm{F}}}{1 - 2\vartheta_{\bm{F}}}} \r)
		e^{-\s_{F_{k}}(X)^2}\prod_{h = 1}^{r}\frac{d_{h} - c_{h}}{\s_{F_{h}}(X)}
		\exp\l( \frac{x_{h}^2}{2}\s_{F_{h}}(X)^2 \r) \r).
	\end{multline}
	When $\|(u_{1}, \dots, u_{r})\| \leq 1$, it follows from Proposition \ref{RKLJVDPP} and equation \eqref{ES_sI3} that
	\begin{align*}
		 & \tilde{M}_{T}\l( x_{1} + 2\pi i \e_{j}(1) u_{1}, \dots, x_{r} + 2 \pi i \e_{j}(r) u_{r} \r)                 \\
		 & = \Xi_{X}(\bm{x})\prod_{h = 1}^{r}\l(1
		+ O_{\bm{F}}\l(|u_{h}|\exp\l( D_{1}\| \bm{x} \|^{\frac{2 - 2\vartheta_{\bm{F}}}{1 - 2\vartheta_{\bm{F}}}} \r)
		+ \frac{x_{j}^{4} + u_{j}^{4}}{\log{X}}\r)\r)                                                                  \\
		 & \qquad \times\exp\l( \frac{x_{h}^2 + 4 \pi i \e_{j}(h) x_{h} u_{h} - 4 \pi^2 u_{h}^2}{2}\s_{F_{h}}(X)^2 \r)
		+ O_{\bm{F}}\l( \exp\l( -b_{1}(\log{\log{X}})^{4(r + 1)} \r) \r).
	\end{align*}
	Therefore, the integral of \eqref{MPnp3} is equal to
	\begin{multline}
		\label{MPnp4}
		\Xi_{X}(\bm{x})\prod_{h = 1}^{r}\int_{0}^{1}\l(1
		+ O_{\bm{F}}\l(u\exp\l( D_{1}\| \bm{x} \|^{\frac{2 - 2\vartheta_{\bm{F}}}{1 - 2\vartheta_{\bm{F}}}} \r)
		+ \frac{x_{h}^{4} + u_{h}^{4}}{\log{X}} \r)\r)
		G\l( \frac{u}{L} \r)f_{c_{h}, d_{h}}(\e_{j}(h)u)\\
		\times \exp\l( \frac{x_{h}^2 + 4 \pi i \e_{j}(h) x_{h} u - 4 \pi^2 u^2}{2}\s_{F_{h}}(X)^2 \r) \frac{du}{u}\\
		+ O_{\bm{F}}\l( \exp\l( -c_{1}(\log{\log{X}})^{4(r + 1)} \r)
		\prod_{h = 1}^{r}\int_{0}^{1} \bigg|G\l( \frac{u}{L} \r) \frac{f_{c_{h}, d_{h}}(u)}{u}\bigg|du \r).
	\end{multline}
	Since $G(u / L) \ll 1$
	and $\frac{f_{c_{h}, d_{h}}(\pm u)}{u} \ll d_{h} - c_{h}$,
	we find that
	\begin{align*}
		 & \int_{0}^{1}\l(u\exp\l( D_{1}\| \bm{x} \|^{\frac{2 - 2\vartheta_{\bm{F}}}{1 - 2\vartheta_{\bm{F}}}} \r)
		+ \frac{x_{h}^{4} + u^{4}}{\log{X}} \r)G\l( \frac{u}{L} \r)f_{c_{h}, d_{h}}(\e_{j}(h)u)                                     \\
		 & \qqquad \times \exp\l( \frac{x_{h}^2 + 4 \pi i \e_{j}(h) x_{h} u - 4 \pi^2 u^2}{2}\s_{F_{h}}(X)^2 \r) \frac{du}{u}       \\
		 & \ll \exp\l( \frac{x_{h}^{2}}{2}\s_{F_{h}}(X)^2 \r)(d_{h} - c_{h})                                                        \\
		 & \qqquad \times \int_{0}^{1} \l(u \exp\l( D_{1}\| \bm{x} \|^{\frac{2 - 2\vartheta_{\bm{F}}}{1 - 2\vartheta_{\bm{F}}}} \r)
		+ \frac{x_{h}^{4} + u^{4}}{\log{X}} \r)\exp\l(- 2 \pi^2 u^2\s_{F_{h}}(X)^2 \r)du                                            \\
		 & \ll_{\bm{F}} \exp\l( D_{1} \| \bm{x} \|^{\frac{2 - 2\vartheta_{\bm{F}}}{1 - 2\vartheta_{\bm{F}}}} \r)
		\times \exp\l( \frac{x_{h}^{2}}{2}\s_{F_{h}}(X)^2 \r)\frac{d_{h} - c_{h}}{\s_{F_{h}}(X)^{2}},
	\end{align*}
	and that
	\begin{align*}
		 & \int_{0}^{1}G\l( \frac{u}{L} \r)f_{c_{h}, d_{h}}(\e_{j}(h) u)
		\exp\l( \frac{x_{h}^2 + 4 \pi i \e_{j}(h) x_{h} u - 4 \pi^2 u^2}{2}\s_{F_{h}}(X)^2 \r) \frac{du}{u} \\
		 & \ll (d_{h} - c_{h})\exp\l( \frac{x_{h}^{2}}{2}\s_{F_{h}}(X)^2 \r)
		\int_{0}^{1} \exp\l(- 2 \pi^2 u^2\s_{F_{h}}(X)^2 \r)du                                              \\
		 & \ll \exp\l( \frac{x_{h}^{2}}{2}\s_{F_{h}}(X)^2 \r)
		\frac{d_{h} - c_{h}}{\s_{F_{h}}(X)}.
	\end{align*}
	Moreover, we find that
	\begin{align*}
		 & \int_{1}^{L}G\l( \frac{u}{L} \r)f_{c_{h}, d_{h}}( \e_{j}(h) u)
		\exp\l( \frac{x_{h}^2 + 4 \pi i \e_{j}(h) x_{h} u - 4 \pi^2 u^2}{2}\s_{F_{h}}(X)^2 \r) \frac{du}{u}                      \\
		 & \ll_{\bm{F}} \exp\l( \frac{x_{h}^{2}}{2}\s_{F_{h}}(X)^{2} \r)\frac{d_{h} - c_{h}}{\s_{F_{h}}(X)}e^{-\s_{F_{h}}(X)^2},
	\end{align*}
	and that
	\begin{align*}
		\int_{0}^{1}\bigg|G\l( \frac{u}{L} \r)\frac{f_{c_{h}, d_{h}}( \e_{j}(h) u)}{u}\bigg|du
		\ll d_{h} - c_{h}.
	\end{align*}
	Also, by \eqref{EST_Xi}, it holds that
	\begin{align*}
		&\Xi_{X}(\bm{x})\exp\l( D_{2} \| \bm{x} \|^{\frac{2 - 2\vartheta_{\bm{F}}}{1 - 2\vartheta_{\bm{F}}}} \r)
		\prod_{j = 1}^{r} \exp\l( \frac{x_{j}^2}{2}\s_{F_{j}}(X)^2 \r) \times \frac{1}{\sqrt{\log{\log{X}}}}\prod_{h = 1}^{r}\frac{1}{\s_{F_{h}}(X)}\\
		&\geq \frac{1}{\sqrt{\log{\log{X}}}}\prod_{h = 1}^{r}\frac{1}{\s_{F_{h}}(X)}
		\geq \exp\l( -b_{1}(\log{\log{X}})^{4(r + 1)} \r)
	\end{align*}
	for some constant $D_{2} = D_{2}(\bm{F}) > 0$.
	From these estimates and \eqref{MPnp4}, integral \eqref{MPnp3} is equal to
	\begin{multline*}
		\Xi_{X}(\bm{x})\prod_{h = 1}^{r}\int_{0}^{L}G\l( \frac{u}{L} \r)f_{c_{h}, d_{h}}(\e_{j}(h)u)
		\exp\l( \frac{x_{h}^2 + 4 \pi i \e_{j}(h) x_{h} u - 4 \pi^2 u^2}{2}\s_{F_{h}}(X)^2 \r) \frac{du}{u}\\
		+ O_{\bm{F}}\l( \Xi_{X}(\bm{x})\exp\l( D_{3} \| \bm{x} \|^{\frac{2 - 2\vartheta_{\bm{F}}}{1 - 2\vartheta_{\bm{F}}}} \r)
		\prod_{j = 1}^{r} \exp\l( \frac{x_{j}^2}{2}\s_{F_{j}}(X)^2 \r) \times
		\frac{1}{\sqrt{\log{\log{X}}}}\prod_{h = 1}^{r}\frac{d_{h} - c_{h}}{\s_{F_{h}}(X)}\r)
	\end{multline*}
	with $D_{3} = \max\{ D_{1}, D_{2} \}$.
	Using the well known formula
	\begin{align}
		\label{FTGF}
		\frac{1}{\sqrt{2\pi}}\int_{\RR}e^{-iv\xi}e^{-\eta v^2}dv
		= \frac{1}{\sqrt{2\eta}}\exp\l( -\frac{\xi^2}{4\eta} \r),
	\end{align}
	we can rewrite the above main term as
	\begin{multline*}
		\Xi_{X}(\bm{x})\l(\prod_{h = 1}^{r} \frac{\exp\l( \frac{x_{h}^2}{2}\s_{F_{h}}(X)^2 \r)}{\sqrt{2\pi}} \r)\\
		\times \int_{\RR^{r}} e^{-(v_{1}^2 + \cdots + v_{r}^2)/2}
		\bigg\{
		\prod_{h = 1}^{r}\int_{0}^{L}G\l( \frac{u}{L} \r)
		e^{2\pi i \e_{j}(h) u (x_{h}\s_{F_{h}}(X)^2 - v_{h} \s_{F_{h}}(X))}f_{c_{h}, d_{h}}(\e_{j}(h) u)\frac{du}{u}\bigg\}
		d\bm{v}.
	\end{multline*}
	Combining this with \eqref{MPnp3}, we see that integral \eqref{MPnp2} is equal to
	\begin{align*}
		 & \Xi_{X}(\bm{x})\l(\prod_{h = 1}^{r} \frac{\exp\l( \frac{x_{h}^2}{2}\s_{F_{h}}(X)^2 \r)}{\sqrt{2\pi}} \r)                \\
		 & \qquad \times \int_{\RR^{r}} e^{-(v_{1}^2 + \cdots + v_{r}^2)/2}
		\bigg\{
		\prod_{h = 1}^{r}\int_{0}^{L}G\l( \frac{u}{L} \r)
		e^{2\pi i \e_{j}(h) u (x_{h}\s_{F_{h}}(X)^2 - v_{h} \s_{F_{h}}(X))}f_{c_{h}, d_{h}}(\e_{j}(h) u)\frac{du}{u}\bigg\}
		d\bm{v}                                                                                                                    \\
		 & + O_{\bm{F}}\l( \Xi_{X}(\bm{x})\exp\l( D_{1} \| \bm{x} \|^{\frac{2 - 2\vartheta_{\bm{F}}}{1 - 2\vartheta_{\bm{F}}}} \r)
		\prod_{j = 1}^{r} \exp\l( \frac{x_{j}^2}{2}\s_{F_{j}}(X)^2 \r) \times \frac{1}{\sqrt{\log{\log{X}}}}
		\prod_{h = 1}^{r}\frac{d_{h} - c_{h}}{\s_{F_{h}}(X)} \r).
	\end{align*}
	Substituting this equation to the definition of $W_{L, \mathscr{R}}$ and using Lemma \ref{Multi_BSF} and equation \eqref{EqnubS},
	we obtain
	\begin{align}
		\label{MPnp5}
		 & \nu_{T, \bm{F}, \bm{x}}(\mathscr{R})                                                                       \\
		 & = \Xi_{X}(\bm{x})\l(\prod_{h = 1}^{r} \frac{\exp\l( \frac{x_{h}^2}{2}\s_{F_{h}}(X)^2 \r)}{\sqrt{2\pi}} \r) \\ \nonumber
		 & \qquad\times\bigg\{\int_{\RR^{r}} e^{-\frac{v_{1}^2 + \cdots + v_{r}^2}{2}}
		\bm{1}_{\mathscr{R}}\l( x_{1}\s_{F_{1}}(X)^2 - v_{1}\s_{F_{1}}(X), \dots, x_{r}\s_{F_{r}}(X)^2 - v_{r}\s_{F_{r}}(X)  \r)
		d\bm{v}
		+ E_{3} + E_{4}\bigg\},
	\end{align}
	where $E_{3}$ and $E_{4}$ satisfy
	\begin{align*}
		 & E_{3} \ll_{\bm{F}}
		\sum_{j = 1}^{r}\int_{\RR^{r}}\bigg\{ \l(
		\frac{\sin(\pi L (x_{j}\s_{F_{j}}(X)^2 - v_{j}\s_{F_{j}}(X) - c_{j}))}
		{\pi L (x_{j}\s_{F_{j}}(X)^2 - v_{j}\s_{F_{j}}(X) - c_{j})} \r)^{2}                    \\
		 & \qqqquad+ \l(\frac{\sin(\pi L (x_{j}\s_{F_{j}}(X)^2 - v_{j}\s_{F_{j}}(X) - d_{j}))}
		{\pi L (x_{j}\s_{F_{j}}(X)^2 - v_{j}\s_{F_{j}}(X) - d_{j})} \r)^{2} \bigg\}
		\times e^{-(v_{1}^2 + \cdots + v_{r}^2)/2}d\bm{v},
	\end{align*}
	and
	\begin{align*}
		E_{4}
		\ll_{\bm{F}}
		\frac{\exp\l( C \| \bm{x} \|^{\frac{2 - 2\vartheta_{\bm{F}}}{1 - 2\vartheta_{\bm{F}}}} \r)}{(\log{\log{X}})^{\a_{\bm{F}} + \frac{1}{2}}}
		+ \frac{\exp\l( C \| \bm{x} \|^{\frac{2 - 2\vartheta_{\bm{F}}}{1 - 2\vartheta_{\bm{F}}}} \r)}{\sqrt{\log{\log{X}}}}
		\prod_{h = 1}^{r}\frac{d_{h} - c_{h}}{\s_{F_{h}}(X)}
	\end{align*}
	for some constant $C = C(\bm{F}) > 0$.
	By equation \eqref{STTBS}, it holds that, for any $\ell \in \RR$,
	\begin{align*}
		 & \int_{\RR^{r}}\l(
		\frac{\sin(\pi L (x_{j}\s_{F_{j}}(X)^2 - v_{j}\s_{F_{j}}(X) - \ell))}
		{\pi L (x_{j}\s_{F_{j}}(X)^2 - v_{j}\s_{F_{j}}(X) - \ell)} \r)^{2}
		\times e^{-(v_{1}^2 + \cdots + v_{r}^2)/2}d\bm{v}                                                                         \\
		 & = \frac{2}{L^2}\Re\int_{0}^{L}(L - \a)\l(\int_{\RR^{r}}e^{2\pi i (x_{j}\s_{F_{j}}(X)^2 - v_{j}\s_{F_{j}}(X) - \ell)\a}
		e^{-(v_{1}^2 + \cdots + v_{r}^2)/2}d\bm{v}\r) d\a                                                                         \\
		 & = \frac{2(2\pi)^{(r-1)/2}}{L^2}\Re\int_{0}^{L}(L - \a)e^{2\pi i (x_{j}\s_{F_{j}}(X)^2 - \ell)\a}
		\l(\int_{\RR}e^{-2\pi i v\s_{F_{j}}(X) \a}e^{-v^2/2}dv \r)d\a,
	\end{align*}
	which, by \eqref{FTGF},  becomes
	\begin{align*}
		 & = \frac{2(2\pi)^{r/2}}{L^2}\Re\int_{0}^{L}(L - \a)e^{2\pi i (x_{j}\s_{F_{j}}(X)^2 - \ell)\a}
		\exp\l( -2 \pi^2 \a^2 \s_{F_{j}}(X)^{2} \r)d\a                                                  \\
		 & \ll_{\bm{F}} \frac{1}{L \s_{F_{j}}(X)}
		\ll_{\bm{F}} \frac{1}{(\log{\log{X}})^{\a_{\bm{F}} + \frac{1}{2}}}.
	\end{align*}
	Hence, we have $E_{3} \ll_{\bm{F}} \frac{1}{(\log{\log{X}})^{\a_{\bm{F}} + \frac{1}{2}}}$.
	Finally, by simple calculations, we can write
	\begin{align*}
		 & \int_{\RR^{r}} e^{-\frac{v_{1}^2 + \cdots + v_{r}^2}{2}}
		\bm{1}_{\mathscr{R}}\l( x_{1}\s_{F_{1}}(X)^2 - v_{1}\s_{F_{1}}(X), \dots, x_{r}\s_{F_{r}}(X)^2 - v_{r}\s_{F_{r}}(X)  \r)
		d\bm{v}                                                                                                                          \\
		 & =\prod_{j = 1}^{r}\int_{x_{j}\s_{F_{j}}(X) - \frac{d_{j}}{\s_{F_{j}}(X)}}^{x_{j} \s_{F_{j}}(X) - \frac{c_{j}}{\s_{F_{j}}(X)}}
		e^{-v^2/2}\frac{dv}{\sqrt{2\pi}}
	\end{align*}
	and this completes the proof of \eqref{FMnu1}.

	Next, we consider \eqref{FMnu2}.
	Using Proposition \ref{RKLJVDPP} and equation \eqref{ES_sI2}, we have
	\begin{align*}
		 & M_{T}\l( x_{1} + 2\pi i \e_{j}(1) u_{1}, \dots, x_{r} + 2 \pi i \e_{j}(r) u_{r} \r)         \\
		 & = \prod_{h = 1}^{r}\l(1 + O_{\bm{F}}\l(|x_{h} + iu_{h}|^2\r)\r)
		\exp\l( \frac{x_{h}^2 + 4 \pi i \e_{j}(h) x_{h} u_{h} - 4 \pi^2 u_{h}^2}{2}\s_{F_{h}}(X)^2 \r) \\
		 & \qqqquad+ O_{\bm{F}}\l( \exp\l( -b_{1}(\log{\log{X}})^{4(r + 1)} \r) \r)
	\end{align*}
	when $\| \bm{x} \|$, $\| \bm{u} \|$ are sufficiently small.
	By using this equation, we can prove \eqref{FMnu2} similarly to the proof of \eqref{FMnu1}.
\end{proof}

\begin{proof}[Proof of Proposition \ref{Main_Prop_JVD}]
	We firstly prove Proposition \ref{Main_Prop_JVD} in the case $V_{j}$'s are nonnegative.
	Let $\bm{x} = (x_{1}, \dots, x_{r}) \in (\RR_{> 0})^{r}$ satisfying
	$\|\bm{x}\| \leq b_{4}$ with $b_{4}$ the same number as in Lemma \ref{FMnu}.
	By Lemma \ref{SPKLH} and equation \eqref{FMnu2}, we have
	\begin{align*}
		 & \mu_{T, \bm{F}}((y_{1}, \infty) \times \cdots \times (y_{r}, \infty))                       \\
		 & = \prod_{j = 1}^{r}e^{\frac{x_{j}^2}{2}\s_{F_{j}}(X)^2}\int_{x_{j} y_{j}}^{\infty}e^{-\tau}
		\int_{x_{j}\s_{F_{j}}(X) - \frac{\tau / x_{j}}{\s_{F_{j}}(X)}}^{x_{j}\s_{F_{j}}(X) - \frac{y_{j}}{\s_{F_{j}}(X)}}
		e^{-v^2 / 2}\frac{dv}{\sqrt{2\pi}} d\tau                                                       \\
		 & \qqquad+ O_{\bm{F}}\l( \frac{1}{(\log{\log{X}})^{\a_{\bm{F}} + \frac{1}{2}}}
		\prod_{j = 1}^{r}e^{\frac{x_{j}^2}{2}\s_{F_{j}}(X)^2 -x_{j} y_{j}}
		+ E \times \prod_{j = 1}^{r}e^{\frac{x_{j}^2}{2}\s_{F_{j}}(X)^2} \r),
	\end{align*}
	where
	\begin{align*}
		 & E =
		\sum_{k = 1}^{r}\int_{x_{r} y_{r}}^{\infty} \cdots \int_{x_{1} y_{1}}^{\infty}
		e^{-(\tau_{1} + \cdots + \tau_{r})}
		\l(\frac{x_{k}^2 (\tau_{k}/x_{k} - y_{k})}{\s_{F_{k}}(X)} + \frac{1}{\s_{F_{k}}(X)^2} \r)
		\prod_{\substack{h = 1 \\ h \not= k}}^{r}\frac{\tau_{h} / x_{h} - y_{h}}{\s_{F_{h}}(X)}d\tau_{1} \cdots d\tau_{r}.
	\end{align*}
	Now, simple calculations lead that
	\begin{align*}
		\int_{x_{j} y_{j}}^{\infty}e^{-\tau}
		\int_{x_{j}\s_{F_{j}}(X) - \frac{\tau / x_{j}}{\s_{F_{j}}(X)}}^{x_{j}\s_{F_{j}}(X) - \frac{y_{j}}{\s_{F_{j}}(X)}}
		e^{-v^2 / 2}\frac{dv}{\sqrt{2\pi}}d\tau
		= \exp\l( -\frac{x_{j}^2}{2}\s_{F_{j}}(X)^2 \r)\int_{V_{j}}^{\infty}e^{-u^2/2}\frac{du}{\sqrt{2\pi}}
	\end{align*}
	since $y_{j} = V_{j} \s_{F_{j}}(X)$.
	Therefore, we obtain
	\begin{align*}
		 & \mu_{T, \bm{F}}((y_{1}, \infty) \times \cdots \times (y_{r}, \infty))     \\
		 & =\prod_{j = 1}^{r}\int_{V_{j}}^{\infty}e^{-u^2 / 2}\frac{du}{\sqrt{2\pi}}
		+ O_{\bm{F}}\l( \frac{1}{(\log{\log{X}})^{\a_{\bm{F}} + \frac{1}{2}}}
		\prod_{j = 1}^{r}e^{\frac{x_{j}^2}{2}\s_{F_{j}}(X)^2 -x_{j} y_{j}}
		+ E \prod_{j = 1}^{r}e^{\frac{x_{j}^2}{2}\s_{F_{j}}(X)^2} \r).
	\end{align*}
	Here, we decide $x_{j}$'s as $x_{j} = \max\{ 1, V_{j} \} / \s_{F_{j}}(X)$, where
	$V_{j}$'s must satisfy the inequality $V_{j} \leq R \s_{F_{j}}(X) $.
	Then, we see that
	\begin{align}
		\label{pMPJVD1}
		e^{-x_{j} y_{j}}
		= e^{-\frac{x_{j}^2}{2}\s_{F_{j}}(X)^2}e^{\frac{x_{j}^2}{2}\s_{F_{j}}(X)^2 - x_{j} y_{j}}
		\ll e^{-\frac{x_{j}^2}{2}\s_{F_{j}}(X)^2} e^{-V_{j}^2/2}.
	\end{align}
	This estimate leads that
	\begin{align*}
		\frac{1}{(\log{\log{X}})^{\a_{\bm{F}} + \frac{1}{2}}}
		\prod_{j = 1}^{r}e^{\frac{x_{j}^2}{2}\s_{F_{j}}(X)^2 -x_{j} y_{j}}
		 & \ll_{r} \frac{e^{-(V_{1}^2 + \cdots + V_{r}^2)/2}}{(\log{\log{X}})^{\a_{\bm{F}} + \frac{1}{2}}} \\
		 & \ll_{r} \frac{1}{(\log{\log{X}})^{\a_{\bm{F}} + \frac{1}{2}}}
		\prod_{j = 1}^{r}(1 + V_{j})\int_{V_{j}}^{\infty}e^{-u^2/2}\frac{du}{\sqrt{2\pi}}.
	\end{align*}
	Moreover, since it holds that
	\begin{align}     \label{BFpte}
		\int_{x_{j}y_{j}}^{\infty}\l( \frac{\tau}{x_{j}} - y_{j} \r)e^{-\tau}d\tau
		= \frac{e^{-x_{j}y_{j}}}{x_{j}},
	\end{align}
	we have
	\begin{align}
		 & \int_{x_{r} y_{r}}^{\infty} \cdots \int_{x_{1} y_{1}}^{\infty}e^{-(\tau_{1} + \cdots + \tau_{r})}
		\l(\frac{x_{k}^2 (d_{k} - c_{k})}{\s_{F_{k}}(X)} + \frac{1}{\s_{F_{k}}(X)^2} \r)
		\prod_{\substack{h = 1                                                                               \\ h \not= k}}^{r}\frac{\tau_{h} / x_{h} - y_{h}}{\s_{F_{h}}(X)}
		d\tau_{1} \cdots d\tau_{r}                                                                           \\ \nonumber
		 & = x_{k}^{2}\prod_{j = 1}^{r}
		\frac{1}{\s_{F_{j}}(X)}\int_{x_{j} y_{j}}^{\infty}\l( \frac{\tau}{x_{j}} - y_{j} \r)e^{-\tau}d\tau
		+ \frac{e^{-x_{k} y_{k}}}{\s_{F_{k}}(X)^2}\prod_{\substack{j = 1                                     \\ j \not= k}}^{r}\frac{1}{\s_{F_{j}}(X)}
		\int_{x_{j} y_{j}}^{\infty}\l( \frac{\tau}{x_{j}} - y_{j} \r)e^{-\tau}d\tau                          \\
		\label{pMPJVD2}
		 & = x_{k}^2\prod_{j = 1}^{r}\frac{e^{-x_{j} y_{j}}}{x_{j}\s_{F_{j}}(X)}
		+ \frac{x_{k}}{\s_{F_{k}}(X)}
		\prod_{j = 1}^{r}\frac{e^{-x_{j} y_{j}}}{x_{j} \s_{F_{j}}(X)}.
	\end{align}
	for every $1 \leq k \leq r$.
	By estimate \eqref{pMPJVD1} and $x_{j}\s_{F_{j}}(X) \asymp 1 + V_{j}$,
	we can write
	\begin{align*}
		\frac{e^{-x_{j} y_{j}}}{x_{j}\s_{F_{j}}(X)}
		\ll e^{-\frac{x_{j}^2}{2}\s_{F_{j}}(X)^2}\frac{e^{-V_{j}^2 / 2}}{1 + V_{j}}
		\ll e^{-\frac{x_{j}^2}{2}\s_{F_{j}}(X)^2}\int_{V_{j}}^{\infty}e^{-u^2/2}\frac{du}{\sqrt{2\pi}}.
	\end{align*}
	By this observation, \eqref{pMPJVD2} is
	\begin{align*}
		 & \ll_{r} \l(x_{k}^2 + \frac{x_{k}}{\s_{F_{k}}(X)}\r)
		\prod_{j = 1}^{r}e^{-\frac{x_{j}^2}{2}\s_{F_{j}}(X)^2}\int_{V_{j}}^{\infty}e^{-u^2/2}\frac{du}{\sqrt{2\pi}} \\
		 & \ll_{\bm{F}} \frac{1 + V_{k}^2}{\log{\log{X}}}
		\prod_{j = 1}^{r}e^{-\frac{x_{j}^2}{2}\s_{F_{j}}(X)^2}\int_{V_{j}}^{\infty}e^{-u^2/2}\frac{du}{\sqrt{2\pi}}.
	\end{align*}
	Hence, we have
	\begin{align*}
		E \prod_{j = 1}^{r}e^{\frac{x_{j}^2}{2}\s_{F_{j}}(X)^2}
		\ll_{\bm{F}} \frac{1 + \norm[]{\bm{V}}^2}{\log{\log{X}}}\prod_{j = 1}^{r}\int_{V_{j}}^{\infty}e^{-u^2/2}\frac{du}{\sqrt{2\pi}}.
	\end{align*}
	From the above estimations, we obtain
	\begin{align*}
		 & \mu_{T, \bm{F}}((y_{1}, \infty) \times \cdots \times (y_{r}, \infty))    \\
		 & = \prod_{j = 1}^{r}\int_{V_{j}}^{\infty}e^{-u^2/2}\frac{du}{\sqrt{2\pi}}
		+ O_{\bm{F}}\l( \l\{\frac{\prod_{k = 1}^{r}(1 + V_{k})}{(\log{\log{X}})^{\a_{\bm{F}} + \frac{1}{2}}}
		+ \frac{1 + \norm[]{\bm{V}}^2}{\log{\log{X}}}\r\}
		\times \prod_{j = 1}^{r}\int_{V_{j}}^{\infty}e^{-u^2/2}\frac{du}{\sqrt{2\pi}} \r)
	\end{align*}
	for $0 \leq V_{j} \leq b_{2} \s_{F_{j}}(X)$.
	Thus, by this formula and \eqref{RTmuS}, we complete the proof of Proposition \ref{Main_Prop_JVD}
	in the case $V_{j}$'s are nonnegative.

	In order to finish the proof of Proposition \ref{Main_Prop_JVD}, we consider the negative cases.
	It suffices to show that,
	for the case $- b\s_{F_{1}}(X) \leq V_{1} \leq 0$ and $0 \leq V_{j} \leq b \s_{F_{j}}(X)$,
	\begin{align*}
		 & \frac{1}{T}\meas(\S_{X}(T, (-V_{1}, V_2, \dots, V_{r}); \bm{F}, \bm{\theta}))                              \\
		 & = \l( 1 + O_{\bm{F}}\l( \frac{\prod_{k = 1}^{r}(1 + |V_{k}|)}{(\log{\log{X}})^{\a_{\bm{F}} + \frac{1}{2}}}
			+ \frac{1 + \norm[]{\bm{V}}^2}{\log{\log{X}}} \r) \r)
		\prod_{j = 1}^{r}\int_{V_{j}}^{\infty}e^{-\frac{u^2}{2}}\frac{du}{\sqrt{2\pi}}
	\end{align*}
	since other cases can be shown similarly by induction.
	By the definition of the set $\S_{X}(T, \bm{V}; \bm{F}, \bm{\theta})$, it holds that
	\begin{multline*}
		\S_{X}(T, \bm{V}; \bm{F}, \bm{\theta})
		= \S_{X}(T, (V_{2}, \dots, V_{r}); (F_{2}, \dots, F_{r}), (\theta_{2}, \dots, \theta_{r}))\\
		\setminus \S_{X}(T, (-V_{1}-0, V_{2}, \dots, V_{r}); \bm{F}, (\pi - \theta_{1}, \theta_2, \dots, \theta_{r})),
	\end{multline*}
	where we regard that if $r = 1$, the first set on the right hand side is $[T, 2T]$.
	Therefore, from the nonnegative cases, we have
	\begin{align}
		\label{PNMPJVD1}
		 & \frac{1}{T}\meas(\S_{X}(T, (-V_{1}, V_2, \dots, V_{r}); \bm{F}, \bm{\theta}))                                         \\
		 & = \frac{1}{T}\meas(\S_{X}(T, (V_{2}, \dots, V_{r}); (F_{2}, \dots, F_{r}), (\theta_{2}, \dots, \theta_{r})))          \\
		 & \qqqquad
		-\frac{1}{T}\meas(\S_{X}(T, (-V_{1} - 0, V_{2}, \dots, V_{r}); \bm{F}, (\pi - \theta_{1}, \theta_2, \dots, \theta_{r}))) \\
		 & = \l( 1 + E_{1} \r)\prod_{j = 2}^{r}\int_{V_{j}}^{\infty}e^{-\frac{u^2}{2}}\frac{du}{\sqrt{2\pi}}
		- (1 + E_{2})\prod_{j = 1}^{r}
		\int_{|V_{j}|}^{\infty}e^{-\frac{u^2}{2}}\frac{du}{\sqrt{2\pi}}.
	\end{align}
	Here, $E_{1}$ and $E_{2}$ satisfy
	\begin{gather*}
		E_{1}
		\ll_{\bm{F}} \frac{\prod_{k = 2}^{r}(1 + V_{k})}{(\log{\log{X}})^{\a_{\bm{F}} + \frac{1}{2}}}
		+ \frac{1 + \norm[]{(V_{2}, \dots, V_{r})}^2}{\log{\log{X}}},\\
		E_{2}
		\ll_{\bm{F}} \frac{\prod_{k = 1}^{r}(1 + V_{k})}{(\log{\log{X}})^{\a_{\bm{F}} + \frac{1}{2}}}
		+ \frac{1 + \norm[]{(V_{1}, \dots, V_{r})}^2}{\log{\log{X}}}.
	\end{gather*}
	Hence, we find that \eqref{PNMPJVD1} is equal to
	\begin{align*}
		 & \l(1 - \int_{-V_{1}}^{\infty}e^{-u^2/2}\frac{du}{\sqrt{2\pi}}\r)
		\prod_{j = 2}^{r}\int_{V_{j}}^{\infty}e^{-u^2/2}\frac{du}{\sqrt{2\pi}}                                         \\
		 & \qqqquad \qqquad+ E_{1}\prod_{j = 2}^{r}\int_{V_{j}}^{\infty}e^{-\frac{u^2}{2}}\frac{du}{\sqrt{2\pi}}
		+ E_{2}\prod_{j = 1}^{r}\int_{V_{j}}^{\infty}e^{-\frac{u^2}{2}}\frac{du}{\sqrt{2\pi}}                          \\
		 & =  \l( 1 + O_{\bm{F}}\l( \frac{\prod_{k = 1}^{r}(1 + |V_{k}|)}{(\log{\log{X}})^{\a_{\bm{F}} + \frac{1}{2}}}
			+ \frac{1 + \norm[]{\bm{V}}^2}{\log{\log{X}}} \r) \r)\prod_{j = 1}^{r}\int_{V_{j}}^{\infty}e^{-u^2/2}\frac{du}{\sqrt{2\pi}}.
	\end{align*}
	Thus, we also obtain the negative cases of Proposition \ref{Main_Prop_JVD}.
\end{proof}

\begin{proof}[Proof of Proposition \ref{Main_Prop_JVD3}]
	Let $\bm{V} = (V_{1}, \dots, V_{r}) \in (\RR_{\geq 0})^{r}$ satisfying
	$\| \bm{V} \| \leq (\log{\log{X}})^{2r}$,
	and put $x_{j} = \max\{ 1, V_{j} \} / \s_{F_{j}}(X)$.
	Similarly to the proof of Proposition \ref{Main_Prop_JVD} by using \eqref{FMnu1} instead of \eqref{FMnu2}, we obtain
	\begin{align*}
		 & \mu_{T, \bm{F}}((y_{1}, \infty) \times \cdots \times (y_{r}, \infty))                                                                         \\
		 & = \Xi_{X}(\bm{x})\Bigg\{\prod_{j = 1}^{r}\int_{V_{j}}^{\infty}e^{-u^2 / 2}\frac{du}{\sqrt{2\pi}}                                              \\
		 & \qqquad+ O_{\bm{F}}\l( \exp\l( C \l(\frac{\| \bm{V} \|}{\sqrt{\log{\log{X}}}}\r)^{\frac{2 - 2\vartheta{\bm{F}}}{1 - 2\vartheta_{\bm{F}}}} \r)
		\frac{\prod_{k =1}^{r}(1 + V_{k})}{(\log{\log{X}})^{\a_{\bm{F}} + \frac{1}{2}}}
		\prod_{j = 1}^{r}\int_{V_{j}}^{\infty}e^{-u^2 / 2}\frac{du}{\sqrt{2\pi}} + E\r)\Bigg\}
	\end{align*}
	for $0 \leq V_{j} \leq (\log{\log{X}})^{2r}$, where
	\begin{align*}
		 & E =
		\exp\l( C \l(\frac{\| \bm{V} \|}{\sqrt{\log{\log{X}}}}\r)^{\frac{2 - 2\vartheta_{\bm{F}}}{1 - 2\vartheta_{\bm{F}}}} \r)
		\frac{1}{\sqrt{\log{\log{X}}}} \times \prod_{j = 1}^{r}\frac{e^{\frac{x_{j}^2}{2}\s_{F_{j}}(X)^2}}{\s_{F_{j}}(X)}
		\int_{x_{j}}^{y_{j}}\l( \frac{\tau}{x_{j}} - y_{j} \r)e^{-\tau}d\tau.
	\end{align*}
	Here, $C = C(\bm{F})$ is a positive constant.
	Moreover, using \eqref{BFpte} we have
	\begin{align*}
		E
		= \exp\l( C \l(\frac{\| \bm{V} \|}{\sqrt{\log{\log{X}}}}\r)^{\frac{2 - 2\vartheta_{\bm{F}}}{1 - 2\vartheta_{\bm{F}}}} \r)
		\frac{1}{\sqrt{\log{\log{X}}}} \prod_{j = 1}^{r}\frac{e^{\frac{x_{j}^2}{2}\s_{F_{j}}(X)^2 - x_{j} y_{j}}}{x_{j}\s_{F_{j}}(X)}
	\end{align*}
	By estimate \eqref{pMPJVD1} and $x_{j}\s_{F_{j}}(X) \asymp 1 + V_{j}$,
	we can write
	\begin{align*}
		\frac{e^{\frac{x_{j}^2}{2}\s_{F_{j}}(X)^2-x_{j} y_{j}}}{x_{j}\s_{F_{j}}(X)}
		\ll \frac{e^{-V_{j}^2 / 2}}{1 + V_{j}}
		\ll \int_{V_{j}}^{\infty}e^{-u^2/2}\frac{du}{\sqrt{2\pi}}.
	\end{align*}
	By this observation, we have
	\begin{align*}
		E
		\ll_{r} \exp\l( C \l(\frac{\| \bm{V} \|}{\sqrt{\log{\log{X}}}}\r)^{\frac{2 - 2\vartheta_{\bm{F}}}{1 - 2\vartheta_{\bm{F}}}} \r)
		\frac{1}{\sqrt{\log{\log{X}}}}\prod_{j = 1}^{r}\int_{V_{j}}^{\infty}e^{-u^2/2}\frac{du}{\sqrt{2\pi}}.
	\end{align*}
	From the above estimations, we obtain
	\begin{align*}
		 & \mu_{T, \bm{F}}((y_{1}, \infty) \times \cdots \times (y_{r}, \infty))                   \\
		 & = \Xi_{X}(\bm{x})\prod_{j = 1}^{r}\int_{V_{j}}^{\infty}e^{-u^2/2}\frac{du}{\sqrt{2\pi}} \\
		 & \qqquad\times \Bigg\{ 1
		+ O_{\bm{F}}\l( \exp\l( C \l(\frac{\| \bm{V} \|}{\sqrt{\log{\log{X}}}}\r)^{\frac{2 - 2\vartheta_{\bm{F}}}{1 - 2\vartheta_{\bm{F}}}} \r)
		\l\{\frac{\prod_{k = 1}^{r}(1 + V_{k})}{(\log{\log{X}})^{\a_{\bm{F}} + \frac{1}{2}}}
		+ \frac{1}{\sqrt{\log{\log{X}}}}\r\} \r) \Bigg\}.
	\end{align*}
	In particular, by the definition of $\Xi_{X}$ \eqref{def_Xi}, assumptions (A1), (A2),
	and the analyticity of $\Xi_{X}$ coming from Lemma \ref{Prop_Psi_FPP}, it holds that
	\begin{align*}
		\Xi_{X}(\bm{x})
		= \l( 1 + O_{\bm{F}}\l( \frac{1}{\sqrt{\log{\log{X}}}} \r) \r)
		\Xi_{X}\l( \tfrac{V_{1}}{\s_{F_{1}}(X)}, \dots, \tfrac{V_{r}}{\s_{F_{r}}(X)} \r).
	\end{align*}
	Thus, by these formulas and \eqref{RTmuS}, we obtain Proposition \ref{Main_Prop_JVD3} when $\| \bm{V} \| \leq (\log{\log{X}})^{2r}$.
\end{proof}

\subsection{Proof of Theorem \ref{GMDP}}

We prove the following lemma as a preparation.

\begin{lemma} \label{LDESRDP}
	Let $F$ be a Dirichlet series satisfying (S4), (S5), and \eqref{SNC}.
	There exists a positive constant $c_{0} = c_{0}(F)$ such that for any large numbers $X$, $T$
  with $X \leq T^{1/2}$
	and for any $\Delta > 0$, $10 \Delta \s_{F}(X)^2 + \Delta^{-1} \leq V \leq \frac{\log{(T / \log{T})}}{\Delta \log{X}}$
	\begin{align*}
		\frac{1}{T}\meas\set{t \in [T, 2T]}{P_{F}(\tfrac{1}{2} + it, X) > V}
		\ll \exp\l( -\Delta V \log\l(\frac{c_{0} V}{\Delta \s_{F}(X)^2}\r) \r).
	\end{align*}
\end{lemma}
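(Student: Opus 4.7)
The plan is to apply Markov's inequality to the $(2k)$-th moment of $P_F(\tfrac{1}{2}+it, X)$, bounded via Lemma~\ref{MomentLambdab}, with the integer $k$ chosen close to $\Delta V$.

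\textbf{Step 1: Moment bound and Markov.} Specializing Lemma~\ref{MomentLambdab} to $Z = 2$, $w \equiv 1$, $c = 0$, $\sigma = 1/2$, combined with Lemma~\ref{SumbLargeP} to control $\sum_{p\leq X}|b_F(p)|^2/p$, one obtains
\begin{align*}
\int_T^{2T}|P_F(\tfrac{1}{2} + it, X)|^{2k}\,dt \leq T\l(C_0 k\,\s_F(X)^2\r)^k
\end{align*}
for every positive integer $k$ with $X^k \leq T$, where $C_0 = C_0(F)$. Markov's inequality then yields
\begin{align*}
\mu := \frac{1}{T}\meas\set{t\in[T,2T]}{P_F(\tfrac{1}{2}+it, X)>V} \leq \l(\frac{C_0 k\,\s_F(X)^2}{V^2}\r)^k
\end{align*}
(interpreting the condition $P_F > V$ via the real part, using $\Re P_F \leq |P_F|$).

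\textbf{Step 2: Choice of $k$.} I would take $k = \lfloor\Delta V\rfloor$. The hypothesis $V \geq \Delta^{-1}$ forces $\Delta V \geq 1$, hence $k \geq 1$; the hypothesis $V \leq \log(T/\log T)/(\Delta \log X)$, rewritten as $\Delta V\log X \leq \log(T/\log T)$, gives $X^k \leq X^{\Delta V} \leq T/\log T \leq T$, so the moment bound applies.

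\textbf{Step 3: Simplification.} Using $k \leq \Delta V$ the Markov bound becomes
\begin{align*}
\mu \leq \l(\frac{C_0 \Delta\,\s_F(X)^2}{V}\r)^k,
\end{align*}
and the hypothesis $V \geq 10\Delta\s_F(X)^2$ forces the base to be at most $C_0/10$, hence bounded away from $1$. Taking logarithms and using $k \geq \Delta V - 1$,
\begin{align*}
\log\mu \leq -\Delta V\log\l(\frac{V}{C_0\Delta\s_F(X)^2}\r) + \log\l(\frac{V}{C_0\Delta\s_F(X)^2}\r).
\end{align*}

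\textbf{Step 4: Choice of $c_0$.} Setting $c_0 = c_0(F)$ so that $c_0 C_0$ is sufficiently small, the identity
\[
-\Delta V\log\l(\frac{V}{C_0\Delta\s_F(X)^2}\r) - \Delta V\log(c_0 C_0) = -\Delta V\log\l(\frac{c_0 V}{\Delta\s_F(X)^2}\r),
\]
together with the inequality $\log(V/(C_0\Delta\s_F(X)^2)) \leq \Delta V|\log(c_0 C_0)|$, yields the claimed bound.

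The main obstacle is the integrality of $k$, which in Step~3 introduces an extra factor of $\log(V/(C_0\Delta\s_F(X)^2))$ in the exponent. In the sub-regime $X \leq \log T$ one can replace $\lfloor\Delta V\rfloor$ by $\lceil\Delta V\rceil$, which still satisfies $X^k \leq T$ and eliminates the loss entirely. In the complementary regime $X > \log T$ the moment constraint $X^k \leq T$ is tight at $k = \lfloor\Delta V\rfloor$, and verifying that the integrality loss is absorbed uniformly into $c_0 = c_0(F)$ requires a careful quantitative comparison exploiting the interplay between the two hypotheses $V \geq 10\Delta\s_F(X)^2$ and $V \leq \log(T/\log T)/(\Delta\log X)$.
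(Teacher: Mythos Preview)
Your approach coincides with the paper's: the same moment bound (which the paper also derives via Lemmas~\ref{MomentLambdab} and~\ref{SumbLargeP}), Markov's inequality, and the choice $k=\lfloor\Delta V\rfloor$. The paper is terser---after choosing $k$ it simply asserts the final estimate (effectively with $c_0=1/4$), without discussing the rounding.

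The concern you raise at the end is real, and your Step~4 does not resolve it: the inequality $\log\bigl(V/(C_0\Delta\sigma_F(X)^2)\bigr)\le\Delta V\,|\log(c_0C_0)|$ fails when $\Delta V$ stays bounded while $r:=V/(\Delta\sigma_F(X)^2)\to\infty$, a regime permitted by the hypotheses (take, e.g., $\Delta=3/(2V)$ so that $\Delta V=3/2$ and $k=1$, and let $\sigma_F(X)/V\to 0$). In that case the rounding costs one full power of the base, i.e.\ a factor of order $r$, which no fixed $c_0$ absorbs. The paper's proof shares this gap. It is, however, irrelevant to the paper's applications of the lemma (bounding $I_3$ and $I_4$ in the proof of Theorem~\ref{GMDP}), where $\Delta V\to\infty$: then the loss $O(\log r)$ in the exponent is swallowed by the gain $\Delta V\cdot|\log(c_0C_0)|$ obtained from shrinking $c_0$. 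If you want a statement that is literally justified by the argument, replace $\Delta V$ by $\lfloor\Delta V\rfloor$ in the exponent of the conclusion; that version is what both you and the paper actually prove, and it suffices for all applications.
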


\begin{proof}
	Similarly to the proof of \eqref{MVEDPCLPP}, we can obtain
	\begin{align*}
		\frac{1}{T}\int_{T}^{2T}|P_{F}(\tfrac{1}{2} + it, X)|^{2k}dt
		\ll (C k \s_{F}(X)^2)^{k}
	\end{align*}
	for any $1 \leq k \leq \frac{\log{(T / \log{T})}}{\log{X}}$ with $C = C(F)$ a positive constant.
	It follows from this estimate that
	\begin{align*}
		\frac{1}{T}\meas\set{t \in [T, 2T]}{P_{F}(\tfrac{1}{2} + it, X) > V}
		\ll \l( \frac{C k \s_{F}(X)^2}{V^2} \r)^{k}.
	\end{align*}
	Hence, choosing $k = \lfloor \Delta V \rfloor$, we obtain
	\begin{align*}
		\frac{1}{T}\meas\set{t \in [T, 2T]}{P_{F}(\tfrac{1}{2} + it, X) > V}
		\ll \exp\l( -\Delta V \log\l( \frac{V}{4 \Delta \s_{F}(X)^2} \r) \r)
	\end{align*}
	for any $V \leq \frac{\log{(T / \log{T})}}{\Delta \log{X}}$.
\end{proof}

\begin{proof}[Proof of Theorem \ref{GMDP}]
To make the dependency of the limit on $X$ and $T$ explicit, 
we introduce three more parameters $n_{\bm{F}}^{*}$, $\vartheta_{\bm{F}}^{*}$, and $b_{\bm{F}}^{*}$, where 
\begin{align*}
  n_{\bm{F}}^{*} = \min_{1 \leq j \leq r}n_{F_{j}},\  
  \vartheta_{\bm{F}}^{*} = \min_{1 \leq j \leq r}\vartheta_{F_{j}}, \ 
  b_{\bm{F}}^{*}
  =\min_{\substack{1\leq i\leq r\\\vartheta_{F_i}=\vartheta_{\bm F}^*} }\sup_{p \text{ prime}}\{\frac{|b_{F_i}(p)|}{p^{\vartheta_{F_i}}}  \}.
\end{align*}
  Let $\e > 0$ be any small fixed constant and let 
	$ X
		\leq \max\{c_{1}(\log{T} \log{\log{T}})^{\frac{2}{1 + 2\vartheta_{\bm{F}}^{*}}}, c_{2}\frac{(\log{T})^2 \log{\log{T}}}{\log_{3}{T}}\}$,
	where $c_{1} = c_{1}(\bm{F}, k, \e)$ and $c_{2} = c_{2}(\bm{F}, k, \e)$ are some positive constants to be chosen later.
	Denote $\phi_{\bm{F}}(t, X) = \min_{1 \leq j \leq r}\Re e^{-i\theta_{j}}P_{F_{j}}(\tfrac{1}{2} + it, X)$ and
	\begin{align}
		\Phi_{\bm{F}}(T, V, X)
		= \meas\set{t \in [T, 2T]}{\phi_{\bm{F}}(t, X) > V}.
	\end{align}
	Then we have
	\begin{align*}
		\int_{T}^{2T}\exp\l( 2k \phi_{\bm{F}}(t, X) \r)dt
		= \int_{-\infty}^{\infty}2k e^{2k V} \Phi_{\bm{F}}(T, V, X)dV.
	\end{align*}
	We divide the range of the integral on the right hand side as
	\begin{align*}
		 & I_{1} + I_{2} + I_{3} + I_{4} + I_{5}                       \\
		 & := \bigg(\int_{-\infty}^{\sqrt{\log{\log{X}}}}
		+ \int_{\sqrt{\log{\log{X}}}}^{C k \log{\log{X}}}
    + \int_{C k \log{\log{X}}}^{\frac{\log{T}}{\log{\log{T}}}} + \int_{\frac{\log{T}}{\log{\log{T}}}}^{c_{3} \log{T}} 
    + \int_{c_{3} \log{T}}^{\infty}\bigg) 2k e^{2k V} \Phi_{\bm{F}}(T, V, X)dV
	\end{align*}
	with $c_{3} = \frac{1}{4k + \sqrt{\e} / 4}$ and $C = C(\bm{F})$ a suitably large constant.
	The trivial bound $\Phi_{\bm{F}}(T, V, X) \leq T$ yields the inequality $I_{1} \leq T e^{2k\sqrt{\log{\log{X}}}}$.
  Applying Lemma \ref{LDESRDP} with $\Delta = 4k$, we have $I_{3} \ll T$ when $C$ is suitably large depending only on $\bm{F}$.
  Similarly, applying Lemma \ref{LDESRDP} with $\Delta = \frac{2k + \sqrt{\e}/10}{\log{\log{T}}}$, we have $I_{4} \ll_{k, \e} T$.

  Next, we consider $I_{5}$.
  We have
	\begin{align*}
		\phi_{\bm F}(t, X)\leq \min_{1\leq j\leq r} |P_{F_{j}}(\tfrac{1}{2} + it, X)|
		\leq\min_{1\leq j\leq r} \sum_{p \leq X}\frac{|a_{F_{j}}(p)|}{p^{1/2}}
		+ \max_{1\leq j\leq r} \sum_{2 \leq \ell \leq \frac{\log{X}}{\log{2}}}\sum_{p \leq X^{1/\ell}}\frac{|b_{F_{j}}(p^{\ell})|}{p^{\ell/2}}.
	\end{align*}
	By the Cauchy-Schwarz inequality, the prime number theorem, and assumption \eqref{SNC}, the first sum is
	\begin{align*}
		\leq \min_{1\leq j\leq r} \l( \sum_{p \leq X}1 \r)^{1/2} \l( \sum_{p \leq X}\frac{|a_{F_{j}}(p)|^2}{p} \r)^{1/2}
    = (1 + o(1))\sqrt{\frac{X}{\log{X}} \times n_{\bm F}^* \log{\log{X}}}.
  \end{align*}
	On the other hand, we also obtain, using the prime number theorem and partial summation, that the first sum is
	\begin{align*}
		\leq \min_{1\leq j\leq r}\sup_{p}\frac{|b_{F_{j}}(p)|}{p^{\vartheta_{F_j}}}\sum_{p \leq X}\frac{p^{\vartheta_{F_{j}}}}{p^{1/2}}
		=  (1 + o(1))\frac{2b_{\bm F}^*}{1 + 2\vartheta_{\bm F}^*}\frac{X^{1/2 + \vartheta_{\bm F}^*}}{\log{X}}
	\end{align*}
	By the assumption (S5) and the Cauchy-Schwarz inequality, the second sum is
	\begin{align*}
		\ll_{F_{j}} \sum_{2 \leq \ell \leq \frac{\log{X}}{\log{2}}}\l( \sum_{p \leq X^{1/\ell}}\frac{1}{(\log{p^{\ell}})^2} \r)^{1/2}
		\l( \sum_{p \leq X^{1/\ell}}\frac{|b_{F_{j}}(p^{\ell}) \log{p^{\ell}}|^2}{p^{\ell}} \r)^{1/2}
		\ll X^{1/4}.
	\end{align*}
  Therefore, we have
  \begin{align*}
    \phi_{\bm{F}}(t, X)
    \leq (1 + o(1))
    \min\l\{ \frac{2b_{\bm F}^*}{1 + 2\vartheta_{\bm F}^*}\frac{X^{1/2 + \vartheta_{\bm{F}}^{*}}}{\log{X}}, \sqrt{n_{\bm{F}}^{*} \frac{X \log{\log{X}}}{\log{X}}} \r\}.
  \end{align*}
  Choosing $c_{1} = (b_{\bm{F}}^{*}(4k + \sqrt{\e}/2)^{-2 / (1 + 2\vartheta_{\bm{F}}^{*})}$ 
  and $c_{2} = ( (8k^2  + \e)n_{\bm{F}}^{*})^{-1}$, 
  we have $\Phi_{\bm{F}}(T, V, X) = 0$ for $V \geq c_{3}\log{T}$ and thus $I_{5} = 0$.

  Finally, we consider $I_{2}$.
	From Proposition \ref{Main_Prop_JVD3}, it holds that
	\begin{align*}
		\Phi_{\bm{F}}(T, V, X)
		= T(1 + E_{1}) \times \Xi_{X}\l( \tfrac{V}{\s_{F_{1}}(X)^2}, \dots, \tfrac{V}{\s_{F_{r}}(X)^2} \r)
		\prod_{j = 1}^{r}\int_{V / \s_{F_{j}}(X)}^{\infty}e^{-u^2/2}\frac{du}{\sqrt{2\pi}}
	\end{align*}
	for $\sqrt{\log{\log{X}}} \leq V \leq C k \log{\log{X}}$.
	Here, $E_{1}$ satisfies the estimate
	\begin{align*}
		E_{1} \ll_{\bm{F}, k}
		\frac{V^{r}}{(\log{\log{X}})^{\a_{\bm{F}} + \frac{r + 1}{2}}}
		+ \frac{1}{\sqrt{\log{\log{X}}}}.
	\end{align*}
	We also have
	\begin{align*}
		\int_{x}^{\infty}e^{-u^2/2}\frac{du}{\sqrt{2\pi}}
		= \frac{e^{-x^{2}/2}}{\sqrt{2\pi}x}\l(1 + O_{\e}\l(\frac{1}{x^2} \r)\r)
	\end{align*}
	for $x \geq \e$ with any $\e > 0$.
	Therefore, we obtain
	\begin{align*}
		 & \Phi_{\bm{F}}(T, V, X)                                                                                       \\
		 & = T\l(\prod_{j = 1}^{r}\s_{F_{j}}(X)\r)\frac{1 + E_{1} + O_{\bm{F}}(\log{\log{X}} / V^2)}{(2\pi)^{r/2}V^{r}}
		\times \Xi_{X}\l( \tfrac{V}{\s_{F_{1}}(X)^2}, \dots, \tfrac{V}{\s_{F_{r}}(X)^2} \r)
		\exp\l( -H_{\bm{F}}(X)^{-1}V^2 \r)
	\end{align*}
	for $\sqrt{\log{\log{X}}} \leq V \leq C k \log{\log{X}}$.
	Hence, we have
	\begin{align} \label{PGMDP1}
		 & 2k e^{2kV}\Phi_{\bm{F}}(T, V, X)                                                                           \\
		 & = T 2k \l( \prod_{j = 1}^{r}\s_{F_{j}}(X) \r)\frac{1 + E_{1} + O(\log{\log{X}} / V^2)}{(2\pi)^{r/2} V^{r}}
		\exp\l( k^{2}H_{\bm{F}}(X) \r)                                                                                \\
		 & \qqqquad \times \Xi_{X}\l( \tfrac{V}{\s_{F_{1}}(X)^2}, \dots, \tfrac{V}{\s_{F_{r}}(X)^2} \r)
		\exp\l( -\frac{1}{H_{\bm{F}}(X)}\l( V - k H_{\bm{F}}(X) \r)^2 \r)
	\end{align}
	for $\sqrt{\log{\log{X}}} \leq V \leq C k \log{\log{X}}$.
	Put $H = D\sqrt{\log{\log{X}} \log_{3}{X}}$ with $D = D(\bm{F})$ a suitably large constant.
	Here, we choose $X_{0}(\bm{F}, k)$ large such that $k H_{\bm{F}}(X) \geq 2H$ if necessary.
	We write
	\begin{align*}
		I_{2}
		= \l(\int_{\sqrt{\log{\log{X}}}}^{kH_{\bm{F}}(X) - H} + \int_{kH_{\bm{F}}(X) - H}^{kH_{\bm{F}}(X) + H}
		+ \int_{kH_{\bm{F}}(X) + H}^{C k \log{\log{X}}} \r)2ke^{2kV}\Phi_{\bm{F}}(T, V, X)dV
		=: I_{2, 1} + I_{2, 2} + I_{2, 3}.
	\end{align*}
	Applying \eqref{PGMDP1}, we find that
	\begin{align*}
		I_{2, 1}
		 & \ll_{\bm{F}, k} T \exp\l( k^2 H_{\bm{F}}(X) \r)
		\int_{\sqrt{\log{\log{X}}}}^{kH_{\bm{F}}(X) - H} \exp\l( -\frac{1}{H_{\bm{F}}(X)}\l( V - kH_{\bm{F}}(X) \r)^2 \r)dV \\
		 & \leq T \exp\l( k^2 H_{\bm{F}}(X) \r)\int_{H}^{\infty}\exp\l( -\frac{V^2}{H_{\bm{F}}(X)} \r)dV
		\ll T \frac{H_{\bm{F}}(X)}{H}\exp\l( k^2 H_{\bm{F}}(X) - \frac{H^2}{H_{\bm{F}}(X)} \r),
	\end{align*}
	and that
	\begin{align*}
		I_{2, 3}
		 & \ll_{\bm{F}, k} T \frac{\exp(k^2 H_{\bm{F}}(X))}{(\log{\log{X}})^{r/2}}
		\int_{kH_{\bm{F}}(X) + H}^{C k \log{\log{X}}}\exp\l( -\frac{1}{H_{\bm{F}}(X)}\l( V - k H_{\bm{F}}(X) \r)^2 \r) dV \\
		 & \leq T \frac{\exp(k^2 H_{\bm{F}}(X))}{(\log{\log{X}})^{r/2}}
		\int_{H}^{\infty}\exp\l( -\frac{V^2}{H_{\bm{F}}(X)} \r)dV
		\ll T \frac{H_{\bm{F}}(X)}{H (\log{\log{X}})^{r/2}}\exp\l( k^2 H_{\bm{F}}(X) - \frac{H^2}{H_{\bm{F}}(X)} \r).
	\end{align*}
	Hence, we have
	\begin{align*}
		I_{2, 1} + I_{2, 3}
		\ll_{\bm{F}, k} T\frac{\exp\l( k^2 H_{\bm{F}}(X)\r)}{(\log{\log{X}})^{2r}}
	\end{align*}
	Our main term comes from $I_{2, 2}$, which is equal to
	\begin{multline*}
		2k \frac{\exp\l( k^2 H_{\bm{F}}(X) \r)}{(\sqrt{2\pi} k H_{\bm{F}}(X))^{r}} \l( \prod_{j = 1}^{r}\s_{F_{j}}(X)\r)
		\l( 1 + O_{\bm{F}}\l( \frac{1}{(\log{\log{X}})^{\a_{\bm{F}} - \frac{r - 1}{2}}}
			+ \frac{1}{\sqrt{\log{\log{X}}}} + \frac{H}{H_{\bm{F}}(X)} \r) \r)\\
		\times \Xi_{X}\l( \frac{k H_{\bm{F}}(X) + O(H)}{\s_{F_{1}}(X)^2}, \dots, \frac{k H_{\bm{F}}(X) + O(H)}{\s_{F_{r}}(X)^2} \r)
		\int_{k H_{\bm{F}}(X) - H}^{k H_{\bm{F}}(X) + H}\exp\l( -\frac{1}{H_{\bm{F}}(X)}(V - kH_{\bm{F}}(X))^2 \r).
	\end{multline*}
	The analyticity of $\Xi_{X}$ yields that
	\begin{align*}
		 & \Xi_{X}\l( \frac{k H_{\bm{F}}(X) + O(H)}{\s_{F_{1}}(X)^2}, \dots, \frac{k H_{\bm{F}}(X) + O(H)}{\s_{F_{r}}(X)^2} \r) \\
		 & = \l( 1 + O_{k, \bm{F}}\l( \frac{H}{\log{\log{X}}} \r) \r)
		\Xi_{X}\l( \frac{k H_{\bm{F}}(X)}{\s_{F_{1}}(X)^2}, \dots, \frac{k H_{\bm{F}}(X)}{\s_{F_{r}}(X)^2} \r).
	\end{align*}
	We also see that
	\begin{align*}
		\int_{k H_{\bm{F}}(X) - H}^{k H_{\bm{F}}(X) + H}\exp\l( -\frac{1}{H_{\bm{F}}(X)}(V - kH_{\bm{F}}(X))^2 \r)
		 & = \sqrt{\pi H_{\bm{F}}(X)} + O\l( \int_{H}^{\infty}\exp\l( -\frac{V^2}{H_{\bm{F}}(X)} \r) \r)        \\
		 & = \sqrt{\pi H_{\bm{F}}(X)} + O\l( \frac{H_{\bm{F}}(X)}{H}\exp\l( -\frac{H^2}{H_{\bm{F}}(X)} \r) \r).
	\end{align*}
	Therefore, we have
	\begin{align*}
		I_{2, 2}
		= T\frac{\exp\l( k^2 H_{\bm{F}}(X) \r) \prod_{j = 1}^{r}\s_{F_{j}}(X)}{\l( \sqrt{2\pi} k H_{\bm{F}}(X) \r)^{r-1}
		\sqrt{\frac{1}{2}H_{\bm{F}}(X)}}
		\Xi_{X}\l( \frac{k H_{\bm{F}}(X)}{\s_{F_{1}}(X)^2}, \dots, \frac{k H_{\bm{F}}(X)}{\s_{F_{r}}(X)^2} \r)\l( 1 + E_{2} \r),
	\end{align*}
	where
	\begin{align*}
		E_{2}
		\ll_{\bm{F}, k}
		\frac{1}{(\log{\log{X}})^{\a_{\bm{F}} - \frac{r - 1}{2}}}
		+ \frac{\log_{3}{X}}{\sqrt{\log{\log{X}}}}.
	\end{align*}
	From the above estimates of $I_{1}$, $I_{2}$, and $I_{3}$, we obtain Theorem \ref{GMDP}.
\end{proof}

\begin{proof}[Proof of Corollary \ref{jointDirichlet}]
  Assume the Selberg Orthonormality Conjecture, \eqref{SSOC} and $\vartheta_{\bm{F}} < \frac{1}{r + 1}$.
  Note that the error term $E$ in Theorem \ref{GMDP} is $= o(1)$ as $X \rightarrow + \infty$ when $\vartheta_{\bm{F}} < \frac{1}{r + 1}$.
  Hence, it suffices to show that for $k > 0$
  \begin{align*}
    &\frac{\exp\l( k^2 H_{\bm{F}}(X) \r) \prod_{j = 1}^{r}\s_{F_{j}}(X)}{\l( \sqrt{2\pi} k H_{\bm{F}}(X) \r)^{r-1}
		\sqrt{\frac{1}{2}H_{\bm{F}}(X)}}
    \Xi_{X}\l( \frac{k H_{\bm{F}}(X)}{\s_{F_{1}}(X)^2}, \dots, \frac{k H_{\bm{F}}(X)}{\s_{F_{r}}(X)^2} \r)
    \sim C(\bm{F}, k, \bm{\theta})\frac{(\log{X})^{k^2 / h_{\bm{F}}}}{(\log{\log{X}})^{(r - 1)/2}}
  \end{align*}
  as $X \rightarrow + \infty$.
  Using \eqref{SSOC}, (S4), (S5), we find that
  \begin{align} \label{pGGHK1}
    \s_{F_{j}}(X)^2
    = \frac{n_{F_{j}}}{2}\log{\log{X}} + c_{j} + o(1)
  \end{align}
  as $X \rightarrow + \infty$. Here, $c_{j}$ is a constant depending only on $\bm{F}$.
  From this equation, we also see that $\s_{F_{j}}(X) = (1 + o(1))\sqrt{\frac{n_{F_{j}}}{2}\log{\log{X}}}$.
  Moreover, it holds that as $X \rightarrow + \infty$
  \begin{align} \label{pGGHK2}
    H_{\bm{F}}(X)
    = \frac{\prod_{j = 1}^{r}(n_{F_{j}}\log{\log{X}} + 2c_{j})}{\sum_{h = 1}^{r}\us{j \not= h}{\prod_{j = 1}^{r}}(n_{F_{j}}\log{\log{X}} + 2c_{j})}
    + o(1)
    = \frac{\log{\log{X}}}{h_{\bm{F}}} + g_{\bm{F}} + o(1)
  \end{align}
  for a constant $g_{\bm{F}}$.
  Hence, we have
  \begin{align*}
    \frac{\exp\l( k^2 H_{\bm{F}}(X) \r) \prod_{j = 1}^{r}\s_{F_{j}}(X)}{\l( \sqrt{2\pi} k H_{\bm{F}}(X) \r)^{r-1}
    \sqrt{\frac{1}{2}H_{\bm{F}}(X)}}
    \sim C_{1}(\bm{F}, k)\frac{(\log{X})^{k^2 / h_{\bm{F}}}}{(\log{\log{X}})^{\frac{r - 1}{2}}}
  \end{align*}
  as $X \rightarrow + \infty$.
  Also, by \eqref{SSOC}, (S4), and (S5), we can prove that, for $j_{1} \not= j_{2}$,
  \begin{align*}
    \tau_{j_{1}, j_{2}}(X) = c_{j_{1}, j_{2}} + o(1)
  \end{align*}
  as $X \rightarrow + \infty$, where $c_{j_{1}, j_{2}}$ is a constant depending only on $\bm{F}$, $\bm{\theta}$.
  From this equation and the definition of $\Psi_{\bm{F}}$ \eqref{def_Psi_F}, we can write
  \begin{align*}
    &\Xi_{X}\l( \frac{k H_{\bm{F}}(X)}{\s_{F_{1}}(X)^2}, \dots, \frac{k H_{\bm{F}}(X)}{\s_{F_{r}}(X)^2} \r)\\
    &= (1 + o(1))\exp\l( k^2\sum_{1 \leq j_{1} < j_{2} \leq r}\frac{H_{\bm{F}}(X)^2}{(\s_{F_{j_{1}}}(X) \s_{F_{j_{2}}}(X))^2}c_{j_{1}, j_{2}} \r)
    \Psi_{\bm{F}}\l( \frac{k H_{\bm{F}}(X)}{\s_{F_{1}}(X)^2}, \dots, \frac{k H_{\bm{F}}(X)}{\s_{F_{r}}(X)^2} \r)
  \end{align*}
  as $X \rightarrow + \infty$.
  Equations \eqref{pGGHK1}, \eqref{pGGHK2} yield that for some constant $C_{2}(\bm{F}, k, \bm{\theta})$
  \begin{align*}
    \exp\l( k^2\sum_{1 \leq j_{1} < j_{2} \leq r}\frac{H_{\bm{F}}(X)^2}{(\s_{F_{j_{1}}}(X) \s_{F_{j_{2}}}(X))^2}c_{j_{1}, j_{2}} \r)
    \sim C_{2}(\bm{F}, k, \bm{\theta}).
  \end{align*}
  Moreover, it follows from the analyticity shown in Lemma \ref{Prop_Psi_FPP} and equations \eqref{pGGHK1}, \eqref{pGGHK2} that
  \begin{align*}
    \Psi_{\bm{F}}\l( \frac{k H_{\bm{F}}(X)}{\s_{F_{1}}(X)^2}, \dots, \frac{k H_{\bm{F}}(X)}{\s_{F_{r}}(X)^2} \r)
    &= \Psi_{\bm{F}}\l( \frac{2k}{n_{F_{1}}h_{\bm{F}}} + o(1), \dots, \frac{2k}{n_{F_{r}}h_{\bm{F}}} + o(1) \r)\\
    &\sim \Psi_{\bm{F}}\l( \frac{2k}{n_{F_{1}}h_{\bm{F}}}, \dots, \frac{2k}{n_{F_{r}}h_{\bm{F}}} \r).
  \end{align*}
  Thus, taking 
  $C(\bm{F}, k, \bm{\theta}) = C_{1}(\bm{F}, k) \cdot C_{2}(\bm{F}, k, \bm{\theta}) 
  \cdot \Psi_{\bm{F}}\l( \frac{2k}{n_{F_{1}}h_{\bm{F}}}, \dots, \frac{2k}{n_{F_{r}}h_{\bm{F}}} \r)$, we obtain \eqref{GGHK}.
\end{proof}

\section{\textbf{Proof of Theorems \ref{Main_Thm_LD_JVD}--Theorem \ref{New_MVT}}} \label{uncond}

\begin{lemma}{\label{diff}}
	Under the same situation as in Proposition \ref{KLI}.
	Let $r \in \ZZ_{\geq 1}$ be given.
	There exists a positive constant $A_{4} = A_{4}(F, r)$ such that
	for $X = T^{1 / (\log{\log{T}})^{4(r + 1)}}$, $Y = T^{1 / k}$, $k \in \ZZ_{\geq 1}$
	with $k \leq (\log{\log{T}})^{{4}(r + 1)}$,
	\begin{align*}
		 & \frac{1}{T}\int_{T}^{2T}\bigg| \log{F(\tfrac{1}{2} +it)} - P_{F}(\tfrac{1}{2}+it, X)
		- \sum_{|\frac{1}{2}+it-\rho_{F}| \leq \frac{1}{\log{Y}}}\log{((\tfrac{1}{2}+it-\rho_{F})\log{Y})} \bigg|^{2k}dt \\
		 & \leq A_{4}^{k} k^{2k} + A_{4}^{k}k!(\log_3T)^{k},
	\end{align*}
	and
	\begin{align*}
		\frac{1}{T}\int_{T}^{2T}\bigg| \log{F(\tfrac{1}{2} +it)} - P_{F}(\tfrac{1}{2}+it, X) \bigg|^{2k}dt
		\leq A_{4}^{k} k^{4k} + A_{4}^{k}k!(\log_3T)^{k}.
	\end{align*}
\end{lemma}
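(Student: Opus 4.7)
The plan is to apply Propositions \ref{KLI} and \ref{KLST} directly with $\sigma = 1/2$, so that the zero-density contribution $T^{\delta_F(1-2\sigma)}$ collapses to $1$, and then show that for the specific choice $X = T^{1/(\log\log T)^{4(r+1)}}$, $Y = T^{1/k}$ with $k \leq (\log\log T)^{4(r+1)}$, the prime sum $\sum_{X < p \leq Y} |a_F(p)|^2/p$ is of size $O_{F, r}(\log_3 T)$.

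First I would check the hypotheses. Both propositions require $3 \leq X \leq Y = T^{1/k}$. Since $1/k \geq 1/(\log\log T)^{4(r+1)}$ by the constraint on $k$, we have $Y \geq X$; and $X \geq 3$ for $T$ sufficiently large. Hence Proposition \ref{KLI} applies and gives
\begin{align*}
\frac{1}{T}\int_T^{2T}\bigg| \log F(\tfrac{1}{2}+it) - P_F(\tfrac{1}{2}+it, X) - \sum_{|\frac{1}{2}+it-\rho_F|\leq 1/\log Y}\!\!\!\log((\tfrac{1}{2}+it-\rho_F)\log Y) \bigg|^{2k} dt \\
\leq A_1^k k^{2k} + A_1^k k^k \bigg(\sum_{X < p \leq Y} \frac{|a_F(p)|^2}{p}\bigg)^{k}.
\end{align*}

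Next I would bound the prime sum using the Selberg orthonormality condition \eqref{SNC}. We have $\log\log X = \log\log T - 4(r+1)\log_3 T + O(1)$ and $\log\log Y = \log\log T - \log k + O(1)$, so by \eqref{SNC},
\begin{align*}
\sum_{X < p \leq Y} \frac{|a_F(p)|^2}{p} = n_F\bigl( 4(r+1)\log_3 T - \log k \bigr) + O_F(1) \ll_{F, r} \log_3 T,
\end{align*}
where in the last step we used $\log k \leq 4(r+1)\log_3 T$, which follows from the assumption $k \leq (\log\log T)^{4(r+1)}$. Substituting this bound and using $k^k \leq e^k k!$ to convert $k^k \cdot (C\log_3 T)^k$ into $C'^k k!(\log_3 T)^k$, we obtain the first claim with a suitable $A_4 = A_4(F, r)$.

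The second inequality is established by the same argument, now invoking Proposition \ref{KLST} in place of Proposition \ref{KLI}: the first term becomes $A_2^k k^{4k}$ (the stronger $k^{4k}$ replacing $k^{2k}$), while the second term is handled identically via the bound on the prime sum, yielding $A_4^k k^{4k} + A_4^k k!(\log_3 T)^k$. There is no genuine obstacle; the only point requiring care is keeping track of the dependence on $r$ in the constants, which is harmless since $r$ is fixed in the statement, and the sharp choice $X = T^{1/(\log\log T)^{4(r+1)}}$ is precisely what forces $\log\log Y - \log\log X$ to be of order $\log_3 T$.
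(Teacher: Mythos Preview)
Your proposal is correct and follows essentially the same route as the paper's proof: apply Propositions \ref{KLI} and \ref{KLST} at $\sigma=1/2$ and reduce to bounding $\sum_{X<p\le Y}|a_F(p)|^2/p \ll_{F,r}\log_3 T$ via \eqref{SNC}. Your write-up is simply more detailed (checking $X\le Y$, and converting $k^k$ to $k!$ via $k^k\le e^k k!$), but the underlying argument is identical.
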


\begin{proof}
	By Proposition \ref{KLI}, it suffices to show that
	\begin{align}	\label{diffp1}
		\sum_{X < p \leq Y}\frac{|a_{F}(p)|^2}{p}
		\ll_{F, r} \log_{3}T,
	\end{align}
	where $Y = T^{1/ k}$.
	Using formula \eqref{SNC}, we find that
	\begin{align*}
		\sum_{X < p < Y}\frac{|a_{F}(p)|^2}{p}
		= n_{F}\l( \log{\log{Y^2}} - \log{\log{X}} \r) + O_{F}(1)
		\ll_{F, r} \log_3T.
	\end{align*}
	Thus, we obtain estimate \eqref{diffp1}.
\end{proof}

\noindent
\textit{Proof of Theorem \ref{Main_Thm_LD_JVD}}.
We consider \eqref{Main_Thm_LD_JVD1} and \eqref{Main_Thm_LD_JVD2}--\eqref{Main_Thm_LD_JVD3} separately.
\begin{proof}[Proof of \eqref{Main_Thm_LD_JVD1}]
	Let $T$ be large.
	Put $X = T^{1 / (\log{\log{T}})^{4(r + 1)}}$.
	Let $A \geq 1$ be a fixed arbitrary constant.
	Let the set $\mathcal{E}_j$ be
	\begin{align*}
		\mathcal{E}_j:=\set{t \in [T, 2T]}{\bigg|\log F_{j}(\tfrac{1}{2} + it) - P_{F_{j}}(\tfrac{1}{2} + it, X)\bigg|
			\geq \mathcal{L}}.
	\end{align*}
	From Lemma \ref{diff}, we have
	\begin{align}
		\meas(\mathcal{E}_j)
		\ll T\mathcal{L}_{j}^{-2k}A_{5}^{k}(k^{4k} + k^{k}(\log_3 T)^k)
	\end{align}
	for all $j$ with $A_{5} := A_{5}(\bm{F}) = \max_{1 \leq j \leq r}A_{4}(F_{j}, r)+1$,
	where $A_{4}(F_{j}, r)$ has the same meaning as in Lemma \ref{diff}.
	Here the parameter $\mathcal{L}$ satisfying $\mathcal{L} \geq (2A_5\log_{3}{T})^{2/3}$ will be chosen later.
	Set $k=\lfloor \mathcal{L}^{1/2}/e A_{5}^{1/4}\rfloor$ so that
	$\operatorname{meas}(\mathcal{E}_j)\ll T \exp(-c_{1}\mathcal{L}^{1/2})$ for some $c_1>0$.
	Therefore except on the set $\mathcal{E}:=\bigcup_{j=1}^r\mathcal{E}_j$ with measure $O_r(T \exp(-c_{1}\mathcal{L}^{1/2}))$, we have
	\begin{align}\label{logFjVj}
		\Re e^{-i\theta_j}\log F_j(\tfrac{1}{2}+it)
		=\Re e^{-i\theta_j}P_{F_{j}}(\tfrac{1}{2} + it, X) + \beta_j(t)\mathcal{L}
	\end{align}
	with $|\beta_j(t)|\leq 1$ for all $j=1, \dots, r$.
	By \eqref{logFjVj} and Proposition \ref{Main_Prop_JVD}, the measure of $t\in[T, 2T]\backslash \mathcal{E}$ such that
	\begin{align}\label{large dev}
		\Re e^{-i\theta_j}\log F_j(\tfrac{1}{2}+it)
		\geq V_{j} \sqrt{\frac{n_{F_{j}}}{2}\log{\log{T}}}, \; \forall j=1, \dots, r
	\end{align}
	is at least (since $\beta_j(t)\leq 1$
	\begin{multline}\label{lower meas}
		T\l( 1 + O_{\bm{F}}\l(
			\frac{\prod_{j = 1}^{r}(1 + |V_{k}| + \frac{\mathcal{L}}{\sqrt{\log{\log{T}}}})}{ (\log{\log{T}})^{\alpha_F+\frac{1}{2}}}
			+ \frac{1 + \norm[]{\bm{V}}^{2}+\frac{\mathcal{L}^2}{\log{\log{T}}}}{\log{\log{T}}}\r) \r)\\
		\times \prod_{j = 1}^{r}\int_{\s_{F_{j}}(X)^{-1}(V_{j}\sqrt{(n_{F_{j}}/2)\log{\log{T}}} + \mathcal{L})}^{\infty}
		e^{-u^2/2}\frac{du}{\sqrt{2\pi}}
	\end{multline}
	for $\frac{\mathcal{L}}{\sqrt{\log{\log{T}}}}, \norm[]{\bm{V}}
		\leq c\sqrt{\log{\log{T}}}$ with $c$ sufficiently small.
	Similarly, the measure of $t\in [T, 2T]\backslash \mathcal{E}$ such that \eqref{large dev} holds is at most
	\begin{multline}\label{upper meas}
		T\l( 1 + O_{\bm{F}}\l(
			\frac{\prod_{k = 1}^{r}(1 + |V_{k}| + \frac{\mathcal{L}}{\sqrt{\log{\log{T}}}})}{(\log{\log{T}})^{\a_{\bm{F}} + \frac{1}{2}}}
			+ \frac{1 + \norm[]{\bm{V}}^{2}+\frac{\mathcal{L}^2}{\log{\log{T}}}}{\log{\log{T}}}\r) \r)\\
		\times \prod_{j = 1}^{r}\int_{\s_{F_{j}}(X)^{-1}(V_{j}\sqrt{(n_{F_{j}}/2)\log{\log{T}}} - \mathcal{L})}^{\infty}
		e^{-u^2/2}\frac{du}{\sqrt{2\pi}}
	\end{multline}
	for $\frac{\mathcal{L}}{\sqrt{\log{\log{T}}}}, \norm[]{\bm{V}}
		\leq c\sqrt{\log{\log{T}}}$.
	By using equation \eqref{SNC}, we find that
	\begin{align*}
		\s_{F_{j}}(X)
		= \sqrt{\frac{n_{F_{j}}}{2} \log{\log{T}}} + O_{r, F_{j}}\l( \frac{\log_{3}{T}}{\sqrt{\log{\log{T}}}} \r),
	\end{align*}
	and so we also have
	\begin{align}
		\label{sigaminv}
		\s_{F_{j}}(X)^{-1}
		= \frac{1}{\sqrt{\frac{n_{F_{j}}}{2} \log{\log{T}}}} \l(1+ O_{r, F_{j}}\l( \frac{\log_{3}{T}}{\log{\log{T}}} \r)\r).
	\end{align}
	Therefore, when $|V_{j}| \leq \sqrt{\frac{\log{\log{T}}}{\log_{3}{T}}}$,
	$(|V_{j}| + 1)\mathcal{L} \leq B_{1}\sqrt{\log{\log{T}}}$ with $B_{1} > 0$ a constant to be chosen later,
	we find that
	\begin{align*}
		 & \int_{\s_{F_{j}}(X)^{-1}(V_{j}\sqrt{(n_{F_{j}}/2)\log{\log{T}}} \pm \mathcal{L})}^{\infty}
		e^{-u^2/2}\frac{du}{\sqrt{2\pi}}                                                              \\
		 & = \int_{V_{j}}^{\infty}e^{-u^2/2}\frac{du}{\sqrt{2\pi}}
		+ \int_{V_{j}+ O_{r, F_{j}}( |V_{j}|\frac{\log_{3}{T}}{\log{\log{T}}} + \frac{\mathcal{L}}{\sqrt{\log{\log{T}}}})}^{V_{j}}
		e^{-u^2/2}\frac{du}{\sqrt{2\pi}}                                                              \\
		 & = \int_{V_{j}}^{\infty}e^{-u^2/2}\frac{du}{\sqrt{2\pi}}
		+ O_{r, F_{j}, B_{1}}\l( \l(\frac{|V_{j}| \log_{3}{T}}{\log{\log{T}}} + \frac{\mathcal{L}}{\sqrt{\log{\log{T}}}} \r)
		e^{-V_{j}^2/2}\r)                                                                             \\
		 & = \l( 1 + O_{r, F_{j}, B_{1}}
		\l( \frac{|V_{j}|(|V_{j}| + 1)\log_{3}{T} + \mathcal{L}(|V_{j}| + 1)\sqrt{\log{\log{T}}}}{\log{\log{T}}} \r) \r)
		\int_{V_{j}}^{\infty}e^{-u^2 / 2}\frac{du}{\sqrt{2\pi}}.
	\end{align*}
	Hence, choosing $\mathcal{L} = 2c_{1}^{-2}r^{2}(\norm[]{\bm{V}}^{4} + (\log_{3}{T})^{2})$,
	we find that \eqref{lower meas} and \eqref{upper meas} become
	\begin{align*} 
		T \l( 1 + O_{\bm{F}, B_{1}}\l(\frac{(\norm[]{\bm{V}}^{4} + (\log_{3}{T})^2)(\norm[]{\bm{V}} + 1)}{\sqrt{\log{\log{T}}}}
			+ \frac{\prod_{k = 1}^{r}(1 + |V_{k}|)}{(\log{\log{T}})^{\a_{\bm{F}} + \frac{1}{2}}}\r)\r)
		\prod_{j = 1}^{r}\int_{V_{j}}^\infty e^{-\frac{u^2}{2}}\frac{du}{\sqrt{2\pi}},
	\end{align*}
	and
	\begin{align*}
		T \exp(-c_{1} \mathcal{L}^{1/2})
		\ll T \exp(-r(\|\bm V\|^2+\log_3T))\ll T \frac{1}{\log{\log{T}}}\prod_{j = 1}^{r}\int_{V_{j}}^{\infty}e^{-u^2/2}\frac{du}{\sqrt{2\pi}}
	\end{align*}
	when $\norm[]{\bm{V}} \leq c_{1}^{2}r^{-2}B_{1}(\log{\log{T}})^{1/10}$.
	Choosing $B_{1} = A c_{1}^{-2} r^{2}$, we have
	\begin{align*}
		 & \frac{1}{T}\operatorname{meas}\left( \bigcap_{j = 1}^{r}\set{t \in [T, 2T]}
		{\frac{\Re e^{-i\theta_{j}}\log{F_{j}(1/2 + it)}}{\sqrt{\frac{n_{F_{j}}}{2}\log{\log{T}}}} \geq V_j}\right)           \\
		 & =\l( 1 + O_{\bm{F}, A}\l(\frac{(\norm[]{\bm{V}}^{4} + (\log_{3}{T})^2)(\norm[]{\bm{V}} + 1)}{\sqrt{\log{\log{T}}}}
			+ \frac{\prod_{k = 1}^{r}(1 + |V_{k}|)}{(\log{\log{T}})^{\alpha_F+1/2}}\r)\r)\prod_{j = 1}^{r}
		\int_{V_j}^{\infty} e^{-\frac{u^2}{2}}\frac{du}{\sqrt{2\pi}}
	\end{align*}
	for $\norm[]{\bm{V}} \leq A(\log{\log{T}})^{1/10}$.
	Thus, we complete the proof of \eqref{Main_Thm_LD_JVD1}.
\end{proof}

\begin{proof}[Proof of \eqref{Main_Thm_LD_JVD2} and \eqref{Main_Thm_LD_JVD3}]
	Let $X = T^{1 / (\log{\log{T}})^{{4}(r + 1)}}$, $Y=T^{1/k}$ and
	let $\mathcal{B}_j$ be the set of $t\in [T, 2T]$ such that
	\begin{align}
		\left| \log F_j(\tfrac{1}{2}+it)-P_{F_{j}}(\tfrac{1}{2} + it, X)
		- \sum_{|1/2+it-\rho_{F_j}| \leq \frac{1}{\log Y}}\log ((\tfrac{1}{2}+it-\rho_{F_j})\log{Y})\right|
		\geq \mathcal{L}.
	\end{align}
	By Lemma \ref{diff}, we know
	\begin{align*}
		\meas(\mathcal{B}_j)
		\leq T\mathcal{L}^{-2k}A_{5}^k (k^{2k}+k^k(\log_{3}T)^k),
	\end{align*}
	where $A_5=\max_{1\leq j\leq r}{A_4(F_j,r)}+2$ and  $A_{4}(F_{j}, r)$ has the same meaning as in Lemma \ref{diff}.
	By taking $k=\lfloor \mathcal{L}/\sqrt{A_{5}e}\rfloor$, we have that
	$\meas(\mathcal{B}_j)\leq T \exp(-c_2 \mathcal{L} )$ for some $c_2>0$ as long as
	$\mathcal{L} \geq 2A_5\log_3T$.
	Therefore, it follows that for $t \in [T, 2T] \setminus \bigcup_{j=1}^r\mathcal{B}_j$
	\begin{align}
		 & \Re e^{-i\theta_{j}}  \log F_j(\tfrac{1}{2}+it)      \\
		 & =\Re e^{-i\theta_{j}}P_{F_{j}}(\tfrac{1}{2} + it, X)
		+ \sum_{|1/2+it-\rho_{F_j}|\leq \frac{1}{\log Y}}
		\Re e^{-i\theta_{j}} \log((\tfrac{1}{2}+it-\rho_{F_j})\log{Y}) + \b_j(t)\mathcal{L}
	\end{align}
	holds for all $1\leq j\leq r$ with some $|\beta_j(t)|\leq 1$.
	Let $\mathcal{C}_j$ be the set of $t\in [T, 2T]$ such that
	\begin{align}
		\sum_{|1/2-it-\rho_{F_j}|\leq \frac{1}{\log{Y}}}\Re e^{-\theta_{j}} \log((\tfrac{1}{2}+it-\rho_{F_j})\log{Y})
		\geq \mathcal{L}.
	\end{align}
	When $\theta_{j} \in [-\frac{\pi}{2}, \frac{\pi}{2}]$ and $|1/2+it-\rho_{F_j}| \leq \frac{1}{\log{Y}}$,
	we find that
	\begin{align}
		 & \Re e^{-i\theta_{j}} \log((\tfrac{1}{2}+it-\rho_{F_j})\log{Y})                                                             \\
		\label{KPlt}
		 & = \cos{\theta_{j}}\log|(\tfrac{1}{2}+it-\rho_{F_j})\log{Y}| + \sin{\theta_{j}} \arg((\tfrac{1}{2}+it-\rho_{F_j})\log{Y})
		\leq \pi.
	\end{align}
	Hence, we have
	\begin{align*}
		\sum_{|1/2+it-\rho_{F_j}|\leq \frac{1}{\log{Y}}}\Re e^{-i\theta_{j}} \log((\tfrac{1}{2}+it-\rho_{F_j})\log{Y})
		\leq \pi\sum_{|1/2+it-\rho_{F_j}|\leq \frac{1}{\log{Y}}}1,
	\end{align*}
	and thus by Lemma \ref{ESRSIZ}
	\begin{align*}
		\meas(\mathcal{C}_j)
		\leq C^k k^{2k} T \mathcal{L}^{-2k}
	\end{align*}
	for some constant $C = C(F_{j}) > 0$.
	By choosing $k = \lfloor \mathcal{L} / \sqrt{C e} \rfloor$, we have $\operatorname{meas}(\mathcal{C}_j)\leq T \exp(-c_3\mathcal{L})$ for some $c_3>0$. Now we have that the measure of
	$t\in[T, 2T] \setminus \bigcup_{j=1}^r(\mathcal{B}_j\cup \mathcal{C}_j)$ such that
	\begin{align*}
		\Re e^{-\theta_j}\log{F_{j}(\tfrac{1}{2}+it)}
		\geq V_j \sqrt{\frac{n_{F_j}}{2}\log\log T}
	\end{align*}
	is bounded by the measure of the set $t\in[T, 2T]$ such that
	\begin{align}\label{lower V-L-D}
		\Re e^{-i\theta_{j}} P_{F_{j}}(\tfrac{1}{2}+it, X)
		\geq V_j\sqrt{\frac{n_{F_j}}{2}\log\log T}- 2\mathcal{L}.
	\end{align}
	From Proposition \ref{Main_Prop_JVD}, we know \eqref{lower V-L-D} holds with measure
	\begin{multline}\label{upper bound V-L-D}
		T\l( 1 + O_{\bm{F}}\l(
			\frac{\prod_{k = 1}^{r}(1 + |V_{k}| + \frac{\mathcal{L}}{\sqrt{\log{\log{T}}}})}{ (\log{\log{T}})^{\alpha_F+\frac{1}{2}}}
			+ \frac{1 + \norm[]{\bm{V}}^{2}+\frac{\mathcal{L}^2}{\log{\log{T}}}}{\log{\log{T}}}\r) \r)\\
		\times \prod_{j = 1}^{r}\int_{\s_{F_{j}}(X)^{-1}(V_{j}\sqrt{(n_{F_{j}}/2)\log{\log{T}}} - 2\mathcal{L})}^{\infty}
		e^{-u^2/2}\frac{du}{\sqrt{2\pi}}
	\end{multline}
	for $\frac{\mathcal{L}}{\sqrt{\log{\log{T}}}}, \norm[]{\bm{V}}
		\leq c\sqrt{\log{\log{T}}}$ with $c$ sufficiently small. Since we have $\operatorname{meas}(\bigcup_{j=1}^r(\mathcal{B}_j\cup \mathcal{C}_j))\ll T\exp(-c_4\mathcal L)$ for some $c_4>0$, we choose  $\mathcal{L} =c_4^{-1}r( \norm[]{\bm{V}}^{2} + 2A_5\log_{3}{T})$ so that that \eqref{upper bound V-L-D} becomes
	\begin{multline} 
		T \l( 1 + O_{\bm{F}, A}\l(\frac{(\norm[]{\bm{V}}^{2} + \log_{3}{T})(\norm[]{\bm{V}} + 1)}{\sqrt{\log{\log{T}}}}
			+ \frac{\prod_{k = 1}^{r}(1 + |V_{k}|)}{(\log{\log{T}})^{\a_{\bm{F}} + \frac{1}{2}}}\r)\r)
		\prod_{j = 1}^{r}\int_{V_{j}}^\infty e^{-\frac{u^2}{2}}\frac{du}{\sqrt{2\pi}},
	\end{multline}
	for $\| \bm V\|\leq A (\log \log T)^{1/6}$.
	This completes the proof of \eqref{Main_Thm_LD_JVD2}.

	The proof of \eqref{Main_Thm_LD_JVD3} is similar by noting that
	when $\theta_{j} \in [\frac{\pi}{2}, \frac{3\pi}{2}]$ and $|\tfrac{1}{2}+it-\rho_{F_j}| \leq \frac{1}{\log{Y}}$,
	\begin{align*}
		 & \Re e^{-i\theta_{j}} \log((\tfrac{1}{2}+it-\rho_{F_j})\log{Y})                                                             \\
		 & = \cos{\theta_{j}}\log|(\tfrac{1}{2}+it-\rho_{F_j})\log{Y}| + \sin{\theta_{j}} \arg((\tfrac{1}{2}+it-\rho_{F_j})\log{Y})
		\geq -\pi.
	\end{align*}
	and thus the set of $t\in[T, 2T]$ such that
	\begin{align}
		\sum_{|1/2+it-\rho_{F_j}|\leq \frac{1}{\log{Y}}}\Re e^{-\theta_{j}} \log((\tfrac{1}{2}+it-\rho_{F_j})\log{Y})
		\leq - \mathcal{L}
	\end{align}
	has measure bounded by $C^k k^{2k} T \mathcal{L}^{-2k}$ for some constant $C=C(F_j)$.
\end{proof}

\begin{proof}[Proof of Theorem \ref{GJu}]
	Let $X = T^{1 / (\log{\log{T}})^{4(r + 1)}}$.
	Let $a_{1} = a_{1}(\bm{F}) > 0$ be a sufficiently small constant to be chosen later,
	Let $\bm{V} = (V_{1}, \dots, V_{r}) \in (\RR_{\geq 0})^{r}$ such that
	$\norm[]{\bm{V}} \leq a_{1}(1 + V_{m}^{{1/2}})(\log{\log{T}})^{{1/4}}$ with $V_{m} := \min_{1 \leq j \leq r}V_{j}$.

	We consider the case when $\bm \theta\in[-\frac{\pi}{2}, \frac{\pi}{2}]^r$ first.
	Similarly to the proof of \eqref{Main_Thm_LD_JVD2} (see \eqref{lower V-L-D}),
	we find that the measure of the set of $t \in [T, 2T]$ except for a set of measure
	$T\exp(-c_{4}\mathcal{L})$ ($\mathcal{L} \gg \log_{3}{T}$) such that
	$
		\Re e^{-i\theta_{j}} \log{F_{j}\l( \frac{1}{2} + it \r)}
		\geq V_{j} \sqrt{\frac{n_{F_{j}}}{2} \log{\log{T}}}
	$
	is at most the measure of the set $t \in [T, 2T]$ such that
	\begin{align*}
		\Re e^{-i\theta_{j}}P_{F_{j}}(\tfrac{1}{2} + it, X)
		\geq V_{j} \sqrt{\frac{n_{F_{j}}}{2} \log{\log{T}}} - 2\mathcal{L}.
	\end{align*}
	From Proposition \ref{Main_Prop_JVD}, the measure of $t \in [T, 2T]$ satisfying this inequality for all $j = 1, \dots, r$
	is equal to
	\begin{multline}
		\label{pGJu1}
		T\l( 1 + O_{\bm{F}}\l(
			\frac{\prod_{k = 1}^{r}(1 + V_{k} + \frac{\mathcal{L}}{\sqrt{\log{\log{T}}}})}{ (\log{\log{T}})^{\alpha_F+\frac{1}{2}}}
			+ \frac{1 + \norm[]{\bm{V}}^{2}+\frac{\mathcal{L}^2}{\log{\log{T}}}}{\log{\log{T}}}\r) \r)\\
		\times \prod_{j = 1}^{r}\int_{\s_{F_{j}}(X)^{-1}(V_{j}\sqrt{(n_{F_{j}}/2)\log{\log{T}}} - 2\mathcal{L})}^{\infty}
		e^{-u^2/2}\frac{du}{\sqrt{2\pi}}
	\end{multline}
	for $\frac{\mathcal{L}}{\sqrt{\log{\log{T}}}}, \norm[]{\bm{V}} \leq c \sqrt{\log{\log{T}}}$ with $c$ sufficiently small.

	Now, we choose $\mathcal{L} = 2 r c_{4}^{-1} \norm[]{\bm{V}}^2 + \log_{3}{T}$
	and $a_{1}$ small enough so that we have the inequalities $\norm[]{\bm{V}} \leq a_{9} \sqrt{\log{\log{T}}}$ and
	$4\mathcal{L} \leq (1 + V_{j})\sqrt{\frac{n_{F_{j}}}{2}\log{\log{T}}}$
	for all $j = 1, \dots, r$, where $a_{9}$ is the same constant as in Proposition \ref{Main_Prop_JVD}.
	Then, by equation \eqref{sigaminv} and the estimate $\int_{V}^{\infty}e^{-u^2/2}du \ll \frac{1}{1 + V}e^{-V^2/2}$
	for $V \geq 0$,
	we obtain
	\begin{align*}
		 & \int_{\s_{F_{j}}(X)^{-1}(V_{j}\sqrt{(n_{F_{j}}/2)\log{\log{T}}} - 2\mathcal{L})}^{\infty}
		e^{-u^2/2}\frac{du}{\sqrt{2\pi}}                                                                                  \\
		 & \ll_{r, F_{j}} \frac{1}{1 + V_{j}}
		\exp\l( -\frac{1}{2}\l( V_{j}
		+ O_{r, F_{j}}\l( \frac{\mathcal{L}}{\sqrt{\log{\log{T}}}} + V_{j}\frac{\log_{3}{T}}{\log{\log{T}}} \r) \r)^2 \r) \\
		 & \ll_{\bm F} \frac{1}{1 + V_{j}}
		\exp\l( -\frac{V_{j}^2}{2} + O_{\bm{F}}\l( \frac{V_{j}\norm[]{\bm{V}}^2}{\sqrt{\log{\log{T}}}}
		+ \frac{\norm[]{\bm{V}}^{4}}{\log{\log{T}}} \r) \r).
	\end{align*}
	Hence, when $0 \leq V_{1}, \dots, V_{r} \leq a \sqrt{\log{\log{T}}}$ with $a$ sufficiently small,
	\eqref{pGJu1} is
	\begin{align*}
		\ll_{\bm{F}} T \l\{\l(\prod_{j = 1}^{r}\frac{1}{1 + V_{j}}\r) + \frac{1}{(\log{\log{T}})^{\a_{\bm{F}} + \frac{1}{2}}} \r\}
		\exp\l( -\frac{V_{1}^2 + \cdots + V_{r}^2}{2}
		+ O_{\bm{F}}\l( \frac{\norm[]{\bm{V}}^{3}}{\sqrt{\log{\log{T}}}} \r) \r).
	\end{align*}
	Moreover, we have
	\begin{align*}
		T \exp(-c_{4} \mathcal{L})
		\leq T \exp(-2 r \norm[]{\bm{V}}^2)
		 & \leq T \prod_{j = 1}^{r}\exp\l( -2(V_{1}^2 + \cdots + V_{r}^2) \r)      \\
		 & \ll T \prod_{j = 1}^{r}\frac{1}{1 + V_{j}}\exp\l(-\frac{V_{j}^2}{2}\r).
	\end{align*}
	Similarly when $\bm \theta\in[\frac{\pi}{2}, \frac{3\pi}{2}]^r$, except for a set of measure $T \exp(-c_4\mathcal L)$ ($\mathcal L \gg \log_3T$), the measure of $t\in[T, 2T]$  such that
	\begin{align}
		\Re e^{-i\theta_j} \log F_j(\tfrac{1}{2}+it) \geq V_j \sqrt{\tfrac{n_{F_j}}{2}\log\log T}
	\end{align}
	is at least the measure of $t\in[T, 2T]$ such that
	\begin{align}
		\Re e^{-i\theta_j}  P_{F_j}(\tfrac{1}{2}+it) \geq V_j \sqrt{\tfrac{n_{F_j}}{2}\log\log T} + 2\mathcal L
	\end{align}
	When $4 \mathcal L\leq V_j\sqrt{\frac{n_{F_j}}{2}\log\log T}$, we have the measure of $t$ satisfying the above inequality for all $j = 1, \dots, r$ is (by Proposition \ref{Main_Prop_JVD})
	\begin{multline}\label{lowden}
		T\l( 1 + O_{\bm{F}}\l(
			\frac{\prod_{k = 1}^{r}(1 + V_{k} + \frac{\mathcal{L}}{\sqrt{\log{\log{T}}}})}{ (\log{\log{T}})^{\alpha_F+\frac{1}{2}}}
			+ \frac{1 + \norm[]{\bm{V}}^{2}+\frac{\mathcal{L}^2}{\log{\log{T}}}}{\log{\log{T}}}\r) \r)\\
		\times \prod_{j=1}^r\int_{\s_{F_{j}}(X)^{-1}(V_{j}\sqrt{(n_{F_{j}}/2)\log{\log{T}}} + 2\mathcal{L})}^{\infty}e^{-u^2/2}\frac{du}{\sqrt{2\pi}},
	\end{multline}
	which can be bounded by
	\begin{align}
		 & \gg_{\bm{F}} T \l(\prod_{j = 1}^{r}\frac{1}{1 + V_{j}}\r)
		\exp\l( -\frac{V_{1}^2 + \cdots + V_{r}^2}{2} + O_{\bm{F}}\l( \frac{\norm[]{\bm{V}}\mathcal L}{\sqrt{\log{\log{T}}}} \r) \r)
	\end{align}
	when $\prod_{j = 1}^{r}(1 + V_{j}) \leq c(\log{\log{T}})^{\a_{\bm{F}} + \frac{1}{2}}$
	with $c = c(\bm{F}) > 0$ a suitably small constant.
	Choose $\mathcal{L}=2r\|\bm V\|^2+ \log_3 T$ and $a_1$ small enough so that $\|\bm V\|\leq a_6 \sqrt{\log\log T}$ and $4\mathcal L \leq V_j\sqrt{\frac{n_{F_j}}{2}\log\log T}$ hold for all $j=1, \dots, r$, where $a_6$ is the same constant as in Proposition \ref{Main_Prop_JVD}.  Then \eqref{lowden} is
	\begin{align}
		 & \gg_{\bm{F}} T \l(\prod_{j = 1}^{r}\frac{1}{1 + V_{j}}\r)
		\exp\l( -\frac{V_{1}^2 + \cdots + V_{r}^2}{2} + O_{\bm{F}}\l( \frac{\norm[]{\bm{V}}^{3}}{\sqrt{\log{\log{T}}}} \r) \r),
	\end{align}
	which completes the proof of Theorem \ref{GJu}.
\end{proof}



We prepare a lemma to prove Theorem \ref{New_MVT}.

\begin{lemma}	\label{LLDSL}
	Let $\theta \in \l[ -\frac{\pi}{2}, \frac{\pi}{2} \r]$, and $F \in \Sc^{\dagger}$ satisfying \eqref{SNC} and (A3).
	There exist positive constants $a_{8} = a_{8}(F)$ such that for any large $V$,
	\begin{align*}
		\frac{1}{T}\meas\set{t \in [T, 2T]}{\Re e^{-i\theta} \log{F(\tfrac{1}{2}+it)} > V}
		\leq \exp\l(-a_{8} \frac{V^2}{\log{\log{T}}}\r)
		+ \exp\l( -a_{8} V \r).
	\end{align*}
\end{lemma}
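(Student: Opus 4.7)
The proof splits into two complementary regimes determined by the size of $V$ relative to $\log\log T$. For the Gaussian regime $V \le a_3 \log\log T$ with $a_3 = a_3(F)$ sufficiently small, invoke Corollary \ref{NNGJu} with $r=1$: since $\theta \in [-\pi/2,\pi/2]$, it yields
\[
\frac{1}{T}\meas\{t: \Re e^{-i\theta}\log F(\tfrac12+it) > V\} \ll_F \exp\!\Bigl(-\frac{V^2}{n_F\log\log T}\bigl(1 + O_F(V/\log\log T)\bigr)\Bigr).
\]
Choosing $a_3$ small keeps the relative correction below $1/2$, producing the claimed bound $\exp(-a_8 V^2/\log\log T)$ with $a_8$ a fixed multiple of $1/n_F$.

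For the tail regime $V > a_3\log\log T$ (and $V \ll \log T$, beyond which the convexity bound $|\log F(\tfrac12+it)| \ll \log T$ makes the excursion set empty), apply Proposition \ref{KLI} with $X = Y := T^{1/k}$ and $k := \lfloor c_F V\rfloor$ for a small $c_F = c_F(F)$. The equality $X = Y$ kills the prime-sum term in Proposition \ref{KLI}, leaving the decomposition $\log F(\tfrac12+it) = P_F(\tfrac12+it,X) + Z(\tfrac12+it) + E(\tfrac12+it)$, where $Z = \sum_{|\tfrac12+it-\rho|\le 1/\log Y}\log((\tfrac12+it-\rho)\log Y)$ and $\int|E|^{2k}\,dt \ll TA_1^k k^{2k}$. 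If $\Re e^{-i\theta}\log F > V$ then at least one of the three pieces exceeds $V/3$. For the Dirichlet polynomial, Lemma \ref{LDESRDP} with $\Delta = \Delta(F)$ small applies because $\sigma_F(X)^2 \asymp \log\log T$; since $V \gtrsim \log\log T$, the logarithmic factor in its exponent is bounded below by a positive constant, giving $\ll T\exp(-cV)$. For the error, Markov with the moment bound gives $\ll T(3A_1^{1/2}k/V)^{2k} \ll T\exp(-cV)$.

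The zero contribution relies on the crucial sign observation: for $\theta \in [-\pi/2,\pi/2]$ and each close zero $\rho$, one has $\cos\theta\,\log|(\tfrac12+it-\rho)\log Y| \le 0$ (as $\cos\theta \ge 0$ and $|(\tfrac12+it-\rho)\log Y| \le 1$) while $\sin\theta\,\arg(\cdot) \le \pi$. Hence $\Re e^{-i\theta} Z \le \pi N_t$, where $N_t = \#\{\rho : |\tfrac12+it-\rho| \le 1/\log Y\}$. Applying Lemma \ref{ESRSIZ} at $\sigma = 1/2$ (with the $X$ there being our $Y$) at the matched index $k_1 = \delta_F k$, combined with Markov's inequality, yields
\[
\meas\{N_t > V/(3\pi)\} \ll T\exp(8\delta_F k)\bigl(3\pi C^{1/2}k/V\bigr)^{2\delta_F k},
\]
and for $k \le V/(e\alpha)$ with $\alpha = 3\pi C^{1/2} e^4$ explicit, the combined exponent is at most $-2\delta_F k$, producing $\ll T\exp(-cV)$.

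The main obstacle is coordinating the choice of $k$ so that all three bounds simultaneously yield exponential decay: the inflating factor $\exp(8\delta_F k)$ from Lemma \ref{ESRSIZ} must be dominated by the $(k/V)^{2\delta_F k}$ decay, which forces $k$ to remain strictly below $V/(e\alpha)$ and thereby fixes the admissible range of the constants $c_F$ and $\Delta$ appearing in the other two estimates. Once these constants are calibrated consistently and the three pieces are summed, the tail regime gives $T^{-1}\meas \ll \exp(-a_8 V)$; together with the Gaussian bound this completes the proof.
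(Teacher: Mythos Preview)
Your proof is correct and reaches the same conclusion, but the organization differs from the paper's in two respects worth noting.

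For the Gaussian regime $V \le a_3 \log\log T$, you invoke Corollary~\ref{NNGJu}, which is legitimate since Theorem~\ref{GJu} is established earlier in the paper and does not depend on Lemma~\ref{LLDSL}. The paper instead treats both regimes by the same direct moment argument: with $X = T^{\delta_F/k}$ it bounds $\int|P_F|^{2k}$, $\int N_t^{2k}$, and $\int|E|^{2k}$ uniformly, then takes $k = \lfloor cV^2/\log\log T\rfloor$ when $V \le \log\log T$ and $k = \lfloor cV\rfloor$ when $V > \log\log T$. Your shortcut is shorter but less self-contained; the paper's route shows that nothing beyond the basic moment inputs is needed.

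In the tail regime your choice $X = Y = T^{1/k}$ forces you to apply Lemma~\ref{ESRSIZ} at the reduced index $k_1 \approx \delta_F k$ and then manage the inflating factor $e^{8\delta_F k}$ explicitly. The paper's choice $X = T^{\delta_F/k}$ makes Lemma~\ref{ESRSIZ} apply directly at index $k$, with $T^{8\delta_F/\log X} = e^{8k}$ and $(\log T/\log X)^{2k} = (k/\delta_F)^{2k}$ absorbing into the constant, which is marginally cleaner. Two small points: your justification ``convexity bound $|\log F|\ll \log T$'' for the trivial range $V \gg \log T$ glosses over the imaginary part; the paper obtains $\Re e^{-i\theta}\log F \le C_1\log T$ from Theorem~\ref{Main_F_S} with $X=3$ and the zero-count estimate~\eqref{SIZDS}. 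Also, when $V$ is near $\log T$ your $X = T^{1/k}$ is only a bounded constant, so Lemma~\ref{LDESRDP} (stated for large $X$) does not literally apply; this is harmless since $P_F(\tfrac12+it,X)$ is then uniformly bounded and the corresponding excursion set is empty, but it is worth saying.
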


\begin{proof}
	We can show that, for $t \in [T, 2T]$, the inequality $\Re e^{-i\theta} \log F(1/2+it) \leq C_{1}\log{T}$
	with $C_{1} = C_{1}(F) > 0$ a suitable constant by using
	Theorem \ref{Main_F_S} in the case $X = 3$, $H = 1$ and estimate \eqref{SIZDS}.
	Hence, this lemma holds when $V \geq C_{1} \log{T}$ with $C_{1} = C_{1}(F) > 0$.
	In the following, we consider the case $V \leq C_{1}\log{T}$.
	Similarly to the proof of Lemma \ref{diff}, we obtain
	\begin{align*}
		 & \frac{1}{T}\int_{T}^{2T}\bigg| \log{F(\tfrac{1}{2} +it)} - P_{F}(\tfrac{1}{2}+it, X)
		- \sum_{|\frac{1}{2}+it-\rho_{F}| \leq \frac{1}{\log{X}}}\log{((\tfrac{1}{2}+it-\rho_{F})\log{X})} \bigg|^{2k}dt \\
		 & \leq A_{4}^{k}k^{2k} + A_{4}^{k} k! (\log\log T)^{k}
	\end{align*}
	for $X = T^{\delta_F/ k}$, where $\delta_F$ has the same meaning as in Lemma \ref{ESRSIZ}. 
	Moreover, by Lemma \ref{SLL} and Lemma \ref{ESRSIZ}, we obtain
	\begin{align*}
		\frac{1}{T}\int_{T}^{2T}|P_{F}(\tfrac{1}{2}+it, X)|^{2k} dt
		\leq \l( Ck \log{\log{T}} \r)^{k},
	\end{align*}
	and
	\begin{align*}
		\frac{1}{T}\int_{T}^{2T}\l( \sum_{|\frac{1}{2}+it-\rho_{F}| \leq \frac{1}{\log{X}}} 1\r)^{2k}dt
		\leq C^{k} k^{2k}.
	\end{align*}
	When $V \leq \log{\log{T}}$, we choose $k = \lfloor c V^2 / \log{\log{T}} \rfloor$,
	and when $V \geq \log{\log{T}}$, we choose $k = \lfloor c V \rfloor$.
	Here, $c$ is a suitably small constant depending only on $F$.
	Then, by the above calculations and the inequality $\Re e^{-i\theta}\log{((1/2+it-\rho_{F})\log{X})} \leq \pi$,
	we obtain
	\begin{align*}
		\frac{1}{T}\meas\set{t \in [T, 2T]}{\Re e^{-i\theta} \log{F(\tfrac{1}{2}+it)} > V}
		\leq \exp\l(-a_{8} \frac{V^2}{\log{\log{T}}}\r)
		+ \exp\l( -a_{8} V \r),
	\end{align*}
	which completes the proof of Lemma \ref{LLDSL}.
\end{proof}

\begin{proof}[Proof of Theorem \ref{New_MVT}]
	Let $0 \leq k \leq a_{6}$ with $a_{6} = a_{6}(\bm{F}) > 0$ suitably small to be chosen later.
	Put $\phi_{\bm{F}}(t) = \min_{1 \leq j \leq r}\Re e^{-i\theta_{j}} \log{F_{j}(1/2+it)}$ and
	\begin{align*}
		\Phi_{\bm{F}}(T, V)
		:= \meas\set{t \in [T, 2T]}{\phi_{\bm{F}}(t) > V}.
	\end{align*}
	Then we have
	\begin{align}
		\label{MINEF}
		\int_{T}^{2T}\exp\l( 2k \phi_{\bm{F}}(t) \r)dt
		= \int_{-\infty}^{\infty}2k e^{2k V}\Phi_{\bm F}(T, V)dV.
	\end{align}
	We consider the case when $\bm \theta \in [-\frac{\pi}{2},\frac{\pi}{2}]^r$ first.
	From Theorem \ref{GJu}, it follows that, for any $0 \leq V \leq a_{3}\log{\log{T}}$ with $a_{3} = a_{3}(\bm{F})$ a suitable constant,
	\begin{align}
		\label{APGJu}
		 & \Phi_{\bm{F}}(T, V) \\
		 & \ll_{\bm{F}}
		T\l(\frac{1}{1 + (V/\sqrt{\log{\log{T}}})^{r}} + \frac{1}{(\log{\log{T}})^{\a_{\bm{F}} + \frac{1}{2}}} \r)
		\exp\l( -h_{\bm{F}}\frac{V^2}{\log{\log{T}}} + C_{1}\frac{V^3}{(\log{\log{T}})^2} \r)
	\end{align}
	for some constant $C_{1} = C_{1}(\bm{F}) > 0$
	Moreover, by Lemma \ref{LLDSL}, it holds that
	\begin{align}
		\label{TBPhiF}
		\Phi_{\bm{F}}(T, V)
		\leq T\exp\l(-a_{8} \frac{V^2}{\log{\log{T}}}\r) + T\exp\l( -a_{8} V \r)
	\end{align}
	for any large $V$.
	Now we choose $a_{6} = \min\{a_{3}a_{8} / 4, a_{8}/4\}$.
	Put $D_{1} = 4a_{8}^{-1}$.
	We divide the integral on the right hand side of \eqref{MINEF} to
	\begin{align*}
		\l(\int_{-\infty}^{0} + \int_{0}^{D_{1}k\log{\log{T}}} + \int_{D_{1}k\log{\log{T}}}^{\infty}\r)2k e^{2k V}\Phi_{\bm{F}}(T, V)dV
		=: I_{1} + I_{2} + I_{3},
	\end{align*}
	say. We use the trivial bound $\Phi_{\bm{F}}(T, V) \leq T$ to obtain $I_{1} \leq T$.
	Also, by inequality \eqref{TBPhiF}, it follows that
	\begin{align*}
		I_{3}
		 & \leq T\int_{D_{1} k \log{\log{T}}}^{\infty}2k \l\{\exp\l( \l(-a_{8}\frac{V}{\log{\log{T}}} + 2k\r)V \r)
		+ e^{(-a_{8} + 2k)V}\r\}dV                                                                                 \\
		 & \leq T\int_{0}^{\infty}4k e^{-2k V}dV
		\leq 2T.
	\end{align*}
	Moreover, using inequality \eqref{APGJu}, we find that
	\begin{align*}
		I_{2}
		 & \ll_{\bm{F}} T\int_{0}^{D_{1} k \log{\log{T}}}
		\l(  E_{1} + E_{2}\r)
		\exp\l(2kV - h_{\bm{F}}\frac{V^2}{\log{\log{T}}} + C_{1}\frac{V^3}{(\log{\log{T}})^2} \r)dV \\
		 & \ll T(\log{T})^{k^2/h_{\bm{F}} + C_{1}D_{1}^{3}k^{3}}
		\int_{0}^{D_{1} k \log{\log{T}}}(E_{1} + E_{2})
		\exp\l( - \frac{h_{\bm{F}}}{\log{\log{T}}}\l( V - \frac{k}{h_{\bm{F}}}\log{\log{T}} \r)^2 \r)dV,
	\end{align*}
	where $E_{1} = \frac{k}{1 + (V/\sqrt{\log{\log{T}}})^r}$ and $E_{2} = \frac{k}{(\log{\log{T}})^{\a_{\bm{F}} + \frac{1}{2}}}$.
	We see that
	\begin{align*}
		 & \int_{0}^{D_{1} k \log{\log{T}}}E_{2}
		\exp\l( - \frac{h_{\bm{F}}}{\log{\log{T}}}\l( V - \frac{k}{h_{\bm{F}}}\log{\log{T}} \r)^2 \r)dV \\
		 & \leq \frac{k}{(\log{\log{T}})^{\a_{\bm{F}} + \frac{1}{2}}}
		\int_{-\infty}^{\infty}\exp\l( - \frac{h_{\bm{F}}}{\log{\log{T}}}V^2\r)dV
		\ll_{\bm{F}} \frac{k}{(\log{\log{T}})^{\a_{\bm{F}}}}.
	\end{align*}
	Also, we write
	\begin{align*}
		 & \int_{0}^{D_{1} k \log{\log{T}}}E_{1}
		\exp\l( - \frac{h_{\bm{F}}}{\log{\log{T}}}\l( V - \frac{k}{h_{\bm{F}}}\log{\log{T}} \r)^2 \r)dV                              \\
		 & = \l( \int_{0}^{\frac{k}{2h_{\bm{F}}}\log{\log{T}}} + \int_{\frac{k}{2h_{\bm{F}}}\log{\log{T}}}^{D_{1}k\log{\log{T}}} \r)
		k\frac{\exp\l( - \frac{h_{\bm{F}}}{\log{\log{T}}}\l( V - \frac{k}{h_{\bm{F}}}\log{\log{T}} \r)^2 \r)}{
			1 + (V/\sqrt{\log{\log{T}}})^{r}}dV
		=: I_{2, 1} + I_{2, 2},
	\end{align*}
	say.
	We find that
	\begin{align*}
		I_{2, 2}
		\ll_{\bm{F}} \frac{k}{1 + (k\sqrt{\log{\log{T}}})^{r}}
		\int_{-\infty}^{\infty}\exp\l( -\frac{h_{\bm{F}}}{\log{\log{T}}}V^2 \r)dV
		\ll_{\bm{F}} \frac{k \sqrt{\log{\log{T}}}}{1 + (k\sqrt{\log{\log{T}}})^{r}},
	\end{align*}
	and that
	\begin{align*}
		I_{2, 1}
		\leq \int_{\frac{k}{2h_{\bm{F}}}\log{\log{T}}}^{\frac{k}{h_{\bm{F}}}\log{\log{T}}}
		k\exp\l( -\frac{h_{\bm{F}}}{\log{\log{T}}}V^2 \r)dV
		\leq \sqrt{\frac{\log{\log{T}}}{h_{\bm{F}}}}
		\int_{\frac{k}{2\sqrt{h_{\bm{F}}}}\sqrt{\log{\log{T}}}}^{\infty}ke^{-u^2}du.
	\end{align*}
	If $k \leq (\log{\log{T}})^{-1/2}$, the last is clearly $\ll_{\bm{F}} 1$.
	If $k \geq (\log{\log{T}})^{-1/2}$, we apply the estimate $\int_{x}^{\infty}e^{-u^2}du \ll x^{-1}e^{-x^2}$ to obtain
	\begin{align*}
		I_{2, 1}
		\leq \sqrt{\frac{\log{\log{T}}}{h_{\bm{F}}}}
		\int_{\frac{k}{2\sqrt{h_{\bm{F}}}}\sqrt{\log{\log{T}}}}^{\infty}ke^{-u^2}du
		\ll 1.
	\end{align*}
	Hence, we obtain
	\begin{align}
		\label{pNew_MVT2_1_I2}
		I_{2}
		\ll_{\bm{F}} T + kT(\log{T})^{k^2/h_{\bm{F}} + C_{1}D_{1}^{3}k^{3}}
		\l( \frac{\sqrt{\log{\log{T}}}}{1 + (k\sqrt{\log{\log{T}}})^{r}} + \frac{1}{(\log{\log{T}})^{\a_{\bm{F}} + \frac{1}{2}}} \r).
	\end{align}
	Combing this estimate and the estimates for $I_{1}$, $I_{3}$,
	we complete the proof of \eqref{New_MVT1}.

	Next, we consider the case $\bm \theta \in [\frac{\pi}{2},\frac{3\pi}{2}]^r$.
	By equation \eqref{MINEF}, estimate \eqref{NNGJu2}, and positivity of $\Phi_{\bm{F}}$, we have
	\begin{align}
		\label{pNew_MVT2_1}
		 & \int_{T}^{2T}\exp\l( 2k \phi_{\bm{F}}(t) \r)dt    \\
		 & \geq \int_{0}^{1}2k e^{2kV} \Phi_{\bm{F}}(T, V)dV
		+ \int_{\frac{k}{h_{\bm F}}\log\log{T}}^{\frac{k}{h_{\bm{F}}} \log{\log{T}} + \sqrt{\log{\log{T}}}}
		2k e^{2k V}\Phi_{\bm F}(T, V)dV.
	\end{align}
	By estimate \eqref{NNGJu2}, the first integral on the right hand side is $\gg_{\bm{F}} T$,
	and the second integral on the right hand side is
	\begin{align}
		 & \gg_{\bm{F}}
		\frac{kT}{1 + (k\sqrt{\log\log T})^{r}}
		\int_{\frac{k}{h_{\bm F}}\log\log T}^{\frac{k}{h_{\bm{F}}} \log{\log{T}} + \sqrt{\log{\log{T}}}}
		\exp\l(2kV - h_{\bm{F}}\frac{V^2}{\log{\log{T}}} - C_{1}\frac{V^3}{(\log{\log{T}})^2} \r)dV                                   \\
		 & \geq \frac{kT (\log{T})^{\frac{k^2}{h_{\bm{F}}} - C_{2}k^{3}}}{1 + (k\sqrt{\log\log T})^{r}}
		\int_{\frac{k}{h_{\bm F}}\log\log T}^{\frac{k}{h_{\bm{F}}} \log{\log{T}} + \sqrt{\log{\log{T}}}}
		\exp\l( -\frac{h_{\bm{F}}}{\log{\log{T}}}\l( V - \frac{k}{h_{\bm{F}}}\log{\log{T}} \r)^2 \r)                                  \\
		 & \gg _{\bm{F}} kT (\log{T})^{\frac{k^2}{h_{\bm{F}}} - C_{2}k^{3}}\frac{\sqrt{\log{\log{T}}}}{1 + (k\sqrt{\log\log T})^{r}},
	\end{align}
	where $C_2 \geq 0$ is some constant depending on $\bm F$.
	Hence, we also obtain Theorem \ref{New_MVT} in the case $\bm \theta \in [\frac{\pi}{2},\frac{3\pi}{2}]^r$.
\end{proof}

\section{\textbf{Proofs of Theorem \ref{RH_LD_JVD} and Theorem \ref{New_MVT_RH}}} \label{cond}

\begin{proof}[Proof of Theorem \ref{RH_LD_JVD}]
	Let $\bm{F} \in (\Sc^{\dagger})^{r}$ and $\bm{\theta} \in [-\tfrac{\pi}{2}, \tfrac{3\pi}{2}]^{r}$ satisfying $\mathscr{A}$.
	Let $T$ be a sufficiently large constant depending on $\bm F$.
	Set $Y = T^{K_{1} / \mathcal{L}}$ where
	$K_{1} = K_{1}(\bm{F}) > 0$ is a suitably large constant and $\mathcal{L} \geq (\log_{3}{T})^{2}$ is a large parameter to be chosen later.
	Let $f$ be a fixed function satisfying the condition of this paper (see Notation) and $D(f) \geq 2$.
	Assuming the Riemann Hypothesis for $F_{1}, \dots, F_{r}$, we apply Theorem \ref{Main_F_S} with $X = Y$, $H = 1$ to obtain
	\begin{align*}
		\log{F_{j}(\tfrac{1}{2}+it)}
		 & = \sum_{2 \leq n \leq Y^2}\frac{\Lam_{F_{j}}(n) v_{f, 1}(e^{\log{n} / \log{Y}})}{n^{1/2+it} \log{n}}      \\
		 & \qquad+ \sum_{|1/2+it-\rho_{F_{j}}| \leq \frac{1}{\log{Y}}}\log((\tfrac{1}{2} + it -\rho_{F_{j}})\log{Y})
		+ R_{F_{j}}(\tfrac{1}{2}+it, Y, 1),
	\end{align*}
	where
	\begin{align*}
		\l| R_{F_{j}}(\tfrac{1}{2}+it, Y, 1)\r|
		\leq C_{0}\l(\frac{1}{\log{Y}}\bigg| \sum_{n \leq Y^3}\frac{\Lam_{F_{j}}(n) w_{Y}(n)}{n^{\frac{1}{2}+\frac{4}{\log{Y}}+it}} \bigg|
		+ \frac{d_{F_{j}}\log{T}}{\log{Y}}\r)
	\end{align*}
	for any $t \in [T, 2T]$.
	Here $C_{0}$ is a positive constant depending only on $f$.
	Moreover, when $\theta_{j} \in \l[ -\frac{\pi}{2}, \frac{\pi}{2} \r]$, it holds that
	\begin{align*}
		\Re e^{-i\theta_{j}} \sum_{|1/2+it-\rho_{F_{j}}| \leq \frac{1}{\log{Y}}}\log((\tfrac{1}{2} + it -\rho_{F_{j}})\log{Y})
		\leq \pi \sum_{|1/2+it-\rho_{F_{j}}| \leq \frac{1}{\log{Y}}}1,
	\end{align*}
	and when $\theta_{j} \in \l[ \frac{\pi}{2}, \frac{3\pi}{2} \r]$, it holds that
	\begin{align*}
		\Re e^{-i\theta_{j}} \sum_{|1/2+it-\rho_{F_{j}}| \leq \frac{1}{\log{Y}}}\log((\tfrac{1}{2} + it -\rho_{F_{j}})\log{Y})
		\geq -\pi \sum_{|1/2+it-\rho_{F_{j}}| \leq \frac{1}{\log{Y}}}1.
	\end{align*}
	Hence, there exists some positive constant $C_{1} > 0$ such that we have (by \eqref{NZVDP}),
	\begin{align*}
		 & \Re e^{-i\theta_{j}} \sum_{|1/2+it-\rho_{F_{j}}| \leq \frac{1}{\log{Y}}}\log((\tfrac{1}{2} + it -\rho_{F_{j}})\log{Y}) \\
		 & \leq C_{1} \l( \frac{1}{\log{Y}}\bigg|
		\sum_{n \leq Y^3}\frac{\Lam_{F_{j}}(n) w_{Y}(n)}{n^{\frac{1}{2}+\frac{4}{\log{Y}}+it}} \bigg|
		+ \frac{d_{F_{j}}\log{T}}{\log{Y}} \r)
	\end{align*}
	when $\theta_{j} \in \l[ -\frac{\pi}{2}, \frac{\pi}{2} \r]$,
	and
	\begin{align*}
		 & \Re e^{-i\theta_{j}} \sum_{|1/2+it-\rho_{F_{j}}| \leq \frac{1}{\log{Y}}}\log((\tfrac{1}{2} + it -\rho_{F_{j}})\log{Y}) \\
		 & \geq -C_{1} \l( \frac{1}{\log{Y}}\bigg|
		\sum_{n \leq Y^3}\frac{\Lam_{F_{j}}(n) w_{Y}(n)}{n^{\frac{1}{2}+\frac{4}{\log{Y}}+it}} \bigg|
		+ \frac{d_{F_{j}}\log{T}}{\log{Y}} \r)
	\end{align*}
	when $\theta_{j} \in \l[ \frac{\pi}{2}, \frac{3\pi}{2} \r]$.
	Taking $K_{1} = 2(C_{0} + C_{1})\max_{1 \leq j \leq r}d_{F_{j}}$, we find that there exists some positive constant $C_{2}$ depending on $f$ such that for any $t \in [T, 2T]$ and all $j = 1, \dots, r$,
	\begin{align} \label{upRH}
		\Re e^{-i\theta_{j}} \log{F_{j}(\tfrac{1}{2}+it)}
		\leq & \Re e^{-i\theta_{j}}\sum_{2 \leq n \leq Y^2}\frac{\Lam_{F_{j}}(n) v_{f, 1}(e^{\log{n} / \log{Y}})}{n^{1/2+it} \log{n}}      \\
		     & + \frac{C_{2}}{\log{Y}}\bigg| \sum_{n \leq Y^3}\frac{\Lam_{F_{j}}(n) w_{Y}(n)}{n^{\frac{1}{2}+\frac{4}{\log{Y}}+it}} \bigg|
		+ \frac{\mathcal{L}}{2}
	\end{align}
	when $\theta_{j} \in \l[ -\frac{\pi}{2}, \frac{\pi}{2} \r]$,
	and
	\begin{align}\label{loRH}
		\Re e^{-i\theta_{j}} \log{F_{j}(\tfrac{1}{2}+it)}
		\geq & \Re e^{-i\theta_{j}}\sum_{2 \leq n \leq Y^2}\frac{\Lam_{F_{j}}(n) v_{f, 1}(e^{\log{n} / \log{Y}})}{n^{1/2+it} \log{n}}      \\
		     & - \frac{C_{2}}{\log{Y}}\bigg| \sum_{n \leq Y^3}\frac{\Lam_{F_{j}}(n) w_{Y}(n)}{n^{\frac{1}{2}+\frac{4}{\log{Y}}+it}} \bigg|
		- \frac{\mathcal{L}}{2}
	\end{align}
	when $\theta_{j} \in \l[ \frac{\pi}{2}, \frac{3\pi}{2} \r]$.


	Put $X = Y^{1 / (\log{\log{T}})^{4(r + 1)}}$.
	By Lemma \ref{SLL} and assumption (A1), we obtain
	\begin{align*}
		\int_{T}^{2T}\bigg| \sum_{X < p \leq Y^2}\frac{a_{F_{j}}(p) v_{f, 1}(e^{\log{p} / \log{Y}})}{p^{1/2+it}} \bigg|^{2k}dt
		 & \ll T k! \l( \sum_{X < p \leq Y^2}\frac{|a_{F_{j}}(p)|^2}{p} \r)^{k} \\
		 & \leq T C_{3}^{k}k^{k}\l(\log_{3}{T}\r)^{k}                           
	\end{align*}
	for some constant $C_{3} = C_{3}(F_{j}, r) > 0$.
	Similarly to the proofs of estimates \eqref{PKLI3},
	we can show that for any integer $k$ with $1 \leq k \leq \mathcal{L} / 4K_{1}$
	\begin{align*}
		\int_{T}^{2T}\bigg| \sum_{\substack{X < p^{\ell} \leq Y^2 \\ \ell \geq 2}}
		\frac{\Lam_{F_{j}}(p^{\ell}) v_{f, 1}(e^{\log{p^{\ell}} / \log{Y}})}{p^{\ell(1/2+it)} \log{p^{\ell}}} \bigg|^{2k}dt
		\leq T C_{4}^k k^{k}
	\end{align*}
	for some constant $C_{4} = C_{4}(F_{j}) > 0$.
	Moreover, by Lemma \ref{ESMDP}, we have
	\begin{align*}
		\int_{T}^{2T}\l( \frac{C_{1}}{\log{Y}}
		\bigg| \sum_{n \leq Y^3}\frac{\Lam_{F_{j}}(n) w_{Y}(n)}{n^{\frac{1}{2}+\frac{4}{\log{Y}}+it}} \bigg| \r)^{2k}dt
		\leq T C_{5}^{k} k^{k}
	\end{align*}
	for any integer $k$ with $1 \leq k \leq \mathcal{L} / 4K_{1}$ and for some constant $C_{5} = C_{5}(F_{j}) > 0$.
	Here the assumptions in Lemma \ref{ESMDP} is satisfied as we can take $\kappa_F$ arbitrarily large.
	Therefore, the set of $t \in [T, 2T]$ such that for all $j = 1, \dots, r$,
	\begin{align*}
		\frac{\mathcal{L}}{2}
		\leq \bigg| \sum_{X < n \leq Y^2}\frac{\Lam_{F}(n) v_{f, 1}(e^{\log{n} / \log{Y}})}{n^{1/2+it}\log{n}} \bigg|
		 & + \frac{C_{2}}{\log{Y}}\bigg| \sum_{n \leq Y^3}\frac{\Lam_{F_{j}}(n) w_{Y}(n)}{n^{\frac{1}{2}+\frac{4}{\log{Y}}+it}} \bigg|
	\end{align*}
	has a measure bounded by
	$
		T \mathcal{L}^{-2k} C_{6}^{k} k^{k} \l(\log_{3}{T}\r)^{k} 
	$
	with $C_{6} = C_{6}(\bm{F}) > 0$ a suitably large constant.
	Choosing $k = \lfloor c_{1} \mathcal{L} \rfloor$ with $c_{1}$ suitably small depending only on $\bm{F}$, we find that
	there exists a set $\mathcal{X} \subset [T, 2T]$ with
	\begin{align}\label{measX}
		\meas(\mathcal{X}) \leq T\exp\l( -c_{1}\mathcal{L}\log\l( \frac{\mathcal{L}}{\log_{3}{T}} \r) \r)
	\end{align}
	such that for any $t \in [T, 2T] \setminus \mathcal{X}$ and any $j = 1, \dots, r$,
	\begin{align}
		\label{RH_LD_JVDp1}
		\Re e^{-i\theta_{j}} \log{F_{j}(\tfrac{1}{2}+it)}
		\leq \Re e^{-i\theta_{j}}P_{F_{j}}(\tfrac{1}{2}+it, X)
		+ \mathcal{L} \quad \text{ when }  \theta_{j}  \in [-\tfrac{\pi}{2}, \tfrac{\pi}{2}],
	\end{align}
	and
	\begin{align}
		\label{RH_LD_JVDp2}
		\Re e^{-i\theta_{j}} \log{F_{j}(\tfrac{1}{2}+it)}
		\geq \Re e^{-i\theta_{j}}P_{F_{j}}(\tfrac{1}{2}+it, X)
		- \mathcal{L} \quad \text{ when } \theta_{j}  \in [\tfrac{\pi}{2}, \tfrac{3\pi}{2}].
	\end{align}
	Now we consider  the case when
	$\|\bm{V}\| \leq a_{5}V_{m}^{{1/2}}(\log{\log{T}})^{{1/4}} (\log_{3}{T})^{{1/2}}$, where $a_{5}$ is a sufficiently small positive constant.
	Set $\mathcal{L} = 4 r c_{1}^{-1}\l(\frac{\norm[]{\bm{V}}^2}{\log{\norm[]{\bm{V}}}} + (\log_{3}{T})^{{4}}\r)$.
	Then we can verify  from \eqref{measX}  that  $\meas(\mathcal{X}) \ll_{\bm{F}} T\exp\l( -2 r \norm[]{\bm{V}}^{2} \r)$.
	Moreover, when $\theta_i\in[-\frac{\pi}{2}, \frac{\pi}{2}]$, the measure of $t\in[T, 2T]\backslash \mathcal{X}$ such that
	\begin{align*}
		\Re e^{-i\theta_{j}} \log{F_{j}(\tfrac{1}{2}+it)} \geq V_{j} \sqrt{\frac{n_{F_{j}}}{2}\log{\log{T}}}
	\end{align*}
	is bounded above by the measure of $t\in [T, 2T]$ such that
	\begin{align*}
		\frac{\Re e^{-i\theta_{j}}P_{F_{j}}(\tfrac{1}{2}+it, X)}{\s_{F_{j}}(X)}
		\geq  V_{j} - C_{\bm{F}}\l( \frac{\mathcal{L}}{\sqrt{\log{\log{T}}}}+ \frac{V_j\log_3T}{\log\log T} \r).
	\end{align*}
	where $C_{\bm F}$ is some positive constant and we used \eqref{sigaminv} for $\sigma_{F_j}(X)^{-1}$.
	Similarly when $\theta_i\in[\frac{\pi}{2}, \frac{3\pi}{2}]$, the measure of $t\in[T, 2T]\backslash \mathcal{X}$ such that
	\begin{align*}
		\Re e^{-i\theta_{j}} \log{F_{j}(\tfrac{1}{2}+it)} \geq V_{j} \sqrt{\frac{n_{F_{j}}}{2}\log{\log{T}}}
	\end{align*}
	is bounded below by the measure of $t\in [T, 2T]$ such that
	\begin{align*}
		\frac{\Re e^{-i\theta_{j}}P_{F_{j}}(\tfrac{1}{2}+it, X)}{\s_{F_{j}}(X)}
		\geq  V_{j} + C_{\bm{F}}\l( \frac{\mathcal{L}}{\sqrt{\log{\log{T}}}}+ \frac{V_j\log_3T}{\log\log T} \r).
	\end{align*}
	Here, we choose $a_{3}$ so that
	$C_{\bm{F}}\l( \frac{\mathcal{L}}{\sqrt{\log{\log{T}}}}+ \frac{V_j\log_3T}{\log\log T} \r)\leq \frac{V_{m}}{2}$.
	From these observations, the estimate $\int_{V}^{\infty}e^{-u^2/2}du \asymp \frac{1}{1+V}e^{-V^2/2}$ for $V \geq 0$,
	estimate \eqref{EST_Xi}, and Proposition \ref{Main_Prop_JVD3}, we find that if $\theta_{j} \in [-\frac{\pi}{2}, \frac{\pi}{2}]$,
	\begin{align*}
		 & \frac{1}{T}\meas(\S(T, \bm{V}; \bm{F}, \bm{\theta}))                                                                             \\
		 & \ll_{\bm{F}} \frac{1}{T}\meas(\mathcal{X})                                                                                       \\
		 & + \l(\frac{1}{V_{1} \cdots V_{r}} + \frac{1}{(\log{\log{T}})^{\a_{\bm{F}}+\frac{1}{2}}}\r)
		\prod_{j = 1}^{r}\exp\Bigg( -\frac{V_{j}^2}{2}
		- \frac{V_{j}^2}{2 \s_{F_{j}}(X)^2}\s_{F_{j}}\l( \frac{V_{j}^2}{\s_{F_{j}}(X)^2} \r)^2 +                                            \\
		 & \qqqquad \qqqquad + O_{\bm{F}}\l( \frac{\norm[]{\bm{V}} \mathcal{L}}{\sqrt{\log{\log{T}}}} + \frac{\mathcal{L}^2}{\log{\log{T}}}
		+ \l( \frac{\norm[]{\bm{V}}}{\sqrt{\log{\log{T}}}} \r)^{\frac{2 - 2\vartheta_{\bm{F}}}{1 - 2\vartheta_{\bm{F}}}} \r) \Bigg)         \\
		 & \ll_{\bm{F}} \l(\frac{1}{V_{1} \cdots V_{r}} + \frac{1}{(\log{\log{T}})^{\a_{\bm{F}}+\frac{1}{2}}}\r)
		\prod_{j = 1}^{r}\exp\l( -\frac{V_{j}^2}{2}
		+ O_{\bm{F}}\l( \frac{\norm[]{\bm{V}}^{3}}{\sqrt{\log{\log{T}}} \log{\norm[]{\bm{V}}}} \r) \r)
	\end{align*}
	for $\|\bm{V}\| \leq a_{5} V_{m}^{{1/2}} (\log{\log{T}})^{{1/4}} (\log_{3}{T})^{{1/2}}$.
	Hence, we obtain estimate \eqref{RH_LD_JVD1u}.
	Similarly, we can also find that if $\theta_{j} \in [\frac{\pi}{2}, \frac{3\pi}{2}]$,
	\begin{align*}
		 & \frac{1}{T}\meas(\S(T, \bm{V}; \bm{F}, \bm{\theta}))                                                                             \\
		 & \gg_{\bm{F}}
		\l(\frac{1}{V_{1} \cdots V_{r}} + \frac{1}{(\log{\log{T}})^{\a_{\bm{F}}+\frac{1}{2}}}\r)
		\prod_{j = 1}^{r}\exp\Bigg( -\frac{V_{j}^2}{2}
		- \frac{V_{j}^2}{2 \s_{F_{j}}(X)^2}\s_{F_{j}}\l( \frac{V_{j}^2}{\s_{F_{j}}(X)^2} \r)^2 +                                            \\
		 & \qqqquad \qqqquad - O_{\bm{F}}\l( \frac{\norm[]{\bm{V}} \mathcal{L}}{\sqrt{\log{\log{T}}}} + \frac{\mathcal{L}^2}{\log{\log{T}}}
		+ \l( \frac{\norm[]{\bm{V}}}{\sqrt{\log{\log{T}}}} \r)^{\frac{2 - 2\vartheta_{\bm{F}}}{1 - 2\vartheta_{\bm{F}}}} \r) \Bigg)         \\
		 & \qqqquad \qqqquad - \frac{1}{T}\meas(\mathcal{X})                                                                                \\
		 & \gg_{\bm{F}} \frac{1}{V_{1} \cdots V_{r}}\exp\l( -\frac{V_{1}^2 + \cdots + V_{r}^2}{2}
		- O_{\bm{F}}\l( \frac{\norm[]{\bm{V}}^{3}}{\sqrt{\log{\log{T}}} \log{\norm[]{\bm{V}}}} \r) \r)
	\end{align*}
	for $\|\bm{V}\| \leq a_{5} V_{m}^{{1/2}} (\log{\log{T}})^{{1/4}} (\log_{3}{T})^{{1/2}}$ satisfying
	$\prod_{j = 1}^{r}V_{j} \leq a_{6}(\log{\log{T}})^{\a_{\bm{F}} + \frac{1}{2}}$ with $a_{6} = a_{6}(\bm{F}) > 0$ a suitably small constant.
	Hence, we also obtain \eqref{RH_LD_JVD1l}.


	Now we consider  \eqref{RH_LD_JVD2}, where $\theta_{j} \in \l[ -\frac{\pi}{2}, \frac{\pi}{2} \r]$.
	Putting $\mathcal{L} = \frac{4K_{1}\log{T}}{\log{\log{T}}}$,
	we see that $Y = (\log{T})^{1/4}$, and hence there exists a positive constant $A = A(\bm{F})$ such that
	the right hand side of \eqref{upRH} is $\leq A \frac{\log{T}}{\sqrt{\log{\log{T}}}}\sqrt{\frac{n_{F_{j}}}{2}\log{\log{T}}}$
	uniformly for any $t \in [T, 2T]$ and all $j = 1, \dots, r$.
	Thus, we may assume $\norm[]{\bm{V}} \leq A\frac{\log{T}}{\sqrt{\log{\log{T}}}}$.
	We first consider  the case when $\sqrt{\log{\log{T}}} \leq \norm[]{\bm{V}} \leq A\frac{\log{T}}{\sqrt{\log{\log{T}}}}$.
	Set $\mathcal{L} = b_{1}\frac{\norm[]{\bm{V}}}{2}\sqrt{\log{\log{T}}}$, where $b_{1}$ is some small positive constant such that the inequality $Y \geq 3$ holds.
	Then we see \eqref{measX}  becomes
	$$
		\meas(\mathcal{X})
		\ll_{\bm{F}} T \exp\l( -c_{2} \norm[]{\bm{V}}\sqrt{\log{\log{T}}} \log{\norm[]{\bm{V}}} \r)
	$$
	for some constant $c_{2} = c_{2}(\bm{F}) > 0$.
	Using Lemma \ref{SLL}, we have, uniformly for any $j = 1, \dots, r$,
	\begin{align}\label{PF_2k}
		\int_{T}^{2T}|P_{F_{j}}(\tfrac{1}{2}+it, X)|^{2k}dt
		\ll_{\bm{F}} T(C_{6} k \log{\log{T}})^{k}
	\end{align}
	for any integer $k$ with $1 \leq k \leq \mathcal{L}\log{\log{T}}$ and some $C_{6} = C_{6}(\bm{F})$.
	Combing \eqref{RH_LD_JVDp1} and \eqref{PF_2k}, we obtain
	\begin{align*}
		 & \frac{1}{T}\meas(\S(T, \bm{V}; \bm{F}, \bm{\theta}))                                                                    \\
		 & \ll \min_{1\leq j\leq r} \frac{1}{T}\meas\set{t \in [T, 2T]}{\Re e^{-i\theta_{j}} \log{F_{j}(\tfrac{1}{2}+it)} > V_{j}} \\
		 & \ll_{\bm{F}}
		\norm[]{\bm{V}}^{-2k} C_{7}^{k} k^{k}
		+ \exp\l( -c_{2} \norm[]{\bm{V}}\sqrt{\log{\log{T}}} \log{\norm[]{\bm{V}}} \r).
	\end{align*}
	When $\norm[]{\bm{V}} \leq \log{\log{T}}$, we choose $k = \lfloor c_{3} \norm[]{\bm{V}}^2 \rfloor$,
	and when $\norm[]{\bm{V}} > \log{\log{T}}$, we choose $k = \lfloor c_{3} \norm[]{\bm{V}}\sqrt{\log{\log{T}}} \rfloor$,
	where $c_{3} $ is a suitably small positive constant depending only on $\bm{F}$.
	Then, it follows that
	\begin{align*}
		\frac{1}{T}\meas(\S(T, \bm{V}; \bm{F}, \bm{\theta}))
		\ll_{\bm{F}}
		\exp\l( -c_{4}\|\bm{V}\|^2 \r) +  \exp\l( -c_{4} \|\bm{V}\|\sqrt{\log{\log{T}}} \log{\norm[]{\bm{V}}} \r),
	\end{align*}
	which completes the proof of \eqref{RH_LD_JVD2}.
\end{proof}

\begin{proof}[Proof of Theorem \ref{New_MVT_RH}]
	Let $T$ be large, and put $\e(T) = (\log_{3}{T})^{-1}$.
	Let $k \geq 0$.
	We recall equation \eqref{MINEF}, which is
	\begin{align*}
		\int_{T}^{2T}\exp\l( 2k \phi_{\bm{F}}(t) \r)dt
		= \int_{-\infty}^{\infty}2k e^{2k V}\Phi_{\bm{F}}(T, V)dV.
	\end{align*}
	We divide the integral on the right hand side to
	\begin{align*}
		\l(\int_{-\infty}^{0}+ \int_{0}^{D_{2}k\log{\log{T}}}
		+ \int_{D_{2}k\log{\log{T}}}^{\infty}\r)2k e^{2k V}\Phi_{\bm{F}}(T, V)dV
		=: I_{4} + I_{5} + I_{6},
	\end{align*}
	say. Here, $D_{2} = D_{2}(\bm{F})$ is a suitably large positive constant.
	Now we consider the case when $\theta_{j} \in [-\frac{\pi}{2}, \frac{\pi}{2}]$.
	We use the trivial bound $\Phi_{\bm{F}}(T, V) \leq T$ to obtain $I_{4} \leq T$.
	Applying estimate \eqref{RH_LD_JVD2}, we find that the estimate
	\begin{align*}
		\Phi_{\bm{F}}(T, V)
		\ll_{\bm{F}} T\exp\l( - 4k V \r)
	\end{align*}
	holds for $V \geq D_{2} k \log{\log{T}}$ when $D_{2}$ is suitably large.
	Therefore, we have
	\begin{align*}
		I_{6}
		\ll_{\bm{F}} T \int_{D\log{\log{T}}}^{\infty}2k  e^{-2kV}dV
		\ll T.
	\end{align*}
	By estimate \eqref{RH_LD_JVD1u}, we find that
	\begin{align*}
		 & \Phi_{\bm{F}}(T, V)                                                                                                            \\
		 & \ll_{k, \bm{F}} T \l( \frac{1}{1 + (V / \sqrt{\log{\log{T}}})^{r}} + \frac{1}{(\log{\log{T}})^{\a_{\bm{F}} + \frac{1}{2}}} \r)
		\exp\l( -\frac{h_{\bm{F}}V^{2}}{\log{\log{T}}} + \frac{C_{1} V^3}{(\log{\log{T}})^2 \log_{3}{T}} \r)                              \\
		 & \ll T\l( \frac{1}{1 + (V / \sqrt{\log{\log{T}}})^{r}} + \frac{1}{(\log{\log{T}})^{\a_{\bm{F}} + \frac{1}{2}}} \r)
		(\log{T})^{C_{1} D_{2}^{3} k^{3} \e(T)}\exp\l( -h_{\bm{F}}\frac{V^{2}}{\log{\log{T}}} \r)
	\end{align*}
	for $(\log{\log{T}})^{2/3} \leq V \leq D_{2}k\log{\log{T}}$.
	Here, $C_{1} = C_{1}(\bm{F})$ is some positive constant.
	Similarly to the proof of \eqref{pNew_MVT2_1_I2} by using this estimate, we obtain
	\begin{align*}
		I_{5}
		\ll_{\bm{F}} T + T(\log{T})^{k^{2}/h_{\bm{F}} + Bk^{3}\e(T)}
		\l( \frac{k \sqrt{\log{\log{T}}}}{1 + (k \sqrt{\log{\log{T}}})^{r}} + \frac{1}{(\log{\log{T}})^{\a_{\bm{F}} + \frac{1}{2}}} \r).
	\end{align*}
	Hence, we obtain \eqref{New_MVT_RH1}.

	For estimate \eqref{New_MVT_RH2}, it holds from the positivity of $\Phi_{\bm{F}}(T, V)$ and equation \eqref{MINEF} that
	\begin{align*}
		\int_{T}^{2T}\exp\l( 2k \phi_{\bm{F}}(t) \r)dt
		\gg_{k} \int_{\frac{k}{h_{\bm{F}}}\log{\log{T}}}^{\frac{k}{h_{\bm{F}}}\log{\log{T}} + \sqrt{\log{\log{T}}}}
		e^{2k V}\Phi_{\bm{F}}(T, V)dV.
	\end{align*}
	When $\theta_i\in[\frac{\pi}{2}, \frac{3\pi}{2}]$,
	assuming $\vartheta_{\bm{F}} < \frac{1}{r + 1}$, we use \eqref{RH_LD_JVD1l} to obtain
	\begin{align*}
		\Phi_{\bm{F}}(T, V)
		 & \gg_{k, \bm{F}} \frac{T}{1 + (V / \sqrt{\log{\log{T}}})^{r}}
		\exp\l( -h_{\bm{F}}\frac{V^{2}}{\log{\log{T}}} - \frac{CV^3}{(\log{\log{T}})^2 \log_{3}{T}} \r) \\
		 & \gg_{\bm{F}} \frac{T (\log{T})^{-C_{2} k^{3} \e(T)}}{1 + (V / \sqrt{\log{\log{T}}})^{r}}
		\exp\l( -h_{\bm{F}}\frac{V^{2}}{\log{\log{T}}} \r)
	\end{align*}
	for $\frac{k}{h_{\bm{F}}}\log{\log{T}} \leq V \leq \frac{k}{h_{\bm{F}}}\log{\log{T}} + \sqrt{\log{\log{T}}}$.
	Here, $C_{2} = C_{2}(\bm{F})$ is a positive constant.
	Similarly to the proof of \eqref{New_MVT2} by using this estimate
	and the bound $\Phi_{\bm{F}}(T, V) \gg_{\bm{F}} T$ for $0 \leq V \leq 1$,
	we can also obtain \eqref{New_MVT_RH2}.
\end{proof}

\section{\textbf{Concluding remarks}}\label{finalremarks}
\subsection{Moments of max/min values}
Under the same conditions as in Theorem \ref{New_MVT}, we can apply Theorem \ref{GJu} to show that
for sufficiently small $k$, there exists some constant $B=B(\bm F)>0$ such that 
\begin{gather}
	\label{MAXLUC}
	\int_{T}^{2T}\l(\max_{1 \leq j \leq r}|F_{j}(\tfrac{1}{2} + it)|\r)^{2k}dt
  \ll_{\bm{F}} T (\log{T})^{n_{\bm{F}}k^2 + B k^{3}}, \\
  \label{MINLUC}
  \int_{T}^{2T}\l(\min_{1 \leq j \leq r}|F_{j}(\tfrac{1}{2} + it)|\r)^{-2k}dt
  \gg_{\bm{F}} T (\log{T})^{n_{\bm{F}}k^2 - B k^{3}}, 
\end{gather}
where $n_{\bm{F}} =\max_{1 \leq j \leq r}n_{F_{j}}$.
Assuming the Riemann Hypothesis for these $L$-functions, we can replace the term $Bk^3$ in the exponent by any $\epsilon>0$ and the results hold for any $k>0$. 
In fact, consider
\begin{align*}
	S_{j}
	= \set{t \in [T, 2T]}{\Re e^{-i\theta_{j}} \log{F_{j}}(\tfrac{1}{2} + it) > V}.
\end{align*}
Using the inclusion-exclusion principle, we see that
\begin{align}
  \begin{aligned}
    \label{IEPS_j}
    &\sum_{j = 1}^{r}\meas S_{j} - \sum_{1 \leq \ell < m \leq r}\meas\l( S_{\ell} \cap S_{m} \r) \\
    &\leq \meas\l(\bigcup_{j = 1}^{r}S_{j}\r)
    = \meas\set{t \in [T, 2T]}{\max_{1 \leq j \leq r}\Re e^{-i\theta_{j}} \log{F_{j}}(\tfrac{1}{2} + it) > V}  \\
    & \leq \sum_{j = 1}^{r}\meas S_{j}.
  \end{aligned}
\end{align}
Applying the second inequality of \eqref{IEPS_j} and Theorem \ref{GJu} with $r = 1$ and
$V_{1} = \frac{V}{\sqrt{(n_{F_{j}} / 2)\log{\log{T}}}}$ for $j = 1, \dots, r$, we obtain
\begin{align*}
	 & \frac{1}{T}\meas\set{t \in [T, 2T]}{\max_{1 \leq j \leq r}\Re e^{-i\theta_{j}} \log{F_{j}}(\tfrac{1}{2} + it) > V} \\
	 & \ll_{\bm{F}} \frac{1}{1 + V / \sqrt{\log{\log{T}}}}
	\exp\l( -\frac{V^2}{n_{\bm{F}}\log{\log{T}}}\l( 1 + O_{\bm{F}}\l( \frac{V}{\log{\log{T}}} \r) \r) \r)
\end{align*}
for any $\bm{\theta} \in [-\tfrac{\pi}{2}, \tfrac{\pi}{2}]^{r}$ and $0 \leq V \leq a \log{\log{T}}$ with $a = a(\bm{F}) > 0$ suitable small.
Similarly, applying the first inequality of \eqref{IEPS_j} and Theorem \ref{GJu} with $r = 1, 2$,
$V_{1} = \frac{V}{\sqrt{(n_{F_{j}}/2)\log{\log{T}}}}$ for $j = 1, \dots, r$, 
and $(V_{1}, V_{2}) = \l( \frac{V}{\sqrt{(n_{F_{\ell}} / 2)\log{\log{T}}}}, \frac{V}{\sqrt{(n_{F_{m}} / 2)\log{\log{T}}}} \r)$
for $1 \leq \ell < m \leq r$, we obtain
\begin{align*}
  & \frac{1}{T}\meas\set{t \in [T, 2T]}{\max_{1 \leq j \leq r}\Re e^{-i\theta_{j}} \log{F_{j}}(\tfrac{1}{2} + it) > V} \\
  & \gg_{\bm{F}} \frac{1}{1 + V / \sqrt{\log{\log{T}}}}
 \exp\l( -\frac{V^2}{n_{\bm{F}}\log{\log{T}}}\l( 1 + O_{\bm{F}}\l( \frac{V}{\log{\log{T}}} \r) \r) \r)
\end{align*}
for any $\bm{\theta} \in [\tfrac{\pi}{2}, \tfrac{3\pi}{2}]^{r}$ 
and $0 \leq V \leq a_{3}\min\{\log{\log{T}}, (\log \log T)^{\frac{\a_{\bm{F}}}{r} + \frac{1}{2} + \frac{1}{2r}}\}$.
The rest of the argument follows the same way as in Theorems \ref{New_MVT} and Theorem \ref{New_MVT_RH}.
Note that assumptions \eqref{SOC2}, (A2) are not needed for \eqref{MAXLUC}. 
\subsection{Moments of product of $L$-functions}
Our method can be modified to study the moment of product of $L$-functions. Under the same assumptions as in Theorem \ref{New_MVT}, we can also show that for sufficiently small $k_1, \dots, k_r>0$ there exists some constant $B=B(\bm F)$ such that
\begin{align}
  \label{PRDLUC}
&\int_{T}^{2T}\prod_{j = 1}^{r}|F_{j}(\tfrac{1}{2} + it)|^{2k_{j}}dt
	\ll_{\bm{F}} T (\log{T})^{n_{F_{1}}k_{1}^2 + \cdots + n_{F_{r}}k_{r}^2 + B K^{3}},\\
	 \label{PRDLUC2}
	&\int_{T}^{2T}\prod_{j = 1}^{r}|F_{j}(\tfrac{1}{2} + it)|^{-2k_{j}}dt
	\gg_{\bm{F}} T (\log{T})^{n_{F_{1}}k_{1}^2 + \cdots + n_{F_{r}}k_{r}^2 - B K^{3}}, \quad \text{ if }\theta_{\bm F}\leq \frac{1}{r+1}
\end{align}
where $K=\max_{1\leq j\leq r}k_j$.
If we assume the Riemann Hypothesis for these $L$-functions, the term $BK^3$ in the exponent can be replaced by any $\epsilon>0$ and the results hold for any $k_1, \dots, k_r>0$. 

To do this, we need to modify the definition of $\S_{X}(T, \bm{V}; \bm{F}, \bm{\theta})$.
Let $X \geq 3$, $\bm{k} = (k_{1}, \dots, k_{r}) \in (\RR_{\geq 0})^{r}$, 
and let $\bm{F}$ be a $r$-tuple of Dirichlet series and $\bm{\theta} \in \RR^{r}$ satisfying (S4), (S5), (A1), and (A2). 
Let
\begin{align*}
	\s_{\bm{F}}(X, \bm{k})
	= \sqrt{\frac{1}{2}\sum_{j = 1}^{r}k_{j}\sum_{p \leq X} \sum_{\ell = 1}^{\infty}\frac{|b_{F_{j}}(p^{\ell})|^2}{p^{\ell}}}
\end{align*}
and 
\begin{align*}
\S_{X}(T, \bm{V}; \bm{F}, \bm{\theta}, \bm k)=\frac{1}{T}\meas
	\set{t \in [T, 2T]}{\frac{\sum_{j = 1}^{r}k_{j}\Re e^{-i\theta_{j}}P_{F_{j}}(\tfrac{1}{2} + it, X)}{\s_{\bm{F}}(X; \bm{k})} > V}.
\end{align*}
Then, we can show that for any $\bm{k} \in (\RR_{> 0})^{r}$
and $|V| \leq a \s_{\bm{F}}(X; \bm{k})$ with $a = a(\bm{F}, \bm{k}) > 0$ suitably small,
\begin{align*}
	\frac{1}{T}\meas
	\set{t \in [T, 2T]}{\frac{\sum_{j = 1}^{r}k_{j}\Re e^{-i\theta_{j}}P_{F_{j}}(\tfrac{1}{2} + it, X)}{\s_{\bm{F}}(X; \bm{k})} > V}
	\sim \int_{V}^{\infty}e^{-u^2/2}\frac{du}{\sqrt{2\pi}}
\end{align*}
by modifying the proofs in Section \ref{Proof_Props_JVD}.
Based on this asymptotic formula, we can also prove an analogue of Theorem \ref{GJu} and thus \eqref{PRDLUC}.
Similarly, we can also obtain \eqref{PRDLUC2}.

\subsection{Other large deviation results}
We would like to mention that our method also recovers the work of Heuberger-Kropf \cite{HK2018}
for higher dimensional quasi-power theorem, and it is likely our method yields improvement to their work in the direction of large deviations.



\end{document}